\DeclareMathAlphabet{\mathbf}{OT1}{ptm}{bx}{n}
\DeclareMathAlphabet{\mathrm}{OT1}{ptm}{m}{n}
\DeclareMathAlphabet{\mathbb}{U}{jkpsyb}{m}{n}
\DeclareRobustCommand{\TitleEquation}[2]{\texorpdfstring{\StrLeft{\f@series}{1}[\@firstchar]$\if%
b\@firstchar\boldsymbol{#1}\else#1\fi$}{#2}}
\DeclareRobustCommand{\cev}[1]{%
  {\mathpalette\do@cev{#1}}%
}
\newcommand{\do@cev}[2]{%
  \vbox{\offinterlineskip
    \sbox\z@{$\m@th#1 x$}%
    \ialign{##\cr
      \hidewidth\reflectbox{$\m@th#1\vec{}\mkern7mu$}\hidewidth\cr
      \noalign{\kern-\ht\z@}
      $\m@th#1#2$\cr
    }%
  }%
}
\def\dash{\leavevmode\unskip\kern0.18em--\penalty\exhyphenpenalty\kern0.18em}
\def\slash{\leavevmode\unskip\kern0.15em/\penalty\exhyphenpenalty\kern0.15em}
\newcommand{\1}{\mathbbm{1}}
\renewcommand{\R}{\mathbf{R}}
\renewcommand{\Z}{\mathbf{Z}}
\renewcommand{\E}{\mathbb{E}}
\renewcommand{\P}{\mathbb{P}}
\renewcommand{\le}{\leqslant}
\renewcommand{\leq}{\leqslant}
\renewcommand{\geq}{\geqslant}
\renewcommand{\a}{\mathbf{a}}
\renewcommand{\b}{\mathbf{b}}
\newcommand{\q}{\mathbf{q}}
\renewcommand{\r}{\mathbf{r}}
\colorlet{darkblue}{blue!90!black}
\colorlet{darkred}{red!90!black}
\colorlet{dr}{red!90!black}
\def\r{\mathrm{r}}
\def\one{\mathord{\mathbf{1}}}
\def\ts{t_{\star}^{\kappa}}
\def\tsd{t_{\star}^{\kappa, \delta}}
\def\u{\mathrm{u}}
\def\sotimes{\mathbin{\otimes_{s}}}
\newcommand{\ep}{\varepsilon}
\newcommand{\RR}{\mathbf{R}}
\newcommand{\NN}{\mathbf{N}}
\newcommand{\ZZ}{\mathbf{Z}}
\newcommand{\TT}{\mathbf{T}}
\newcommand{\EE}{\mathbb{E}}
\newcommand{\PP}{\mathbb{P}}
\newcommand{\mL}{\mathcal{L}}
\newcommand{\mC}{\mathcal{C}}
\newcommand{\mI}{\mathcal{I}}
\newcommand{\mF}{\mathcal{F}}
\newcommand{\mD}{\mathcal{D}}
\newcommand{\mX}{\mathcal{X}}
\newcommand{\mS}{\mathcal{S}}
\newcommand{\mB}{\mathcal{B}}
\newcommand{\mA}{\mathcal{A}}
\newcommand{\mH}{\mathcal{H}}
\newcommand{\mZ}{\mathcal{Z}}
\newcommand{\mf}[1]{\mathfrak{#1}}
\newcommand{\dee}{\, \mathrm{d}}
\newcommand{\ds}{\dee s}
\newcommand{\dr}{\dee r}
\newcommand{\Div}{\operatorname{div}}
\newcommand{\leqc}{\lesssim}
\renewcommand{\l}{\ell}
\newcommand{\para}{\varolessthan}
\newcommand{\rpara}{\varogreaterthan}
\newcommand{\reso}{\varodot}
\newcommand{\ve}{\varepsilon}
\newcommand{\vr}{\varrho}
\newcommand{\vt}{\vartheta}
\newcommand*{\ud}{\mathrm{\,d}}
\renewcommand{\div}{\operatorname{div}}
\newcommand{\pareq}{\mathrel{\rlap{\kern0.07em\tikz[x=0.1em,y=0.1em,baseline=0.04em] \draw[line width=0.3pt] (0,1.93) -- (4.5,0);}\varolessthan}}
\newcommand{\rpareq}{\mathrel{\rlap{\kern0.23em\tikz[x=0.1em,y=0.1em,baseline=0.04em] \draw[line width=0.3pt] (0,0) -- (4.5,1.93);}\varogreaterthan}}
\begin{document}

\title{Lower bounds on the top Lyapunov exponent for linear PDEs driven by the 2D stochastic Navier--Stokes equations}
\author{Martin Hairer$^{1,2}$\orcidlink{0000-0002-2141-6561}, Sam Punshon-Smith$^3$ \orcidlink{0000-0003-1827-220X}, Tommaso Rosati$^4$ \orcidlink{0000-0001-5255-6519} and Jaeyun Yi$^1$ \orcidlink{0000-0002-2940-9307}}

\institute{EPFL, Switzerland, \email{martin.hairer@epfl.ch, jaeyun.yi@epfl.ch}
\and Imperial College London, UK, \email{m.hairer@imperial.ac.uk}  
\and Tulane University, USA, \email{spunshonsmith@tulane.edu}
\and University of Warwick, UK, \email{t.rosati@warwick.ac.uk} }

\maketitle

\begin{abstract}
We consider the top Lyapunov exponent associated to the advection-diffusion
and linearised Navier--Stokes equations on the two-dimensional
torus. The velocity field is given by the stochastic Navier--Stokes equations driven by a
non-degenerate white-in-time noise with a power-law correlation structure.  We show
that the top Lyapunov exponent is bounded from below by a negative power of the
diffusion parameter. This partially answers a conjecture of Doering and
Miles and provides a first lower bound on the Batchelor scale in terms of the
diffusivity. The proof relies on a robust analysis of the projective process
associated to the linear equation, through its spectral median dynamics.
We introduce a probabilistic argument to show that high-frequency
states for the projective process are unstable under stochastic perturbations,
leading to a Lyapunov drift condition and quantitative-in-diffusivity estimates.
\vspace{1em}

\noindent{\it Mathematics Subject Classification 2020:} 60H15, 35Q35, 37H15, 37L30

\noindent {\it Keywords:} Fluid mechanics, dynamics, Lyapunov exponents, multiplicative ergodic theorem
\end{abstract}

\setcounter{tocdepth}{2}

\tableofcontents

\section{Introduction}
This work concerns the study of top Lyapunov exponent associated to solutions $ \varrho$ of linear parabolic stochastic PDEs on the two dimensional torus $\TT^{2}$,
\begin{equ}[e:main]
\partial_t \varrho + L[u]\varrho - \kappa \Delta \varrho = 0 \;, \qquad
\varrho(0, x) = \varrho_{0} (x)\;,
\end{equ}
where $ \kappa > 0 $ is a viscosity parameter. The equation is driven by a
Markov process $u_t$ taking values in some space of regular velocity fields through a linear differential
operator $ L[u]  $. 
As we explain below, the methods that we introduce allow in principle for the treatment of a
wide class of driving stochastic processes $ u_{t}$ and of linear
operators $ L[u] $. However, because of their particular relevance, we restrict
our analysis to examples from
fluid mechanics, where $u$ is assumed to solve the incompressible
stochastic Navier--Stokes (SNS) equations on $\TT^2$:
\begin{equation}\label{e:sns}
\partial_t u + u \cdot \nabla u - \Delta u +\nabla p = \xi \;, \quad \div u
= 0 \;, \quad u(0, x) = u_{0}(x) \;,
\end{equation}
driven by some noise $ \xi $ that is white in time and sufficiently
non-degenerate, see \eqref{e:assu-noise} for the precise assumptions.
In this paper we focus primarily on two examples of equation \eqref{e:main}:
the advection diffusion equation and the linearisation of the Navier--Stokes
equations in vorticity form. These correspond to the following operators $L[u]$:
\minilab{e:form-L}
\begin{equs}
L[u]\varrho & = u \cdot \nabla \varrho \;, \qquad \qquad \qquad & \textbf{(Advection)} \;, \label{e:form-PSA}\\
L[u]\varrho & = u\cdot \nabla \varrho + \Delta u\cdot \nabla^{-1}\varrho \;,
\qquad \qquad \qquad & \textbf{(Linearised SNS)} \label{e:form-LNS} \;.
\end{equs}
For the advection diffusion equation we are particularly interested in the {\em small diffusivity }regime $ 0< \kappa \ll 1 $. For the linearised Navier--Stokes equation, we note that there is a viscosity coefficient mismatch between \eqref{e:main} and $\eqref{e:sns}$, so that $\eqref{e:main}$ is only the linearisation of the vorticity form of $\eqref{e:sns}$ with respect to its initial data (also called the first variation equation) when $\kappa = 1$. Regardless, even
if $ \kappa \in (0,1) $ we will refer to this as the linearised SNS
system.

\subsection{Lyapunov exponents and motivation}

The top Lyapunov exponent, $\lambda_1^\kappa$, describes the maximal asymptotic growth rate of the $L^2$ norm of the solution $ \varrho_t$ to \eqref{e:main}
\begin{equ}\label{e:def-lyap-intro}
\lambda_1^\kappa = \lim_{t\to\infty}\frac{1}{t} \sup_{\|\varrho_0\|_{L^2} = 1}\log\|\varrho_t\|_{L^2}\,.
\end{equ}
See section  \ref{sec:main} and Appendix~\ref{appendix:Lyapunov} for more details on the definition of $\lambda_1^\kappa$ and its properties.
Lyapunov exponents have been extensively studied in the context of fluid
mechanics, particularly for understanding the stability of solutions to the
Euler and Navier--Stokes equations
\cite{Chandrasekhar-Hydrodynamic-2013q,Drazin-Hydrodynamic-2004j,IUdovich-Linearization-1989c,Yaglom-Hydrodynamic-2012w},
and in the study of the dynamics of semilinear parabolic problems, again such as the Navier--Stokes equations, to provide upper bounds on the dimension of global attractors \cite{Constantin1985-as,Constantin1985-as,Foias-Navier-stokes-2001t,Temam1997-ft}. Furthermore, they are expected to play a crucial role in analysing the transition to turbulence \cite{Ruelle1971-ic}, with Ruelle and Takens \cite{Ruelle1971-ic,MacKay1991-qb} proposing dynamical chaos, characterised by positive Lyapunov exponents, as a key mechanism. 

Our main result is a lower bound on $\lambda_1^\kappa$ that is \emph{finite} and \emph{quantitative} in the diffusivity coefficient $ \kappa $. Specifically, Theorem~\ref{thm:main} states that for any $ \ve > 0 $ there exists a constant $ C_{\ve} $ depending only on the intensity of the noise, such that for all $\kappa \in (0,1)$
\begin{equ}[e:main-result]
\lambda^{\kappa}_{1} \geqslant -C_{\ve} \kappa^{-3 - \ve} \;.
\end{equ}
Additionally, in Corollary \ref{cor:stationary}, we establish a Furstenberg--Khasminskii-type formula for $\lambda_1^\kappa$
\begin{equ}\label{e:fk-intro}
\lambda_1^\kappa \geq - \int  \left(\kappa\|\nabla\pi\|^2_{L^2} +
\langle\pi,L[u]\pi\rangle_{L^2}\right) \dee \nu(u,\pi) \;,
\end{equ}
where $\nu$ is a stationary probability measure for the ``projective dynamics'' $(u_t,\pi_t)$, $\pi_t = \varrho_t/\|\varrho_t\|$. Formulas of this type are well-known in the context of finite-dimensional systems, see for example \cite{Baxendale1992-ms,Arnold1995-rs,Bedrossian2022-ho}. However, in infinite dimensions, the existence of such a measure is not guaranteed, and the proof of \eqref{e:fk-intro} is a non-trivial extension of the classical theory.

Establishing lower bounds on Lyapunov exponents for systems of the type \eqref{e:main} is crucial for understanding several fundamental open problems in fluid mechanics. For example, in the study of stochastic Navier--Stokes equations, it is expected that there is a positive Lyapunov exponent when the viscosity is taken small enough.  This is supported by rigorous results for Galerkin truncations of the SNS equations \cite{MR4733339} as well as in numerical simulations \cite{Bohr_Jensen_Paladin_Vulpiani_1998,Keefe1992-uw,Fouxon2021-uw,Mohan2017-uw}. Moreover, the positivity of Lyapunov exponents is essential for understanding the efficiency of magnetic field generation in the fast dynamo problem \cite{childress2008stretch}, and the existence of multiple invariant measures in stochastic systems almost surely preserving a submanifold \cite{MR4244269}. These phenomena are all linked to the dynamics of systems with a structure similar to \eqref{e:form-LNS} in the small viscosity regime. The objective of this work is to introduce a new and robust method to obtain quantitative lower bounds on Lyapunov exponents for parabolic PDEs.

\subsection{The Batchelor scale conjecture for advection-diffusion}

A key motivation for our work is the Batchelor scale conjecture, which predicts a ``typical length scale'' of order $\sqrt{\kappa}$ in advection-diffusion systems \cite{Batchelor_1959}.  Above this scale, advection dominates the dynamics, while below it, diffusion takes over. Ultimately mixing should be halted at the Batchelor scale preventing super-exponential decay.  An extensive description of this problem is given by Miles and Doering \cite{miles2018diffusion}, with numerical experiments that validate the Batchelor scale prediction.

The Batchelor scale conjecture can be reformulated in terms of the Lyapunov exponent as follows: the quantity
\begin{equ}\label{e:filamentation}
\ell(\varrho) \eqdef \frac{\|\varrho\|_{L^2}}{\|\nabla \varrho\|_{L^2}} \;,
\end{equ}
defines a type of ``filamentation length scale'' for the solution $\varrho$ to \eqref{e:main}, which one would expect approach the Batchelor scale after long times. Therefore, by the Furstenberg--Khasminskii formula \eqref{e:fk-intro}, if the Batchelor scale were $\sqrt{\kappa}$, the top Lyapunov exponent is bounded from below by 
\begin{equ}\label{e:fk-intro-2}
\lambda_1^\kappa \geq -\int \kappa \ell^{-2}(\pi) \dee \nu(u,\pi) \sim -1 \;.
\end{equ}

Our main result, \eqref{e:main-result}, provides the first rigorous proof of a
finite lower bound for the top Lyapunov exponent in this setting. While the
bound is not yet optimal to fully confirm the Batchelor scale conjecture \dash our
current bound places the Batchelor scale somewhere between $\kappa^{2}$ and
$\kappa^{1/2}$, up to logarithmic corrections \dash it represents a significant step towards its resolution by establishing a quantitative dependence on the diffusivity.

The upper bound $ \lambda_{1}^{\kappa} \lesssim -1 $ is instead a form of {\em enhanced dissipation}, implying the the decay rate is bounded away from zero uniformly in $\kappa$. This {\em uniform in diffusivity mixing} was recently obtained in \cite{bedrossian2021almost} using chaos in the Lagrangian formulation of
advection-diffusion. This phenomenon is also reminiscent of the stochastic stability observed in Ruelle--Pollicott
resonances associated with stationary hyperbolic flows \cite{Dyatlov2015-tq}.
These resonances, which describe the decay of correlations in uniformly
hyperbolic maps, also exhibit a singular dependence on a perturbation parameter
analogous to the viscosity coefficient in our setting.

\subsection{Challenges and prior work}

A major challenge in analysing Lyapunov exponents for infinite-dimensional systems is the possibility of super-exponential decay ($ \lambda^\kappa_1= - \infty $). This phenomenon, where the system's energy\footnote{We will always use ``energy'' to denote the (square of the) $L^2$ norm of $\rho$, even if this doesn't have any physical interpretation as energy.} dissipates at an infinitely fast rate, has been observed in discrete-time systems involving, for example, random Blaschke products \cite{MR4275477} or pulsed diffusions of the cat map \cite{Feng2019-bi}.

In the context of the parabolic problem \eqref{e:main}, the possibility of super-exponential decay is related to the interplay of advection and diffusion. Advection by a smooth incompressible velocity fields tends to mix through filamentation, transferring energy to higher frequencies at exponential rates. When coupled with diffusion, this can lead to a cascade of energy to increasingly high frequencies where it is rapidly dissipated. 
However, such super-exponential decay is not expected for sufficiently
``generic'' velocity fields, as implied by the Batchelor scale conjecture
described above \cite{Batchelor_1959,miles2018diffusion}. 

For the advection diffusion equation, when the velocity field $u = u(x)$ is
smooth and stationary, a classical result on the spectral theory of elliptic
operators due to Agmon \cite[Theorem 16.4]{Agmon-Lectures-ac}
implies that the spectrum of the operator $ -\kappa \Delta + u\cdot\nabla
$ is non-empty, and hence $\rho_t$ decays at most exponentially in time, with
a rate that depends on the initial datum. However,
when $u$ is time dependent, the situation is more complicated.
Super-exponential decay can still be ruled out for scalar, positivity preserving equations,
with signed initial data, through a generalisation of the Krein--Rutman
theorem \cite{doi:10.1142/S0219493722500101, gu2024some}.
Apart from these settings, and the previous work \cite{hairer2023spectral} that
is described below, it is unknown whether a smooth velocity field can produce
super-exponential decay.

\subsection{High-frequency stochastic instability}

To overcome the challenge of potential super-exponential decay, we analyse the \emph{projective process} $ \pi_{t} = \varrho_{t} / \| \varrho_{t} \|_{L^{2}}$ associated to \eqref{e:main}, with values in the $ L^{2} $ unit sphere. In finite dimensions, the analysis of top Lyapunov exponent through ergodic properties of projective
processes is commonplace: see the monographs \cite{MR2894052, MR3289050,Arnold1995-rs}. However, in infinite dimensions, due to lack of compactness of the unit sphere, the projective process might become increasingly irregular at large times and the existence of invariant
measures on the $L^{2}$ unit sphere is not guaranteed. 

The key difficulty is that projective process follows dynamics that are, at least in the deterministic setting, unlike those of a
parabolic equation. For example, in absence of noise and with $ u $ time-independent, $ \pi_{t} $ can be trapped in
high-frequency states that correspond to eigenfunctions of the operator $\kappa \Delta + L[u]$. These states are expected to be saddle points that are unstable under sufficiently generic perturbations.

The present work demonstrates that this instability does appear, due to the randomness in the dynamics of $ u $, a property we refer to as
\emph{high-frequency stochastic instability}.
To quantify this instability, we follow the evolution of the spectral
median of the scalar $ \varrho $, denoted $ M(\varrho_{t}) $. This is the level at which the scalar $ \varrho_{t} $ has roughly half the energy at frequencies higher or lower than $ M (\varrho_{t}) $. This approach was first introduced in \cite{hairer2023spectral}, to analyse linear SPDEs driven by multiplicative white (in time) noise. However, in its original presentation, the method was not quantitative in the viscosity coefficient and could not handle equations driven
by a process that is not white in time, like $ (u_{t})_{t \geqslant 0}
$, or equations with perturbations of first-order differential operators like \eqref{e:form-L}.

We overcome these issues through a new approach to high-frequency stochastic instability, based on a small-time Gaussian chaos decomposition of the solution $ u $ to \eqref{e:sns}, under strong non-degeneracy assumptions on $ \xi$ (see Section~\ref{sec:hfsi}). Our lower bound \eqref{e:main-result} follows from Furstenberg--Khasminskii-type inequalities \eqref{e:fk-intro} (see Corollary~\ref{cor:stationary}).

\subsection{Extensions, limitations and future work}

While our current results provide valuable insights into the behaviour of Lyapunov exponents for parabolic SPDEs, there are several limitations and potential extensions that we aim to address in future work. While parts of our argument are quite general and other parts model-dependent, we believe that a very similar argument should cover other choices of the fluid process $u$ and linear operators $L[u]$, for instance the 3D hyper-viscous SNS, vector advection, or simply the stochastic Stokes equations with noise \eqref{e:assu-noise}; see, for example, see for example \cite[Section~1]{bedrossian2021almost}.

Our method currently relies on a notion of non-degeneracy for the stochastic Navier--Stokes equations, where the noise acts on all sufficiently large frequencies, allowing us to leverage high-high mode interactions to create low frequencies. This excludes certain cases, like Galerkin truncations or systems with only finitely many modes forced stochastically. We are exploring techniques to relax this assumption and extend our results to a broader class of noise structures. We also believe that the exponent $-3-\ep$ in our lower bound \eqref{e:main-result} is not optimal. Based on the Batchelor scale conjecture, we expect the optimal exponent to be $0$. This discrepancy arises from limitations in our use of Gaussian chaos expansions to prove high-frequency stochastic instability. We are investigating alternative approaches to improve the bound and potentially establish its optimality.

Our analysis focuses on the two-dimensional torus. A natural extension is to
generalise our results to higher-dimensional systems. This presents additional
challenges due to the increased complexity of the dynamics, but we believe that
our core techniques can be adapted to address this. While we establish a
Furstenberg--Khasminskii-type inequality, obtaining the corresponding formula
would require proving a spectral gap for the projective process. We conjecture
that such a spectral gap holds for $\kappa>0$ and plan to investigate this
using techniques like asymptotic coupling, see also Remark~\ref{rem:avrg-lyap}.

\subsection*{Acknowledgments}
TR gratefully acknowledges financial support through a Leverhulme Early Career
Fellowship 2024-543. SPS acknowledges support from the National Science Foundation NSF Award DMS-1803481 and a Sloan Fellowship.

\subsection*{Structure of the article}
In Section~\ref{sec:main} we introduce the setting and the main results of this
work, Theorems~\ref{thm:main} and~\ref{thm:lyap}. The strategy and the main
steps for their proof are presented in Section~\ref{sec:hfsi}. Our proof relies
on some probabilistic estimates, which are stated in
Proposition~\ref{prop:bds-new}, and which are proven in
Section~\ref{sec:gaussian-estimates} in the case of pure advection, and in
Section~\ref{sec:gauss-sns} in the case of linearised SNS. Finally, the last
two sections contain mostly technical results.
Section~\ref{sec:regularity} is devoted to estimates on the high-frequency
regularity of the projective process, and Section~\ref{sec:analytic_est}
contains estimates on the velocity field and related quantities. In Appendix~\ref{appendix:Lyapunov} we provide a brief introduction to Lyapunov exponents and their properties.

\subsection*{Notation} Set $ \NN = \{ 0, 1, 2, \dots \} $ and similarly  $
\NN_{*} = \{ 1, 2,3, \dots. \} $, $ \ZZ^{d}_{*} =
\ZZ^{d} \setminus \{ 0 \} $ and $\RR^{d}_{*} = \RR^{d}\setminus \{0\}$. Further let $
\TT^{2} = \RR^{2} / \left( 2 \pi \ZZ^{2} \right) $ be the two-dimensional torus
and $ L^{2}_{0} (\TT^{2}; \RR^{d}) $ the space of
square integrable, mean zero (in each component) functions $ \varphi \colon \TT^{2} \to
\RR^{d} $. Then define $ L^{2}_{*} (\TT^{2}; \RR^{d}) = L^{2}_{0}(\TT^{2};
\RR^{d}) \setminus \{ 0 \} $ and we will omit the target space $ \RR^{d} $ when
clear from context. We denote with $ \| \varphi \| = \|
\varphi \|_{L^{2}} $ the $ L^{2} $ norm of a function $ \varphi $. 
We write $ \mF ( \varphi) \colon
\ZZ^{2}_{*} \to \mathbf{C} $ for the Fourier transform of any
mean zero Schwartz distribution $ \varphi $, given by $$ \hat{\varphi}(k) = \mF
(\varphi) (k) = \frac{1}{(2 \pi)^2} \int_{\TT^{2}}
\varphi(x) e^{ -  \iota k \cdot x} \ud x \;.$$
For a set $ \mX $ and two functions $ \varphi, \psi \colon \mX \to \RR $ we
write $ \varphi \lesssim \psi $ if there exists a constant $ c > 0 $ such that $
\varphi (x) \leqslant c \psi (x) $ for all $ x \in \mX $. We will often use
the Bessel norms
\begin{equ}
\| \varphi \|_{H^{\alpha}}^{2} = \sum_{k \in \ZZ^{2}_{*}} | k
|^{2 \alpha} | \hat{\varphi}(k) |^{2} \;, \qquad \alpha\in \mathbf{R}\;.
\end{equ}
Moreover, we write $ \mC^{\alpha} $ for the Besov--H\"older space with parameter $ \alpha \in \RR
$, which is defined in terms of Paley blocks in \eqref{e:holder}.
Next, we will use high- and low-frequency projections of functions. Here we
use that $ e_{k}(x) = e^{ \iota k \cdot x} $ is an eigenfunction for the
negative Laplacian $ -\Delta $ with eigenvalue $  | k
|^{2} $. Then for every $ M > 0 $ we define respectively the high- and low-frequency projections
\begin{equ}
\mH_{M} \varphi (x) = \sum_{ | k | > M} \hat{\varphi} (k) e_{k} (x) \;, \qquad
\mL_{M} \varphi (x) = \sum_{ | k | \leqslant M} \hat{\varphi}(k) e_{k} (x) \;.
\end{equ}
Finally, we will work with stochastic processes defined on a filtered
probability space $ (\Omega, \mF, (\mF_{t})_{t \geqslant 0}, \PP) $,
where the filtration $(\mF_{t})_{t \geqslant 0} $ is generated by the noise $
\xi $ in \eqref{e:assu-noise}, namely it is formally given by $\mF_{t} = \sigma \left( \xi (x,s)  \; \colon \; s \leqslant
t, x \in \TT^{2} \right)$. In this setting we
denote with $ \EE_{t} $ (and similarly $ \PP_{t} $) the conditional expectation
$ \EE_{t} [\varphi] = \EE [\varphi | \mF_{t}] $. Additionally, as is common for stochastic processes, we will denote the time $t$ evaluation of a process $X$ by $X_t$, not to be confused with the notation for time derivative often used for partial differential equations.

\section{Setting and main results} \label{sec:main}

The goal of this paper is to study the Lyapunov exponent of the system
\eqref{e:main}--\eqref{e:sns}, under the  assumption \eqref{e:form-L} on
the linear evolution of the scalar $ \varrho $. 
To completely describe the system, we must introduce the structural assumptions
on the noise $ \xi $ that drives \eqref{e:sns}.
The latter is a centered Gaussian field
supported on a probability
space $ (\Omega, \mF, \PP) $ and chosen to be Gaussian, white
in time, and non-degenerate. Namely we
force all high frequency modes in such a way that the Fourier coefficients of
the solution $ u $ to \eqref{e:sns} have polynomial decay. More precisely,
we fix $ \xi $ a white-in-time noise with zero mean, with the following 
correlation structure (here $ \hat{\xi} $ denotes the spatial Fourier transform):
\begin{equ}[e:assu-noise]
\EE [ \hat{\xi}_{i} (k , s) \hat{\xi}_{j} (l, t)] = \sigma^2\delta (t-s) \frac{1}{| k
|^{\a}} 1_{\{ k + l = 0 \}} 1_{\{ i = j \}} \;, \qquad \a > 10 \;,
\end{equ}
for $ i, j \in \{ 1, 2 \}, k \in \ZZ^{2}_{*}, t,s \in [0, \infty) $ and 
parameters $ \a > 10 $ and $\sigma >0$. Note that while we allow for an
arbitrary choice of $ \sigma $, our estimates will not be uniform over it. 
The system \eqref{e:sns} is well-posed for example for all $u_0 \in
H^{\alpha}$, for any $ \alpha > 0 $.
See section \ref{sec:velocity-estimates}
below for some preliminary estimates on the stochastic Navier--Stokes system
\eqref{e:sns}. 

In particular, we note that under \eqref{e:assu-noise} the SNS equations
\eqref{e:sns} admit a unique invariant measure, to which the solution converges in total
variation distance \cite[Theorem 3.1]{MR1346374}.  We
write $ \mu $ for the invariant measure on the state space $ H^{\b} $ for $
\b = (\a-3)/2 $. The choice of $ \b $ is arbitrary, for later convenience, and
could be replaced by any other regularity parameter in $ (0, \a/2) $.

We are interested in the study of the Lyapunov exponent associated
 to the solution $ \varrho$. For any $ \varrho_{0} \in L^{2}_{*} $,  we define the Lyapunov
exponent to be the $ \PP \times \mu $-almost sure limit:
\begin{equ}[e:def-lyap]
\lambda^{\kappa} ( \varrho_{0}, u_{0}, \omega ) = \lim_{ t \to \infty} \frac{1}{t} \log{\|
\varrho_{t}(\omega) \| } \;.
\end{equ}
We expect the above limit to hold exist for all initial data $ u_{0} $ and not
just for $ \mu $-almost all. However working with the measure $ \PP \times \mu
$ on $ \Omega \times H^{\b} $ turns $ \varrho $ into a linear co-cycle with
respect to a measure-preserving transformation (given by the skew product $
\Theta_{t} (\omega,  u_{0}) = (\vt_{t} \omega, u_{t}(\omega;u_0)) $, where $
\vt_{t} $ is the time-shift of the noise and $u_t(\omega;u_0)$ the solution to \eqref{e:sns} with initial state $ u_{0} $.), so that we are in a classical setting and
can apply well-known results.
Indeed, the existence of $ \lambda^{\kappa}_1 (\varrho_{0}, u_{0}, \omega ) \in [- \infty,
\infty) $ is guaranteed by the multiplicative ergodic theorem (MET): see for example \cite[Theorem
A]{MR2674952}, and note that the log integrability condition required by the
theorem is checked in
Lemma~\ref{lem:integrability}. Moreover, the MET also implies the existence of the top
Lyapunov exponent $\lambda^\kappa_1\in [-\infty, \infty)$, which is deterministic and independent of the
initial conditions $ \varrho_{0}, u_0$, and satisfies that $\P \times \mu $-almost surely
$\lambda_1^{\kappa}\geqslant \lambda^{\kappa}(\varrho_{0}, u_{0} , \omega)$:
\begin{equ}[e:lambda-top]
\lambda^{\kappa}_1 = \lim_{ t \to \infty} \sup_{\|\varrho_{0} \| =1}\frac{1}{t} \log{\| \varrho_{t} \| } \;.
\end{equ}
Note that the top Lyapunov exponent is ``attainable'', in the sense that one
can find an initial condition with growth arbitrarily close to $
\lambda_{1} $. Namely, by Lemma~\ref{lem:exp-decay}, for any $ \ve > 0 $ and $ \PP \times \mu $-almost all $
(\omega, u_{0}) \in \Omega \times H^{\b} $ there exists a non-empty,
finite-dimensional, invariant subspace $ G_{\ve} (\omega, u_{0}) \subseteq L^{2}_{0}$, a conical neighbourhood $C_\ve$ of $G_{\ve}$ and a constant $ D_{\ve}(\omega, u_{0}) > 0  $ such that for any $
\varrho_{0} \in C_{\ve} $:
\begin{equ}[e:lb-lyap]
   \|\varrho_t\| \geq  D_\ep(\omega,u_0)\|\varrho_0\|
e^{(\lambda^{\kappa}_1-\ep) t}\;.
\end{equ}
Furthermore, by \cite{MR4642633}, the exponential bound \eqref{e:lb-lyap}
is not special to $ L^2$, and the $ L^{2} $ norm can for example be replaced by
an $ H^{\alpha} $ norm for any $ | \alpha | < \a/2 $. More generally,
under suitable log moment condition, the statement of
Lemma~\ref{lem:exp-decay} also holds on any invariant Banach space of
functions $B$ such that the evolution $\varrho_0\mapsto \varrho_t$ is
compact on $ B $ and such that either $B$ is continuously
and densely embedded in $L^2$, or $L^2$ is continuously and densely embedded in
$B$.

At this level of generality, the exponent $ \lambda^{\kappa}$ defined in $\eqref{e:def-lyap}$ may be random and depend on the initial
conditions $ \varrho_{0},u_0$. However, if there exists a unique stationary measure $\nu$ for the projective dynamics $(u_t, \pi_{t})$, with $ \pi_{t} = \varrho_{t} / \|
\varrho_{t} \| $, then {\em for every} $(\varrho_0,u_0) \in
L^2_*\times H^\b$, we have $\lambda^\kappa(\varrho_0,u_0,\omega) =
\lambda^\kappa_1$, $\P$ almost surely. See for example \cite[Theorem
III.1.2]{MR884892} and \cite{MR805125} for such results in finite dimensions,
and see also Remark~\ref{rem:avrg-lyap} below.
Proving uniqueness of invariant measures for the projective dynamics is a major
open problem. In \cite{hairer2023spectral} it has been possible to prove a
spectral gap when $ u \cdot \nabla $ is replaced by a multiplicative
white-in-time noise, via the Bismut--Elworthy--Li formula. In our setting,
obtaining a spectral gap would require a significant extension of asymptotic
strong Feller techniques, perhaps through an extension of the Lie bracket structure in
\cite{MR4733339} to infinite dimensions. 

In any case, uniqueness of the invariant measure is not the focus of the
present work, which concerns lower bounds (in expectation) on $ \lambda^{\kappa}$ that are uniform over all
parameters: in particular, our lower bound will be 
independent of both $ \varrho_{0}, u_{0} $, and with an explicit dependence on
the viscosity $ \kappa $. 
We highlight that obtaining a uniform lower bound on the Lyapunov exponent is
only possible when the initial condition is adapted independent of the future
noise $ \xi $. This is implied in the formulation of all our results, since $
\varrho_{0} $ can not depend on $ \omega $. Our main result is the following. 

\begin{theorem}\label{thm:main}
Let $ \xi $ satisfy \eqref{e:assu-noise} and $ \lambda^{\kappa}_{1} $ be defined
as in \eqref{e:lambda-top}.
Then, for every $ \q > 3$, there exists a $ C > 0 $ such that, uniformly over
$ \kappa \in (0, 1] $, we have 
$$\lambda^\kappa_1 \geqslant  - C
\kappa^{-\q}\;.$$
\end{theorem}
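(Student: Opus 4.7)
The strategy is to deduce the bound from the Furstenberg--Khasminskii-type inequality of Corollary~\ref{cor:stationary}, namely
\begin{equ}
\lambda^\kappa_1 \geq -\int \left(\kappa \|\nabla \pi\|_{L^2}^2 + \langle \pi, L[u]\pi\rangle_{L^2}\right) \dee \nu(u, \pi) \;,
\end{equ}
where $\nu$ is some stationary measure for the projective process $(u_t, \pi_t)$ on $H^\b \times S(L^2_*)$. Since the first-order part of $L[u]$ is controlled by the velocity estimates of Section~\ref{sec:analytic_est}, and since the diffusion term dominates for $\pi$ concentrated at high frequencies, the proof reduces to showing that any such $\nu$ satisfies $\int \|\nabla \pi\|^2 \dee \nu \lesssim \kappa^{-\q-1}$ and analogous estimates for the $L[u]$-term. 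By the Krylov--Bogoliubov principle this is in turn implied by a \emph{Lyapunov drift condition} for the projective semigroup of the form $\mathcal{A} V \leq -c V + K \kappa^{-\q-1}$, for a function $V(u, \pi)$ that dominates $\|\nabla \pi\|_{L^2}^2$.

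To build $V$, I would follow the spectral median philosophy of~\cite{hairer2023spectral}: since a large value of $M(\pi)$ is the only obstruction to bounding $\|\nabla \pi\|_{L^2}^2$ by a power of the median itself, the Lyapunov function should be a suitable composition of the median with high-frequency Bessel-type seminorms $\|\mH_M \pi\|_{H^\alpha}^2$ and moment controls on $u$ coming from the SNS invariant measure $\mu$. The deterministic part of the drift is easy: diffusion contributes a damping proportional to $\kappa M^2$, which is favourable when $M$ is large, while the transport term $u \cdot \nabla$ and, in the linearised SNS case, the $\Delta u \cdot \nabla^{-1}$ perturbation contribute only lower-order terms thanks to the regularity of $u$ provided by Section~\ref{sec:regularity}. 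The delicate part is showing that, despite diffusion's tendency to accumulate mass at the spectral median, the stochastic evolution of $u$ actively moves this median \emph{downwards} on a definite time scale; this is the \emph{high-frequency stochastic instability} flagged in Section~\ref{sec:hfsi}.

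The main obstacle, and the place where the exponent $\q > 3$ is consumed, is the quantitative verification of this instability, which will rely on Proposition~\ref{prop:bds-new}. The plan is to work on a short time window of length $\Delta t = \Delta t(\kappa)$ and decompose $u$ into its Gaussian chaos components around the initial datum. The leading Gaussian chunk acts linearly on $\pi$ via $L[\cdot]$, and thanks to the non-degeneracy assumption \eqref{e:assu-noise} (all high modes are forced) it produces a controllable amount of \emph{low-frequency} mass out of a purely high-frequency $\pi$, via high$\times$high-to-low mode interactions. The iterated Wiener chaoses produce lower-order corrections that can be absorbed provided $\Delta t$ is small. The two competing constraints, that $\Delta t$ be large enough that the created low-frequency mass overcomes the diffusive damping $e^{-\kappa M^2 \Delta t}$, and small enough that the chaos expansion remains quantitatively controlled, force $\Delta t \sim \kappa^{\q}$ with $\q$ strictly greater than $3$. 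Once this single-step instability estimate is proven, iterating it on successive time intervals and combining with the dissipative contribution of diffusion yields the drift inequality, and hence, after integrating against $\nu$, the bound $\lambda^\kappa_1 \geq -C\kappa^{-\q}$.
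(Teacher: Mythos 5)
Your high-level strategy matches the paper: deduce the bound from a Furstenberg--Khasminskii inequality (Corollary~\ref{cor:stationary}), reduce to a moment bound on $\|\nabla\pi\|^2$ against a stationary measure, and obtain that moment bound from the high-frequency stochastic instability of Proposition~\ref{prop:bds-new} via a Gaussian chaos expansion. The intermediate mechanism you propose is, however, different from the paper's. You want to exhibit a Lyapunov function $V(u,\pi)$ dominating $\|\nabla\pi\|^2$ and verify an \emph{infinitesimal} drift condition $\mathcal{A}V\leq -cV+K\kappa^{-\q-1}$. The paper never constructs such a $V$; instead it proves the fixed-time, uniform-in-$\pi_0$ moment bound $\EE_{u_0,\pi_0}\|\pi_{t_0}\|_{H^1}^{2}\leq C\kappa^{-\q}V_{\r}(u_0)$ (Theorem~\ref{thm:lyap}) by a discrete-time iteration: Lemma~\ref{lem:t-1} shows the median drops by one with probability bounded below over the diffusive window $\ts(M)$, Lemma~\ref{lem:t-2} upgrades this to high probability over $\tsd(M)=\ts(M)M^{\delta}$, and Proposition~\ref{prop:stopped} chains the downcrossings via a nested sequence of stopping times $\tau_i,\sigma_i$ and controls the $H^1$-regularity at the final stopping time via Proposition~\ref{prop:regu}. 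This sidesteps the two main obstacles to your approach: $\pi\mapsto M(\pi)$ is $\NN_{*}$-valued and discontinuous, so a smooth $V$ built from it is awkward; and the median decrease is not a pointwise drift but a probabilistic event realised only after a finite (state-dependent) time window, so it is not naturally read off the generator $\mathcal{A}$. The stopping-time bookkeeping is exactly the device that converts this finite-window probabilistic statement into the quantitative moment bound you need, and it is not obviously recoverable from a pointwise $\mathcal{A}V\leq -cV+K$ inequality.

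A second, smaller point: your accounting of where $\q>3$ comes from is off. You suggest the time window scales as $\Delta t\sim\kappa^{\q}$, $\q>3$. In the paper the window is $\ts(M)=\lambda\kappa^{-1}M^{-2}\log M$, whose length at the terminal scale $M\sim\kappa^{-2-}$ is roughly $\kappa^{3}$. The exponent in the theorem arises from a chain of losses: Proposition~\ref{prop:bds-new} forces the median threshold $M\geq\kappa^{-\q_{\mathrm{med}}}$ with $\q_{\mathrm{med}}>2$; Proposition~\ref{prop:regu} gives $\|\pi_{\bar\sigma}\|_{H^1}\lesssim M\sim\kappa^{-\q_{\mathrm{med}}}$, and the Cauchy--Schwarz in Proposition~\ref{prop:stopped} doubles this to $\kappa^{-2\q_{\mathrm{med}}}>\kappa^{-4}$ for the second moment; finally the prefactor $\kappa$ in the Furstenberg--Khasminskii integrand gains one power back, netting $\q>3$.
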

This result is implied by Corollary~\ref{cor:stationary} below.
The main ingredient in the proof of Theorem~\ref{thm:main}  is a bound on the projective process $
\pi_{t} = \varrho_{t} / \| \varrho_{t} \| $. This is a process with values in
the infinite-dimensional sphere
\begin{equ}
S = \{ \varphi \in L^{2}_{*} (\TT^{2})  \; \colon \; \| \varphi \| =1 \} \;,
\end{equ}
and our control on $ \pi_{t} $ is made precise in the following theorem. Note that by parabolic backwards
uniqueness \cite{Poon}, $ \varrho_{t} \neq 0 $ for all $ t \geqslant 0 $ and
all $ \varrho_{0} \in L^{2}_{*} $, so that the process $ \pi_{t} $ is defined for all times.

\begin{theorem}\label{thm:lyap} 
Consider the same setting as in Theorem~\ref{thm:main}.
For any $\q>4$, there exist constants $ C , \r > 1 $ and a time $ t_{0} > 0 $
such that uniformly over all $ \kappa \in (0, 1], \pi_{0} \in S $ and $ u_{0}
\in H^{\b}$ with $ \b = (\a -3)/2 $
\begin{equ}[e:mainLyap]
\EE\left[\| \pi_{t_{0}} \|^2_{H^1}\right] \leqslant C\kappa^{- \q} V_{\r}(u_0)
\;,
\end{equ}
with $ V_{\r} $ as defined in \eqref{e:lyap-sns}.
\end{theorem}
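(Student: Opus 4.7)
My plan is to deduce \eqref{e:mainLyap} from a control, in expectation and uniform in $\pi_0 \in S$, on the spectral median $M(\pi_t)$. Splitting $\|\pi\|^2_{H^1}$ into its contributions below and above the threshold $M(\pi)$ yields
\begin{equ}
\|\pi\|^2_{H^1} \leqslant M(\pi)^2 + \|\mH_{M(\pi)}\pi\|^2_{H^1},
\end{equ}
so the low-frequency part is controlled by the square of the median, while the high-frequency tail at time $t_0$ is tamed by the parabolic regularisation estimates of Section~\ref{sec:regularity}, which produce only polynomial-in-$\kappa^{-1}$ losses over the fixed window $[0,t_0]$. The genuine task is to show that $\EE[M(\pi_{t_0})^2]$ remains bounded by $\kappa^{-\q}V_\r(u_0)$ even when $\pi_0$ has essentially all of its mass at arbitrarily high frequencies.

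For the median I would introduce a Lyapunov functional $W(\pi) = g(M(\pi))$ with $g$ an appropriately calibrated increasing function, and prove the one-step drift inequality
\begin{equ}
\EE\bigl[W(\pi_{t_0})\bigr] \leqslant \delta\, W(\pi_0) + K(\kappa)\, V_\r(u_0)\;,
\end{equ}
with $\delta \in (0,1)$ and $K(\kappa)$ polynomial in $\kappa^{-1}$. For moderate medians $M(\pi_0) \leqslant M_\star(\kappa)$ the inequality follows from parabolic a priori bounds and the regularity estimates of Section~\ref{sec:regularity}. For $M(\pi_0) > M_\star$ the essential input is the \emph{high-frequency stochastic instability} of Proposition~\ref{prop:bds-new}: with probability bounded away from zero, a macroscopic fraction of the $L^2$ mass of $\pi_{t_0}$ is transferred below $M(\pi_0)$, forcing the median strictly downwards. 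Combining the drift with the standard SNS Lyapunov bound $\EE[V_\r(u_{t_0})] \lesssim V_\r(u_0)$ and with the high-frequency regularity estimate then yields \eqref{e:mainLyap}.

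The main obstacle, and the technical heart of the argument, is the quantitative form of this instability. In a purely deterministic setting any eigenfunction of $-\kappa \Delta + L[u]$ for a frozen $u$ is an absorbing state for $\pi_t$, so breaking such an invariance requires genuinely exploiting the randomness of $u$ over the short window $[0,t_0]$. My route is a short-time Wiener chaos expansion of the SNS velocity, together with the non-degeneracy \eqref{e:assu-noise} of $\xi$: the first-chaos component of $u_t$, coupled through the transport operator (and through the additional term $\Delta u \cdot \nabla^{-1}\pi$ in the case \eqref{e:form-LNS}) with the high-frequency part of $\pi_0$, produces low-frequency Fourier modes of $\pi_{t_0}$ whose conditional variance admits a lower bound uniform in $\pi_0 \in S$, up to polynomial losses captured by $V_\r(u_0)$. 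Turning this heuristic into a quantitative statement with explicit $\kappa$ dependence is what produces the loss $\kappa^{-\q}$ and constitutes the model-specific content of Sections~\ref{sec:gaussian-estimates}--\ref{sec:gauss-sns}.
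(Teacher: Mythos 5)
You correctly identify the ingredients --- the spectral median as the organising quantity, Proposition~\ref{prop:bds-new} as the source of high-frequency instability, and the parabolic regularity estimates of Section~\ref{sec:regularity} as the bridge between a median bound and an $H^1$ bound --- but the drift inequality you propose as the engine of the argument does not deliver the conclusion. A one-step geometric drift
\begin{equ}
\EE\bigl[W(\pi_{t_0})\bigr] \leqslant \delta\, W(\pi_0) + K(\kappa)\, V_\r(u_0)\;,
\qquad \delta \in (0,1)\;,
\end{equ}
even after $n$ iterations only gives $\EE[W(\pi_{nt_0})] \leqslant \delta^n W(\pi_0) + K(\kappa)V_\r(u_0)/(1-\delta)$. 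Since $W(\pi_0) = g(M(\pi_0))$ is unbounded over $\pi_0 \in S$, this never produces a bound that is \emph{uniform in $\pi_0$ at a fixed time}, which is precisely the content of \eqref{e:mainLyap}. The uniformity is the whole point of the theorem: it is what makes existence of a projective invariant measure possible.

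What the paper actually proves is a ``coming down from infinity'' statement, which is strictly stronger than a geometric drift. The median, when at level $M$, drops by one unit in a time of order $\tsd(M) \sim \kappa^{-1} M^{-2+\delta}$ with high probability (Lemma~\ref{lem:t-2}, obtained by iterating Lemma~\ref{lem:t-1} $\lfloor M^\delta\rfloor$ times). The crucial observation, made precise in Definition~\ref{def:eta-kappa} and Lemma~\ref{lem:ub-eta}, is that the sum of these waiting times over all levels from $M_0$ down to $\kappa^{-\q}$ is $\sum_i \tsd(\hat M_i) \lesssim \kappa^{\q-2\delta\q-1}$, which is finite and \emph{independent of $M_0$}. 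In other words, the median evolves like a solution of $\partial_t m_t \leqslant -m_t^{2-\delta}$ and reaches a $\kappa$-dependent level in a time bounded uniformly in $M_0$. Without this explicit dependence of the decrement time on the current level, your functional $W$ cannot absorb arbitrarily high initial medians into a fixed window. To make your outline close you would at minimum need to replace the geometric drift by a level-dependent decrement estimate of the type above, then construct (as the paper does via the stopping times $\tau_i, \sigma_i$ and the terminal time $\eta$) a random time with deterministic upper bound at which the median has dropped below $\kappa^{-\q}$, and only then invoke the regularity estimate (Proposition~\ref{prop:regu}) at that stopped time and transport it to the deterministic time $t_0$ via Lemma~\ref{lem:ub-h1-Lns}.

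A smaller but worth-noting imprecision: the mass transferred to low frequencies by Proposition~\ref{prop:bds-new} is not ``macroscopic'' --- it is polynomially small, of order $\kappa^{-1/2} M^{-\a/2-4}$. The instability works not because a large chunk of energy moves, but because even this tiny low-frequency production outcompetes the \emph{much faster} dissipation of the high modes: if the median had stayed above $M-1$ for the full window $\ts(M)$, the high-frequency energy would have decayed below $M^{-\lambda}$ with $\lambda = \a/2+5$, contradicting the lower bound. This comparison is the actual mechanism in Lemma~\ref{lem:t-1} and explains the specific choice of the timescale $\ts(M)$.
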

The previous theorems are directly connected to the existence of invariant
measures for the projective process, and to Furstenberg--Khasminskii formulas.
We highlight this connection in the following corollary.
\begin{corollary}\label{cor:stationary}
In the same setting as in Theorem~\ref{thm:main}, define the projective process $ \pi_{t} =
\varrho_{t}/ \| \varrho_{t} \| $ for all $ t \geqslant 0$. Then
the couple $
(u_{t}, \pi_{t})_{t \geqslant 0} $ is a Markov process with values in $
H^{\b} \times S $ and there exists at least one stationary measure $\nu$ for $(u_t, \pi_t) $ on $ H^{\b} \times S$. Moreover any stationary measure $\nu$ satisfies for any $\q > 3$
\begin{equ}\label{e:Furstenberg-Khasminskii-PSA}
\lambda_1^\kappa \geq - \int_{H^{\b} \times S} \kappa\|\nabla \pi\|^2 \ud
\nu(u,\pi) \geq -C\kappa^{-\q} \;,
\end{equ}
in the case in which $L[u]$ is given by \eqref{e:form-PSA}, and
\begin{equ}\label{e:Furstenberg-Khasminskii-LNS}
   \lambda_1^\kappa \geq - \int_{H^{\b} \times S} \kappa\|\nabla \pi\|^2
+\langle \pi, \Delta u\cdot \nabla^{-1}\pi\rangle \ud \nu(u,\pi) \geq
-C\kappa^{-\q}\;,
   \end{equ}
in the case in which $L[u]$ is given by \eqref{e:form-LNS}.
\end{corollary}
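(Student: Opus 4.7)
My plan has two main components: a Krylov--Bogolyubov construction of a stationary $\nu$ made quantitative by Theorem~\ref{thm:lyap}, together with a Furstenberg--Khasminskii identity lifting the bound on $\nu$ to a bound on $\lambda_1^\kappa$. For the Markov property, the fact that \eqref{e:main} is linear in $\varrho$ makes $\pi_t$ depend only on $\pi_0$ and on the path $(u_s)_{s\le t}$; combined with the autonomy of \eqref{e:sns} and parabolic backward uniqueness (keeping $\pi_t\in S$), this gives Markovianity of $(u_t,\pi_t)$ on $H^{\b}\times S$.

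For existence of $\nu$, I would start from $u_0\sim\mu$ (so $u_t\sim\mu$ for all $t$) and arbitrary $\pi_0\in S$. Theorem~\ref{thm:lyap} yields
\[
\sup_{t\ge t_0}\EE\,\|\pi_t\|_{H^1}^2 \le C\kappa^{-\q}\,\EE_\mu[V_\r(u)]\;,
\]
which is finite by \eqref{e:lyap-sns}. Combined with standard higher-regularity moments of $u$ under $\mu$, the compact embeddings $H^1\hookrightarrow L^2$ and $H^{\b+\delta}\hookrightarrow H^{\b}$ give tightness of the Cesàro averages $\tfrac{1}{T}\!\int_{t_0}^{t_0+T}\mathrm{Law}(u_t,\pi_t)\,\ud t$. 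Any weak cluster point $\nu$ is stationary; its $u$-marginal equals $\mu$ by uniqueness of the SNS invariant measure, and Fatou transfers the $H^1$ bound to $\nu$ itself.

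The Furstenberg--Khasminskii step starts from the energy identity, using incompressibility of $u$ to cancel $\langle\pi,u\cdot\nabla\pi\rangle$:
\[
\log\|\varrho_t\| - \log\|\varrho_0\| = -\!\int_0^t\!\bigl(\kappa\|\nabla\pi_s\|^2 + \langle\pi_s,L[u_s]\pi_s\rangle\bigr)\,\ud s\,,
\]
with the inner product vanishing in \eqref{e:form-PSA} and reducing to $\langle\pi,\Delta u\cdot\nabla^{-1}\pi\rangle$ in \eqref{e:form-LNS}. Decomposing $\nu$ into ergodic components $\nu_e$ (on which the integrand is integrable via the $H^1$ bound together with H\"older/Sobolev against $\mu$-moments of $\Delta u$), Birkhoff's theorem gives $\nu_e\otimes\PP$-a.s.\ convergence of $t^{-1}\log\|\varrho_t\|$ to $-\!\int(\kappa\|\nabla\pi\|^2+\langle\pi,L[u]\pi\rangle)\,\ud\nu_e$. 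Since $\lambda_1^\kappa\ge\lambda^\kappa(\varrho_0,u_0,\omega)$ by the MET, averaging over $e$ yields the first inequality in \eqref{e:Furstenberg-Khasminskii-PSA}/\eqref{e:Furstenberg-Khasminskii-LNS}. The quantitative second inequality is then immediate from Theorem~\ref{thm:lyap} under stationarity: $\int\kappa\|\nabla\pi\|^2\,\ud\nu\le C\kappa^{-(\q-1)}\int V_\r\,\ud\mu$ for $\q>4$ (hence $\q-1>3$), and the LNS cross term is handled by interpolation of $\|\pi\|_{L^{q}}$ between $L^2$ and $H^1$ against high-Sobolev $\mu$-moments of $u$.

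The main obstacle I anticipate is checking that the quantitative $H^1$ moment bound on $\nu$ survives the ergodic decomposition, so that on $\nu_e$-typical components the integrand is $\nu_e$-integrable and Birkhoff applies pointwise (rather than only in a time-averaged sense), and similarly for the LNS cross term. This should follow from the Fatou argument combined with the $\mu$-moment bounds developed in Section~\ref{sec:velocity-estimates}, but requires careful bookkeeping since $S$ carries no natural compact structure and the $\pi$-marginal of $\nu$ is not known to enjoy further regularity beyond the $H^1$ bound.
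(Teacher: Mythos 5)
Your proposal follows essentially the same route as the paper's proof: Markovianity from linearity plus backward uniqueness, Krylov--Bogolyubov existence made quantitative by the uniform moment bound of Theorem~\ref{thm:lyap} (with the $u$-marginal pinned to $\mu$ and $V_\r$ integrable against $\mu$), the energy identity $\tfrac{\ud}{\ud t}\log\|\varrho_t\|=-\bigl(\kappa\|\nabla\pi_t\|^2+\langle\pi_t,L[u_t]\pi_t\rangle\bigr)$ with the advective term cancelling by incompressibility, Birkhoff on ergodic components and convex combination over the ergodic decomposition, and the gain $\q>4\Rightarrow\q-1>3$ from the extra factor $\kappa$. The integrability worry you raise at the end is not a genuine gap: once $\int\kappa\|\nabla\pi\|^2\ud\nu<\infty$ (and the LNS cross term is bounded crudely by $\|\Delta u\|_\infty$, since $\|\pi\|=\|\nabla^{-1}\pi\|\leqslant 1$ and $\mu$ has all such moments), the disintegration theorem guarantees $\nu_e$-integrability for $\nu$-a.e.\ ergodic component, so Birkhoff applies pointwise as you need; your suggested $L^q$ interpolation for the cross term is unnecessary.
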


\begin{remark}\label{rem:avrg-lyap}
Rather than considering the top Lyapunov exponent, we can also study $
\lambda(\varrho_{0}, u_{0}, \omega)$ as defined in \eqref{e:def-lyap}. Because
for every $ \pi_{0} \in S, u_{0} \in H^{\b} $ Theorem~\ref{thm:lyap} implies
the existence of at least one invariant measure $ \nu $ for $ (u, \pi) $, we
deduce from Corollary~\ref{cor:stationary} the estimate
\begin{equ}
\EE [ \lambda (\varrho_{0}, u_{0})] \geqslant - C \kappa^{- \q} \;,
\end{equ}
with the same $ C > 0 $ as in Corollary~\ref{cor:stationary}. However, this result is not surprising,
since we expect something much stronger, namely all the
Lyapunov exponents $ \lambda (\varrho_{0}, u_{0}) $ to be identically equal to $ \lambda_{1}^{\kappa} $. This
would be an immediate consequence of unique ergodicity for $
(u, \pi) $. As we have already remarked, such unique ergodicity is expected
(one can show weak irreducibility for the Markov process $ (u, \pi) $), but it is technically
challenging to prove the (asymptotic) strong Feller property.
\end{remark}
Now we are ready to prove Corollary~\ref{cor:stationary}.

\begin{proof}
Note that $ \pi_{t} $ is defined for all times, for all $ u_{0} \in
H^{\b} $ and $ \varrho_{0} \in L^{2}_{*} $, as discussed above the statement of
Theorem~\ref{thm:lyap}.
That $ (u_{t}, \pi_{t})_{t \geqslant 0} $ is a Markov process follows from
the linearity of \eqref{e:main}. Furthermore, it follows from
Theorem~\ref{thm:lyap} and Lemma~\ref{lem:lyap-sns} that for any $ \r > 0 $ and
$ \beta \in (\b, \a/2-1) $
$$ \sup_{t_{0} \leqslant t < \infty}
\EE  \left[ \| \pi_{t} \|^{2}_{H^{1}} + V_{\r, \beta} (u_{t})\right]  \leqslant
C(\kappa, u_{0}) < \infty \;,$$
where $ V_{\r, \beta} $ is the Lyapunov functional for \eqref{e:sns} defined in
\eqref{e:lyap-sns}.
This immediately implies tightness for the process $ (u_{t}, \pi_{t})_{t
\geqslant 0} $, and therefore the existence of stationary ergodic measures via
Krylov--Bogolyubov, provided that $
(u_{t}, \pi_{t}) $ is Feller. To show this, we first note that the solution $u$ to \eqref{e:sns} 
viewed as an element of $C(\R_+,C^1)$
depends continuously on $u_0$ for almost every $\xi$. 
Furthermore, the 
solution to \eqref{e:main} depends continuously on $\varrho_0$ and $u$. 
It remains to show that $\varrho_t \neq 0$, so that 
the random flow $ (u_{t}, \pi_{t}) = \Phi_{t} (u_{0},
\pi_{0}) $ is almost surely continuous, thus implying the Feller property.
This however is an immediate consequence of parabolic backwards uniqueness,
see for example \cite{Poon}.

Next, we prove \eqref{e:Furstenberg-Khasminskii-PSA}: we will omit the proof of
\eqref{e:Furstenberg-Khasminskii-LNS}, which is similar. Note that by
Theorem~\ref{thm:lyap}
\[
(u_0,\pi_0)\mapsto \E_{u_0,\pi_0}\|\pi_{t_0}\|_{H^1}^2 
\]
is integrable with respect to $\nu$ since $V_\r$ is integrable with respect to
the $u$-marginal $\mu$. The integrability of $V_\r$ with respect to $\mu$ is a
standard consequence of the Lyapunov property of $V_\r$. Using the stationarity
of $\nu$ gives that for $\q>3$
\[
\int_{H^\b\times S} \kappa \|\pi\|_{H^1}^2\ud\nu(u,\pi) = \int_{H^\b\times
S} \kappa \E\|\pi_{t_0}\|_{H^1}^2\ud\nu(u_0,\pi_0) \leq C\kappa^{-\q}\;.
\]
Note that we can write
\[
\log\|\varrho_t\| = \int_0^t \frac{\langle \varrho_s,\kappa\Delta\varrho_s - u_s\cdot\nabla \varrho_s\rangle}{\|\varrho_s\|^2}\ds = -\int_0^t\kappa \|\nabla \pi_s\|^2\ud s\;,
\]
where in the last line, we used integration by parts and the divergence-free property of $u$. Now, suppose that $\nu$ is ergodic, then by the ergodic theorem then for $\nu$-almost all $(u_0,\pi_0)$
\begin{equs}
\lambda_1^\kappa  \geq \E\lambda(\pi_0,u_0) &= \lim_{t\to\infty}\frac{1}{t}\E\log\|\varrho_t\|
   =- \lim_{t\to \infty}\frac{1}{t}\int_{t_0}^t\kappa\E\|\pi_{s}\|_{H^1}^2\ud s\\
 &=
-\int_{H^\b\times S}\kappa\|\pi\|_{H^1}^2\ud\nu(u,\pi) \geq -C\kappa^{-\q}\;.
\end{equs}
If $\nu$ is not ergodic, then we can consider the ergodic decomposition of $\nu$ and the above argument applies to each ergodic component of $\nu$. Taking a convex combination of the resulting lower bounds gives the desired result.
\end{proof}
The fundamental step in the proof of Theorem~\ref{thm:lyap} is a study of
spectral quantiles, in the spirit of \cite{hairer2023spectral}. We show that
even if the energy of the
system is initially concentrated at high frequencies, say of order $ M \gg 1$, then
by a time of order one the majority of the energy will have reached low
frequencies of order one, independently of the value of $ M $. A consequence of this
phenomenon is that the
right-hand side in the estimate in Theorem~\ref{thm:lyap} does not depend on
the initial condition $ \pi_{0}$, and moreover the theorem implies
the existence of projective stationary measures.

Our approach to prove such phenomenon rests
on what we call \emph{high-frequency stochastic instability},
which is the idea that high-frequency states are unstable
under stochastic perturbations. 
In fact, the main challenge in analysing infinite-dimensional projective processes is
that the dynamics of such processes might get trapped in high-frequency states.
For example, if $ u = 0 $ and $ \varrho $ solves the heat equation, then every
eigenfunction of the Laplacian is a steady state for the projective process. In
particular a uniform lower bound on the instantaneous dissipation rate as in Theorem~\ref{thm:main}
is impossible, since one can always choose $ \varrho_{0} $ to be an
eigenfunction with an arbitrarily negative eigenvalue: the spectrum of the Laplacian
is unbounded from below.

Here \emph{probability} comes to help. The presence of noise makes any high-frequency state
unstable, because the fluctuations excite low frequencies and transfer to them some
small amount of energy from high frequencies. Then, because high frequencies
dissipate at a much faster rate than low ones,  the vast majority of the
energy quickly concentrates in low frequencies. Of
course, this picture is only partial since the threshold that divides ``high''
from ``low'' frequencies diverges as the viscosity vanishes (at order $ \kappa^{- \q} $ for
$ \q > 4 $). Therefore, whenever we speak of ``low'' frequencies, these may still be
very large for $ \kappa \ll 1 $. 

All the main steps in the proof
of Theorem~\ref{thm:lyap} are detailed in the next section, with technical
results deferred to later parts of the paper. 
In particular, high-frequency stochastic instability is the consequence of
the probabilistic energy transfer estimates in Proposition~\ref{prop:bds-new}
(which is proven in Section~\ref{sec:gaussian-estimates}), together with
some analytic results which rely on the diffusive effect of the
Laplacian: these are contained in Lemma~\ref{lem:t-1}, Lemma~\ref{lem:t-2} and
Proposition~\ref{prop:stopped} below.
The proof of Theorem~\ref{thm:lyap} itself is then provided at the end
of Section~\ref{sec:prof-main-rslt}.

\section{High-frequency stochastic instability} \label{sec:hfsi}

This section is devoted to the proof of Theorem~\ref{thm:lyap}. As we have
briefly explained above, the core of the proof lies in using probability to
show that high frequency states are unstable for the projective dynamics. The
first step is therefore to provide a mathematically useful definition of being
in a ``high frequency state''. Here we follow a similar approach as in
\cite{hairer2023spectral} through the study of spectral quantiles, and in
particular through the spectral median: a high frequency state
would correspond to a large value of such median.

To introduce spectral quantiles, we start with the high- and low-frequency projections
$ \mH_{M} $ and $ \mL_{M} $, which are respectively the projections on
frequencies higher or lower than $ M $:
\begin{equ}
\mH_{M} \varphi = \sum_{ | k | > M} \hat{\varphi} (k)
e_{k} (x) \;,
\qquad \mL_{M} \varphi = \sum_{ | k | \leqslant M}
\hat{\varphi} (k) e_{k}(x) \;.
\end{equ}
This definition extends naturally to vector-valued functions by applying the
projection to each component.

We introduce spectral
$ \beta$-quantiles (with some abuse of jargon, because in its common use a
quantile $ x $ is related to $ \beta $ by $ \beta = (1-x)/x $), for arbitrary $ \beta >0 $ by setting
\begin{equ}[e:beta-median]
M^{(\beta)} (\varrho) = \min \{ M \in \NN_{*}  \; \colon \; \| \mH_{M} \varrho \|
\leqslant \beta \| \mL_{M} \varrho \| \} \;.
\end{equ}
When $\beta = 1$, we write $M(\varrho) = M^{(1)}(\varrho)$ and call it
the \emph{spectral median}. In our setting, we are mainly interested in the
spectral median of the passive scalar $ \varrho $, so we set
\begin{equ}
M_{t} = M (\varrho_{t}) = M (\pi_{t}) \;,
\end{equ}
observing that the median is invariant under multiplication of $\varrho$ by (non-zero) scalars.

Our aim is to show that if the median is very large at a given time (which, 
since the process is time-homogeneous, can be chosen to be zero), then
it will rapidly decrease with high probability, because some energy is
transferred to lower frequencies. We will show that, with some positive probability, 
a small such energy transfer happens at the latest by the \emph{diffusive} timescale
\begin{equ}[e:t-kappa]
t_{\star}^{\kappa} (M) \eqdef  \lambda \kappa^{-1} M^{-2} \log{(M)} \;, \qquad
\lambda = \frac{\a}{2} + 5 \;.
\end{equ}
The choice of the parameter $ \lambda $ is somewhat arbitrary and used only in
Lemma~\ref{lem:t-1} below: it can be replaced by any constant strictly greater
than $ \a/2 +4 $.
The choice of this
timescale is related to our overall approach to show energy transfer to low
frequencies, which is a proof by contradiction.
Indeed, if the spectral median $ M(\varrho_{t}) $ were
to remain larger than $
M $ for all $ t \in [0, \ts (M)] $, then through dissipation by time $ \ts (M)
$ the total amount of energy would be
polynomially small, of order $ M^{- \lambda} \| \varrho_{0} \| $. 
Therefore, if the energy increase in low frequencies is
such that by time $ \ts (M) $ we obtain a lower bound that
is an inverse power of $ M $ on the total energy (and if $ \lambda $ is sufficiently
large), we would find a contradiction, implying that the median must
have decreased before time $ \ts (M) $: for the details of this argument see
the proof or Lemma~\ref{lem:t-1}.

In fact, we are able to show that an energy production at low frequencies, with a polynomial lower
bound, takes place by time $ \ts(M) $. This is the content of the following
proposition.

\begin{proposition}\label{prop:bds-new}
Consider the setting of Theorem~\ref{thm:main}.
For any $\beta \geqslant 1 $ and $ \q > 2 $, there exist $ \alpha, C_{1},
C_{2}>0, \gamma \in (0, \a/2) $ and $ \r > 1 $, such that uniformly over $
\kappa \in (0, 1] $ and $
  \varrho_{0} \in L^{2}_{*}$ such that $ \| \varrho_{0} \|=1 $, the following estimate holds:
\begin{equ}
\PP \left( \left\| \mL_{\sqrt{2}} \varrho_{\ts (M)} \right\| \geqslant C_{1} \kappa^{- \frac{1}{2}} M^{-
\frac{\a}{2} -4} \right) \geqslant \alpha  \;, \qquad \forall M \geqslant M^{(\beta)}(\varrho_{0}) \vee
\kappa^{- \q } \vee C_{2} (\| u_{0} \|_{\mC^{\gamma}}+1)^{\r} \;.
\end{equ}
\end{proposition}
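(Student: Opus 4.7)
The plan is to isolate the leading Gaussian contribution to $\mL_{\sqrt 2}\varrho_t$ at the diffusive time $t = \ts(M)$, bound its size from below by Gaussian anti-concentration, and show that the remaining corrections are subdominant uniformly in $\kappa$. Writing $P_\tau = e^{\kappa\Delta\tau}$ for the heat semigroup and applying Duhamel's formula to \eqref{e:main}, then iterating the identity once, gives
\[
\mL_{\sqrt 2}\varrho_t \;=\; P_t\mL_{\sqrt 2}\varrho_0 \;-\; \int_0^t P_{t-s}\mL_{\sqrt 2} L[u_s] P_s\varrho_0 \, ds \;+\; R,
\]
where $R$ is the Duhamel remainder involving two factors of $L[u]$. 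Because $|k|^2 \leq 2$ on the range of $\mL_{\sqrt 2}$ and $\kappa t \leq M^{-2}\log M$, every factor of $P_{t-s}$ falling on a low-frequency mode acts as the identity up to a bounded multiplicative constant.

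The main tool is a small-time Gaussian chaos decomposition of the velocity field, $u_s = u_s^{(0)} + u_s^{(1)} + u_s^{\geq 2}$, where $u_s^{(0)}$ is the deterministic Navier--Stokes evolution of $u_0$ (without noise), $u_s^{(1)} = \int_0^s e^{(s-r)\Delta}\xi_r\, dr$ is the first Wiener chaos (Gaussian), and $u_s^{\geq 2}$ collects the higher chaos terms produced by the NS nonlinearity, controlled by the analytic estimates of Section~\ref{sec:analytic_est}. Substituting $u_s^{(1)}$ into the linear Duhamel term produces the principal Gaussian random variable
\[
X \;=\; -\int_0^t \mL_{\sqrt 2} L[u_s^{(1)}]\, P_s\varrho_0 \, ds,
\]
which takes values in the finite-dimensional space $\mL_{\sqrt 2} L^2_0$ and has explicit covariance computable directly from \eqref{e:assu-noise}.

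The central computation is a lower bound on $\E\|X\|^2$. In the advection case, using $u^{(1)} \cdot \nabla\varrho = \nabla\cdot(u^{(1)}\varrho)$ and expanding in Fourier, the low modes $|k|\leq\sqrt 2$ of $L[u_s^{(1)}]P_s\varrho_0$ receive contributions from convolutions $\ell+m = k$ with $|\ell| \approx |m|$. The median hypothesis $M \geq M^{(\beta)}(\varrho_0)$ ensures a quantitative amount of $L^2$-mass of $\varrho_0$ at frequencies $\lesssim M$, while at such $|\ell|$ the mode $u^{(1)}_{s,\ell}$ is approximately stationary Gaussian with $\E|u^{(1)}_{s,\ell}|^2 \sim \sigma^2 |\ell|^{-\a-2}$, decorrelating on the time scale $|\ell|^{-2}$. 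Summing in Fourier and integrating in time yields a bound of the shape
\[
\E\|X\|^2 \;\gtrsim\; \sigma^2\, t\, M^{-\a-4} \;\gtrsim\; \kappa^{-1} M^{-\a-6}\log M,
\]
whose square root exceeds the target $C_1 \kappa^{-1/2} M^{-\a/2-4}$ by a factor of order $M\sqrt{\log M}$. The linearised SNS case is structurally analogous: the extra operator $\Delta u \cdot \nabla^{-1}\varrho$ has a comparable contribution at high-high-to-low interactions and admits an analogous covariance lower bound.

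The main obstacle is showing that the residual $\mL_{\sqrt 2}\varrho_t - X$ is, with probability at least $1-\alpha/2$, much smaller than the standard deviation of $X$. This requires three separate estimates: (i) the higher Wiener chaos $u^{\geq 2}$ is bounded via hypercontractivity together with the quantitative analytic control of Section~\ref{sec:analytic_est}; (ii) the deterministic contribution $u_s^{(0)}$ in $L[u_s^{(0)}]\varrho_0$ is controlled using $M \geq C_2(\|u_0\|_{\mC^\gamma}+1)^\r$, which forces the high-frequency part of $u_s^{(0)}$ to be quantitatively small; (iii) the iterated Duhamel remainder $R$ is handled by the high-frequency regularity estimates of Section~\ref{sec:regularity} and by the smallness of $\ts(M)$, using $M \geq \kappa^{-\q}$ with $\q > 2$ to ensure that the two-factor $L[u]$ error is subdominant compared to the threshold. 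Once all three remainders are controlled, Gaussian anti-concentration applied to $X$ in its finite-dimensional target space, in the form that for any deterministic shift $m$ one has $\P(\|m+X\| \geq c(\E\|X\|^2)^{1/2}) \geq c'$ for universal $c,c'>0$, yields the claimed lower bound with probability at least $\alpha$.
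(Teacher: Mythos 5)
Your overall strategy --- expand $\varrho$ via Duhamel, isolate a leading first-chaos Gaussian term, compute its variance, and show it dominates --- is the same as the paper's, and the variance computation is essentially correct (it matches the content of Lemma~\ref{lem:LBI1-new}, including the role of the median hypothesis in placing $\gtrsim M^{-\a}$ of the weighted mass of $\varrho_0$ at frequencies $\lesssim M$). But the plan to \emph{close} the argument has two linked gaps, and both are precisely where the paper has to work harder.

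First, your claim that the iterated Duhamel remainder (two factors of $L[u]$) is with high probability \lq\lq much smaller than the standard deviation of $X$\rq\rq\ does not hold for the naive expansion. The paraproduct-plus-Schauder bound (Lemma~\ref{lem:bds-lb}) gives $\|\mL_{\sqrt2}\Phi[u,\Phi[u,\varrho]]_t\|\lesssim t^{3/2}\kappa^{-1/2}\|u\|^2_{\mC^\beta_t}\|\varrho\|_{H^{-\alpha}_t}$, which at $t=\ts(M)$ and after converting norms through the median hypothesis is of the \emph{same} order in $M$ (up to $M^\varepsilon$) as the Gaussian standard deviation; the assumption $M\geqslant\kappa^{-\q}$ with $\q>2$ does not save this. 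The paper explicitly notes that this bound is \lq\lq just barely not sufficient.\rq\rq\ One must go to the finer third-order expansion of Section~\ref{sec:upper-bounds}: separate the pure-chaos pieces $S$ in \eqref{e:stoch-terms}, bound their first-chaos projection $\Pi^{(1)}S$ by the sharper It\^o-isometry estimates of Lemma~\ref{lem:Gaussian-n} (which gain an extra $\sqrt{t}$ over the pathwise paraproduct bound), and absorb only the genuine remainder $R$ of \eqref{e:rest-terms}, which now carries two additional powers of $t$ and hence an extra factor $M^{-1}$. Also note that the remainder $R$ is controlled by the analytic machinery of Section~\ref{sec:analytic_est}, not the projective-process regularity of Section~\ref{sec:regularity} as you indicate.

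Second, even after the finer decomposition, the zeroth/second/third-chaos parts of $S$ (e.g.\ $\Pi^{(2)}\Phi[X,\Phi[X,Y]]$) are \emph{not} shown, and cannot easily be shown, to be small: they are random, of comparable $L^2$-size, and built from the same noise as your Gaussian $X$. Your anti-concentration statement $\PP(\|m+X\|\geqslant c(\EE\|X\|^2)^{1/2})\geqslant c'$ applies to a \emph{deterministic} shift $m$ only, and does not cover a correlated random shift. The paper sidesteps this entirely: since the chaos decomposition is $L^2$-orthogonal, the higher-chaos contributions can only increase $\EE\|\mL_{\sqrt2}(A+S)\|^2$, and Paley--Zygmund is applied to the full degree-bounded Gaussian polynomial $Z=\|\mL_{\sqrt2}(A+S)\|^2$ with Gaussian hypercontractivity giving $\EE[Z^2]\lesssim(\EE Z)^2$ with a universal constant. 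A Carbery--Wright anti-concentration inequality for Gaussian polynomials would serve the same purpose --- it is essentially the same hypercontractivity input --- but the vanilla Gaussian anti-concentration you state is not the right tool. As written, your proposal would not go through without importing one of these.
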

A complete proof of the proposition is deferred to 
Section~\ref{sec:prf-prop}. We note that the projection onto a ball of
frequencies of radius $ \sqrt{2} $ (rather than for example a ball of radius
$ 1 $) is necessary for technical reasons to be able to cover the case of
linearised SNS, see Section~\ref{sec:gauss-sns}.
However, before we proceed, we detail in the next subsection the strategy for
the proof of the proposition, which is based on a short-time expansion of the
solution.

\subsection{Polynomial chaos expansions for low frequency energy production}
\label{sec:chaos-expansion}
Our approach to showing Proposition~\ref{prop:bds-new} builds on an expansion of
the solutions $ \varrho $ and $ u $  
to~\eqref{e:main}--\eqref{e:sns}. The idea is that for short times $ u
$ is well approximated by a Gaussian process, which influences linearly the
passive scalar.

\begin{remark}\label{rem:short-time}
The idea of a short-time expansion for the mild solution of the passive scalar
is sub-optimal to obtain uniform bounds in the viscosity $ \kappa \in
(0,1) $. When the viscosity is very small, the solution should resemble more a
transport equation than a perturbation of the heat equation. However, our
expansion is mathematically convenient, as it reduces the entire problem to
bounding a Gaussian stochastic integral. Developing a
better approach for low frequency energy production is left to future work.
\end{remark}
Let us explain more precisely our expansion starting with the Navier--Stokes
equations~\eqref{e:sns}. Since the solution $ u $ to \eqref{e:sns} is divergence free, it is
convenient to project the equation onto the space of divergence-free vector fields through the Leray projection $
\mathbf{P} $, so that \eqref{e:sns} can be rewritten as
\begin{equ}
\partial_{t} u + \mathbf{P} \div(u \otimes u) - \Delta u =
\mathbf{P} \xi\;, \qquad u(0, \cdot) = u_{0}(\cdot) \;,
\end{equ}
where we have used the definition of matrix-valued
divergence:
$\div ( A)_j (x) = \sum_{i =1}^{2} \partial_{i} A_{ij} (x)$.
We then decompose the solution into its linear part and a remainder. Given an initial velocity
field $u_0$, we define the Gaussian process $ X $, which depends on the initial
condition $ u_{0} $, by
\begin{equ}[e:def-X]
\partial_{t} X -  \Delta X = \mathbf{P} \xi \;, \qquad X(0, \cdot) =
u_{0}(\cdot) \;.
\end{equ}
Next, for any time-dependent vector fields $ w_{1}$ and $w_{2} $, define the
quadratic form
\begin{equ}[e:def-psi]
\Psi[w_{1}, w_{2}]_{t} =  - \int_{0}^{t} P_{t-s} \mathbf{P} \div(
w_{1} \sotimes w_{2}) \ud s \;,
\end{equ}
where $ P_{t} = \exp ( t \Delta) $ denotes the heat semigroup and $\sotimes$ 
denotes the symmetrised tensor product.
In this context, the solution $u$ to \eqref{e:sns} can be written as $ u = X + \psi $,
where $ \psi $ solves the fixed point problem
\begin{equ}
\psi = \Psi [X+ \psi, X+ \psi] \;.
\end{equ}
This allows to formally expand $ u $ in terms of $ X $ as
$ u = X + \Psi [X] +2 \Psi[X, \Psi[X]] + \Psi[\Psi[X]] + \dots$, where we have
used the shorthand notation $ \Psi[w] = \Psi[w, w] $.

We can proceed similarly for the solution $ \varrho $ to \eqref{e:main}, by
defining
\begin{equ}[e:def-Y]
 Y_t = P_t^{\kappa} \varrho_{0} \;,\qquad P_t^{\kappa} = \exp ( \kappa t \Delta)\;,
\end{equ}
and setting $ \varrho = Y + \varphi $, so that $\varphi$ solves the
fixed point problem $\varphi = \Phi [u, Y+ \varphi]$ with
\begin{equ}[e:def-phi]
\Phi[u, w]_{t} = - \int_{0}^{t} P^{\kappa}_{t-s}  \left[ L[u_{s}] w_{s} \right]
\ud s \;.
\end{equ}
This similarly leads to an expansion for $ \varrho $ of the form
\begin{equ}
\varrho = \Phi[X, Y] + \Phi[\Psi[X], Y] + \Phi[X, \Phi[X, Y]] + \dots \;.
\end{equ}
This should be interpreted as a \emph{short-time} expansion, meaning
roughly that higher terms in the expansion behave like higher powers of
$ t $ as $ t \to 0 $. Establishing precise estimates on such decay rates is
not trivial since we need to simultaneously keep track on the dependence on
$M$,
 and this is the objective of Section~\ref{sec:gaussian-estimates}. At
the heart of our approach lies the idea that the dominating term in the
expansion of $ \varrho $ in terms of energy production at low frequencies, for
small times, is the first term $ \Phi [X,Y] $,
which is Gaussian and can be controlled precisely.

In particular, our aim is to show that even if the energy of $ \varrho_{0} $ is
concentrated in high frequencies, a fraction of that energy will transfer to
low frequencies in a very short time, through the term $ \Phi[X, Y] $. Of
course, to make use of this bound we must show that the later
terms do not cancel out this first contribution. Therefore, our analysis in
Section~\ref{sec:gaussian-estimates} comprises of a lower bound on the energy
produced at low frequencies by the Gaussian term $ \| \mL_{1} \Phi[X, Y] \| $
(Lemma~\ref{lem:LBI1-new}) and an upper bound on the energy
produced at low frequencies by the other terms
(these are the content of all subsequent results in
Section~\ref{sec:gaussian-estimates}). 

However, in view of Remark~\ref{rem:short-time}, we observe that the 
fundamental idea that the first term
has the highest contribution seems unlikely to hold uniformly over the viscosity
constant $ \kappa \in (0, 1) $. In particular, for our expansion to work
we will require that the median frequency satisfies $ M(\varrho_{t}) \gtrsim
\kappa^{- \q} $ for some $ \q > 5/2 $, leading to the constraint on $M$ in
Proposition~\ref{prop:bds-new}.

\subsection{Spectral median dynamics}\label{sec:short-time-insta}

We are now ready to build on the small energy transfer that is proven in
Proposition~\ref{prop:bds-new}, in order to prove that a substantial amount of
the total energy of $ \varrho $ is transferred to low frequencies. Another perspective on this
phenomenon is that high frequency states are unstable for the projective
dynamics, so we also refer to this as high frequency stochastic
instability.
A consequence of this effect is that the spectral median behaves like a
stochastic process with a strong drift towards low frequencies, until it
reaches a level of order $ \kappa^{-\q} $ for $ \q>2 $.

Indeed, we will now show that the median is likely to decrease by one unit in a short
time $ \ts (M) $ for $ M = M (\varrho_{0}) $. We introduce the stopping time 
\begin{equ}[e:tau-n]
\tau(M) = \inf \{ t \geqslant 0  \; \colon \; M (\varrho_{t})  < M -1 \}
\;,
\end{equ}
and we first show that it has some order $1$ probability to be less
than the dissipative timescale $ \ts (M) $.

\begin{lemma}\label{lem:t-1}
Consider the same setting as in Theorem~\ref{thm:main}.
For any $ \q > 2$, there exist $ \r, C, \alpha > 0 $, and a $
\gamma \in (0, \a/2) $ such that uniformly $ u_{0} \in \mC^{\gamma},
\varrho_{0} \in L^{2}_{*}$ and $ \kappa
\in (0, 1] $
\begin{equ}
\PP \left( \tau(M) \leqslant t_{\star}^{\kappa}(M) \right) \geqslant \alpha \;, \qquad
\forall M \geqslant M^{(2)} (\varrho_{0}) \vee \kappa^{- \q} \vee C (\|
u_{0} \|_{\mC^{\gamma}}+1)^{\r} \;.
\end{equ}
\end{lemma}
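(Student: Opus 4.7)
The plan is to prove the lemma by contradiction, combining the probabilistic low-frequency energy production from Proposition~\ref{prop:bds-new} with a purely deterministic $L^2$ dissipation estimate that becomes effective precisely on the event $\{\tau(M) > t_\star^\kappa(M)\}$. By linearity of~\eqref{e:main} we may assume throughout that $\|\varrho_0\| = 1$; all bounds carry over to general $\varrho_0 \in L^2_*$ by homogeneity.

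Applying Proposition~\ref{prop:bds-new} with $\beta = 2$ and the same exponent $\q > 2$ gives constants $\alpha, C_1, C, \gamma$ and $\r$ such that, under the hypothesis $M \geq M^{(2)}(\varrho_0) \vee \kappa^{-\q} \vee C(\|u_0\|_{\mC^\gamma}+1)^\r$, the event
\[
A \;=\; \bigl\{\,\|\mL_{\sqrt 2}\varrho_{t_\star^\kappa(M)}\| \geq C_1 \kappa^{-1/2} M^{-\a/2-4}\,\bigr\}
\]
has probability at least $\alpha$. I will establish the inclusion $A \subseteq \{\tau(M) \leq t_\star^\kappa(M)\}$, which immediately yields $\PP(\tau(M) \leq t_\star^\kappa(M)) \geq \PP(A) \geq \alpha$ and hence the lemma.

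To derive the inclusion, suppose for contradiction that $\omega \in A \cap \{\tau(M) > t_\star^\kappa(M)\}$. On $\{\tau(M) > t_\star^\kappa(M)\}$, the spectral median satisfies $M(\varrho_t) \geq M-1$ for every $t \in [0, t_\star^\kappa(M)]$; by the definition~\eqref{e:beta-median}, this means $\|\mH_{M-2}\varrho_t\|^2 > \tfrac12 \|\varrho_t\|^2$, and consequently
\[
\|\nabla \varrho_t\|^2 \;\geq\; (M-2)^2 \,\|\mH_{M-2}\varrho_t\|^2 \;>\; \tfrac{(M-2)^2}{2}\,\|\varrho_t\|^2 .
\]
Plugging this into the identity $\tfrac{\ud}{\ud t}\|\varrho_t\|^2 = -2\kappa\|\nabla \varrho_t\|^2$ and integrating up to $t_\star^\kappa(M) = \lambda \kappa^{-1} M^{-2} \log M$ gives a dissipation upper bound of the form $\|\varrho_{t_\star^\kappa(M)}\| \leq M^{-c \lambda}$ for a constant $c > 0$ that tends to the dissipative rate as $M \to \infty$. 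Since $\|\mL_{\sqrt 2}\varrho_{t_\star^\kappa(M)}\| \leq \|\varrho_{t_\star^\kappa(M)}\|$, this upper bound contradicts the lower bound defining $A$ once
\[
M^{c\lambda - \a/2 - 4} \;>\; \kappa^{1/2}/C_1,
\]
which, thanks to $\lambda > \a/2 + 4$ and the hypothesis $M \geq \kappa^{-\q}$ with $\q > 2$, is satisfied uniformly after possibly enlarging the threshold constant in the lemma.

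The key conceptual point, and the main obstacle, is identifying the correct dissipation exponent: on $\{\tau(M) > t_\star^\kappa(M)\}$ the median remains high \emph{throughout} $[0, t_\star^\kappa(M)]$, so the energy decay must be quantified carefully against the timescale $t_\star^\kappa$ so that the resulting upper bound on $\|\varrho_{t_\star^\kappa(M)}\|$ is strictly smaller than the proposition's lower bound $C_1 \kappa^{-1/2} M^{-\a/2 - 4}$. This is precisely why $t_\star^\kappa(M)$ is tuned via $\lambda > \a/2 + 4$; the condition $M \geq \kappa^{-\q}$ with $\q > 2$ then dominates the $\kappa^{-1/2}$ factor appearing in the lower bound, closing the contradiction. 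Tracking the dependence on the constants $\r, \gamma$ and $C$ produced by Proposition~\ref{prop:bds-new} yields the uniform version stated in the lemma.
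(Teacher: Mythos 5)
Your approach is genuinely different from the paper's, and cleaner where it applies. The paper tracks the \emph{high-frequency} energy $\|\mH_{M-1}\varrho_t\|$ via a differential inequality, then passes to the low modes using the median condition $\|\mL_{M-1}\varrho_t\| < \|\mH_{M-1}\varrho_t\|$. You instead track the \emph{full} $L^2$ norm, using the median condition only as a Poincar\'e-type inequality $\|\nabla\varrho_t\|^2 \gtrsim M^2\|\varrho_t\|^2$, and then $\|\mL_{\sqrt 2}\varrho_{t_\star}\| \leq \|\varrho_{t_\star}\|$ gives the contradiction. For pure advection your route is particularly attractive because the incompressibility of $u$ kills $\langle\varrho, u\cdot\nabla\varrho\rangle$ identically, making the inclusion $A\subseteq\{\tau(M)\leq t_\star^\kappa(M)\}$ completely deterministic — no control of $\sup_s\|u_s\|$ is needed, and the final probabilistic step in the paper's proof (the appeal to Lemma~\ref{lem:mmt-unif}) is avoided.

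However, there is a genuine gap: the identity $\tfrac{\ud}{\ud t}\|\varrho_t\|^2 = -2\kappa\|\nabla\varrho_t\|^2$ holds \emph{only} for $L[u]=u\cdot\nabla$, while the lemma covers both \eqref{e:form-PSA} and \eqref{e:form-LNS}. For linearised SNS the energy balance reads
\[
\tfrac12\tfrac{\ud}{\ud t}\|\varrho_t\|^2 \;=\; -\kappa\|\nabla\varrho_t\|^2 \;-\; \langle\varrho_t,\Delta u_t\cdot\nabla^{-1}\varrho_t\rangle\;,
\]
and the second term is not zero; it is only controlled by $\|\Delta u_t\|_\infty\|\varrho_t\|^2$. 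As a result $\|\varrho_{t_\star}\|\lesssim M^{-c\lambda}$ does \emph{not} hold deterministically on $\{\tau>t_\star\}$, and the inclusion $A\subseteq\{\tau\leq t_\star\}$ fails. One must instead estimate $\PP(\sup_{s\leq t_\star}\|u_s\|_{\mC^\gamma}\geq CM^{1/\r})$ (as the paper does, via Lemma~\ref{lem:mmt-unif}) and subtract it from the lower bound on $\PP(A)$. The good news is that, once you insert this extra probabilistic step, your version is still slightly more economical than the paper's: you only need to control $t_\star\sup_s\|\Delta u_s\|_\infty$ rather than $M\,t_\star\sup_s\|u_s\|_{\mC^\gamma}$, since the advective contribution vanishes in the $L^2$ balance whereas it survives (with an extra factor of $M$) in the paper's $\|\mH_{M-1}\varrho\|$ estimate. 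But as written, your proof does not cover linearised SNS.
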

Note that since $ M^{(2)} (\varrho_{0}) \leqslant
M^{(1)}( \varrho_{0}) $, the result can be applied for example to the median level $ M =
M(\varrho_{0}) $, provided it is larger than $ \kappa^{- \q} $: the assumption $ M
\geqslant M^{(2) } (\varrho_{0}) $ is only necessary to provide us some flexibility
that is needed in the upcoming proofs.

\begin{proof}
Let us assume that $ \| \varrho_{0} \|_{L^{2}} =1 $, since otherwise we can
normalise the initial condition and the result would not change because of the
linearity of the equation. Then Proposition~\ref{prop:bds-new} implies that
there exist $ C_{1}, \alpha, C_{2} , \r  > 0, \gamma \in (0, \a/2) $ such
that 
\begin{equ}[e:lbr]
\PP \Big( \| \mL_{\sqrt{2} } \varrho_{t_{\star}^{\kappa} (M) } \| <
C_{1} \kappa^{- \frac{1}{2}} M^{-\frac{\a}{2}- 4}  \Big) 
\leqslant  \alpha  \;,
\end{equ}
given that $ M \geqslant M^{(2)} (\varrho_{0}) \vee \kappa^{- \q} \vee
C_{2} (\| u_{0} \|_{\mC^{\gamma}}+1)^{\r} $.

Next, we show that that high modes are reasonably well controlled
up to time $\ts(M)$, in such a way that \eqref{e:lbr} implies a decrease of
the spectral median. In principle, this depends on the particular choice of
$ L [u] $ in \eqref{e:form-L}. However, since the advection case is simpler
than the linearised SNS case, we only consider the latter. For $t<
\tau(M)$ we have $M(\varrho_t) > M-1$ and we find (for a constant $C$ that may
change from line to line)
\[
\begin{aligned}
\partial_{t} \| \mH_{M-1} \varrho_{t} \|^{2} & \leqslant -2 \kappa
|M-1|^{2} \|
\mH_{M-1} \varrho_{t} \|^{2} - 2 \langle \mH_{M-1} \varrho_{t},
\div( \varrho_{t} u_{t} ) \rangle - 2 \langle \mH_{M-1} \varrho_{t}, \Delta
u_{t} \cdot \nabla^{-1} \varrho_{t} \rangle\\
& \leqslant  (-2\kappa |M-1|^2 + CM\|u_t\|_{ \mC^{\gamma}}) \|\mH_{M-1}\varrho_t\|^2\;,
\end{aligned}
\]
for any $ \gamma > 2 $.  Here we have used that
\[
   \langle \mH_{M-1} \varrho_{t}, \div( \varrho_{t} u_{t} ) \rangle =  \langle \mH_{M-1} \varrho_{t},
\div( (\mL_{M-1} \varrho_{t}) u_{t} ) \rangle  =  \langle \mH_{M-1}
\varrho_{t}, (\nabla \mL_{M-1} \varrho_{t}) \cdot u_{t} \rangle \;,
\]
together with the bounds 
\[
\begin{aligned}
| \langle \mH_{M-1} \varrho_{t}, (\nabla \mL_{M-1}
\varrho_{t}) \cdot u_{t} \rangle | & \lesssim M \|\mH_{M-1}\varrho_t\|^{2} \|
u_{t} \|_{\infty} \;, \\
   | \langle \mH_{M-1} \varrho_{t} , \Delta
u_{t} \cdot \nabla^{-1} \varrho_{t}\rangle | & \lesssim \|\mH_{M-1}\varrho_t\|^{2} \| \Delta u_{t} \|_{\infty} \;,
\end{aligned}
\]
since $M(\varrho_t) > M-1$. Hence, if $\ts(M) \leqslant  \tau(M)$ we obtain
that for some $ C> 0 $
\begin{equ}
\|\mH_{M-1}\varrho_{\ts(M)}\|  \leqslant  M^{-\lambda}
\exp\Bigl({CM\ts(M) \Bigl( \sup_{0 \leqslant  s \leqslant  \ts(M)}\|u_s\|_{
\mC^{\gamma}}} + \kappa \Bigr) \Bigr) \;,
\end{equ}
where the $ \kappa $ on the right-hand side appears because we have estimated
$ - 2\kappa | M -1 |^{2} \leqslant - 2 \kappa | M |^{2} + 4 M \kappa $.
Next, denoting by $ \mB_{M} $ the event
\[
\mathcal{B}_M = \left\{\| \mL_{\sqrt{2} } \varrho_{t_{\star}^{\kappa} (M) } \| \geqslant
C_{1} \kappa^{- \frac{1}{2}} M^{-\frac{\a}{2}- 4}\right\}\;,
\]
so that we can bound from above the desired probability as follows:
\begin{equs}
\PP(\ts(M) < \tau(M)) & \leqslant  \PP\left(\|\mL_{M-1}\varrho_{\ts(M)}\|
\leqslant C
M^{-\lambda}\exp\Bigl({CM \ts(M)\sup_{0\leqslant s \leqslant   \ts(M)}\|u_s\|_{\mC^{\gamma}}}\Bigr)\right)\\
& \leqslant  \PP\left( C_{1} \kappa^{- \frac{1}{2} } M^{\lambda -\a/2-4} \leqslant C 
\exp\Bigl({CM\ts(M)\sup_{0\leqslant s \leqslant  \ts(M)}\|u_s\|_{\mC^{\gamma}}}\Bigr)\right)
+ \PP(\mathcal{B}_M^c)\\
& \leqslant  \PP\left( C_{1} \leqslant C
\exp\Bigl({CM\ts(M)\sup_{0\leqslant s \leqslant  \ts(M)}\|u_s\|_{\mC^{\gamma}}}\Bigr)\right)
+ \PP(\mathcal{B}_M^c)\\
& \leqslant  \PP\Bigl(\sup_{0\leqslant s \leqslant  \ts(M)}\|u_s\|_{\mC^{\gamma}}
\geqslant CM^{1/\r}\Bigr) + \PP(\mathcal{B}_M^c) \;.
\end{equs}
Here in the second to last line we have used that $ \lambda =5 + \a/2 >  4 +
\a/2$, see \eqref{e:t-kappa}, together with the fact that $ \kappa \leqslant 1
$. Moreover, the value of the constant $ C $ was allowed to change in the last
estimate.

Now we conclude by observing that via Lemma~\ref{lem:mmt-unif}, for any $ \gamma \in
(2, \a/2) $ there exists an $ \r > 1 $ such that for all $ p \geqslant 0 $ and
some $ C_{3}(p) > 0 $:
\begin{equ}[e:finalBd]
   \PP\Bigl(\sup_{0\leqslant s \leqslant  \ts(M)}\|u_s\|_{\mC^{\gamma}} \geqslant CM^{1/\r}\Bigr) \leqslant C_{3} M^{- p} (\| u_{0} \|_{\mC^{\gamma}}+1)^{\r p}\;.
\end{equ}
Since \eqref{e:lbr} guarantees that $\PP(\mathcal{B}_M^c) \le \alpha$ and the right-hand side
of \eqref{e:finalBd} can be made smaller than $(1-\alpha)/2$ by choosing the constant $C$ 
in our statement large enough, this completes the proof.
\end{proof}

Lemma~\ref{lem:t-1} is one of the fundamental results of this
article, as it proves that the spectral median has a tendency to decrease, at least until
it reaches level $ \kappa^{-\q} $ for arbitrary $ \q> 2 $. The remainder of this
section, including the proof of Theorem~\ref{thm:lyap}, builds on the previous
lemma.

In particular, one issue with Lemma~\ref{lem:t-1} is that the probability with
which the spectral median decreases is not large, since $ \alpha $ can not be
chosen arbitrarily close to one. However, the relatively small probability with
which the median decrease takes place is balanced by the fact that the decrease
happens over a very short time. This allows us to iterate the argument to
obtain a decrease with high probability over a slightly longer timescale. Here
we introduce a small parameter $ \delta \in (0, 1) $ and consider
the timescale
\begin{equ}[e:tsd]
 \tsd (M) = \ts (M) M^{\delta} \;.
\end{equ}
Furthermore, we introduce the
stopping time
\begin{equ}[e:sigma]
\sigma(M) = \inf \{ t \geqslant 0  \; \colon \; M^{(2)}(\varrho_{t}) >
M \} \;.
\end{equ}
The time horizon $ \tsd(M) $ is slightly longer than the time
horizon $ \ts(M)  $ in Lemma~\ref{lem:t-1}, but still shorter than the time $
M^{-1} $, provided $ \delta $ is sufficiently small and $ \kappa \ll M^{-1} $, by which 
time the stopping
time $ \sigma $  might kick in (that this stopping
times takes at least a time of order $ M^{-1} $ to kick in is essentially a consequence
of Lemma~\ref{lem:hl}). To guarantee that $ \tsd (M) \ll M^{-1} $
under the assumption $ M \geqslant \kappa^{- \q} $, we choose $
\delta $ such that
\begin{equ}
\tsd (M) = \lambda \kappa^{-1} M^{-2+ \delta} \log{(M)} \ll M^{-1}  \iff M^{-1+1/
\q + \delta} \log{(M)}\ll 1 \;,
\end{equ}
which leads to the constraint $ \delta < 1-1/\q $ in the lemma below. 

\begin{lemma}\label{lem:t-2}
Consider the same setting as in Theorem~\ref{thm:main}, with
$ \tsd  , \sigma, \tau $ as defined in \eqref{e:tsd}, \eqref{e:sigma} and
\eqref{e:tau-n}.
Then, for any $ \q > 2  $  and $ \delta \in (0, 1-1/\q) $ there
exist $ \r > 1 $ and $ \gamma \in (0, \a/2) $ such that for any $ p
\geqslant 0 $ and some $ C(p) > 0 $, and uniformly over $ \kappa \in (0, 1] $
\begin{equ}[e:mainBound]
\PP \left( \tau(M) \geqslant \tsd(M) \wedge \sigma(M) \right) \leqslant
C M^{- p} (\| u_{0} \|_{\mC^{\gamma}}+1)^{\r p}  \;, \quad
\forall M \geqslant M (\varrho_{0}) \vee \kappa^{- \q}\;.
\end{equ}
\end{lemma}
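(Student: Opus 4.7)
The plan is to iterate Lemma~\ref{lem:t-1} over $N := \lfloor M^\delta \rfloor$ consecutive sub-intervals of length $\ts(M)$ inside $[0, \tsd(M)]$, exploiting the Markov property of $(u_t, \varrho_t)$. Setting $t_k := k\ts(M)$, one has $t_N \leqslant \tsd(M)$. Under the hypotheses $M \geqslant \kappa^{-\q}$ and $\delta \in (0, 1-1/\q)$, a direct calculation from \eqref{e:t-kappa} and \eqref{e:tsd} shows $\tsd(M) \ll M^{-1}$, which will be exploited via Lemma~\ref{lem:hl} to prevent the stopping time $\sigma(M)$ from interrupting the cascade.

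I first decompose the bad event as
\begin{equ}
\{\tau(M) \geqslant \tsd(M) \wedge \sigma(M)\} \subseteq \hat F_N \cup \{\sigma(M) \leqslant \tsd(M)\}\;,
\end{equ}
where $\hat F_N := \{\tau(M) > t_N, \sigma(M) > t_N\}$. Indeed, on $\{\sigma > t_N\}$ one has $\tsd \wedge \sigma \geqslant t_N$ (using $t_N \leqslant \tsd$), so $\tau \geqslant \tsd \wedge \sigma$ forces $\tau \geqslant t_N$, placing the event inside $\hat F_N$ up to a null set; otherwise $\sigma \leqslant t_N \leqslant \tsd(M)$.

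To bound $\PP(\hat F_N)$, I define $A_k := \{\sup_{0 \leqslant t \leqslant t_k}\|u_t\|_{\mC^\gamma} \leqslant c M^{1/\r}\} \in \mF_{t_k}$, with $c > 0$ chosen small enough that $A_k$ implies $M \geqslant C(\|u_{t_k}\|_{\mC^\gamma}+1)^\r$, where $C$ is as in Lemma~\ref{lem:t-1}. On $\hat F_k \cap A_k$, the state $(u_{t_k}, \varrho_{t_k})$ satisfies all hypotheses of Lemma~\ref{lem:t-1} at parameter $M$: $M \geqslant M^{(2)}(\varrho_{t_k})$ follows from $\sigma(M) > t_k$, $M \geqslant \kappa^{-\q}$ is given, and the $\mC^\gamma$ bound follows from $A_k$. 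By the Markov property at $t_k$, Lemma~\ref{lem:t-1} then yields $\PP(\tau(M) > t_{k+1} \mid \mF_{t_k}) \leqslant 1-\alpha$ on $\hat F_k \cap A_k$. Since $\hat F_{k+1} \cap A_{k+1} \subseteq \hat F_k \cap A_k \cap \{\tau(M) > t_{k+1}\}$, induction gives $\PP(\hat F_N \cap A_N) \leqslant (1-\alpha)^N \leqslant e^{-\alpha M^\delta}$, which is super-polynomially small in $M$. Combined with the moment bound $\PP(A_N^c) \leqslant C_3 M^{-p}(\|u_0\|_{\mC^\gamma}+1)^{\r p}$ from Lemma~\ref{lem:mmt-unif} (valid for any $p \geqslant 0$), this yields the required polynomial bound on $\PP(\hat F_N)$.

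Finally, for the remaining term $\PP(\sigma(M) \leqslant \tsd(M))$, I appeal to Lemma~\ref{lem:hl}, which together with the timescale separation $\tsd(M) \ll M^{-1}$ gives $\PP(\sigma(M) \leqslant \tsd(M)) \leqslant C M^{-p}(\|u_0\|_{\mC^\gamma}+1)^{\r p}$ for any $p \geqslant 0$. The main obstacle is precisely this control of $\sigma(M)$: the iteration cascade breaks down as soon as $M^{(2)}(\varrho_t)$ jumps above $M$, and the constraint $\delta < 1-1/\q$ is forced upon us by the requirement that $\tsd(M) \ll M^{-1}$, so that Lemma~\ref{lem:hl} can rule out this possibility with the needed polynomial decay.
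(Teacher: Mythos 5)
Your proof is correct and takes essentially the same route as the paper: iterate Lemma~\ref{lem:t-1} over $\lfloor M^\delta\rfloor$ subintervals of length $\ts(M)$ via the Markov property, control the $\mC^\gamma$-excursions of $u$ so the hypothesis $M\geqslant C(\|u_{t_k}\|_{\mC^\gamma}+1)^{\r}$ can be verified at each step, and dispose of the event $\{\sigma(M)<\tsd(M)\}$ through Lemma~\ref{lem:hl} and the timescale separation $\tsd(M)\ll M^{-1}$ (which is precisely what forces $\delta<1-1/\q$). The only cosmetic difference is that the paper tracks the velocity-norm excursion through an auxiliary stopping time $\sigma_{\mathrm{u}}$ and decomposes the bad event as $\mA_n\cup\mB\cup\mC$, whereas you use the nested $\mF_{t_k}$-measurable events $A_k$ and split $\hat F_N$ against $A_N$; these serve the identical purpose.
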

We observe that compared to the condition in Lemma~\ref{lem:t-1} we ask
that $ M \geqslant M (\varrho_{0}) $ instead of $ M \geqslant M^{(2)}
(\varrho_{0}) $. This is not a typo: we require this additional wiggling space
so that we can expect $ M \geqslant
M^{(2)}(\varrho_{t}) $ for all times $ t \in [0, \tsd(M) ] $ and
therefore apply Lemma~\ref{lem:t-1} repeatedly over that time interval.

\begin{proof}
Our approach to prove this result is to iterate about $ M^{\delta} $ times  the bound obtained in
Lemma~\ref{lem:t-1}. For this purpose, it will be convenient to introduce an
additional stopping time $\sigma_{\u}$, which controls excursions of the
velocity field $ u $, and is given by
\begin{equ}
\sigma_{\u}(M) = \inf \{ t \geqslant 0  \; \colon \; \| u_{t}
\|_{\mC^{\gamma}} \geqslant M^{\ve} (\| u_{0} \|_{\mC^{\gamma}} +1) \} \;,
\end{equ}
where $ \gamma \in (0, \a/2) $ is the same as in Lemma~\ref{lem:t-1}, and $ \ve
\in (0, 1) $ is a parameter that we will fix later on.
We then introduce the events $ \mA_{i} $ defined for all integers $ i
\geqslant 0 $ by
\begin{equ}
\mA_{i} = \{ \min\{\tau (M),\sigma(M),\sigma_\u(M)\} \geqslant   t_{i} \}
\;,
\end{equ}
where we set $ t_{i} = \ts(M) i $.
These events are chosen such that, setting $ n = \lfloor M^{\delta}
\rfloor $ and ignoring the argument $M$ of $\tsd, \tau, \sigma$ and $
\sigma_{\mathrm{u}} $ from now on, we have the inclusion
\begin{equ}
\{ \tau \geqslant  \tsd \wedge \sigma \} \subseteq
  \mA_{n} \cup \mB
\cup \mC  \;,
\end{equ}
where 
\begin{equ}
\mB = \{ \sigma < \tsd \} \;, \qquad \mC = \{ \sigma_{\u} <
\tsd \} \;.
\end{equ}
Then  the desired estimate will follow from proving that the events $ \mB
$ and $ \mC $ occur with suitably small probability, and by employing
Lemma~\ref{lem:t-1} to iteratively bound the probability of $ \mA_{n} $.

Let us start by analysing the events $ \mB $ and $ \mC $.
The first step is to show that the stopping time $
\sigma $ is unlikely to kick in before time $\tsd $.
Indeed, by Lemmas~\ref{lem:hl} and~\ref{lem:mmt-unif}, since $ M
\geqslant M(\varrho_{0}) $, we obtain that for any $ \gamma >2 $, any $q
\geqslant 1$, and some $
C , \overline{\r} > 1 $:
\begin{equs}
\PP (\mB) &= \PP \left( \sigma  < \tsd\right)  \leqslant  \PP \left( \sup_{0 \leqslant s \leqslant
\tsd} \frac{\| \mH_M \varrho_s \|}{\|\mL_M  \varrho_s \|} > 2 \right)
\leqslant  \PP \Bigl( \sup_{0 \leqslant s \leqslant
\tsd} C M s \| u_{s} \|_{\mC^{\gamma}}
>  1 \Bigr)  \\ 
&\lesssim (M \tsd)^{q} \EE \Bigl[ \sup_{0 \leqslant s
\leqslant \tsd } \|
u_{s} \|_{\mC^{\gamma}}^{q} \Bigr]
 \lesssim (\kappa^{-1} M^{-1 +  \delta} \log (M))^{q} (\| u_{0}
\|_{\mC^{\gamma}}+1)^{ \overline{\r} q}\;.\label{e:bd-unifrpt}
\end{equs}
Since $ \kappa^{-1} \leqslant
M^{\frac{1}{\q}} $ and $ \delta \in (0, 1 - 1/\q) $ by assumption, this implies the bound
\begin{equ}
\PP (\mB)\lesssim M^{- p} ( \|
u_{0} \|_{\mC^{\gamma}}+1)^{\r p}\;,
\end{equ}
for some $\r > 1$, provided that we choose $q$ large enough.

Similarly, the
stopping time $ \sigma_{\mathrm{u}} $ is also unlikely to kick in before time $
\tsd $. Indeed, by Markov's inequality and
Lemma~\ref{lem:mmt-unif}, for any $ \ve \in (0, 1) $ and $ \gamma \in
(0, \a/2) $ there exists an $ \overline{\r} > 1 $ such that for all $ p \geqslant 0 $
\begin{equ}[e:bd-C]
\PP( \mC) = \PP \left( \sigma_{\mathrm{u}} < \tsd \right) \lesssim_{p}
M^{- \ve p} (\| u_{0} \|_{\mC^{\gamma} }+ 1)^{ \overline{\r} p}\;.
\end{equ}

Both upper bounds \eqref{e:bd-unifrpt} and \eqref{e:bd-C} are of the right
order for the proof of the lemma, up to choosing larger $ \overline{\r} = 
\overline{\r} (\ve) >1$ in the case of
\eqref{e:bd-C}, so it remains to obtain a similar upper bound on
the event $ \mA_{n} $. We observe that the events $ \mA_{i} $ amount to a series of identical
unsuccessful trials, each of which has small probability of order $ 1 - \alpha -
CM^{- p} (\| u_{t_{i}}  \|_{\mC^{\gamma}}+1)^{\r p} $ by Lemma~\ref{lem:t-1}, for some $
\alpha \in (0, 1) $ and $ \r > 1 $. 
Indeed, the tower property and the fact that the $\mA_i$ are decreasing yields 
for every $k \leqslant n$
\begin{equ}
\PP \left(\mA_{k+1} \right) = \EE \left[\one_{\mA_{k}}\one_{\mA_{k+1}} \right] = \EE \left[ \one_{\mA_{k}}  \PP_{t_{k}}(\mA_{k+1}) \right] \;.
\end{equ}
On the event $ \mA_{k}$ we have $
\sigma  > t_{k}  $ by definition, so that $ M \geqslant M^{(2)} (\varrho_{t_{k}}) $ and we
are in the setting of the Lemma~\ref{lem:t-1}, which implies that for some $ \alpha,
\r, C >0 $
\begin{equ}
\one_{\mA_{k}}  \PP_{t_{k}}(\mA_{k+1}) \leqslant \one_{\mA_{k}} \alpha \;,
\end{equ}
provided that in addition $M \geqslant  \kappa^{- \q} \vee C (\|
u_{t_{k}} \|_{\mC^{\gamma}}+1)^{\r}$. Here the bound $ M \geqslant
\kappa^{-\q} $ is satisfied by our initial assumption on $ M $. 
Instead, for the second bound assume that for some $ \overline{\r} >
0 $ we have at the initial time $ M \geqslant C(\| u_{0} \|_{\mC^{\gamma}}+1)^{
\overline{\r}}  $. Then, because of the presence of the stopping time $
\sigma_{\mathrm{u}} $, we deduce $ M \geqslant C M^{- \ve \overline{\r}}(\|
u_{t_{k}} \|_{\mC^{\gamma}}+1)^{ \overline{\r}}  $, because $ \|
u_{t_{k}} \|_{\mC^{\gamma}} \leqslant M^{\ve} (\| u_{0} \|_{\mC^{\gamma}}+1) $.
Then the desired estimate $M \geqslant C (\|
u_{t_{k}} \|_{\mC^{\gamma}}+1)^{\r} $ follows (up to modifying the value of $
C > 0 $), for example by choosing $ \ve( \overline{\r}) = 1/ \overline{\r} $
and by assuming that $ \overline{\r} \geqslant 2 \r $.

This shows that, provided we choose $ \overline{\r} > 1 $ sufficiently large
and $ M \geqslant C (\| u_{0} \|_{\mC^{\gamma}} +1)^{ \overline{\r}} $, we have: 
\begin{equ}
\PP \left(\mA_{k+1} \right) = \EE\bigl[\PP_{t_{k}} ( \mA_{k+1})
\one_{\mA_{k}}\bigr] \leqslant \alpha \PP(\mA_{k}) \;.
\end{equ}
Hence, iterating this estimate $ n $ times and combining the previous calculation with \eqref{e:bd-unifrpt} and
\eqref{e:bd-C} we obtain for some $ \overline{\r} >0 $, any $ p \geqslant 0 $ and some $ C(p)> 0 $
\begin{equs}
\PP \left(\mA_{n} \right) & \leqslant \alpha^{n}
\;, \quad \PP(\mB)  \leqslant C M^{-p} (\| u_{0} \|_{\mC^{\gamma} }+ 1)^{
\overline{\r} p} \;, \quad
\PP (\mC )  \leqslant C M^{- p} (\| u_{0} \|_{\mC^{\gamma} }+ 1)^{
\overline{\r} p}\;.
\end{equs}
Now we can bound the first term by $
\alpha^{n} \lesssim_{\alpha,p, \delta} M^{- p} $ (recalling that $n = \lfloor
M^{\delta} \rfloor$). Therefore, overall,
up to choosing a sufficiently large $ C, \overline{\r} > 1 $ we have obtained
\begin{equ}
\PP \left( \tau \geqslant  \tsd  \wedge \sigma  \right) \leqslant
C M^{-p} (\| u_{0} \|_{\mC^{\gamma} }+ 1)^{ \overline{\r} p}\;.
\end{equ}
Note that we can drop the assumption $ M \geqslant C (\| u_{0}
\|_{\mC^{\gamma}} +1)^{ \overline{\r}}  $, because if it is not satisfied, then
the upper bound is in any case satisfied because the right-hand side is greater
than one, once more up to increasing the value of $ C $.
The result is therefore proven: note that in this proof we are using $ \overline{\r} $ to denote the
$ \r $ that is appearing in the statement of the present lemma, while we use $
\r $ to refer to the constant appearing in Lemma~\ref{lem:t-1}. 
\end{proof}

\subsection{Proof of Theorem~\ref{thm:lyap}} \label{sec:prof-main-rslt}
The proof of Theorem~\ref{thm:lyap} builds on a combination of the drift to low frequencies of the spectral median,
which we have obtained in Lemma~\ref{lem:t-2}, together with high frequency regularity
estimates for the projective process in Section~\ref{sec:regularity}. Recall that we write $ M_{t} $ for the median process $ M_{t} = M
(\varrho_{t}) $, and similarly $ M^{(2)}_{t} = M^{(2)} (\varrho_{t}) $.

To obtain Theorem~\ref{thm:lyap} we must 
control the $ H^{1} $ norm of the projective process at a fixed time $t_0$, 
but Lemma~\ref{lem:t-2} controls the median only for very short times.
To reach times of order one, we will simply iterate the bounds in
Lemma~\ref{lem:t-2}. Since it takes at most time $ M^{-2+\delta} $ (ignoring
for the moment the dependence on the viscosity coefficient, as well as
logarithmic terms) for the median to
decrease from $ M $ to $ M-1 $, the evolution of the spectral median follows
roughly that of the ODE $ \partial_{t} m_{t} \leqslant  - m^{2-\delta}_{t} $, which 
comes down form infinity in finite time as long as $\delta < 1$.
We are able to
prove the same effect for the spectral median: iterating Lemma~\ref{lem:t-2} we
will show that, no matter the initial value $ M(\varrho_{0}) $, one
has a bound of order one on  $M(\varrho_{t}) $ at times of order one.

To prove this, we combine one after the other the stopping times
introduced in the previous subsection, thereby constructing a new time
$ \eta $ by which
the median will have reached levels of order one with high probability.
Of course, when we refer to ``levels of order one'', we are neglecting the
dependence on the viscosity parameter $ \kappa $. Indeed, our arguments only show
that the spectral median will settle at some frequency below level $
\kappa^{- \q} $ for any $ \q > 2 $, which makes the final time
depend on the viscosity. We remark that the final time $\eta$ is not required to be defined as a stopping time in our proof.  We define $ \eta $ as follows.

\begin{definition}\label{def:eta-kappa}
For any $ \q > 2$ and $ \kappa \in (0, 1] $, we set $\hat M_i = (M_0 - i) \vee \kappa^{-\q}$ and define the  time $ \eta$ as follows:
\begin{itemize}
\item Set $ \tau_{0} =0 $ and then for every $ i \geqslant 0 $:
\begin{equs}[e:def-tau-sigma-i]
\sigma_{i} &= \inf \{ t \geqslant
\tau_{i}  \; \colon \; M^{(2)}_{t} \geqslant \hat M_i \}\wedge \{ 
\tau_{i} + t_{\star}^{\kappa, \delta} (\hat M_i)\} \;,\\
\tau_{i+1} &= \inf \{ t \geqslant  \tau_{i}  \;\colon \; M_{t} \leqslant \hat M_{i+1}\}  \;,
\end{equs}
where $ \tsd (M)$ is defined in \eqref{e:tsd}. 
\item Define $ L = \min \{i \geqslant 0\,:\, \hat M_{i} \leqslant \kappa^{- \q} \}$ and the ``final step'' $i_{\mathrm{fin}}= \min \{ i \geqslant 0  \; \colon \; \sigma_{i} < \tau_{i+1} \} \wedge L$.
\item Finally, we set
$\eta = \sigma_{i_{\mathrm{fin}}}$.
\end{itemize}
\end{definition}

Before we move on, let us try to give a more intuitive explanation of this construction.
Our objective is to show that, by time $ \eta $,
the process $ M_{t} $ will have dipped below level $ \kappa^{- \q} $, at least with
high probability. In order to reach below $
\kappa^{- \q} $ starting from $ M_{0} $, the median must 
cross $L$
integers. The stopping times $ \{ \tau_{i} \}_{i \in \NN} $ indicate exactly
the times at which the crossing of each integer happens for the first
time. Of course, these stopping times are somewhat arbitrary: the
median process could downcross a given level arbitrarily often, reaching
arbitrarily high values before dropping down to the next lower integer. If this
were the case, there would be no value in splitting up time in the intervals $[
\tau_{i}, \tau_{i+1}] $, since anything could happen in between.

However, such excursions are unlikely.
The probability of a large chunk of energy moving to higher frequencies during the interval $
[\tau_{i}, \tau_{i+1}] $ is controlled by the probability that the 2-quantile
$ M^{(2)}_{t} $ grows above level $ \hat{M}_{i}$. While any given quantile can vary
rapidly (because quantiles are dramatically susceptible to fluctuations in the
energy spectrum), considering different quantiles at the same time allows to
control the fluctuations easily, as was  done already in the proofs of
Lemma~\ref{lem:t-1} and Lemma~\ref{lem:t-2}. The stopping times $
\sigma_{i} $ also kick in if the hitting time of the next lower level is
unexpectedly long (longer than the timescale $ \tsd (M) $ of
Lemma~\ref{lem:t-2}). In this way, we obtain a deterministic upper bound on $
\eta $.

Finally, we will prove that with with high probability $ \tau_{i+1} <
\sigma_{i} $ for all $ i < L$, so that $
i_{\mathrm{fin}} = L $. Once we reach level $
\kappa^{-\q} $, we will additionally wait a time at most $\tsd (\hat{M}_{L}) =
\tsd(\kappa^{-\q})$, which is why $\eta = \sigma_{i_{\mathrm{fin}}}$. This
additional waiting time guarantees a lower bound on the duration of $\eta$, so that we can employ parabolic regularity estimates to bound the high frequency
regularity of $ \pi_{t} $. Note in this regard that we are not able to prove
any lower bound on the time it takes for the stopping times $ \tau_{i} $ to kick
in. 

\begin{remark}\label{rem:st}
It is possible that $ \tau_{i} = \tau_{i+1} $ for some $i$. When
this happens, although we know that $ M_{\tau_{i}} \leqslant \hat{M}_{i}$, it is
unclear exactly what value $ M_{\tau_{i}} $ takes.
However, when $ \tau_{i+1} > \tau_{i} $, then
$ M_{\tau_{i}} =  \hat{M}_{i}$. In addition, for every $ t \in [ \tau_{i},
\tau_{i+1} \wedge \sigma_{i}) $, we have $ M_{t}^{(2)} \leqslant \hat{M}_{i}$.
\end{remark}
Note that $\eta $ depends on the diffusivity and the initial median. However, we have a deterministic bound for $\eta$, uniformly over $\kappa\in(0,1]$ and $M_{0}\in \NN_{*}$.
\begin{lemma}\label{lem:ub-eta}
Consider $ \eta $ as in Definition~\ref{def:eta-kappa} with any $\delta
<1/4$ and
$ \q > 2 $.
Then there
exists a (deterministic) $t_{0}> 0$ such that uniformly over all $\kappa\in(0,1]$ and all $M_{0}\in\NN_{*}$, we have that
$\eta\leqslant t_{0}$.
\end{lemma}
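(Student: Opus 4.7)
The plan is to prove this deterministically, with no probabilistic input: despite the fact that $\tau_i$ and $\sigma_i$ are random times, the construction in Definition~\ref{def:eta-kappa} forces a pathwise bound on $\eta$ via a telescoping sum of the hard caps $\tsd(\hat M_i)$, and the hypotheses $\q>2$ and $\delta<1/4$ are exactly what is needed to control this sum uniformly in $\kappa\in(0,1]$ and $M_0\in\NN_*$.

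First I would unpack the structure. By construction, $\sigma_i \leq \tau_i + \tsd(\hat M_i)$ deterministically, since the definition of $\sigma_i$ takes a minimum with this cap. Next, the minimality built into $i_{\mathrm{fin}}$ forces $\tau_{i+1} \leq \sigma_i$ for every $i<i_{\mathrm{fin}}$: otherwise $\sigma_i < \tau_{i+1}$ and $i$ would already belong to the minimizing set. Combining these two facts,
\[
\tau_{i+1} - \tau_i \leq \tsd(\hat M_i)\qquad \text{for all } 0 \leq i < i_{\mathrm{fin}}.
\]
Telescoping and then adding the final dwell time $\sigma_{i_{\mathrm{fin}}} - \tau_{i_{\mathrm{fin}}} \leq \tsd(\hat M_{i_{\mathrm{fin}}})$, and using $i_{\mathrm{fin}}\leq L$ to enlarge the range, yields the pathwise bound
\[
\eta \;\leq\; \sum_{i=0}^{L} \tsd(\hat M_i).
\]

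Next I would evaluate this sum. Writing $\tsd(m) = \lambda \kappa^{-1} m^{-2+\delta} \log m$, and splitting according to whether $\hat M_i = M_0 - i$ (for $i=0,\ldots,L-1$) or $\hat M_i = \kappa^{-\q}$ (for $i=L$),
\[
\sum_{i=0}^L \hat M_i^{-2+\delta}\log \hat M_i \;\leq\; \sum_{j > \kappa^{-\q}} j^{-2+\delta}\log j \;+\; \q \kappa^{\q(2-\delta)}\log(\kappa^{-1}).
\]
Since $\delta<1$, an integral comparison gives $\sum_{j>N} j^{-2+\delta}\log j \lesssim_\delta N^{-1+\delta}\log N$. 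Plugging $N=\lceil\kappa^{-\q}\rceil$ and multiplying by $\lambda\kappa^{-1}$ produces
\[
\eta \;\lesssim_{\delta,\q,\lambda}\; \log(\kappa^{-1})\bigl(\kappa^{\q(1-\delta)-1} + \kappa^{\q(2-\delta)-1}\bigr).
\]

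To conclude, I would check the parameter constraints. With $\q>2$ and $\delta<1/4$, the smaller exponent satisfies $\q(1-\delta)-1 > 2\cdot \tfrac{3}{4}-1 = \tfrac12 > 0$, and the other is even larger. Since $x\mapsto x^a\log(1/x)$ is bounded on $(0,1]$ for every $a>0$ (attaining its max at $x=e^{-1/a}$ with value $1/(ea)$), the right-hand side above is bounded by some $t_0$ depending only on $\delta,\q,\lambda,\a$, uniformly in $\kappa\in(0,1]$ and in $M_0$. There is no real obstacle: the whole argument reduces to deterministic bookkeeping of the stopping time construction, together with the convergence of $\sum j^{-2+\delta}\log j$ and a positivity check on two $\kappa$-exponents. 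The probabilistic content of Lemmas~\ref{lem:t-1} and~\ref{lem:t-2} is what justified choosing the $\tsd(\hat M_i)$ as the right caps in Definition~\ref{def:eta-kappa}, but is not invoked here; it will reappear when showing that, in fact, $i_{\mathrm{fin}}=L$ with high probability, which is needed for Theorem~\ref{thm:lyap} but not for the present deterministic upper bound.
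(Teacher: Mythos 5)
Your proof is correct and follows the same approach as the paper: bound $\eta$ pathwise by $\sum_{i=0}^L \tsd(\hat M_i)$, then estimate the sum by an integral comparison tail at $\kappa^{-\q}$ and check that the resulting power of $\kappa$ is positive under $\q>2$, $\delta<1/4$. Your write-up is somewhat more explicit than the paper's — you spell out the telescoping argument ($\tau_{i+1}\leq\sigma_i\leq\tau_i+\tsd(\hat M_i)$ for $i<i_{\mathrm{fin}}$) that justifies the first inequality, and you carry the logarithm through and dispose of it at the end via boundedness of $x^a\log(1/x)$ on $(0,1]$, whereas the paper absorbs it into the power $\delta$ at the outset; both are minor presentational variants of the same estimate.
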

\begin{proof}
From Definition~\ref{def:eta-kappa}, if $L\geqslant 1$, then we have $\hat{M}_{i}= M_{0}-i$ for all $0\leqslant i \leqslant L$ so that 
\begin{equs}
\eta\leqslant \sum_{i =0}^{L } \tsd(\hat{M}_{i})
& \lesssim \sum_{i=0}^{L } \kappa^{- 1} (\hat{M}_{i})^{-2 +
2\delta}  \lesssim \kappa^{-1} \int_{\kappa^{-\q}}^{\infty} x^{-2+2\delta} \ud x = \kappa^{-1} (\kappa^{-\q})^{-1+2 \delta} =  \kappa^{\q-2
\delta \q -1} \;,
\end{equs}
where we have absorbed the logarithmic factors into a small power $
(\hat{M}_{i})^{\delta} $. In the case $L=0$, we obtain the same bound using that $\hat{M}_{0} = \kappa^{-\q}$ and $\kappa\in(0,1]$. Therefore, provided $ \delta < 1/4$ and $ \q > 2 $, then we have  $ \q-2
\delta \q-1 >0$, so that $ \eta \leqslant t_{0}$ uniformly over $
\kappa\in(0,1]$ and all $M_{0}\in\NN_{*}$. 
\end{proof}
In this setting, the main result of this section is the following. Throughout
the remainder, we fix $\delta \in (0,1/4)$. 

\begin{proposition}\label{prop:stopped}
Fix any $ \bar{\q} > 4$,  and $
p \geqslant 0$. Then there exist $
C( p,\bar{\q}), \r > 1 $ and $ \gamma \in (0, \a/2) $ such that uniformly over $  \kappa \in (0, 1] $, $u_{0}\in \mC^{\gamma}$, we can estimate
\begin{equ}[e:wantedStopped]
\EE \left[ \| \pi_{\eta} \|^{2p}_{H^{1}} \right] \leqslant C(p,\bar{\q}) \kappa^{- \bar{\q} p} (\| u_{0} \|_{\mC^{\gamma}}+1)^{\r p} \;,
\end{equ}
where the time $ \eta $ is defined as in
Definition~\ref{def:eta-kappa} for any $ \q > 2 $ and any $ \delta \in
(0, 1/4) $.
\end{proposition}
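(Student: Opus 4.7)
The plan is to split the expectation in~\eqref{e:wantedStopped} according to whether the construction in Definition~\ref{def:eta-kappa} terminates successfully, i.e., whether $G \eqdef \{i_\mathrm{fin}=L\}$ holds. Once the high-frequency stochastic instability from Lemma~\ref{lem:t-2} is iterated via the strong Markov property, the good event will have overwhelming probability, and on it parabolic smoothing will turn the 2-quantile control into an $H^1$ bound; the bad event will be absorbed by Cauchy--Schwarz.

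\emph{Step 1: Iterating Lemma~\ref{lem:t-2}.} On $G^c$ there is some $k<L$ with $\sigma_k<\tau_{k+1}$, meaning the median fails to drop from $\hat M_k$ to $\hat M_{k+1}$ within the diffusive time $\tsd(\hat M_k)$. Applying the strong Markov property at $\tau_k$ and Lemma~\ref{lem:t-2} (valid because $\hat M_k\ge\kappa^{-\q}$ and, by Remark~\ref{rem:st}, $M_{\tau_k}\le\hat M_k$) yields, for every $p\ge 0$,
\begin{equation*}
\PP\bigl(\sigma_k<\tau_{k+1}\,\big|\,\mathcal F_{\tau_k}\bigr)\one_{\{i_\mathrm{fin}\ge k\}}\lesssim_p\hat M_k^{-p}(\|u_{\tau_k}\|_{\mC^\gamma}+1)^{\r p}.
\end{equation*}
Taking expectations, absorbing the velocity factor into one of $u_0$ via Lemma~\ref{lem:mmt-unif}, and summing the resulting geometric series in $\hat M_k$ gives $\PP(G^c)\lesssim_p\kappa^{\q p - C_0}(\|u_0\|_{\mC^\gamma}+1)^{\r' p}$ for an absolute constant $C_0$. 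Choosing $p$ large makes $\PP(G^c)$ smaller than any fixed power of $\kappa$.

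\emph{Step 2: Bound on the good event $G$.} On $G$ we have $\eta=\sigma_L$ with $\tau_L\le\eta\le\tau_L+\tsd(\kappa^{-\q})$, and throughout $[\tau_L,\eta)$ the 2-quantile satisfies $M^{(2)}_t\le\kappa^{-\q}$. The diffusive time window has length at least of order $\kappa^{-1}(\kappa^{-\q})^{-2+\delta}\log(\kappa^{-\q})$, a positive power of $\kappa$ since $\q>2$ and $\delta<1/4$. Combining this $L^2$-level 2-quantile control with the parabolic smoothing of Section~\ref{sec:regularity} converts the bound $M^{(2)}_t\le\kappa^{-\q}$ into a pathwise estimate of the form
\begin{equation*}
\|\pi_\eta\|_{H^1}^{2p}\one_G\lesssim\kappa^{-\bar\q p}\bigl(1+\sup_{0\le s\le t_0}\|u_s\|_{\mC^\gamma}\bigr)^{\r'' p},
\end{equation*}
for any $\bar\q>4$ (depending on $\q,\delta$) by carefully bounding both the low-frequency block $\mL_{\kappa^{-\q}}\pi_\eta$ (directly) and the high-frequency block $\mH_{\kappa^{-\q}}\pi_\eta$ (by using the quadratic dissipation $\kappa|k|^2\gtrsim\kappa^{1-2\q}$ against the linear transport loss). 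Taking expectations and invoking Lemma~\ref{lem:mmt-unif} produces the desired bound on $\EE[\|\pi_\eta\|_{H^1}^{2p}\one_G]$.

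\emph{Step 3: Absorbing the bad event $G^c$.} A Gr\"onwall-type energy estimate for $\|\varrho_t\|_{H^1}^2$, combined with parabolic smoothing from an auxiliary earlier time within the construction, controls arbitrary moments $\EE[\|\pi_\eta\|_{H^1}^{4p}]$ by some fixed negative power of $\kappa$ times polynomial moments of $\sup_{s\le t_0}\|u_s\|_{\mC^\gamma}$; these moments are in turn finite by Lemma~\ref{lem:mmt-unif}. Cauchy--Schwarz then gives
\begin{equation*}
\EE\bigl[\|\pi_\eta\|_{H^1}^{2p}\one_{G^c}\bigr]\le\PP(G^c)^{1/2}\,\EE\bigl[\|\pi_\eta\|_{H^1}^{4p}\bigr]^{1/2},
\end{equation*}
and the arbitrarily large power of $\kappa$ available in Step 1 swallows the fixed polynomial blow-up of the $H^1$ moments, leaving a contribution well within the target $\kappa^{-\bar\q p}$.

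The main obstacle is Step 2: extracting a clean $H^1$ estimate from a diffusion time that is itself a small positive power of $\kappa$ requires balancing the dissipation of $\mH_{\kappa^{-\q}}\varrho$ against the transport by $u$ (and, in the linearised SNS case, against $\Delta u\cdot\nabla^{-1}\varrho$, which loses two derivatives on $u$ but only one on $\varrho$). The analytic content of Section~\ref{sec:regularity} is precisely designed to execute this delicate trade-off uniformly in $\kappa$; everything else in the argument is a Markov chain bookkeeping exercise anchored on Lemma~\ref{lem:t-2}.
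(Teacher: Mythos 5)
Your overall framework \dash split into ``good'' and ``bad'' events, bound the probability of the bad event, bound the $H^1$ regularity on the good event, and glue via Cauchy--Schwarz \dash is in the right spirit and is close to what the paper does. But there are two genuine gaps that make the argument as written collapse.

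\textbf{Gap 1 (Step 2): there is no deterministic lower bound on the time window.} You claim that ``the diffusive time window has length at least of order $\kappa^{-1}(\kappa^{-\q})^{-2+\delta}\log(\kappa^{-\q})$'' and then deduce a \emph{pathwise} $H^1$ bound on $\pi_\eta$. Both are false: by Definition~\ref{def:eta-kappa}, $\sigma_L$ is the minimum of $\tau_L + \tsd(\hat M_L)$ and the first time after $\tau_L$ when $M^{(2)}_t \geqslant \hat M_L$, and the second stopping time can kick in immediately (even $\eta=0$ is possible when the initial 2-quantile is already at level $\hat M_0$). The quantity $\tsd(\hat M_L)$ is an \emph{upper} bound on $\eta-\tau_L$, not a lower bound. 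Consequently, parabolic smoothing cannot produce a pathwise $H^1$ bound at time $\eta$: the smoothing constant degenerates like $(\kappa(\eta-\tau_L))^{-1/2}$ and is unbounded on a set of positive measure. The paper's Proposition~\ref{prop:regu} gives only an \emph{expectation} bound, and its proof is forced to control the \emph{negative moments} of the stopping time via Lemma~\ref{lem:ngemmts} precisely to handle the event that $\sigma_L$ kicks in early.

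\textbf{Gap 2 (Step 3): a global moment bound on $\|\pi_\eta\|_{H^1}^{4p}$ is circular.} You apply Cauchy--Schwarz as $\EE[\|\pi_\eta\|_{H^1}^{2p}\one_{G^c}]\leqslant \PP(G^c)^{1/2}\EE[\|\pi_\eta\|_{H^1}^{4p}]^{1/2}$, and then assert that $\EE[\|\pi_\eta\|_{H^1}^{4p}]$ is ``controlled by some fixed negative power of $\kappa$'' uniformly in $M_0$. This is essentially the statement you are trying to prove (for $4p$ instead of $2p$), so the step is circular. The problem is quantitative: on $\mA_i=\{i_{\mathrm{fin}}=i<L\}$, Proposition~\ref{prop:regu} gives $\EE_{\tau_i}[\|\pi_{\sigma_i}\|_{H^1}^{4p}] \lesssim (\hat M_i)^{4p}$, and $\hat M_i$ is unbounded as $M_0\to\infty$; the raw sum $\sum_i(\hat M_i)^{4p}$ diverges. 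The only way to salvage this is to pair the regularity growth $(\hat M_i)^{4p}$ against the probability decay $\PP(\mA_i)\lesssim(\hat M_i)^{-2q}$ \emph{on each} $\mA_i$ separately, choosing $q>2p+1$ so the series converges. This is exactly what the paper does: it writes
\begin{equ}
\EE[\|\pi_\eta\|_{H^1}^{2p}] \leqslant \sum_{i\geqslant 0}\EE[\|\pi_\eta\|_{H^1}^{4p}\one_{\mA_i}]^{1/2}\,\PP(\mA_i)^{1/2} + \EE[\|\pi_\eta\|_{H^1}^{4p}\one_{\mB}]^{1/2}\,\PP(\mB)^{1/2}\;,
\end{equ}
and sums the geometric-like series. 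A single coarse split into $G$ vs.\ $G^c$ followed by one Cauchy--Schwarz loses the crucial index-by-index cancellation.

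In short: you identified the right mechanisms (high-frequency stochastic instability from Lemma~\ref{lem:t-2}, parabolic smoothing from Section~\ref{sec:regularity}, Cauchy--Schwarz), but you need (i) to replace the claimed pathwise bound by the moment bound of Proposition~\ref{prop:regu}, whose proof is nontrivial precisely because of the possibly small stopping time, and (ii) to apply Cauchy--Schwarz level-by-level over $\mA_i$ so that the $(\hat M_i)^{4p}$ blowup is matched against the $(\hat M_i)^{-2q}$ probability decay.
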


\begin{proof}
To estimate the desired quantity, we partition over the events for all integers $i\geqslant 0$
\begin{equ}
\mA_{i} = \{ \eta=
\sigma_{i}\} \cap \{i_{\mathrm{fin}}=i<L  \} \;, \qquad \mB = \{ \eta = \sigma_{i_{\mathrm{fin}}}\} \cap\{i_{\mathrm{fin}}=L \} \;.
\end{equ} Our bound then works somewhat in analogy to the proof of Lemma~\ref{lem:t-2}.

We start by estimating (recall that $ L $ is deterministic while
$ i_{\mathrm{fin}} $ is not) via Cauchy--Schwarz
\begin{equs}
\EE \left[ \| \pi_{\eta} \|^{2p}_{H^{1}} \right] \leqslant & \sum_{i = 0}^{\infty} \EE \left[
\| \pi_{\eta} \|^{4p}_{H^{1}} 
 \one_{\mA_{i}} \right]^{\frac{1}{2}} \PP ( \mA_{i})^{\frac{1}{2}
}  + \EE \left[ \| \pi_{\eta} \|^{4p}_{H^{1}} \one_{\mB} \right]^{\frac{1}{2}} \PP (
\mB)^{\frac{1}{2}} \;,
\end{equs} 
Now, we bound each term in the sum. To lighten the notation we allow as usual the
value of generic constants such as $ C$ and $ \r $ to change increasingly from
line to line. We first bound the probability $
\PP (\mA_{i}) $.  Here we apply Lemma~\ref{lem:t-2}, which implies
that there exists an $ \r > 1 $, and for any $ q \geqslant  1 $ there exists a $C=
C(q)>0 $ such that
\begin{equ}
\PP_{\tau_{i}} (\mA_{i}) \leqslant C (\hat{M}_{i})^{- 2 q}(\| u_{\tau_{i}} \|_{\mC^{\gamma}}+1)^{2 \r q} \;,
\end{equ} where we have used that $\hat{M}_{i} \geqslant M_{\tau_{i}} \vee \kappa^{-\q} $ on $\mA_{i}$.
From this, via Lemma~\ref{lem:mmt-unif} we further deduce that up to
choosing larger $ C, \r > 1 $ we have
\begin{equ}[e:bd-PA]
\PP (\mA_{i}) \leqslant C (\hat{M}_{i})^{- 2 q}(\| u_{0} \|_{\mC^{\gamma}}+1)^{2 \r q} \;.
\end{equ}
Instead, for the $4p$th moment of the norm we proceed as follows. 
To conclude our estimate for the events $ \mA_{i} $, we control the
regularity of $ \pi_{t} $ through Proposition~\ref{prop:regu},
which yields
\begin{equ}
\EE_{\tau_{i}} \left[ \| \pi_{\eta}
\|^{4p}_{H^{1}} \one_{\mA_{i}} \right] \leqslant  \EE_{\tau_{i}}
\left[ \| \pi_{\sigma_{i}} \|^{4p}_{H^{1}} \right] \lesssim
(\hat{M}_{i})^{4p} (\| u_{0} \|_{\mC^{\gamma}}+1)^{2 \r p} \;,
\end{equ} 
where we again used that $\hat{M}_{i}\geqslant M_{\tau_{i}}\vee \kappa^{-\q}$ on $\mA_{i}$.
Overall, assuming that $q$ is sufficiently large so that  $ q > 2p+1 $ and $ (\kappa^{\q})^{q-2p-1} \leqslant 1 $,  we obtain
\begin{equs}
\sum_{i = 0}^{\infty} \EE \left[
 \| \pi_{\eta}
\|^{4p}_{H^{1}} \one_{\mA_{i+1}} \right]^{\frac{1}{2}} \PP ( \mA_{i})^{\frac{1}{2}}
& \lesssim \sum_{i =0}^{\lceil M_{0} - \kappa^{-\q} \rceil \vee 0 } (\hat{M}_{i})^{-q +
2p}  ( \| u_{0} \|_{\mC^{\gamma}}+1)^{\r p+\r q} \\
& \lesssim (\kappa^{\q})^{q - 2p-1}  (\| u_{0}
\|_{\mC^{\gamma}}+1)^{\r (p+ q) } \lesssim (\| u_{0} \|_{\mC^{\gamma}}+1)^{\r (p+ q) }\;.
\end{equs}
Finally, the term involving $ \mB $ is estimated similarly as before via
Proposition~\ref{prop:regu}. Indeed, on $\mB$ we have $ M_{\tau_{L}} \vee \kappa^{-\q}\leqslant \hat{M}_{L}\leqslant \kappa^{-\q} $ from the definition of $L$. The only difference is that we do not have
any upper bound on $ \PP(\mB) $, other than the trivial $ \PP (\mB) \leqslant 1
$. For this reason, our estimate explodes in $ \kappa $:
\begin{equs}
\EE_{\tau_{L}}  \left[  \| \pi_{\eta}
\|^{4p}_{H^{1}}  \one_{\mB} \right]^{\frac{1}{2}
} & \lesssim \kappa^{- 2p \q}  ( \| u_{0}
\|_{\mC^{\gamma} }+1)^{\r p}\;.
\end{equs}
Hence we obtain the desired moment estimate with $ \overline{\q} = 2 \q $.
\end{proof}
We are now ready to deduce Theorem~\ref{thm:lyap}.

\begin{proof}[of Theorem~\ref{thm:lyap}]
We start by fixing $ t_{0} $ as in Lemma~\ref{lem:ub-eta} as well as the time $ \eta $ from Definition~\ref{def:eta-kappa}, for some arbitrary
$ \q>2 $ (these values of $ t_{0} $ and $ \q $ are not yet the ones that
appear in the statement of the theorem). Then, since by Lemma~\ref{lem:ub-eta}
we have $ \eta \leqslant t_{0} $, Lemma~\ref{lem:ub-h1-Lns} implies that
\begin{equs}
\EE \|  \pi_{t_{0}} \|_{H^{1}}^{2} \lesssim  \EE \left[
\exp\left(C\int_{\eta}^{t_{0}} \|u_{r}\|_{H^{\gamma}}\ud r
\right)\left(\|\pi_{\eta}\|_{H^{1}}+ t_{0}
\sup_{0\leqslant r \leqslant t_0}\|u_{r}\|_{H^{\gamma +1}}\right)^{2} \right] \;,
\end{equs}
for any value of $ \gamma \in (2, 3) $. Now we can apply
Lemma~\ref{lem:lyap-sns} to bound the exponential of the norm, and we can use
Lemma~\ref{lem:mmt-unif} to bound moments of $ \| u \|_{H^{\gamma+1}} $ uniformly
in time. We therefore obtain
\begin{equ}
\EE \|  \pi_{t_{0}} \|_{H^{1}}^{2} \lesssim \EE  \left[ \| \pi_{\eta}
\|_{H^{1}}^{4}\right]^{\frac{1}{2}} + (\| u_{0}
\|_{\mC^{\gamma}}+1)^{\r} \;,
\end{equ}
for some $ \gamma \in (0, \a/2) $ (different from the previous one) and $ \r>1
$. Then by Proposition~\ref{prop:stopped}
we further estimate, for any $ \bar{\q} > 4 $
\begin{equ}
\EE \|  \pi_{\eta} \|_{H^{1}}^{4} \lesssim  \kappa^{- 2\bar{\q}} (\| u_{0}
\|_{\mC^{\gamma}}+1)^{2\r} \;.
\end{equ}
Note that all we did in this step was to reduce the moment bound at a
deterministic time to that at a stopping time with a deterministic upper bound.
The overall estimate that we have obtained
\begin{equ}
\EE \|  \pi_{t_{0}} \|_{H^{1}}^{2} \lesssim \kappa^{- \bar{\q}} (\| u_{0}
\|_{\mC^{\gamma}}+1)^{\r }
\end{equ}
is not quite enough for the statement of the theorem, since $ \| u_{0}
\|_{\mC^{\gamma}} $ is not bounded by the Lyapunov functional $ V_{\r} $ as
defined in \eqref{e:lyap-sns}. Therefore we consider a slightly longer time
horizon $ \overline{t}_{0} = t_{0}+1 $ and apply the Markov property together
with the first bound in Lemma~\ref{lem:mmt-unif} to obtain for some $
\overline{\r} > 1 $
\begin{equ}
\EE \|  \pi_{ \overline{t}_{0}} \|_{H^{1}}^{2} \lesssim \kappa^{- \bar{\q}} \EE
(\| u_{1} \|_{\mC^{\gamma}}+1)^{\r } \lesssim \kappa^{- \bar{\q}} (1 +
V_{ \overline{\r}}(u_{0})) \;.
\end{equ}
This is the desired estimate for the theorem, by setting $
\overline{t}_{0} = t_{0} $, $ \overline{\r}=\r $, and $ \overline{\q} = \q $.
\end{proof}

\section{Gaussian estimates for passive scalar advection} \label{sec:gaussian-estimates}
This section is devoted to the proof of Proposition~\ref{prop:bds-new}, by
analysing the terms in the expansion described in
Section~\ref{sec:chaos-expansion}. However, there are some subtle differences
between the case of passive scalar advection and that of linearised
Navier--Stokes. Therefore, we first treat the case of passive scalar advection:
this will allow us to set up the entire approach. We then later revisit this
approach in the context of linearised SNS, in Section~\ref{sec:gauss-sns}.

Furthermore, it will be convenient to rewrite
\eqref{e:main} and \eqref{e:sns} in Fourier coordinates, and introduce a
suitable setting for estimates of Gaussian integrals.
We start by representing the Leray projection of the driving noise $ \xi $ in
Fourier coefficients by
\begin{align*}
   \mathbf{P} \xi(t,x) = \sum_{k \in \ZZ^{2}_{*}}  e_{k}(x)
    \frac{\iota k^{\perp}}{|k|^{1+\frac{\a}{2}}} \partial_{t} \zeta^{k}_{t}\;,
\end{align*}
where $ k^{\perp} = (k^{(2)}, - k^{(1)}) $ and 
$ \{ \zeta^{k} \}_{k \in \ZZ^{d}} $ a sequence of i.i.d.\ complex Brownian
motions with covariance
   \begin{equ}[e:cov-zeta]
      \EE [ \partial_{t} \zeta^{k}_{t} \partial_{t}
      \zeta^{k^{\prime}}_{s}] = \sigma^{2} \delta (t-s) 1_{\{ k =- k^{\prime} \}} \;,\qquad
      \EE [ \partial_{t} \zeta^{k}_{t} \partial_{t}
      \bar \zeta^{k^{\prime}}_{s}] = \sigma^{2} \delta (t-s) 1_{\{ k =
k^{\prime} \}} \;.
   \end{equ}
Here the parameter $ \sigma > 0 $ is the same as in \eqref{e:assu-noise}.
However, since we are not interested in obtaining bounds that are uniform over
$ \sigma $, we fix without loss of generality $ \sigma =1 $ for the entirety of
the upcoming sections.

In this setting, we can for example represent the Gaussian term $ X $ appearing
in \eqref{e:def-X} via
\begin{equ}
X_{t}(x)  = \sum_{k \in \ZZ^{2}_{*} } e^{- | k |^{2} t} \hat{u}_{0}(k) +  \frac{1}{| k
|^{\frac{\a}{2}}} \left(  \int_{0}^{t} e^{- | k |^{2} (t-s)}
\ud \zeta^{k}_{s} \right) \frac{\iota k^{\perp}}{| k |} e_k(x) \;.
\end{equ}
For the sake of clarity, and to introduce
the notation that we will use later on, let us state the It\^o isometry in
our setting. For any $ t >0 $ we consider, on the space $$ E_{t}= [0, t] \times
\ZZ^{2}_{*} \;, $$ the product measure $ \ud s \otimes \ud k $ between Lebesgue on $ [0, t] $ and the
counting measure on $ \ZZ^{2}_{*} $. We will write $ L^{2}(E_{t}) $ for $
L^{2}(E_{t}, \ud s \otimes \ud k) $, where the latter is the real Hilbert space
defined, through the inner product $ \langle f, g \rangle_{E_{t}} = \int_{0}^{t}
\sum_{k} f(s, k) g (s, -k) \ud s $, as the set of complex-valued functions $
f $ such that $ \overline{f}(s, k) = f (s, - k) $ and $ \| f
\|_{L^{2}(E_{t})} < \infty $.
Then, for any $ f \in L^{2}(E_{t}) $ the following
stochastic integral is defined
\begin{equ}
\int_{E_{t}} f(s, k) \ud \zeta(s, k) = \sum_{k \in \ZZ^{2}_{*}}
\int_{0}^{t}f(s, k) \ud \zeta_{s}^{k} \;,
\end{equ}
where $ \zeta^{k}_{s} $ are the complex-valued Brownian motions from
\eqref{e:cov-zeta}. In particular
\begin{equ}[e:ito]
\EE \left\vert \int_{E_{t}} f(s, k) \ud \zeta(s, k) \right\vert^{2} = \| f
\|_{L^{2}(E_{t})}^{2} \;.
\end{equ}
We will also find it useful to denote the Malliavin derivative of the $k$-th Brownian motion at time $s<t$ by $\mathcal{D}_{s,k}$ so that $\mathcal{D}_{s,k}\zeta(r,\ell) = \delta(s-r)\delta_{k,\ell}$. In particular, we have for any $f\in L^2(E_t)$ that
\[
\mathcal{D}_{s,k} \int_{E_t} f(r,\ell)\dee\zeta(r,\ell) = f(s,k) \;.
\]
Now we are ready to work on the proof of Proposition~\ref{prop:bds-new}.

\subsection{Lower bound}

Now we are ready to start our analysis. The first step is to establish the
lower bound in Proposition~\ref{prop:bds-new}.
Recall the definition of $ \Phi, X $ and $ Y $ from
\eqref{e:def-phi}, \eqref{e:def-X} and \eqref{e:def-Y} respectively. Since we
are only interested in the contribution of the term in
$ \Phi[X, Y] $ let us further decompose $ X = X^{0} + \widetilde{X} $, with $
X^{0}_{t} = P_{t} u_{0} $ and $ \widetilde{X} $ the (Gaussian) stochastic
integral $ \widetilde{X} = \int_{0}^{t} P_{t-s} \mathbf{P} \xi \ud s $. Then we
find the following.

\begin{lemma}\label{lem:LBI1-new} 
Consider the map $ \Phi $ as defined in \eqref{e:def-phi} in the case $ L
[u] = u \cdot \nabla $, and fix any $ \beta \geqslant 1 $. Then
the following estimates hold uniformly over $ \varrho_{0} \in L^{2}_{*}, \kappa \in (0, 1] $ and $M \in
\NN_{*}, M \geqslant M^{(\beta)}(\varrho_{0}) $:
\begin{equ}
\EE \left\| \mL_{1} \, \Phi [ \widetilde{X}, Y]_{\ts (M)}
\right\|^{2} \gtrsim \kappa^{- 1} M^{ -6} \|
\varrho_{0} \|_{H^{- \frac{\a}{2}}}^{2} \;.
\end{equ}
\end{lemma}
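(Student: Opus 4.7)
The plan is to expand $\Phi[\widetilde{X}, Y]_{\ts(M)}$ in Fourier modes, compute the expected $L^{2}$-norm via the It\^o isometry \eqref{e:ito}, and then invoke the spectral median hypothesis $M \geq M^{(\beta)}(\varrho_0)$ to relate the resulting weighted sum to $\| \varrho_0 \|_{H^{-\a/2}}^{2}$.

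First, I would substitute the explicit Fourier representations of $\widetilde{X}$ (a Gaussian stochastic integral) and $Y = P^\kappa_s \varrho_0$ into the definition \eqref{e:def-phi} of $\Phi$, which for $L[u] = u\cdot\nabla$ puts $L[\widetilde X_s] Y_s$ inside the time integral. Using the key identity $k^\perp \cdot (m-k) = k^\perp \cdot m$ (since $k^\perp \cdot k = 0$) and applying stochastic Fubini to exchange the outer $s$-integral with the inner Wiener integral, one obtains
\[
\mF(\Phi[\widetilde{X}, Y]_t)(m) = \sum_k \int_0^t g_m(r, k)\, \dee \zeta^k_r, \qquad g_m(r, k) = \frac{k^\perp \cdot m}{|k|^{1+\a/2}}\,\hat{\varrho}_0(m-k)\, J_{m,k}(r),
\]
where
\[
J_{m,k}(r) = \int_r^t e^{-\kappa |m|^2 (t-s) - \kappa |m-k|^2 s - |k|^2 (s-r)}\, \dee s.
\]
The It\^o isometry yields $\EE |\mF(\Phi)(m)|^2 = \sum_k \int_0^t |g_m(r,k)|^2\, \dee r$. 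Changing variables to $\ell = m - k$ (so $k^\perp \cdot m = m^\perp \cdot \ell$), summing over $|m| = 1$ with $\sum_{|m|=1} |m^\perp \cdot \ell|^2 = 2|\ell|^2$, and using $|m-\ell| \sim |\ell|$ for $|\ell| \geq 2$, one finds
\[
\EE \|\mL_1 \Phi[\widetilde{X}, Y]_{\ts(M)}\|^2 \gtrsim \sum_{|\ell| \geq 2} \frac{|\hat{\varrho}_0(\ell)|^2}{|\ell|^\a}\, \min_{|m|=1}\int_0^{\ts(M)} |J_{m, m-\ell}(r)|^2\, \dee r,
\]
with the finitely many $|\ell|=1$ modes giving the same scaling by direct computation.

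Next, I would obtain an $\ell$-uniform lower bound on the time integral. Setting $a = \kappa |m|^2 \leq \kappa$, $b = \kappa |\ell|^2$, $c = |m-\ell|^2 \in [1,\, 4|\ell|^2]$ and using the closed form $J_{m,k}(r) = (e^{-a(t-r) - br} - e^{-bt - c(t-r)})/(b+c-a)$, together with the fact that $a \ts(M) = \lambda M^{-2} \log M \leq 1$ for $M$ large, an elementary estimate (restricting $r$ to $[0, \ts(M)/2]$ and separating the regimes $b \ts(M) \lesssim 1$ versus $b \ts(M) \gtrsim 1$) gives
\[
\int_0^{\ts(M)} |J_{m, m-\ell}(r)|^2\, \dee r \;\gtrsim\; \frac{1}{|\ell|^4\, \max(\kappa |\ell|^2,\, \ts(M)^{-1})}.
\]
In the high-frequency regime $\kappa |\ell|^2 \geq \ts(M)^{-1}$ this equals $1/(\kappa |\ell|^6) \geq (\kappa M^6)^{-1}$ since $|\ell| \leq M$; in the low-frequency regime $\kappa|\ell|^2 \leq \ts(M)^{-1}$ it equals $\ts(M)/|\ell|^4 = \lambda \log M / (\kappa M^2 |\ell|^4) \geq \lambda \log M / (\kappa M^6)$, again using $|\ell| \leq M$. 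Thus in both cases $\int_0^{\ts(M)} |J_{m,m-\ell}|^2\,\dee r \gtrsim (\kappa M^6)^{-1}$ uniformly over $2 \leq |\ell| \leq M$.

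Finally, I would sum over $|\ell| \leq M$ and invoke the spectral-median hypothesis. The accumulated lower bound is
\[
\EE \|\mL_1 \Phi[\widetilde{X}, Y]_{\ts(M)}\|^2 \gtrsim \frac{1}{\kappa M^6} \sum_{|\ell| \leq M} \frac{|\hat{\varrho}_0(\ell)|^2}{|\ell|^\a}.
\]
The condition $M \geq M^{(\beta)}(\varrho_0)$ means $\| \mH_M \varrho_0 \| \leq \beta \| \mL_M \varrho_0 \|$; since $|\ell|^{-\a} \leq M^{-\a} \leq |\ell'|^{-\a}$ for $|\ell| > M \geq |\ell'|$, this yields $\sum_{|\ell| > M} |\ell|^{-\a} |\hat{\varrho}_0(\ell)|^2 \leq M^{-\a} \beta^2 \| \mL_M \varrho_0 \|^2 \leq \beta^2 \sum_{|\ell| \leq M} |\ell|^{-\a} |\hat{\varrho}_0(\ell)|^2$, so $\sum_{|\ell| \leq M} |\ell|^{-\a} |\hat{\varrho}_0(\ell)|^2 \geq (1+\beta^2)^{-1} \| \varrho_0 \|_{H^{-\a/2}}^2$. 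Substituting yields the claim, with the implicit constant allowed to depend on $\beta$. I expect the main technical obstacle to be the uniform-in-$\ell$ lower bound on $\int_0^{\ts(M)} |J|^2\, \dee r$, which requires carefully balancing the diffusive decay of $\widetilde{X}$ (at rate $|k|^2 \sim |\ell|^2$) against that of $Y$ (at rate $\kappa |\ell|^2$) at the chosen timescale $\ts(M)$, across both high- and low-frequency regimes.
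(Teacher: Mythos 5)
Your proposal follows essentially the same route as the paper's proof: Stroock's formula identifies the first-chaos kernel, the It\^o isometry reduces the problem to a weighted sum of time integrals, a restriction to half the time interval gives an $\ell$-uniform lower bound, and the $\beta$-quantile hypothesis closes the argument by comparing $\|\mL_M\varrho_0\|_{H^{-\a/2}}$ with $\|\varrho_0\|_{H^{-\a/2}}$. The notation is shifted (your $m$ is the paper's $\ell$, your $\ell$ the paper's $\ell-k$) but the argument is identical, including the observation $\sum_{|m|=1}|m^\perp\cdot\ell|^2\gtrsim|\ell|^2$.

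One small caveat: your stated lower bound $\int_0^{\ts(M)}|J_{m,m-\ell}(r)|^2\,\dee r\gtrsim 1/(|\ell|^4\max(\kappa|\ell|^2,\ts(M)^{-1}))$ is slightly too optimistic in the sub-regime $|\ell|^2<\ts(M)^{-1}$. The inner $r$-integral over $[0,t/2]$ contributes $c^{-1}\wedge t$ rather than $c^{-1}$ (here $c=|m-\ell|^2\sim|\ell|^2$), so the sharp bound is $(c^{-1}\wedge t)^2(\kappa^{-1}c^{-1}\wedge t)$, which in that sub-regime is $\ts(M)^3$ rather than $\ts(M)/|\ell|^4$. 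This does not affect the final conclusion, since $\ts(M)^3=\lambda^3\kappa^{-3}M^{-6}(\log M)^3\gtrsim\kappa^{-1}M^{-6}$ anyway (using $\lambda>1$, $\kappa\leq1$), but you would need the corrected form when carrying out the elementary estimate you allude to.
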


\begin{proof}
We must estimate the following norm
\begin{equ}
\left\| \mL_{1}\, \Phi[ \widetilde{X}, Y]_{\ts (M)} \right\|^{2}= 2 \sum_{|\ell|\leqslant 1} |
\mF \Phi[ \widetilde{X}, Y]_{\ts (M)}(\ell) |^2 \;,
\end{equ}
where for $\mF \Phi[ \widetilde{X}, Y]_{t} (\ell)$, denotes the Fourier transform of $\Phi[ \widetilde{X}, Y]_{t}$ at frequency $\ell \in \Z^2_*$. Note that since $\tilde{X}$ is Gaussian, $\mF \Phi[ \widetilde{X}, Y]_{t} (\ell)$ can be written as a stochastic integral
\begin{equ}
   \mF \Phi[ \widetilde{X}, Y]_{t} (\ell) = \int_{E_{t}} f_t^\ell (s, k) \ud \zeta(s, k) \;,
\end{equ}
where, by Stroock's formula, the integrand $f_t^\ell$ is given by
\begin{equ}
f^{\ell}_t(s,k) = \mF\Phi[\mathcal{D}_{s,k}\widetilde{X}, Y]_t(\ell)\;.
\end{equ}
In this setting, we can write the Fourier transform as
\begin{equ}
\mF \Phi[\tilde{X},Y]_t(\ell) = - \sum_{j\in\Z^2_*}\int_0^t
e^{-\kappa|\ell|^2(t-r)}e^{-\kappa|\ell -
j|^2}\iota\ell\cdot\mF\widetilde{X}_r(j)\widehat{\varrho_0}(\ell-j)\dr \;.
\end{equ}
Therefore, since the Malliavin derivative of $\widetilde{X}$ is given by
\begin{equ}
\mathcal{D}_{s,k}\widetilde{X}(r,x) =
e^{-|k|^2(r-s)}|k|^{-\frac{\a}{2}-1}k^\perp e_k(x)\1_{\{0\leqslant s \leqslant
r\}} \;,
\end{equ}
we have 
\begin{equ}
   f^{\ell}_t(s,k) = - \iota\ell\cdot k^\perp
\widehat{\varrho_0}(\ell-k)|k|^{-\a/2-1}\int_s^te^{-\kappa|\ell|^2(t-r)}e^{-\kappa|\ell
- k|^2}e^{-|k|^2(r-s)}\dr \;.
\end{equ}
It follows by It\^{o}'s isometry that
\begin{equ}
   \E\|\mL_1\Phi[\widetilde{X},Y]_{t}\|^2 = \sum_{|\ell|\leqslant 1}\sum_{k\in\Z^2_*}\frac{|\ell\cdot k^\perp|^2|\widehat{\varrho_0}(\ell-k)|^2}{|k|^{\a +2}} \int_0^t\left|\int_s^te^{-\kappa|\ell|^2(t-r)}e^{-\kappa|\ell - k|^2}e^{-|k|^2(r-s)}\dr\right|^2\ds\;.
\end{equ}
Now we can bound the time integral as follows:
\begin{equs}
   \int_{0}^{t}  \left\vert \int_{s}^{t} e^{- \kappa (t-r)}
   e^{- \kappa |\ell- k|^{2}r} e^{- |k|^{2}(r -s)} \ud r 
   \right\vert^{2} \ud s &\gtrsim \int_{0}^{t}  \left\vert \int_{s}^{t} 
   e^{- (\kappa |\ell- k|^{2} + | k |^{2})r}  \ud r 
   \right\vert^{2} e^{2 |k|^{2} s} \ud s \\
   & \gtrsim \int_{0}^{t}  \left\vert \int_{0}^{t-s} 
   e^{- ( 1+ c\kappa) |k|^{2} r}  \ud r 
   \right\vert^{2} e^{-2c\kappa |k|^{2} s} \ud s \\
   & \gtrsim \left\vert \int_{0}^{t/2} 
   e^{- ( 1+ c\kappa) |k|^{2} r}  \ud r 
   \right\vert^{2} \int_{0}^{t/2}  e^{-2c\kappa |k|^{2} s} \ud s \\
   & \gtrsim \bigl||k|^{-2} \wedge t\bigr|^2 \bigl|\kappa^{-1} |k|^{-2} \wedge t\bigr| \;.
\end{equs}
Here the first inequality follows because $ \exp(- \kappa (t -r)) $ is
bounded from below for $ t \leqslant t_{\star}^{\kappa}(M) \leqslant 1 $, 
 the second inequality follows from the fact that  there exists $ c > 0 $ such 
 that $ |\ell -
k|^{2} \leqslant c |k|^{2} $ uniformly over $
k \in \ZZ^{2}_{*} $, and the last inequality follows from the fact that 
$\int_0^t e^{-as}\ds \gtrsim a^{-1} \wedge t$, uniformly over $a,t >0$.

If we restrict ourselves to $|k| \leqslant M+1$ and consider 
$t = t_{\star}^{\kappa} (M) \geqslant \kappa^{-1}M^{-2}$, we find that 
\begin{equ}
\bigl||k|^{-2} \wedge t\bigr|^2 \bigl|\kappa^{-1} |k|^{-2} \wedge t\bigr| 
\gtrsim \kappa^{-1} M^{-6}\;,
\end{equ}
so that overall
\begin{equs}
\E\|\mL_1 \Phi[\widetilde{X},Y]_{t}\|^2 &\gtrsim \kappa^{-1} M^{-6} \sum_{|\ell| \leqslant 1} \sum_{|k|\leqslant M+1}\frac{|\ell\cdot k^\perp|^2|\widehat{\varrho_0}(\ell-k)|^2}{|k|^{\a +2}}\\
 &\gtrsim \kappa^{-1} M^{-6} \sum_{\ell \leq 1}\sum_{|k|\leq M+1 }\frac{|\ell\cdot (k-\ell)^\perp|^2|}{|k- \ell|^2}\frac{\widehat{\varrho_0}(\ell-k)|^2}{|k-\ell|^{\a}}\\
 &\gtrsim  \sum_{|k|\leq M}\Biggl(\sum_{|\ell|\leq 1}\frac{|\ell\cdot k^\perp|^2}{|k|^2}\Biggr)\frac{|\widehat{\varrho_0}(k)|^2}{|k|^{\a}}\\
 &\gtrsim \|\mL_M \varrho_0\|_{H^{-\frac{\a}{2}}}^2 \;,
\end{equs}
where we have used the fact that $ k \neq \ell $ (otherwise $
\hat{\varrho}_{0} (\ell - k) = 0 $), implying $ | k | \sim | \ell -k | $, and
$ |\ell \cdot k^{\perp}| = | \ell\cdot(k-\ell)^\perp| $. Additionally, we have
\[
   \sum_{|\ell|\leq 1}\frac{|\ell\cdot k^\perp|^2}{|k|^2} \gtrsim 1 \;.
\]
The proof is complete upon noting that for $M \geq M^{(\beta)}(\varrho_0)$, we have $\|\mL_M \varrho_0\|_{H^{-\frac{\a}{2}}} \gtrsim_\beta \|\varrho_0\|_{H^{-\frac{\a}{2}}}$.
\end{proof}

\subsection{Upper bounds}\label{sec:upper-bounds}

Now we show that Lemma~\ref{lem:LBI1-new} is sufficient to deduce that by
time $ t_{\star}^{\kappa}(M) $ we have $ \| \mL_{1}
\varrho_{t_{\star}^{\kappa}(M)} \| \gtrsim \kappa^{-1/2} M^{
 -3} \| \varrho_{0} \|_{H^{- \frac{\a}{2}}} $ (provided $ M \geqslant M(\varrho_{0}) $ and $ \| \varrho_{0} \| =1 $).
For this, we show that our Gaussian approximation is not cancelled
by the remainder term, namely that $ \| \mL_{1}
\varrho_{t_{\star}^{\kappa}(M)}  \| \gtrsim \| \mL_{1}  \Phi[ \widetilde{X},
Y]_{\ts (M)}\| $. By the definition of $ \varrho $, we have
\begin{equs}
\varrho = Y + \Phi [u, \varrho] &= Y + \Phi[u, Y] + \Phi [u, \Phi[u,\varrho]] \\
& = Y+ \Phi [X, Y] + \Phi[\Psi[u,u], Y] + \Phi [u, \Phi[u,\varrho]] \;. \label{e:hdr}
\end{equs}
We have obtained an estimate for the first two terms (deterministic
effects, that depend only on the initial data do not affect our stochastic
lower bounds), and now we must show that the
remaining terms are of lower order through suitable upper bounds. However, this
requires some care, as analytic bounds (such as \eqref{e:bd-paraproduct}) alone do not seem sufficient and we must
leverage additional probabilistic cancellations.
The power of \eqref{e:bd-paraproduct} is
that it allows us for example 
to obtain a bound on the norm of
$ \varrho $ in $ H^{- \alpha} $ for $ \alpha < \frac{\a}{2} $. Indeed,
we find
\begin{equ}
\| \vr_{t} \|_{H^{- \alpha}} \leqslant \| \varrho_{0}
\|_{H^{- \alpha}} + \int_{0}^{t} \| P^{\kappa}_{t-s} (u_{s} \cdot \nabla
\varrho_{s}) \|_{H^{- \alpha}}  \ud s \;.
\end{equ}
Since $ u $ is incompressible, we further bound through Schauder estimates an
\eqref{e:bd-paraproduct}:
\begin{equ}
\| P^{\kappa}_{t-s} (u_{s}  \cdot \nabla \varrho_{s})
\|_{H^{- \alpha}}^{2} 
\lesssim  \kappa^{-1} (t-s)^{-1} \| \varrho_{s} \|_{H^{- \alpha}}^{2} \|
u_{s} \|_{\mC^{\beta}}^{2} \;. \label{e:grad-bd}
\end{equ}
Therefore, we conclude that
\begin{equ}
\sup_{0 \leqslant s \leqslant t} \| \vr_{s} \|_{H^{- \alpha}} \leqslant \|
\varrho_{0} \|_{H^{- \alpha}} + C  \sqrt{t / \kappa } \left( \sup_{0
\leqslant
s \leqslant t} \| \varrho_{s} \|_{H^{- \alpha}} \right) \left( \sup_{0
\leqslant s \leqslant t}
\| u_{s} \|_{\mC^{\beta}}\right) \;,
\end{equ}
and by a Gronwall-type argument we obtain the a priori bound 
\begin{equ}[e:a-priori-scalar]
\sup_{0 \leqslant s \leqslant t} \| \vr_{s} \|_{H^{- \alpha}} \leqslant C \|
\varrho_{0} \|_{H^{- \alpha}} \exp \left( C  t  \kappa^{-1} \sup_{0 \leqslant s \leqslant t} \|
u_{s} \|_{\mC^{\beta}}^{2}  \right)  \;.
\end{equ}
Now let us proceed by studying one of the two rest terms (the most difficult
one): our objective is to obtain an upper bound on $\| \mL_{1} \left[ \Phi [u,
\Phi[u, \varrho]]\right]_{t} \|$ at $ t = \ts (M) $.
Since $ u $ is incompressible we have $ u
\cdot \nabla (\varrho - Y) = \div(u (\varrho-Y)) $, so that we estimate:
\begin{equ}[e:tobdn]
\left\| \mL_{1} \Phi [u, \Phi[u, \varrho]]_{t} \right\| = \left\| \mL_{1}
\int_{0}^{t} \div \left( P^{\kappa}_{t-s} u_{s} (\varrho - Y)_{s} \right)\ud s\right\| \lesssim \| \mL_{1}
\widetilde{\Phi}[u, \Phi[u, \varrho]]_{t}\| \;,
\end{equ}
because we could absorb the derivative through the projection onto low
frequencies, and where for later convenience we have defined $ \widetilde{\Phi}
$ through   $ \Phi[u, \varrho] = \div ( \widetilde{\Phi}[u, \varrho]) $:
\begin{equ} 
\widetilde{\Phi}[\varphi, \psi]_{t} = - \int_{0}^{t} P^{\kappa}_{t -s}
\varphi_{s} \psi_{s}  \ud s \;.
\end{equ}
Now all of our future bounds will build on the following estimates on $ \Phi
$. To lighten the
notation, for any function $ \varphi \colon [0, t] \to \mC^{\beta} $
(respectively $ \varphi \colon [0, t] \to H^{\beta} $), for some
$ \beta \in \RR $, we write $ \| \varphi \|_{\mC^{\beta}_{t}} $ or $ \| \varphi
\|_{H^{\beta}_{t}} $ for the norms
\begin{equ}[e:cbt]
\| \varphi \|_{\mC^{\beta}_{t}} := \sup_{0 \leqslant s \leqslant t} \|
\varphi_{s} \|_{\mC^{\beta}} \;, \qquad \| \varphi \|_{H^{\beta}_t} := \sup_{0 \leqslant s \leqslant t} \|
\varphi_{s} \|_{H^{\beta}}\;.
\end{equ}

\begin{lemma}\label{lem:bds-lb}
Let $ \Phi $ be defined as in \eqref{e:def-phi}, with $ L[u] = u \cdot \nabla  $ as in
\eqref{e:form-PSA}.
For any $ t > 0 $ and $ 0 <\alpha < \beta $. Let $\varphi \in L^\infty([0,t],\mC^\beta)$ be a vector field such that $\div(\varphi) = 0$ and let $\psi\in L^\infty([0,t],L^2_*)$, the following estimates hold:
\begin{equs}
\| \mL_{1}\Phi[ \varphi, \psi]_t \| & \lesssim  t \| \varphi
\|_{\mC^{\beta}_t} \|
\psi \|_{H^{-\alpha}_t}\;,\\
 \| \Phi [\varphi,
\psi]_t \|_{H^{-\alpha}} & \lesssim \sqrt{t/ \kappa}\| \varphi
\|_{\mC^{\beta}_t}\| \psi\|_{H^{-\alpha}_t} \;.
\end{equs}
\end{lemma}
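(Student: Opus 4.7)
The plan is to exploit the divergence-free assumption on $\varphi$ to rewrite $\varphi_s\cdot\nabla\psi_s = \div(\varphi_s\psi_s)$, so that both estimates reduce to controlling the divergence form expression
\[
\Phi[\varphi,\psi]_t = -\int_0^t P^\kappa_{t-s}\div(\varphi_s \psi_s)\ds \;.
\]
After this rewriting, the two bounds amount to measuring the action of the semigroup $P^\kappa_{t-s}\div$ on $\varphi_s\psi_s$ in two different norms, and the pointwise (in time) product can be controlled via the paraproduct estimate $\|\varphi \psi\|_{H^{-\alpha}}\lesssim \|\varphi\|_{\mC^\beta}\|\psi\|_{H^{-\alpha}}$ valid for $0<\alpha<\beta$ (i.e.\ the inequality \eqref{e:bd-paraproduct} referenced earlier in the excerpt).

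For the first inequality, I would observe that $\mL_1$ is the projection on frequencies $|k|\le 1$, so that composition with $\div$ is a Fourier multiplier supported on $\{|k|\le 1\}$ with symbol bounded by $1$. In particular, for any $\alpha\ge 0$, we have $\|\mL_1\div f\|_{L^2}\lesssim \|f\|_{H^{-\alpha}}$. Combining this with the $L^2$-contractivity of $P^\kappa_{t-s}$ and Minkowski yields
\[
\|\mL_1\Phi[\varphi,\psi]_t\| \le \int_0^t \|\mL_1\div(\varphi_s\psi_s)\|_{L^2}\ds \lesssim \int_0^t \|\varphi_s\psi_s\|_{H^{-\alpha}}\ds \;,
\]
and the paraproduct bound then gives the factor $t\,\|\varphi\|_{\mC^\beta_t}\|\psi\|_{H^{-\alpha}_t}$.

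For the second inequality, I would use the standard Schauder estimate for the heat semigroup combined with one derivative: the Fourier multiplier associated to $P^\kappa_r\div$ has symbol of size $|k|\,e^{-\kappa r|k|^2}\lesssim (\kappa r)^{-1/2}$, which shows that $\|P^\kappa_r\div f\|_{H^{-\alpha}}\lesssim (\kappa r)^{-1/2}\|f\|_{H^{-\alpha}}$ for every $\alpha\in\RR$ and $r>0$. Applied inside the time integral and combined again with the paraproduct estimate, this yields
\[
\|\Phi[\varphi,\psi]_t\|_{H^{-\alpha}} \lesssim \int_0^t (\kappa(t-s))^{-1/2}\|\varphi_s\|_{\mC^\beta}\|\psi_s\|_{H^{-\alpha}}\ds \lesssim \sqrt{t/\kappa}\,\|\varphi\|_{\mC^\beta_t}\|\psi\|_{H^{-\alpha}_t}\;,
\]
after computing the integrable time singularity.

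Neither step presents a genuine obstacle: everything rests on the divergence-free identity, a standard paraproduct bound (already used elsewhere in the paper, compare with \eqref{e:grad-bd}), and elementary Fourier-multiplier estimates for the heat semigroup. The only point requiring mild care is to keep $0<\alpha<\beta$ so that the paraproduct estimate applies, and to note that in the first inequality no diffusive regularisation by $\kappa$ is needed because all the action takes place at frequencies $|k|\le 1$, where $P^\kappa_{t-s}$ is harmless.
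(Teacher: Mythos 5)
Your proof is correct and follows essentially the same route as the paper: rewrite $\varphi\cdot\nabla\psi=\div(\varphi\psi)$ using incompressibility, apply the paraproduct bound \eqref{e:bd-paraproduct} to $\varphi_s\psi_s$, and then use the harmlessness of $\div$ under $\mL_1$ for the first estimate and the Schauder bound $\|P^\kappa_r\div\|_{H^{-\alpha}\to H^{-\alpha}}\lesssim(\kappa r)^{-1/2}$ for the second. This is precisely the calculation the paper carries out in \eqref{e:grad-bd}--\eqref{e:a-priori-scalar} and then invokes by saying ``Both estimates follow from \eqref{e:bd-paraproduct} together with the previous calculations.''
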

Both estimates follow from \eqref{e:bd-paraproduct} together with
the previous calculations.
A similar estimate is useful also for the map $ \Psi $ appearing in
\eqref{e:def-psi}: the following result is an immediate consequence of Schauder
estimates.

\begin{lemma}\label{lem:bd-psi}
Let $ \Psi $ be defined as in \eqref{e:def-psi}. Then for any $\beta>0$ and $t>0$, the following holds
\[
 \| \Psi [\varphi, \psi]
\|_{\mC^{\beta}_{t}} \lesssim \sqrt{t} \| \varphi \|_{\mC^{\beta}_{t}} \| \psi
\|_{\mC^{\beta}_{t}}\;.
\]
\end{lemma}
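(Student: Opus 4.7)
The plan is to bound the $\mathcal{C}^\beta$ norm of $\Psi[\varphi,\psi]_s$ pointwise in $s \in [0,t]$ by applying Schauder estimates inside the time integral defining $\Psi$ and then integrating a non-summable but integrable time singularity.

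First I would recall the heat semigroup Schauder estimate: for $\beta > 0$ and any tempered distribution $f$ with zero mean, one has $\|P_{t-s}\nabla f\|_{\mathcal{C}^\beta} \lesssim (t-s)^{-1/2}\|f\|_{\mathcal{C}^\beta}$, since each derivative landing on the heat kernel costs a factor $(t-s)^{-1/2}$. The Leray projection $\mathbf{P}$ is a zero-order Fourier multiplier that is bounded on the Besov--Hölder scale $\mathcal{C}^\beta$ for $\beta \in \mathbf{R}$ not an integer (and in general on the Besov spaces defined via Paley blocks used in this paper), so it introduces only a harmless constant.

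Next I would handle the product: for $\beta > 0$, the standard paraproduct/Bony decomposition gives $\|\varphi_s \sotimes \psi_s\|_{\mathcal{C}^\beta} \lesssim \|\varphi_s\|_{\mathcal{C}^\beta}\|\psi_s\|_{\mathcal{C}^\beta}$, since $\mathcal{C}^\beta$ is a Banach algebra under multiplication (with the symmetrised tensor product being componentwise a product of scalar functions). Combining these two estimates yields
\[
\|P_{t-s}\mathbf{P}\,\div(\varphi_s \sotimes \psi_s)\|_{\mathcal{C}^\beta}
\lesssim (t-s)^{-1/2}\|\varphi_s\|_{\mathcal{C}^\beta}\|\psi_s\|_{\mathcal{C}^\beta}
\lesssim (t-s)^{-1/2}\|\varphi\|_{\mathcal{C}^\beta_t}\|\psi\|_{\mathcal{C}^\beta_t}.
\]

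Finally I would plug this into the definition of $\Psi$ and integrate:
\[
\|\Psi[\varphi,\psi]_s\|_{\mathcal{C}^\beta}
\lesssim \|\varphi\|_{\mathcal{C}^\beta_t}\|\psi\|_{\mathcal{C}^\beta_t}\int_0^s (s-r)^{-1/2}\dr
\lesssim \sqrt{s}\,\|\varphi\|_{\mathcal{C}^\beta_t}\|\psi\|_{\mathcal{C}^\beta_t}
\lesssim \sqrt{t}\,\|\varphi\|_{\mathcal{C}^\beta_t}\|\psi\|_{\mathcal{C}^\beta_t},
\]
uniformly in $s \in [0,t]$, which after taking the supremum over $s$ gives the claim. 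No step is really an obstacle here: the only mild subtlety is that Schauder estimates must be applied in the Besov--Hölder scale on $\mathbf{T}^2$ (mean-zero sector) so that $\mathbf{P}$ is bounded and the product estimate holds for every $\beta > 0$, but this is exactly the framework the paper has set up in \eqref{e:holder}.
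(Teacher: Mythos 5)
Your proof is correct and matches the paper's approach: the paper does not spell out a proof but states the lemma is ``an immediate consequence of Schauder estimates,'' and your argument (Schauder gain of $(t-s)^{-1/2}$ for the half-derivative from $\div$, boundedness of $\mathbf{P}$ as a zero-order multiplier on the Paley--Besov scale, the algebra property of $\mC^\beta$ for $\beta>0$, and integrating the $(s-r)^{-1/2}$ singularity) is precisely that computation.
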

Armed with these estimates we can now proceed to bound \eqref{e:tobdn} and all
other terms in \eqref{e:hdr}. As for \eqref{e:tobdn}, we find for any pair $
\alpha < \beta < \a/2 $, via Lemma~\ref{lem:bds-lb}:
\begin{equ}[e:need-t-ref]
\left\| \mL_{1} \Phi [u, \Phi[u, \varrho]]_{t} \right\| \lesssim t \|
u \|_{\mC^{\beta}_{t}} \| \Phi[u, \varrho] \|_{H^{-\alpha}_t} \lesssim t^{\frac{3}{2}} \kappa^{- \frac{1}{2}} \| u
\|_{\mC^{\beta}_{t}}^{2} \| \varrho \|_{H^{-\alpha}_t} \;.
\end{equ}
When evaluated at time $ t = \ts(M) $ and through \eqref{e:a-priori-scalar}, we
see that the upper bound is of order $ M^{- 3 + \ve} \| \varrho_{0}
\|_{H^{- \alpha}}  $ for any $ \ve > 0 $ and $ \alpha < \a/2 $ (omitting the
dependence on $ \kappa $ and $ u $). Hence, this upper bound is just short of
matching the lower bound in Lemma~\ref{lem:LBI1-new}, and thus not sufficient
for our needs.
A similar estimate holds for the
term $ \Phi[u-X, Y] $ in the decomposition \eqref{e:hdr}, since for any $ \alpha <
\beta < \a/2 $ we estimate via Lemma~\ref{lem:bds-lb} and
Lemma~\ref{lem:bd-psi}:
\begin{equ}
\| \mL_{1} \left[ \Phi[\Psi[u,u], Y]_{t} \right] \| \lesssim t \|
\Psi[u, u]\|_{\mC^{\beta}_{t}} \| \varrho_{0} \|_{H^{-\alpha}}  \lesssim  t^{\frac{3}{ 2}} 
 \| \varrho_{0} \|_{H^{-\alpha}} \| u \|_{\mC^{\beta}_{t}}^{2} \;.
\end{equ}
Once more, even disregarding the viscosity, this is just barely not sufficient to obtain an upper bound of the correct
order, since its contribution is at time $ \ts (M) $ of order $ M^{ -3 +
\ve} \| \varrho_{0} \|_{H^{- \alpha}} $. Therefore, we must proceed to a finer decomposition.

\subsubsection{Higher order decomposition}
To obtain a more precise estimate on the energy at low frequencies, we can keep
expanding the remainder terms in \eqref{e:hdr}. 
Expanding $ \varrho $ up to third order through $ \Phi$ gives
\[
\varrho = Y + \Phi[u,Y] + \Phi[u,\Phi[u,Y]] + \Phi[u,\Phi[u,\Phi[u,\varrho]]] \;,
\]
and similarly expanding $u$ up to second order through $ \Psi $ yields
\[
u = X + \Psi[u,u] = X + \Psi[X,X] + \Psi[u+X,\Psi[u,u]] \;.
\]
We now want to expand the two terms $\Phi[u,Y]$ and $\Phi[u,\Phi[u,Y]]$ of the $\varrho$ expansion into stochastic terms of up second order and remainder terms of third order. First we use the third order expansion of $u$
\[
   \Phi[u,Y] = \Phi[X,Y] + \Phi[\Psi[X,X],Y] + \Phi[\Psi[u+X,\Psi[u,u]],Y] \;,
\]
and we use the first order expansion of $u$
\[
   \Phi[u,\Phi[u,Y]] = \Phi[X,\Phi[X,Y]] + \Phi[\Psi[u,u],\Phi[u,Y]] +
\Phi[u,\Phi[\Psi[u,u],Y]]\;.
\]
Finally, to obtain an optimal estimate in the viscosity
coefficient $ \kappa $ (therefore, this piece of the decomposition can be
omitted in a first read, and $ \Phi[u,\Phi[u,\Phi[u,\varrho]]]  $ can simply be
considered a rest term: the reader can see Remark~\ref{rem:further-dec} for an
additional explanation), we further decompose the term $  
\Phi[u,\Phi[u,\Phi[u,\varrho]]] $ as
\begin{equ}
\Phi[u,\Phi[u,\Phi[u,\varrho]]] = \Phi[X,\Phi[X,\Phi[X, Y]]] + \overline{R} \;,
\end{equ}
where
\begin{equs}
\overline{R} = & \Phi[u,\Phi[u,\Phi[u, \Phi[u, \varrho]]]] + \Phi[u, \Phi[u, \Phi[ \Psi[u,u],
Y]]] \\
& +\Phi[u, \Phi[\Psi[u,u], \Phi[X, Y]]]  +
\Phi[\Psi[u, u], \Phi[X, \Phi[X, Y]]] \;.
\end{equs}
Collecting everything we obtain the stochastic terms
\begin{equ}[e:stoch-terms]
   S =  \Phi[\Psi[X,X],Y]+ \Phi[X,\Phi[X,Y]] + \Phi[X,\Phi[X,\Phi[X, Y]]]\;,
\end{equ}
and the remainder terms
\begin{equ}[e:rest-terms]
   R = \Phi[\Psi[\Psi[u,u], u+X],Y] +
\Phi[\Psi[u,u], \Phi[u,Y]]+ \Phi[u,\Phi[\Psi[u,u],Y]] +  \overline{R} \;,
\end{equ}
so that overall we have the decomposition
\begin{equ}\label{e:decomp-rho}
\varrho = Y + \Phi[X,Y] + S + R \;.
\end{equ}
We treat these groups of terms in the two following subsections. For the
stochastic terms we follow a similar approach as for the lower bound obtained
in Lemma~\ref{lem:LBI1-new}. For the rest terms we use analytic upper bounds in
the spirit of Section~\ref{sec:upper-bounds}.

\subsection{Higher order Gaussian estimates} \label{sec:hog}

Let us start by obtaining upper bounds on the terms in \eqref{e:stoch-terms}.
Because terms in the zeroth (deterministic) and second homogeneous It\^o chaos
are uncorrelated from the first (Gaussian) chaos term appearing in
Lemma~\ref{lem:LBI1-new}, we need not address these terms:
they will be dealt with through an application of the Paley--Zygmund
inequality in the proof of Proposition~\ref{prop:bds-new}. Instead, we
concentrate on bounding the contribution of \eqref{e:stoch-terms} to the first
chaos. Therefore, let us further write $ X^{0}_{t}  = P_{t} u_{0}  $ and $
\widetilde{X} = X - X^{0} $. Then the
first chaos component of \eqref{e:stoch-terms} is given by
\begin{equ}
\Pi^{(1)}S = \Phi \bigl[ \widetilde{X},
\Phi[ X^{0}, Y] \bigr] + \Phi \bigl[ X^{0}, \Phi[ \widetilde{X}, Y] \bigr]+
2\Phi \bigl[ \Psi[ X^{0}, \widetilde{X}], Y \bigr]  + \Pi^{(1)}
\Phi[X,\Phi[X,\Phi[X, Y]]]\;,
\end{equ}
where $\Pi^{(1)}  $ is the projection on the first homogeneous Wiener--It\^o chaos.
In order to bound these terms we start with some preliminary results. 
Our next results are an improvement on (and a stochastic analogue to) the
deterministic bounds of Lemma~\ref{lem:bds-lb}.

\begin{lemma}\label{lem:Gaussian-n}
   Consider the map $ \Phi $ as in \eqref{e:def-phi} with $ L[u] =u \cdot
\nabla  $ as in \eqref{e:form-PSA}.
   For any $ t > 0 $ and $ 1< \alpha \leqslant  \a/2 $, we estimate uniformly over $\kappa\in(0,1]$ and $\psi \in H^{- \alpha }_{t}$:
   \begin{equ}
      \E\|\mL_1\Phi[\widetilde{X},\psi]_{t}\|^2\lesssim
t^3 \|\psi\|_{H^{- \frac{\a}{2} }_{t}}^2\;, \qquad
\E\|\Phi[\widetilde{X},\psi]_{t}\|^2_{H^{- \alpha}} \lesssim
t^2 \kappa^{-1}  \|\psi\|_{H^{- \alpha }_{t}}^2 \;.
   \end{equ}
\end{lemma}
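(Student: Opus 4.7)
The plan is to mirror the Fourier-space strategy used in the proof of Lemma~\ref{lem:LBI1-new}, turning the lower bound there into upper bounds via Cauchy--Schwarz. Using that $\widetilde X$ is divergence-free, the same manipulations as in that proof (substituting $\mF \widetilde X_r(j) = \iota j^\perp |j|^{-1-\a/2} \int_0^r e^{-|j|^2(r-s)} \ud \zeta^j_s$ and applying stochastic Fubini) give
\begin{equ}
\mF \Phi[\widetilde X, \psi]_t(\ell) = \int_{E_t} f^\ell_t(s,j) \, \ud\zeta(s,j),
\end{equ}
with kernel
\begin{equ}
f^\ell_t(s,j) = \frac{\ell \cdot j^\perp}{|j|^{1 + \a/2}} \int_s^t \hat\psi_r(\ell - j) \, e^{-\kappa|\ell|^2(t-r)} e^{-|j|^2(r-s)} \ud r.
\end{equ}
By It\^o's isometry \eqref{e:ito}, both bounds reduce to estimating $\sum_j \int_0^t |f^\ell_t(s,j)|^2 \ud s$, weighted suitably in $\ell$.

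For the low-frequency bound, I would apply Cauchy--Schwarz to the inner $r$-integral while bounding both exponentials by $1$, which yields $|f^\ell_t(s,j)|^2 \leq |\ell|^2 (t-s) |j|^{-\a} \int_s^t |\hat\psi_r(\ell-j)|^2 \, \ud r$. Since $|\ell| \leq 1$ with $\ell \in \ZZ^2_*$ forces $|\ell| = 1$, and $\hat\psi_r(0) = 0$ allows us to discard the term $j = \ell$, we have $|\ell - j| \sim |j|$ and $\sum_j |j|^{-\a}|\hat\psi_r(\ell-j)|^2 \lesssim \|\psi_r\|^2_{H^{-\a/2}}$; the two time integrations in $s$ and $r$ then produce the claimed $t^3$ scaling.

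For the $H^{-\alpha}$ bound, the $\kappa^{-1}$ factor comes from retaining the decaying exponential $e^{-2\kappa |\ell|^2(t-r)}$ in the Cauchy--Schwarz step: this gives $|f^\ell_t(s,j)|^2 \leq (2\kappa |j|^\a)^{-1} \int_s^t |\hat\psi_r(\ell-j)|^2 \ud r$. Weighting by $|\ell|^{-2\alpha}$ and summing over $\ell, j$, the problem reduces to the lattice convolution estimate
\begin{equ}
\sum_{\ell \in \ZZ^2_*} \frac{1}{|\ell|^{2\alpha} \, |\ell - k|^\a} \lesssim |k|^{-2\alpha}, \qquad k \in \ZZ^2_*,
\end{equ}
after which the change of variables $k = \ell - j$ produces $\|\psi_r\|^2_{H^{-\alpha}}$ inside the time integral, giving the $t^2 \kappa^{-1}$ factor.

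The only non-cosmetic step is this convolution inequality, which one proves by splitting the sum into the three regions $|\ell| \leq |k|/2$, $|\ell - k| \leq |k|/2$, and their complement. The hypothesis $\alpha > 1$ ensures summability of $|\ell|^{-2\alpha}$ near the origin in two dimensions, $\a > 2$ (automatic since $\a > 10$) handles the singularity near $\ell = k$, and $2\alpha \leq \a$ controls the far region where $|\ell| > |k|/2$ and $|\ell - k| > |k|/2$---precisely the three conditions packaged in the hypothesis $1 < \alpha \leq \a/2$.
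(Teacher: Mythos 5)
Your proof is correct and follows essentially the same approach as the paper's: express $\mF\Phi[\widetilde X,\psi]_t(\ell)$ as a first-chaos It\^o integral with explicit kernel, apply the isometry \eqref{e:ito}, and control the resulting double sum by Cauchy--Schwarz and a lattice estimate (your convolution inequality $\sum_\ell |\ell|^{-2\alpha}|\ell-k|^{-\a}\lesssim|k|^{-2\alpha}$ plays exactly the role of Lemma~\ref{lem:equivalencen} in the paper, and your extraction of $\kappa^{-1}$ from $\int_s^t e^{-2\kappa|\ell|^2(t-r)}\ud r$ combined with $|\ell\cdot j^\perp|\leq|\ell||j|$ is a clean alternative to the paper's Schauder step). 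One small inaccuracy in the bookkeeping: the condition $2\alpha\leq\a$ is actually what you need in the region $|\ell|\leq|k|/2$ to convert the resulting $|k|^{-\a}$ into $|k|^{-2\alpha}$, while the far region $|\ell|,|\ell-k|>|k|/2$ only requires $\a>2$; this does not affect the validity of the argument.
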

We observe that Lemma~\ref{lem:bds-lb} would only imply $ \| \mL_{1} [
\widetilde{X}, \psi]_{t} \| \lesssim t \| \widetilde{X} \|_{\mC^{\beta}} \|
\psi \|_{H^{- \alpha}_{t}}$ for any $ \alpha < \beta < \a/2 $. Therefore, we
are improving the bound by a power of $ t $. Similarly, the second estimate is
improved by a power $ \sqrt{t} $.
\begin{proof}
For any $\ell\in \Z^2_*$ we write the Fourier transform of $ \Phi
[ \widetilde{X}, \psi]_{t} $ as a
stochastic integral:
\[
\mF \Phi[\widetilde{X},\psi]_t(\ell) = \int_{E_t} f_t^\ell(s,k) \ud
\zeta(s,k)\;.
\]
Then, let $\mathcal{D}_{s,k}$ denote the Malliavin derivative of the $k$-th Brownian
motion at time $s$. By Stroock's formula, we have for every $s \in [0,t]$ and $k \in \Z^2_*$:
\[
f_t^\ell(s,k) = \mF \Phi[\mathcal{D}_{s,k}\widetilde{X},\psi]_t(\ell) \;,
\]
where
\[
\mathcal{D}_{s,k}\widetilde{X} (r ,x)=
e^{-|k|^2(r-s)}|k|^{-\frac{\a}{2}}\sigma_k(x)\1_{\{0\leq s\leq r\}}\;, \quad \sigma_k(x) = \frac{\iota k^\perp}{|k|}e^{\iota k\cdot x}\;.
\]
It follows by It\^{o}'s isometry that
\[
   \E\|\mL_1\Phi[\widetilde{X},\psi]_{t}\|^2 = \sum_{|\ell|\leq 1}\sum_{k\in\Z^2_*}\int_0^t|\mF \Phi[\mathcal{D}_{s,k}\widetilde{X},\psi]_t(\ell)|^2\ds = \sum_{k\in \Z^2_*}\int_0^t\|\mL_1 \Phi[\mathcal{D}_{s,k}\widetilde{X},\psi]_t\|^2\ds\;.
\]
Now, in our setting
\begin{equ}
\Phi[\mathcal{D}_{s,k}\widetilde{X},\psi]_t = \int_{s}^{t} | k |^{-
\frac{\a}{2}} e^{- | k |^{2} (r -s)}\div (P_{t -r}^{\kappa} (\sigma_{k} \psi_{r}) )\ud r
\;,
\end{equ}
so that we can overall estimate
\begin{equs}
 \E\|\mL_1\Phi[\widetilde{X},\psi]_{t}\|^2 & \leqslant  \int_{0}^{t} \sum_{k \in \ZZ^{2}_{*}}
\frac{1}{| k |^{\a}}  \left( \int_{s}^{t}  e^{- | k |^{2} (r -s)} \|
\mL_{1} \sigma_{k} \psi_{r} \| \ud r \right)^{2} \ud s \\
& \lesssim t \int_{0}^{t} \int_{s}^{t}  \sum_{k \in \ZZ^{2}_{*}}
\frac{1}{| k |^{\a}}  \|
\mL_{1} \sigma_{k} \psi_{r} \|^{2} \ud r \ud s \\
& \lesssim t \int_{0}^{t} \int_{s}^{t}  \sum_{k \in \ZZ^{2}_{*}}
\frac{1}{| k |^{\a}}  \|
\sigma_{k} \psi_{r} \|^{2}_{H^{- \frac{\a}{2} }} \ud r \ud s \lesssim t^{3} \|
\psi \|_{H^{- \frac{\a}{2} }_{t}}^{2} \;,
\end{equs}
where we used Jensen in the second line and the last step follows from the norm equivalence in
Lemma~\ref{lem:equivalencen}: note that $ \a/2 >1$ from our assumption $ \a > 10
$. 

The second estimates follows in exactly the same way, the only difference being
that we estimate
\begin{equ}
\| \Phi[ \mD_{s, k} \widetilde{X}, \psi]_{t} \|_{H^{- \alpha}} \leqslant
\kappa^{- \frac{1}{2}} \int_{s}^{t} | k |^{- \frac{\a}{2}} (t -r)^{- \frac{1}{2}}  \| \sigma_{k} \psi_{r}
\|_{H^{- \alpha}}   \ud r\;.
\end{equ}
From here we then bound following the same steps as before:
\begin{equs}
 \E\|\Phi[\widetilde{X},\psi]_{t}\|^2_{H^{- \alpha}}  & \leqslant
\kappa^{- 1}  \int_{0}^{t} \sum_{k \in \ZZ^{2}_{*}}
\frac{1}{| k |^{\a}}  \left( \int_{s}^{t}  (t -s)^{- \frac{1}{2}} \|
 \sigma_{k} \psi_{r} \|_{H^{- \alpha}} \ud r \right)^{2} \ud s \\
& \lesssim \kappa^{-1} t^{\frac{1}{ 2}} \int_{0}^{t}   \int_{s}^{t}  (t -s)^{- \frac{1}{2}} \sum_{k \in \ZZ^{2}_{*}}
\frac{1}{| k |^{\a}}\|
 \sigma_{k} \psi_{r} \|_{H^{- \alpha}}^{2} \ud r \ud s \lesssim
\kappa^{-1} t^{2} \| \psi \|_{H^{- \alpha}_{t}}^{2} \;.
\end{equs}
This completes the proof.
\end{proof}
As a consequence of the previous result, we are now able to bound the first
chaos terms appearing in \eqref{e:stoch-terms}.

\begin{proposition}\label{prop:chaos-together}
Fix $ \q >1 $ and consider the map $ \Phi $ as in \eqref{e:def-phi} with $ L [u] = u \cdot \nabla
$.  For any $ 1 <\alpha < \beta < \a/2 $ we have, uniformly
over $ \kappa \in (0, 1] $ and $ M \geqslant \kappa^{-\q} $,
\begin{equ}
\EE \bigl\| \mL_{1}  \bigl[  \Pi^{(1)}S_{\ts (M)} \bigr] \bigr\|^{2}\lesssim
(\ts (M))^{4} \kappa^{-1} (\| u_{0}
\|+1)^{4}_{\mC^{\beta}} \| \varrho_{0} \|_{H^{- \alpha}}^{2} \;.
\end{equ} 
\end{proposition}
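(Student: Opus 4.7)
The plan is to exploit the fact that each of the four terms in the decomposition of $\Pi^{(1)}S$ is linear in $\widetilde{X}$, and therefore admits a representation as a first-chaos stochastic integral $\int_{E_{t}} g_{t}(s,k)\,\ud\zeta(s,k)$ whose kernel is obtained by substituting $\mathcal{D}_{s,k}\widetilde{X}_{r}=|k|^{-\a/2}e^{-|k|^{2}(r-s)}\sigma_{k}\one_{\{s\leqslant r\}}$ for one occurrence of $\widetilde{X}$ (exactly as in the proof of Lemma~\ref{lem:Gaussian-n}). It\^o's isometry then reduces each estimate to
\[
\EE\|\mL_{1}[\cdot]_{t}\|^{2}=\sum_{k\in\ZZ^{2}_{*}}\int_{0}^{t}\|\mL_{1} g_{t}(s,k)\|^{2}\,\ud s\;,
\]
i.e.\ to a pointwise-in-$(s,k)$ deterministic bound on $g_{t}(s,k)$, followed by a Fourier sum and a time integral. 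The universal input across all four terms is $\|\mathcal{D}_{s,k}\widetilde{X}_{r}\|_{\mC^{\beta}}\lesssim|k|^{-\a/2+\beta}$, combined with the convergence of $\sum_{k}|k|^{-\a+2\beta}$ in dimension two, which is harmless since $\beta$ can be chosen strictly less than $\a/2$ and $\a>10$.

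For the first term I would apply the first estimate of Lemma~\ref{lem:Gaussian-n} directly, since $\psi=\Phi[X^{0},Y]$ is deterministic, and control $\|\psi\|_{H^{-\alpha}_{t}}\lesssim\sqrt{t/\kappa}\,\|u_{0}\|_{\mC^{\beta}}\|\varrho_{0}\|_{H^{-\alpha}}$ via Lemma~\ref{lem:bds-lb}; this immediately yields the desired bound of order $t^{4}\kappa^{-1}\|u_{0}\|^{2}_{\mC^{\beta}}\|\varrho_{0}\|^{2}_{H^{-\alpha}}$. For the second term, two applications of Lemma~\ref{lem:bds-lb} (first to the outer $\Phi[X^{0},\cdot]$, then to the inner $\Phi[\mathcal{D}_{s,k}\widetilde{X},Y]$, using the explicit form of $\mathcal{D}_{s,k}\widetilde{X}$) give
\[
\|\mL_{1} g_{t}(s,k)\|\lesssim t\sqrt{t/\kappa}\,\|u_{0}\|_{\mC^{\beta}}|k|^{-\a/2+\beta}\|\varrho_{0}\|_{H^{-\alpha}}\;,
\]
and squaring, summing over $k$ and integrating over $s$ reproduces the target bound.

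For the third term I would replace the inner Lemma~\ref{lem:bds-lb} step by Lemma~\ref{lem:bd-psi}, using the pointwise bound $\|\Psi[X^{0},\mathcal{D}_{s,k}\widetilde{X}]_{r}\|_{\mC^{\beta}}\lesssim\sqrt{r-s}\,\|u_{0}\|_{\mC^{\beta}}|k|^{-\a/2+\beta}$; the extra $\sqrt{t}$ from $\Psi$ compensates exactly for the missing $\kappa^{-1/2}$ factor, producing a bound actually better than required. For the fourth term I would expand $X=X^{0}+\widetilde{X}$ into three sub-terms (depending on which of the three $X$'s carries the Malliavin derivative), and bound each by iterating Lemma~\ref{lem:bds-lb} three times inside the It\^o isometry. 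This yields an estimate of order $t^{5}\kappa^{-2}\|u_{0}\|^{4}_{\mC^{\beta}}\|\varrho_{0}\|^{2}_{H^{-\alpha}}$, which is dominated by the target $t^{4}\kappa^{-1}\|u_{0}\|^{4}_{\mC^{\beta}}\|\varrho_{0}\|^{2}_{H^{-\alpha}}$ precisely because the hypothesis $M\geqslant\kappa^{-\q}$ with $\q>1$ forces $\ts(M)/\kappa=\lambda\kappa^{-2}M^{-2}\log M\to 0$ uniformly in $M\geqslant\kappa^{-\q}$ as $\kappa\to 0$.

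The main technical obstacle I would expect is the bookkeeping for the fourth term: verifying that the single constraint $M\geqslant\kappa^{-\q}$ with $\q>1$ is strong enough to absorb the extra factor of $t/\kappa$ produced by the triple iteration of Lemma~\ref{lem:bds-lb}. This is the only place where the restriction on $M$ enters, and it traces back to the sub-optimality of the short-time expansion noted in Remark~\ref{rem:short-time}. A secondary, more routine, obstacle is choosing $1<\alpha<\beta<\a/2$ simultaneously so that all the Fourier sums of the form $\sum_{k}|k|^{-\a+2\beta}$ converge, which is painless given that $\a>10$.
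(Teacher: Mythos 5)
Your proposal is correct in substance and recovers the right powers of $t$ and $\kappa$, but it takes a genuinely different route from the paper's on the second, third and fourth terms of $\Pi^{(1)}S$. The paper applies the It\^o isometry ``inside out'': it first controls the innermost $\Phi[\widetilde{X},\cdot]$ with Lemma~\ref{lem:Gaussian-n}, which is a true stochastic improvement over the deterministic Lemma~\ref{lem:bds-lb} (one extra power of $t$ gained via Jensen combined with the norm equivalence Lemma~\ref{lem:equivalencen}), and then peels the deterministic outer layers off with Lemma~\ref{lem:bds-lb}. You instead apply the isometry at the top, writing each term as $\int_{E_t} g_t(s,k)\,\ud\zeta$, and bound the deterministic kernel $g_t(s,k)$ pathwise via repeated application of Lemma~\ref{lem:bds-lb} and Lemma~\ref{lem:bd-psi}; the extra power of $t$ then comes from the $\ud s$ integral, and the $k$-sum is handled by the absolute convergence $\sum_k|k|^{-\a+2\beta}<\infty$, bypassing Lemma~\ref{lem:equivalencen} entirely. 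This is a legitimate simplification; note however that it forces $\beta<\a/2-1$ for the sum to converge (slightly stronger than the stated $\beta<\a/2$), which is harmless by monotonicity of H\"older norms but should be stated. Also, for the first term, the first estimate in Lemma~\ref{lem:Gaussian-n} calls for $\|\psi\|_{H^{-\a/2}_t}$, not $\|\psi\|_{H^{-\alpha}_t}$; this is benign since $\|\varrho_0\|_{H^{-\a/2}}\leqslant\|\varrho_0\|_{H^{-\alpha}}$.

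One concrete gap: the first-chaos projection of the cubic $\Phi[X,\Phi[X,\Phi[X,Y]]]$ contains \emph{six} nontrivial pieces, not three. Beyond the three placements of $\widetilde{X}$ against two copies of $X^0$, the fully stochastic piece $\Phi[\widetilde{X},\Phi[\widetilde{X},\Phi[\widetilde{X},Y]]]$ contributes three contraction terms: the Stroock kernel is $\EE[\mD_{s,k}F]$, and for a quadratic remainder the expectation picks up both the deterministic part and the Gaussian contraction. The paper encodes these as $\overline{\EE}\,\Phi[\widetilde{X},\Phi[\overline{X},\Phi[\overline{X},Y]]]$ (and permutations) with an independent copy $\overline{X}$, and shows by Jensen that they obey exactly the same $t^5\kappa^{-2}$ bound as the $(\widetilde{X},X^0,X^0)$ terms. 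Your argument, once it includes those contractions and moves the auxiliary expectation outside by Jensen, closes without further change; as written it is incomplete. Finally, the observation that $t^5\kappa^{-2}\lesssim t^4\kappa^{-1}$ because $\ts(M)/\kappa\to 0$ under $M\geqslant\kappa^{-\q}$ with $\q>1$ is exactly the paper's reduction.
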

Because we follow slightly different arguments to bound the different terms, we
have split the proof of the proposition into 
Lemmas~\ref{lem:first-chaosn}, \ref{lem:Frst-Chaos-2n}, and~\ref{lem:third-psa} below. Note that the upper bound in
Lemma~\ref{lem:third-psa} is of order $ \ts(M)^{5} \kappa^{-2} \lesssim
\ts(M)^{4} \kappa^{-1} $ under the assumption $ M \geqslant \kappa^{- \q} $ for
some $ \q > 1 $. We start by bounding the first two terms in
\eqref{e:stoch-terms}.

\begin{lemma}\label{lem:first-chaosn}
Consider the map $ \Phi $ as in \eqref{e:def-phi} with $ L [u] = u \cdot \nabla
$.  For any $ 1 <\alpha < \beta < \a/2 $, we can estimate uniformly
over $ \kappa \in (0, 1] $:
\begin{equs}
\EE \bigl\| \mL_{1}  \bigl[  \Phi \bigl[ \widetilde{X},
\Phi[ X^{0}, Y] \bigr]_{\ts (M)}+ \Phi \bigl[ X^{0}, \Phi[ \widetilde{X}, Y]
\bigr]_{\ts (M)} \bigr] \bigr\|^{2}\lesssim (\ts (M))^{4} \kappa^{-1} \| u_{0}
\|^{2}_{\mC^{\beta}} \| \varrho_{0} \|_{H^{- \alpha}}^{2} \;.
\end{equs} 
\end{lemma}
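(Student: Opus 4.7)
The plan is to bound the two terms separately, exploiting in each case the fact that one of the two ``slots'' in the outer $\Phi$ is deterministic (given the initial data), so that one of Lemmas~\ref{lem:bds-lb} or~\ref{lem:Gaussian-n} can be applied cleanly.

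First, for the term $\Phi\bigl[\widetilde{X}, \Phi[X^0, Y]\bigr]_{\ts(M)}$ I would apply Lemma~\ref{lem:Gaussian-n} directly with $\psi = \Phi[X^0, Y]$, which is admissible since $X^0$ and $Y$ are both deterministic functions of the initial data $u_0, \varrho_0$ (and so is $\Phi[X^0,Y]$). The first bound of Lemma~\ref{lem:Gaussian-n} then gives $\E\|\mL_1\Phi[\widetilde{X},\Phi[X^0,Y]]_t\|^2 \lesssim t^3\|\Phi[X^0,Y]\|_{H^{-\a/2}_t}^2$. Using the continuous embedding $H^{-\alpha}\hookrightarrow H^{-\a/2}$ (since $\alpha<\a/2$) together with the second estimate in Lemma~\ref{lem:bds-lb}, which is applicable because $X^0 = P_{\cdot}u_0$ is divergence-free, I would bound
$$\|\Phi[X^0, Y]_s\|_{H^{-\alpha}} \lesssim \sqrt{s/\kappa}\,\|X^0\|_{\mC^\beta_s}\|Y\|_{H^{-\alpha}_s}\lesssim \sqrt{t/\kappa}\,\|u_0\|_{\mC^\beta}\|\varrho_0\|_{H^{-\alpha}}\;,$$
using contractivity of $P_s$ on $\mC^\beta$ and of $P^\kappa_s$ on $H^{-\alpha}$. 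Combining yields the desired $t^4\kappa^{-1}\|u_0\|_{\mC^\beta}^2\|\varrho_0\|_{H^{-\alpha}}^2$.

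For the second term $\Phi\bigl[X^0, \Phi[\widetilde X, Y]\bigr]_{\ts(M)}$ the situation is more delicate: one cannot apply Lemma~\ref{lem:Gaussian-n} to the outer $\Phi$ since the inner argument is random, while a direct application of Lemma~\ref{lem:bds-lb} would produce an $L^\infty_t$ norm of $\Phi[\widetilde{X},Y]$ inside the expectation, which is the \textbf{main obstacle}. I would circumvent this by passing to the integral representation and using the divergence-free property of $X^0$:
$$\mL_1\Phi[X^0,\Phi[\widetilde{X},Y]]_t = -\int_0^t \mL_1 P^\kappa_{t-s}\,\div\!\bigl(X^0_s \,\Phi[\widetilde X,Y]_s\bigr)\,\ds\;.$$
Since $\mL_1 P^\kappa_{t-s}\div$ is uniformly bounded from $H^{-\alpha}$ to $L^2$ (low-frequency projection controls both the derivative and the negative regularity), and the pointwise paraproduct estimate \eqref{e:bd-paraproduct} gives $\|X^0_s\Phi[\widetilde{X},Y]_s\|_{H^{-\alpha}}\lesssim \|X^0_s\|_{\mC^\beta}\|\Phi[\widetilde{X},Y]_s\|_{H^{-\alpha}}$, this reduces the problem to a scalar time integral.

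After Cauchy--Schwarz in time and taking expectations, one obtains
$$\E\bigl\|\mL_1\Phi[X^0,\Phi[\widetilde X,Y]]_t\bigr\|^2 \lesssim t\,\|u_0\|_{\mC^\beta}^2 \int_0^t \E\|\Phi[\widetilde X,Y]_s\|_{H^{-\alpha}}^2\,\ds\;.$$
Now the second estimate of Lemma~\ref{lem:Gaussian-n}, applied \emph{pointwise in $s$} to the deterministic $\psi=Y$, yields $\E\|\Phi[\widetilde X, Y]_s\|_{H^{-\alpha}}^2\lesssim s^2\kappa^{-1}\|\varrho_0\|_{H^{-\alpha}}^2$. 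Integrating in $s$ produces the factor $t^3/3$ and hence the same overall bound $t^4\kappa^{-1}\|u_0\|_{\mC^\beta}^2\|\varrho_0\|_{H^{-\alpha}}^2$ at $t=\ts(M)$. Summing the contributions of the two terms completes the proof.
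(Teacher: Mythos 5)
Your proposal is correct and follows the paper's own two-line argument (Lemma~\ref{lem:Gaussian-n} applied to one slot, Lemma~\ref{lem:bds-lb} to the other, for each term). For the first term your extra step of passing through the embedding $H^{-\alpha}\hookrightarrow H^{-\a/2}$ before applying Lemma~\ref{lem:bds-lb} is exactly the right way to make the paper's one-line estimate precise, since Lemma~\ref{lem:bds-lb} requires the inner Sobolev index to be strictly less than $\beta < \a/2$.

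Where you genuinely add something is the second term. The paper simply writes $\lesssim t^{2}\|u_{0}\|_{\mC^{\beta}}^{2}\sup_{0\leqslant s\leqslant t}\EE\|\Phi[\widetilde X, Y]_{s}\|_{H^{-\alpha}}^{2}$, but a naive application of Lemma~\ref{lem:bds-lb} followed by squaring and taking expectations would produce $\EE\sup_{s}\|\cdots\|^{2}$, not $\sup_{s}\EE\|\cdots\|^{2}$, and this exchange is not free. Your fix --- unfolding the outer $\Phi$ into its time integral, applying the bound $\|\mL_{1}P^{\kappa}_{t-s}\div(\cdot)\|\lesssim\|\cdot\|_{H^{-\alpha}}$ together with the paraproduct estimate \eqref{e:bd-paraproduct} at each fixed $s$, and then applying Cauchy--Schwarz in time before taking expectations --- is the clean way to arrive at the $\sup_{s}\EE$ bound without invoking a maximal inequality. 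This produces $t\int_{0}^{t}\EE\|\Phi[\widetilde X,Y]_{s}\|_{H^{-\alpha}}^{2}\ds$, which after the $s^{2}\kappa^{-1}$ pointwise bound integrates to the same $t^{4}\kappa^{-1}$ as the paper claims. In short: same strategy as the paper, but your write-up closes a small gap in the exchange of supremum and expectation that the paper leaves implicit.
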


\begin{proof}
Combining Lemma~\ref{lem:Gaussian-n} with
Lemma~\ref{lem:bds-lb} we estimate:
\begin{equ}
\EE \| \mL_{1}\Phi \bigl[ \widetilde{X},
\Phi[ X^{0}, Y] \bigr]_{t} \|^{2} \lesssim t^{3} \| \Phi[ X^{0}, Y]
\|_{H^{- \frac{\a}{2}}}^{2} \lesssim t^{4} \kappa^{-1} \| u_{0}
\|_{\mC^{\beta}}^{2} \| \varrho_{0} \|_{H^{- \frac{\a}{2}}}^{2} \;.
\end{equ}
And similarly for the second term:
\begin{equ}
\EE \| \mL_{1} \Phi \bigl[ X^{0}, \Phi[ \widetilde{X}, Y]_{t} \bigr]\|^{2} \lesssim
t^{2} \| u_{0} \|_{\mC^{\beta}}^{2} \sup_{0 \leqslant s \leqslant t}
\EE \| \Phi[ \widetilde{X}, Y]_{s} \|_{H^{- \alpha}}^{2} \lesssim t^{4} \kappa^{-1}
\| u_{0} \|_{\mC^{\beta}}^{2} \| \varrho_{0} \|_{H^{- \alpha}}^{2}\;.
\end{equ}
This completes the proof.
\end{proof}
Similarly, we can bound the third term in \eqref{e:stoch-terms}.

\begin{lemma}\label{lem:Frst-Chaos-2n}
Consider the map $ \Phi $ as in \eqref{e:def-phi} with $ L [u] = u \cdot \nabla
$.
For any $ 1 < \alpha  < \a/2 $, we can estimate uniformly
over $ \kappa \in (0, 1] $:
\begin{equ}
\EE \bigl\| \mL_{1}  \bigl[  \Phi \bigl[ \Psi [X^{0}, \widetilde{X}], Y
\bigr]_{\ts (M)} \bigr] \bigr\|^{2}\lesssim (\ts (M))^{4}  \| u_{0} \|^{2}_{\mC^{\alpha}}  \| \varrho_{0}
\|_{H^{- \alpha}}^{2}\;.
\end{equ} 
\end{lemma}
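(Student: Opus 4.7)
The plan is to exploit the fact that $\Psi[X^0, \widetilde{X}]$ is linear in the Gaussian process $\widetilde{X}$ (because $X^0 = P_\cdot u_0$ is deterministic), so that $\Phi\bigl[\Psi[X^0, \widetilde{X}], Y\bigr]$ lives entirely in the first homogeneous Wiener chaos. This allows us to repeat the Malliavin-derivative/It\^o-isometry argument that proved Lemma~\ref{lem:Gaussian-n}, but now with $\widetilde{X}$ replaced by the Gaussian vector field $\Psi[X^0, \widetilde{X}]$ in the outer $\Phi$.

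\smallskip
First I would compute, by Stroock's formula and linearity of $\Psi$ in its second argument, that
\begin{equ}
\mathcal{D}_{s,k}\,\Phi\bigl[\Psi[X^0, \widetilde{X}], Y\bigr]_t \;=\; \Phi\bigl[\Psi[X^0, \mathcal{D}_{s,k}\widetilde{X}], Y\bigr]_t\;,
\end{equ}
and therefore, by It\^o isometry on each low-frequency Fourier mode $|\ell|\le 1$,
\begin{equ}
\EE\bigl\|\mL_1\,\Phi\bigl[\Psi[X^0, \widetilde{X}], Y\bigr]_t\bigr\|^2
\;=\; \sum_{k\in\ZZ^2_*}\int_0^t \bigl\|\mL_1\,\Phi\bigl[\Psi[X^0, \mathcal{D}_{s,k}\widetilde{X}], Y\bigr]_t\bigr\|^2\ud s\;.
\end{equ}
From this point the problem is purely deterministic for each $(s,k)$.

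\smallskip
Next I would insert the explicit expression $\mathcal{D}_{s,k}\widetilde{X}_\tau = e^{-|k|^2(\tau-s)}|k|^{-\a/2}\sigma_k\,\one_{\{s\le\tau\}}$, which gives $\|\mathcal{D}_{s,k}\widetilde{X}_\tau\|_{\mC^\beta}\lesssim |k|^{\beta-\a/2}$ uniformly in $\tau$. Applying Lemma~\ref{lem:bd-psi} with a regularity $\beta\in(\alpha,(\a-2)/2)$ (which is non-empty since $\a>10$) yields
\begin{equ}
\bigl\|\Psi[X^0,\mathcal{D}_{s,k}\widetilde{X}]\bigr\|_{\mC^\beta_t} \;\lesssim\; \sqrt{t}\,\|u_0\|_{\mC^\beta}\,|k|^{\beta-\a/2}\;,
\end{equ}
and Lemma~\ref{lem:bds-lb}, together with $\|Y\|_{H^{-\alpha}_t}\le\|\varrho_0\|_{H^{-\alpha}}$, then gives the pointwise estimate
\begin{equ}
\bigl\|\mL_1\,\Phi\bigl[\Psi[X^0,\mathcal{D}_{s,k}\widetilde{X}], Y\bigr]_t\bigr\| \;\lesssim\; t^{3/2}\,\|u_0\|_{\mC^\beta}\,|k|^{\beta-\a/2}\,\|\varrho_0\|_{H^{-\alpha}}\;.
\end{equ}
Squaring, integrating in $s\in[0,t]$ and summing in $k$ produces the factor $t^4$ together with $\sum_{k\in\ZZ^2_*}|k|^{2\beta-\a}$, and the latter is finite because $2\beta<\a-2$. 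Evaluating at $t=\ts(M)$ gives the desired estimate (using $\|u_0\|_{\mC^\beta}\lesssim\|u_0\|_{\mC^{\alpha}}$ with our choice of $\beta$, possibly after a harmless adjustment of the exponent appearing on the right-hand side, as in Lemma~\ref{lem:first-chaosn}).

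\smallskip
The main obstacle is simply navigating the parameter window: the deterministic estimate from Lemma~\ref{lem:bds-lb} demands $\beta>\alpha$ on the Hölder side, while summability of $\sum_k|k|^{2\beta-\a}$ in two dimensions demands $\beta<(\a-2)/2$. The assumption $\a>10$ leaves comfortable room for both. Compared with Lemmas~\ref{lem:first-chaosn} and~\ref{lem:Gaussian-n}, the key gain here is that the Gaussian mode $\widetilde{X}$ is ``buried'' under an additional integration in $\Psi$, which produces the extra $\sqrt{t}$ factor responsible for the absence of a $\kappa^{-1}$ in the final bound—this $\sqrt{t}$ replaces exactly the $\kappa^{-1/2}$ we would otherwise pick up from a Schauder estimate on the heat kernel inside $\Phi$.
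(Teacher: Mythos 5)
Your proof starts from the same Malliavin/It\^o-isometry reduction as the paper, but the way you then handle the sum over $k$ is genuinely different, and this is where the argument breaks down. After reducing to bounding $\|\mL_1\Phi[\Psi[X^0,\mD_{s,k}\widetilde{X}],Y]_t\|$ for each fixed $(s,k)$, you estimate $\|\mD_{s,k}\widetilde{X}_\tau\|_{\mC^\beta}\lesssim|k|^{\beta-\a/2}$ and feed this into the purely deterministic Lemmas~\ref{lem:bd-psi} and~\ref{lem:bds-lb}. This produces a $k$-dependence of exactly $|k|^{\beta-\a/2}$, which after squaring gives $\sum_k|k|^{2\beta-\a}$; in two dimensions this converges only for $\beta<\a/2-1$, and since Lemma~\ref{lem:bds-lb} also needs $\alpha<\beta$, your argument only covers the range $1<\alpha<\a/2-1$. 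The lemma as stated allows any $1<\alpha<\a/2$, and—more importantly—the proof of Proposition~\ref{prop:bds-new} (via Proposition~\ref{prop:chaos-together}) invokes these bounds with $\alpha$ taken \emph{arbitrarily close to} $\a/2$, precisely to control the conversion factor $\|\varrho_0\|_{H^{-\alpha}}\lesssim M^{\a/2-\alpha}\|\varrho_0\|_{H^{-\a/2}}$. In fact, tracing the exponents there shows one needs $\alpha>\a/2-1+2/\q$ for some $\q>2$, which is disjoint from your window $\alpha<\a/2-1$. So this is not a cosmetic parameter adjustment but a real gap.

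The paper's proof avoids this by never separating $\sigma_k$ from $Y_r$ with a crude $\mC^\beta$ bound. Instead it inserts $e_ke_{-k}=1$ and uses
\begin{equ}
\|\mL_1[(\Div P_{r-\tau}\mathbf{P}(X^0_\tau\otimes_s\sigma_k))Y_r]\| \lesssim \|[\Div P_{r-\tau}\mathbf{P}(X^0_\tau\otimes_s\sigma_k)]e_{-k}\|_{H^\alpha}\,\|e_kY_r\|_{H^{-\alpha}}\;,
\end{equ}
so that the $k$-dependence is transferred onto $\|e_k\varrho_0\|_{H^{-\alpha}}$ rather than appearing as a bare power of $|k|$. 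This is exactly what Lemma~\ref{lem:equivalencen}, namely $\sum_k|k|^{-2\alpha}\|e_k\varphi\|_{H^{-\alpha}}^2\simeq\|\varphi\|^2_{H^{-\alpha}}$, is there to handle: it makes the $k$-sum converge for every $\alpha>1$, with no ceiling $\alpha<\a/2-1$. This is the missing ingredient in your approach. (Separately: the inequality $\|u_0\|_{\mC^\beta}\lesssim\|u_0\|_{\mC^\alpha}$ you invoke at the end is backwards when $\beta>\alpha$; this would be a harmless slip if the rest of the argument were correct, since the conclusion can be stated with $\|u_0\|_{\mC^\beta}$ as in Lemma~\ref{lem:first-chaosn}, but I flag it for completeness.)
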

\begin{proof}
As in the previous lemma, we find that
\begin{equs}
\EE \bigl\| \mL_{1}  \bigl[  \Phi \bigl[ \Psi [X^{0}, \widetilde{X}], Y
\bigr]_{t} \bigr] \bigr\|^{2}& = \sum_{|\ell|\leq
1}\sum_{k\in\Z^2_*}\int_0^t|\mF
\Phi \bigl[ \Psi [X^{0}, \mD_{s, k} \widetilde{X} ], Y
\bigr]_{t}(\ell)|^2\ds \\
& = \sum_{k\in
\Z^2_*}\int_0^t\|\mL_1 \Phi \bigl[ \Psi [X^{0}, \mD_{s, k} \widetilde{X} ], Y
\bigr]_{t}\|^2\ds \;.
\end{equs}
Then, following the same type of argument as in the previous upper bounds and using
Cauchy--Schwarz, we obtain the following bound
\[
\E\|\mL_1\Phi[\Psi[X^0,\widetilde{X}],Y]_t\|^2 \leq \sum_{k \in
\ZZ^{2}_{*}} \int_0^t\biggl(\int_s^t\int_s^{r}g(k, s,r,\tau)\dee \tau\dee r\biggr)^2\dee s\,,
\]
where $g$ is given by
\[
g(k, s,r,\tau) = |k|^{- \frac{\a}{2} }\|\mL_1[(\Div P_{r-\tau}\mathbf{P}(X^0_\tau\otimes_s\sigma_k))Y_{r}]\|\,,
\]
with $ \mathbf{P} $ the Leray projection. Now we bound
\begin{equs}
\|\mL_1[(\Div P_{r-\tau}\mathbf{P}(X^0_\tau\otimes_s\sigma_k))Y_{r}]\| & \lesssim
\|[ \Div P_{r-\tau}\mathbf{P}(X^0_\tau\otimes_s\sigma_k)] e_{- k}\|_{H^{\alpha}}
\| e_{k}Y_{r} \|_{H^{-
\alpha}} \\
& \lesssim (r - \tau)^{- \frac{1}{ 2}}  \| u_{0} \|_{H^{\alpha}} \|
e_{k} \varrho_{0} \|_{H^{- \alpha}} \;.
\end{equs}
Here we have used that
\begin{equs}
\|[ \Div P_{r-\tau}\mathbf{P}(X^0_\tau\otimes_s\sigma_k)] e_{-
k}\|_{H^{\alpha}}^{2} & = \sum_{l \in \ZZ^{2}_{*}} | l |^{2 \alpha} | \mF [ \Div
P_{r-\tau}\mathbf{P}(X^0_\tau\otimes_s\sigma_k)]|^{2} (l + k) \\
& \lesssim (r - \tau)^{-1}\sum_{l \in \ZZ^{2}_{*}} | l |^{2 \alpha} | \mF
(X^0_\tau\otimes_s\sigma_k)|^{2} (l + k)\\
& \lesssim (r - \tau)^{-1}\sum_{l \in \ZZ^{2}_{*}} | l |^{2 \alpha} | \mF
X^0_\tau |^{2} (l) \lesssim (r - \tau)^{- 1}\| u_{0} \|_{H^{\alpha}}^{2}
\;.
\end{equs}
Hence applying Jensen with the measure $(r-\tau)^{-1/2}\dee\tau\dr$ we obtain overall
\begin{equs}
\E\|\mL_1\Phi[\Psi[X^0,\widetilde{X}],Y]_t\|^2 & \lesssim t^{\frac{3}{2} 
}  \int_0^t\int_s^t\int_s^{r} (r - \tau)^{- \frac{1}{2}} \sum_{k \in
\ZZ^{2}_{*}} | k |^{- \a}  \|
u_{0} \|_{H^{\alpha}}^{2} \| e_{k}
\varrho_{0} \|_{H^{- \alpha}}^{2}\dee \tau\dee r \dee s \\
& \lesssim t^{4} \| u_{0} \|_{H^{\alpha}}^{2} \| \varrho_{0} \|_{H^{-
\alpha}}^{2} \;,
\end{equs}
where the last estimate follows from the
second norm equivalence in Lemma~\ref{lem:equivalencen}.
This is the desired bound, when evaluated at $ t = \ts(M) $.
\end{proof}
And finally we can also bound the last term in \eqref{e:stoch-terms}.
\begin{lemma}\label{lem:third-psa}
Consider the map $ \Phi $ as in \eqref{e:def-phi} with $ L [u] = u \cdot \nabla $.
For any $ 1 < \alpha  < \a/2 $, we can estimate uniformly
over $ \kappa \in (0, 1] $:
\begin{equ}
\EE \bigl\| \mL_{1}  \bigl[  \Pi^{(1)}
\Phi[X,\Phi[X,\Phi[X, Y]]]_{\ts (M)} \bigr] \bigr\|^{2}\lesssim (\ts (M))^{5} \kappa^{-2} \| \varrho_{0}
\|_{H^{- \alpha}}^{2} (\| u_{0} \|^{4}_{\mC^{\alpha}}+1)\;.
\end{equ} 

\end{lemma}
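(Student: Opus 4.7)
The plan is to expand $X = X^{0} + \widetilde{X}$ in each of the three slots of $\Phi[X,\Phi[X,\Phi[X,Y]]]$. Since $X^{0}$ is deterministic and $\widetilde{X}$ is centered and Gaussian (hence purely in the first Wiener chaos), the projection $\Pi^{(1)}$ selects exactly those terms with one factor of $\widetilde{X}$ and two factors of $X^{0}$, giving
\begin{equs}
\Pi^{(1)}\Phi[X,\Phi[X,\Phi[X,Y]]]
&= \Phi[\widetilde{X},\Phi[X^{0},\Phi[X^{0},Y]]]\\
&\quad + \Phi[X^{0},\Phi[\widetilde{X},\Phi[X^{0},Y]]]\\
&\quad + \Phi[X^{0},\Phi[X^{0},\Phi[\widetilde{X},Y]]]\;.
\end{equs}
I would then estimate the three terms separately, in the spirit of Lemma~\ref{lem:first-chaosn} and Lemma~\ref{lem:Frst-Chaos-2n}.

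The unifying principle is: at every layer whose outer argument is the deterministic $X^{0}$ we invoke the deterministic estimate of Lemma~\ref{lem:bds-lb}, and at the unique layer whose outer argument is $\widetilde{X}$ we invoke the Gaussian estimate of Lemma~\ref{lem:Gaussian-n}. Each deterministic layer contributes a factor $t$ (if it is the outermost, controlling $\|\mL_{1}\cdot\|$) or $\sqrt{t/\kappa}$ (if it controls an $H^{-\alpha}$ norm); the Gaussian layer contributes the improved $t^{3/2}$ or $t\kappa^{-1/2}$ under squaring and expectation. Concretely, for the first term I would apply Lemma~\ref{lem:Gaussian-n} directly with $\psi_{s} = \Phi[X^{0},\Phi[X^{0},Y]]_{s}$, using $\|\psi\|_{H^{-\a/2}_t} \leqslant \|\psi\|_{H^{-\alpha}_t}$ for $\alpha < \a/2$, and then iterate Lemma~\ref{lem:bds-lb} twice on $\psi$. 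For the second and third terms I would first apply Lemma~\ref{lem:bds-lb} to peel off the outer $\Phi[X^{0},\cdot]$ layer(s), and only then apply Lemma~\ref{lem:Gaussian-n} (and, for the second term, Lemma~\ref{lem:bds-lb} once more to control $\Phi[X^{0},Y]$). A straightforward power-count in $t$ and $\kappa$ shows that each of the three terms is bounded by $t^{5}\kappa^{-2}\|u_{0}\|_{\mC^{\beta}}^{4}\|\varrho_{0}\|_{H^{-\alpha}}^{2}$ for some $\beta \in (\alpha,\a/2)$, uniformly using $\|X^{0}\|_{\mC^{\beta}_{t}} \leqslant \|u_{0}\|_{\mC^{\beta}}$ (since $X^{0}_{s} = P_{s}u_{0}$) and $\|Y\|_{H^{-\alpha}_{t}} \leqslant \|\varrho_{0}\|_{H^{-\alpha}}$.

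The one genuine technical point I anticipate is that Lemma~\ref{lem:Gaussian-n} bounds $\EE \|\cdot\|_{H^{-\alpha}}^{2}$ at a fixed time, whereas Lemma~\ref{lem:bds-lb} requires the time-supremum $\|\cdot\|_{H^{-\alpha}_{t}}$ of its second argument. When the Gaussian layer sits strictly inside $\Phi[X^{0},\cdot]$, as in the second and third terms, one must commute $\sup_{s}$ with $\EE$. This is handled exactly as in the proof of Lemma~\ref{lem:first-chaosn}, by applying Cauchy--Schwarz to the defining time integral of $\Phi[X^{0},\cdot]$ before taking expectation, and then bounding the resulting pointwise-in-time Gaussian expectation via Lemma~\ref{lem:Gaussian-n}; no new ideas beyond this trick are needed. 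Summing the three contributions and evaluating at $t = \ts(M)$ yields the desired bound, with the minor discrepancy between $\|u_{0}\|_{\mC^{\beta}}$ and $\|u_{0}\|_{\mC^{\alpha}}$ resolved by choosing $\beta$ arbitrarily close to $\alpha$.
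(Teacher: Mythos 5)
Your treatment of the three terms linear in $\widetilde{X}$ matches the paper's proof in structure and power-counting, and your remark about applying Cauchy--Schwarz to the time integral before taking expectation (to reduce to $\sup_s \EE[\cdot]$ rather than $\EE[\sup_s \cdot]$) is exactly the right device. However, the claim underlying your decomposition is wrong, and the proof has a genuine gap as a result.

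You assert that because $\widetilde{X}$ lies in the first Wiener chaos, the projection $\Pi^{(1)}$ of $\Phi[X,\Phi[X,\Phi[X,Y]]]$ (after expanding $X = X^0 + \widetilde{X}$) selects precisely the three terms carrying one factor of $\widetilde{X}$ and two of $X^0$. This is false: a homogeneous trilinear expression in a Gaussian such as $\Phi[\widetilde{X},\Phi[\widetilde{X},\Phi[\widetilde{X},Y]]]$ is not purely in the third chaos. By the Wick decomposition it lives in chaoses $3$ and $1$, and its first-chaos component (arising from contracting a pair of the $\widetilde{X}$'s against each other) does not vanish here. The paper therefore writes $\Pi^{(1)}\Phi[X,\Phi[X,\Phi[X,Y]]]$ as a sum of \emph{six} terms: your three, plus three contraction terms of the form
\begin{equ}
\overline{\EE}\,\Phi\bigl[\widetilde{X},\Phi[\overline{X},\Phi[\overline{X},Y]]\bigr]\;,\qquad
\overline{\EE}\,\Phi\bigl[\overline{X},\Phi[\widetilde{X},\Phi[\overline{X},Y]]\bigr]\;,\qquad
\overline{\EE}\,\Phi\bigl[\overline{X},\Phi[\overline{X},\Phi[\widetilde{X},Y]]\bigr]\;,
\end{equ}
where $\overline{X}$ is an independent copy of $\widetilde{X}$ and $\overline{\EE}$ averages over it. Once you include them, they are estimated by the same machinery: apply Jensen in $\overline{\EE}$, then bound $\EE\,\overline{\EE}\,\|\cdot\|^2$ exactly as for the first three terms (treating $\overline{X}$ as if it were $X^0$, with $\|X^0\|_{\mC^\beta}$ replaced by $\|\overline{X}\|_{\mC^\beta_t}$, whose moments are finite). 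Each contraction term then also contributes $t^5\kappa^{-2}\|\varrho_0\|^2_{H^{-\alpha}}$, which is why the final constant in the lemma carries a ``$+1$'' rather than being proportional to $\|u_0\|^4_{\mC^\beta}$ alone. Without this correction your decomposition under-counts $\Pi^{(1)}$, so the argument as written does not establish the lemma.
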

\begin{proof}
The first step in proving this result is to write out the first chaos
components of the process $ \Phi[X,\Phi[X,\Phi[X, Y]]] $. Since $ X =
X^{0} + \widetilde{X} $, there are terms that appear as a combination of one $
\widetilde{X} $ and two $ X^{0} $, and other three terms that appear from the
contractions between pairs of $ \widetilde{X} $. In this setting it will be
convenient to write $ \overline{X} $ for an independent copy of $ \widetilde{X}
$ and $ \overline{\EE} $ for the expectation over $ \overline{X} $.  Then
\begin{equs}
\Pi^{(1)}\Phi[X,\Phi[X,\Phi[X, Y]]] = & \Phi[
\widetilde{X},\Phi[X^{0},\Phi[X^{0}, Y]]]+ \Phi[
X^{0},\Phi[ \widetilde{X},\Phi[X^{0}, Y]]]\\ 
&+ \Phi[ X^{0},\Phi[X^{0},\Phi[ \widetilde{X}, Y]]] + \overline{\EE}\Phi[
\widetilde{X} ,\Phi[ \overline{X},\Phi[ \overline{X}, Y]]] \\
& +  \overline{\EE}\Phi[
\overline{X} ,\Phi[ \widetilde{X},\Phi[ \overline{X}, Y]]] + \overline{\EE}\Phi[
\overline{X} ,\Phi[ \overline{X},\Phi[ \widetilde{X}, Y]]] \;.
\end{equs}
Now each of these terms can be treated similarly, for any fixed $ \alpha <
\beta < \a/2 $. For the first term we find
via Lemma~\ref{lem:Gaussian-n} and Lemma~\ref{lem:bds-lb}:
\begin{equ}
\EE \bigl\| \mL_{1}  \bigl[ \Phi[
\widetilde{X},\Phi[X^{0},\Phi[X^{0}, Y]]]_{t} \bigr] \bigr\|^{2} \lesssim
t^{3} \| \Phi[X^{0},\Phi[X^{0}, Y]] \|_{H^{- \frac{\a}{2}}_{t}}^{2} \lesssim
t^{5} \kappa^{- 2} \| u_{0} \|_{\mC^{\beta}}^{4} \| \varrho_{0}
\|_{H^{- \alpha}}^{2} \;.
\end{equ}
For the second term
\begin{equs}
\EE \bigl\| \mL_{1}  \bigl[ \Phi[
X^{0},\Phi[ \widetilde{X},\Phi[X^{0}, Y]]]_{t} \bigr] \bigr\|^{2} & \lesssim
t^{2} \| u_{0} \|_{\mC^{\beta}}^{2} \sup_{0 \leqslant s \leqslant t} \EE  \| \Phi[
\widetilde{X},\Phi[X^{0}, Y]]_{s}\|_{H^{- \alpha}}^{2} \\
& \lesssim t^{4} \kappa^{- 1} \| u_{0} \|_{\mC^{\beta}}^{2} \| \Phi[X^{0}, Y]
\|_{H^{- \alpha}_{t}}^{2} \lesssim t^{5} \kappa^{- 2} \| u_{0}
\|_{\mC^{\beta}}^{4} \| \varrho_{0} \|_{H^{- \alpha}}^{2}\;.
\end{equs}
And similarly for the third term
\begin{equs}
\EE \bigl\| \mL_{1}  \bigl[ \Phi[ X^{0},\Phi[X^{0},\Phi[ \widetilde{X},
Y]]]_{t} \bigr] \bigr\|^{2} & \lesssim t^{3} \kappa^{-1} \| u_{0}
\|_{\mC^{\beta}}^{4} \sup_{0 \leqslant s \leqslant t} \EE \|\Phi[ \widetilde{X},
Y]_{s} \|^{2}_{H^{- \alpha}}\\
& \lesssim t^{5} \kappa^{- 2} \| u_{0}
\|_{\mC^{\beta}}^{4} \| \varrho_{0} \|_{H^{- \alpha}}^{2}\;.
\end{equs}
Now we are left with the terms that appear from contractions. These are however
bounded in exactly the same way as the previous three. For clarity we bound the
first term only. We find via Jensen's inequality and the same calculation as
for the first term above:
\begin{equs}
\EE \bigl\| \mL_{1}  \bigl[  \overline{\EE}\Phi[
\widetilde{X} ,\Phi[ \overline{X},\Phi[ \overline{X}, Y]]]_{t} \bigr]
\bigr\|^{2} & \leqslant \overline{\EE} \EE \bigl\| \mL_{1}  \bigl[  \Phi[
\widetilde{X} ,\Phi[ \overline{X},\Phi[ \overline{X}, Y]]]_{t} \bigr]
\bigr\|^{2} \\
& \lesssim t^{5} \kappa^{- 2}  \| \varrho_{0}
\|_{H^{- \alpha}}^{2} \overline{\EE} \left[  \| u_{0} \|^{4}_{\mC^{\beta}}
\right] \lesssim t^{5} \kappa^{- 2} \| \varrho_{0} \|_{H^{- \alpha}}^{2} \;.
\end{equs}
All other terms follow in a similar way. Hence, the proof is complete.
\end{proof}
The last element of this subsection is a deterministic result regarding the
equivalence of certain Sobolev norms, which is useful in the preceding
stochastic estimates.

\begin{lemma}\label{lem:equivalencen}
The following equivalence between norms holds for any $ \alpha >1 $
\begin{equ}
\sum_{k \in \ZZ^{2}_{*}} | k |^{-2 \alpha} \| e_{k} \varphi \|_{H^{- \alpha}}^{2} \simeq
\| \varphi \|^{2}_{H^{- \alpha} }  \;.
\end{equ}
\end{lemma}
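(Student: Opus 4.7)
The plan is to compute $\|e_k \varphi\|_{H^{-\alpha}}^2$ in Fourier variables, exchange the two sums, and reduce the equivalence to a discrete convolution estimate on $\ZZ^2_*$. Since multiplication by $e_k$ corresponds to a frequency shift, $\widehat{e_k \varphi}(\ell) = \hat\varphi(\ell - k)$, so that after substituting $m = \ell - k$ and using that $\hat\varphi(0) = 0$ (mean zero condition) we obtain
\[
  \sum_{k \in \ZZ^2_*} | k |^{-2 \alpha} \| e_{k} \varphi \|_{H^{- \alpha}}^{2}
  = \sum_{m \in \ZZ^2_*} |\hat\varphi(m)|^2 \, K(m), \qquad
  K(m) \eqdef \sum_{\substack{k \in \ZZ^2_* \\ k \neq -m}} |k|^{-2\alpha} |m+k|^{-2\alpha},
\]
where exchanging the two (nonnegative) sums is justified by Tonelli. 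The lemma then reduces to showing that $K(m) \simeq |m|^{-2\alpha}$ uniformly in $m \in \ZZ^2_*$.

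For the \emph{upper bound}, I would split $K(m)$ into three pieces according to the regions
\[
  A = \{|k| \leq |m|/2\}, \qquad
  B = \{|m+k| \leq |m|/2\}, \qquad
  C = \{|k| > |m|/2\} \cap \{|m+k| > |m|/2\}.
\]
In region $A$ one has $|m+k| \geq |m|/2$, so using that $\sum_{k \neq 0} |k|^{-2\alpha}$ converges in $\ZZ^2$ precisely because $\alpha > 1$, the contribution is $\lesssim |m|^{-2\alpha}$. By symmetry (the change of variable $k \mapsto -k-m$), region $B$ gives the same bound. In region $C$ we estimate $|m+k|^{-2\alpha} \lesssim |m|^{-2\alpha}$ and compute the tail sum $\sum_{|k| > |m|/2} |k|^{-2\alpha} \lesssim |m|^{2-2\alpha}$, yielding a contribution of order $|m|^{2-4\alpha} \lesssim |m|^{-2\alpha}$ for $\alpha \geq 1$ and $|m| \geq 1$.

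For the \emph{lower bound}, when $|m|$ is large, it is enough to keep a single term in the sum: pick any $k \in \ZZ^2_*$ with $|k| = 1$, then $|m+k| \leq |m| + 1 \leq 2|m|$, so $|k|^{-2\alpha} |m+k|^{-2\alpha} \gtrsim |m|^{-2\alpha}$. When $|m|$ is bounded, both $K(m)$ and $|m|^{-2\alpha}$ are bounded above and below by positive constants depending only on $\alpha$, so the equivalence is trivial in that case.

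The only delicate point is the exclusion $k \neq -m$, which is harmless because it removes at most one term and this term is anyway dominated by contributions from nearby $k$, together with tracking the borderline role of the condition $\alpha > 1$, which is exactly what guarantees summability of $|k|^{-2\alpha}$ in dimension two. No further input is required beyond these elementary lattice computations.
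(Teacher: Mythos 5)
Your proof is correct, but it takes a genuinely different route from the paper's. Both arguments begin by unfolding $\|e_k\varphi\|_{H^{-\alpha}}^2$ in Fourier and observing that the lower bound is trivial from the single term $|k|\simeq 1$. For the upper bound, you change variables to isolate the convolution kernel $K(m)=\sum_{k\neq 0, -m}|k|^{-2\alpha}|m+k|^{-2\alpha}$ and prove $K(m)\simeq |m|^{-2\alpha}$ by splitting $\ZZ^2_*$ into the three regions $|k|\le |m|/2$, $|m+k|\le|m|/2$, and their complement; each region is handled by a standard lattice tail estimate, with $\alpha>1$ entering as the summability threshold for $\sum_{k\neq 0}|k|^{-2\alpha}$ in two dimensions. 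The paper instead keeps both sums and uses the algebraic inequality $|k|^{-2\alpha}|\ell|^{-2\alpha}\lesssim |\ell|^{-2\alpha}|\ell-k|^{-2\alpha}+|k|^{-2\alpha}|\ell-k|^{-2\alpha}$ (from $|\ell-k|^{2\alpha}\lesssim|k|^{2\alpha}+|\ell|^{2\alpha}$), then exploits the $k\leftrightarrow\ell$ symmetry to reduce to a single term which factorises as $\|\varphi\|_{H^{-\alpha}}^2\sum_\ell|\ell|^{-2\alpha}$. Your version pinpoints the underlying kernel estimate more explicitly, at the cost of a three-region case analysis; the paper's version avoids case-splitting through the algebraic trick plus symmetry. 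Both are sound and both hinge on the same use of $\alpha>1$; neither has an advantage beyond taste.
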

\begin{proof}
Let us start with the first estimate. Note that $\sum_{k \in
\ZZ^{2}_{*}} | k |^{-2 \alpha} \| e_{k} \varphi \|_{H^{- \alpha}}^{2}
\gtrsim \| \varphi \|^{2}_{H^{- \alpha} } $ is trivial by choosing $ | k | \simeq 1 $.
For the upper bound we have instead that
\begin{equs}
\sum_{k\in \Z^2_*}|k|^{- 2\alpha } \|e_k \varphi \|_{H^{-\alpha}}^2 & \lesssim
\sum_{\ell,k \in \Z^2_{*}\atop \ell \neq j}
|k|^{- 2 \alpha }|\ell|^{-2\alpha}|\widehat{\varphi}(\ell-k)|^2
\leqc \sum_{\ell, k \in \Z^2_{*}\atop \ell \neq j}
|\ell|^{-2\alpha}|\ell-k|^{-2\alpha}|\widehat{\varphi} (\ell-k)|^2\;.
\end{equs}
where in the last inequality we used symmetry of the summand, together with the fact that
\[
   |k|^{- 2 \alpha }|\ell|^{-2\alpha} =  |k|^{-
2\alpha}|\ell|^{-2\alpha}|\ell-k|^{-2\alpha}|\ell-k|^{2\alpha} \leqc
|\ell|^{-2\alpha}|\ell-k|^{-2\alpha} + |k|^{-2\alpha}|\ell-k|^{-2\alpha}\;,
\] 
where the last inequality holds via the triangle inequality $ | \l - k |
\leqslant | k | + | \l - k | $. Finally, we obtain as desired
\begin{equ}
\sum_{\ell, k \in \Z^2_{*}\atop \ell \neq j}
|\ell|^{-2\alpha}|\ell-k|^{-2\alpha}|\widehat{\varphi} (\ell-k)|^2 = \| \varphi
\|_{H^{- \alpha}}^{2}\sum_{\ell \in \Z^2_{*}}
|\ell|^{-2\alpha} \lesssim \| \varphi \|_{H^{- \alpha}}^{2} \;,
\end{equ}
where we have used that $ \alpha > 1 $ and we are in dimension $ d=2 $.
\end{proof}
This concludes our stochastic estimates, and we can move on to bound the
remainder terms through analytic estimates.

\subsection{Upper bound on the remainder terms} \label{sec:ubg}
Now we are left with bounding the remainder terms appearing in
\eqref{e:rest-terms}. Here our estimates build only on Lemma~\ref{lem:bds-lb}
and Lemma~\ref{lem:bd-psi}.

\begin{lemma}\label{lem:rest-complete}
Fix any $ \q >1 $. Consider the map $ \Phi, \Psi $ as in \eqref{e:def-psi} and\eqref{e:def-phi} with $ L [u] = u \cdot \nabla
$, and let $ R $ be defined as in \eqref{e:rest-terms}.  For any $ \alpha <
\beta < \a/2 $, there exists a $ C > 0 $ such
that we can estimate uniformly over $ M \geqslant \kappa^{-\q} $ and $ \kappa \in (0, 1] $:
\begin{equs}
\left\| R_{\ts (M)} \right\| \lesssim (\ts (M))^{2} \kappa^{-
\frac{1}{2} }  \| \varrho_{0} \|_{H^{- \alpha}}  \exp \left( C \ts(M) \kappa^{-1} \| u
\|_{\mC^{\beta}_{t}}^{2} \right) ( \| X \|_{\mC^{\beta}_{t}}+ \| u
\|_{\mC^{\beta}_{t}} )^{4} \;.
\end{equs} 
\end{lemma}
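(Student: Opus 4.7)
The plan is to bound each of the seven summands in the decomposition \eqref{e:rest-terms} of $R$ separately, by combining iteratively the Schauder-type estimates of Lemma~\ref{lem:bds-lb} (each nested $\Phi$ costing at worst a factor $\sqrt{t/\kappa}$ while preserving $H^{-\alpha}$ regularity of its second slot) and Lemma~\ref{lem:bd-psi} (each $\Psi$ costing a factor $\sqrt{t}$ while keeping its output in $\mC^\beta_t$). For the single summand in $\overline R$ that still involves $\varrho$ itself, I will close the estimate by appealing to the a priori bound \eqref{e:a-priori-scalar}, which is precisely what produces the exponential factor in the statement.

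Before proceeding term by term, I will record a complementary $L^2$-valued version of the first bound in Lemma~\ref{lem:bds-lb}: for $0 < \alpha < \beta < \a/2$ with $\alpha < 1$ and any incompressible $\varphi \in \mC^\beta_t$,
\begin{equ}
\|\Phi[\varphi,\psi]_t\| \lesssim t^{(1-\alpha)/2}\kappa^{-(1+\alpha)/2}\|\varphi\|_{\mC^\beta_t}\|\psi\|_{H^{-\alpha}_t}\;.
\end{equ}
This follows along the lines of the proof of Lemma~\ref{lem:bds-lb}, by using incompressibility to write $\varphi\cdot\nabla\psi = \div(\varphi\psi)$, the paraproduct bound $\|\varphi\psi\|_{H^{-\alpha}} \lesssim \|\varphi\|_{\mC^\beta}\|\psi\|_{H^{-\alpha}}$, and the Schauder estimate $\|P^\kappa_\tau\div\,\cdot\,\|_{H^{-\alpha}\to L^2} \lesssim (\kappa\tau)^{-(1+\alpha)/2}$, which is time-integrable provided $\alpha<1$. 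This will be my tool for converting the $H^{-\alpha}$ estimates on the interior of each nested expression into an $L^2$ estimate at the outermost $\Phi$, with effective cost $\sqrt{t/\kappa}$ up to an arbitrarily small loss.

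I will then treat the six summands of $R$ that only involve $u,X,Y$ by peeling off the $\Phi$'s from the outside in: the outermost $\Phi$ is handled by the $L^2$-valued bound above, the inner $\Phi$'s by the $H^{-\alpha}$ bound of Lemma~\ref{lem:bds-lb}, and each $\Psi$ by Lemma~\ref{lem:bd-psi}. The innermost $Y$ contributes the factor $\|\varrho_0\|_{H^{-\alpha}}$ via $\|Y_s\|_{H^{-\alpha}}\le \|\varrho_0\|_{H^{-\alpha}}$, while the factor $(\|X\|_{\mC^\beta_t}+\|u\|_{\mC^\beta_t})^4$ comes out by collecting the $\mC^\beta_t$-norms produced at each step, possibly after using $u = X + \Psi[u,u]$ and Lemma~\ref{lem:bd-psi} to bound the $\psi$-part. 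For the remaining term $\Phi[u,\Phi[u,\Phi[u,\Phi[u,\varrho]]]]$ I peel off three outer $\Phi$'s in the same way and then control the residual $\|\varrho\|_{H^{-\alpha}_t}$ by \eqref{e:a-priori-scalar}: this simultaneously supplies $\|\varrho_0\|_{H^{-\alpha}}$ and the exponential $\exp(Ct\kappa^{-1}\|u\|_{\mC^\beta_t}^2)$.

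The main obstacle is not any single hard estimate but rather the careful bookkeeping of factors $\sqrt{t}$ and $\sqrt{t/\kappa}$ across seven summands of varying nesting depth, to verify that their combined output sits inside $(\ts(M))^2\kappa^{-1/2}$ times the stated $\mC^\beta_t$ and exponential factors. Where individual estimates give slightly suboptimal powers of $\kappa$, the standing assumption $M \geqslant \kappa^{-\q}$ with $\q>1$ — equivalently, the smallness $\ts(M)\lesssim \kappa^{-1}M^{-2}\log M$ — provides enough slack to absorb these excesses into the $(\|X\|_{\mC^\beta_t}+\|u\|_{\mC^\beta_t})^4$ and exponential prefactors, so the bound is ultimately driven by the plain counting of $\Phi$'s and $\Psi$'s in each summand.
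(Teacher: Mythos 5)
Your term-by-term plan is the paper's: bound each summand of $R$ in \eqref{e:rest-terms} by peeling off the nested $\Phi$'s and $\Psi$'s via Lemma~\ref{lem:bds-lb} and Lemma~\ref{lem:bd-psi}, and close the one summand still containing $\varrho$ with \eqref{e:a-priori-scalar}, which supplies the exponential factor. The difficulty, and where your proposal has a genuine gap, is in the treatment of the outermost $\Phi$.

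What the paper actually proves --- and what is used in the proof of Proposition~\ref{prop:bds-new} --- is a bound on $\|\mL_1 R_{\ts(M)}\|$, not on $\|R_{\ts(M)}\|$; the missing $\mL_1$ in the displayed statement is evidently a typo. Correspondingly the outermost $\Phi$ is controlled by the first estimate of Lemma~\ref{lem:bds-lb}, $\|\mL_1\Phi[\varphi,\psi]_t\| \lesssim t\,\|\varphi\|_{\mC^\beta_t}\|\psi\|_{H^{-\alpha}_t}$, valid for every $0<\alpha<\beta$, because projecting onto $|k|\leqslant 1$ absorbs both the divergence and the negative Sobolev weight at no cost in $t$ or $\kappa$. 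Your substitute $L^2$-valued bound $\|\Phi[\varphi,\psi]_t\|\lesssim t^{(1-\alpha)/2}\kappa^{-(1+\alpha)/2}\|\varphi\|_{\mC^\beta_t}\|\psi\|_{H^{-\alpha}_t}$ is only available for $\alpha<1$ (the Schauder singularity $(\kappa(t-s))^{-(1+\alpha)/2}$ is not time-integrable otherwise), while the lemma must hold for $\alpha$ arbitrarily close to $\a/2 > 5$: the proof of Proposition~\ref{prop:bds-new} deliberately takes $\alpha$ near $\a/2$ so that $\|\varrho_0\|_{H^{-\alpha}}$ can be traded for $\|\varrho_0\|_{H^{-\a/2}}$ at the modest price $M^{\a/2-\alpha}$. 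Restricting to $\alpha<1$ breaks the downstream application.

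Even granting $\alpha<1$, the power counting does not close. Your outer $\Phi$ costs roughly $(t/\kappa)^{1/2}$ rather than the $\mL_1$ estimate's $t$; the ratio $(t\kappa)^{-1/2}$ at $t=\ts(M)=\lambda\kappa^{-1}M^{-2}\log M$ is of order $M$. So on, e.g., $\Phi[u,\Phi[u,\Phi[u,\Phi[u,\varrho]]]]$ your estimate overshoots the target by a factor $\sim M$. This is not a ``slightly suboptimal power of $\kappa$'' that $M\geqslant\kappa^{-\q}$ can absorb: the deficit grows with $M$, and the prefactors $(\|X\|_{\mC^\beta_t}+\|u\|_{\mC^\beta_t})^4$ and $\exp\bigl(C\ts(M)\kappa^{-1}\|u\|^2_{\mC^\beta_t}\bigr)$ carry no compensating dependence on $M$ or $\kappa$. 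Replacing your auxiliary $L^2$ bound by the $\mL_1$ estimate from Lemma~\ref{lem:bds-lb} at the outermost level repairs both issues, and the rest of your bookkeeping then goes through.
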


\begin{remark}\label{rem:further-dec}
The reason for considering $ \overline{R} $ rather than $ \Phi[u, \Phi[u,
\Phi[u, \varrho]] $ in the decomposition \eqref{e:rest-terms}, is that the
upper bound on the latter term we obtain is of order
\begin{equ}
\left\| \mL_{1}  \left[  \Phi[u, \Phi[u, \Phi[u, \varrho]]]_{t} \right]
\right\| \lesssim t^{2} \kappa^{- 1 }  \| u
\|_{\mC^{\beta}_{t}}^{3} \| \varrho \|_{H^{- \alpha}_{t}} \;,
\end{equ}
which is worse than the upper bound in the lemma above, and in particular is of
higher order with respect to the upper bound in
Proposition~\ref{prop:chaos-together}.
\end{remark}

\begin{proof}
We bound one by one the terms in \eqref{e:rest-terms} by applying successively
Lemma~\ref{lem:bds-lb} and Lemma~\ref{lem:bd-psi}. Let us start with 
the first term $ \Phi[\Psi[\Psi[u,u], u+X],Y]  $, we have
\begin{equs}
\left\| \mL_{1}  \left[   \Phi \left[\Psi[ \Psi[u,u], u+ X], Y \right]_{t} \right]
\right\| & \lesssim  t\| \Psi[ \Psi[u,u], u+ X] \|_{\mC^{\beta}_{t}} \|
\varrho_{0} \|_{ H^{- \alpha}} \\
& \lesssim  t^{\frac{3}{ 2}} \| \Psi[u, u]
\|_{\mC^{\beta}_{t}} \| u+X \|_{\mC^{\beta}_{t}} \| \varrho_{0}
\|_{H^{- \alpha}}\\
& \lesssim  t^{2} \| u \|_{\mC^{\beta}_{t}}^{2} \| u+X \|_{\mC^{\beta}_{t}} \| \varrho_{0}
\|_{H^{- \alpha}} \;.
\end{equs}
For the second term $ \Phi[\Psi[u,u], \Phi[u,Y]] $ we have
\begin{equ}
\left\| \Phi[\Psi[u,u], \Phi[u,Y]]_{t} 
\right\|\lesssim  t^{2} \kappa^{- \frac{1}{2}} \| u
\|_{\mC^{\beta}_{t}}^{3} \| \varrho \|_{H^{- \alpha}_{t}} \;.
\end{equ}
For the third term $  \Phi[u,\Phi[\Psi[u,u],Y]]$ we have
\begin{equs}
 \left\| \mL_{1}  \left[   \Phi[u,\Phi[\Psi[u,u],Y]]_{t} \right]
\right\| & \lesssim t^{\frac{3}{2}} \kappa^{- \frac{1}{2}}  \| u
\|_{\mC^{\beta}_{t}} \| \Psi[u, u] \|_{\mC^{\beta}_{t}} \| \varrho
\|_{H^{- \alpha}_{t}}  \lesssim  t^{2} \kappa^{- \frac{1}{2}} \| u \|_{\mC^{\beta}_{t}}^{3} \| \varrho
\|_{H^{- \alpha}_{t}} \;.
\end{equs}
Now we are left with bounding $ \overline{R} $, which consists itself of four
terms, which we can bound as follows:
\begin{equs}
\left\| \mL_{1}  \left[   \Phi[u,\Phi[u,\Phi[u, \Phi[u, \varrho]]]]_{t} \right]
\right\| & \lesssim t^{\frac{5}{2}} \kappa^{- \frac{3}{2}} \| u
\|_{\mC_{t}^{\beta}}^{4} \| \varrho \|_{H^{- \alpha}_{t}} \;, \\
\left\| \mL_{1}  \left[   \Phi[u, \Phi[u, \Phi[ \Psi[u,u],
Y]]] _{t} \right]
\right\| & \lesssim t^{\frac{5}{2}} \kappa^{- 1} \| u
\|_{\mC_{t}^{\beta}}^{4} \| \varrho \|_{H^{- \alpha}_{t}} \;, \\
\left\| \mL_{1}  \left[ \Phi[u, \Phi[\Psi[u,u], \Phi[X, Y]]]_{t} \right]
\right\| & \lesssim t^{\frac{5}{2}} \kappa^{- 1} \| u
\|_{\mC_{t}^{\beta}}^{4} \| \varrho \|_{H^{- \alpha}_{t}} \;, \\
\left\| \mL_{1}  \left[ \Phi[\Psi[u, u], \Phi[X, \Phi[X, Y]]]_{t} \right]
\right\| & \lesssim t^{\frac{5}{2}} \kappa^{- 1} \| u
\|_{\mC_{t}^{\beta}}^{4} \| \varrho \|_{H^{- \alpha}_{t}} \;.
\end{equs}
Now, the final estimate follows from \eqref{e:a-priori-scalar}, and by setting
$ t = \ts(M) $. Indeed, we observe that $ \ts (M)^{\frac{5}{2}} \kappa^{-
\frac{3}{2} } \lesssim \ts (M)^{2} \kappa^{-1} $ under the assumption that $
M \geqslant \kappa^{- \q} $ for some $ \q > 1 $.
\end{proof}
This concludes our study of the upper bounds, and we are ready to conclude the
section with the proof of the main result.

\subsection{Proof of Proposition~\ref{prop:bds-new}}
\label{sec:prf-prop}

We are now ready to prove the high-frequency instability result that was the
main objective of this section.

\begin{proof}[of Proposition~\ref{prop:bds-new}]

Let us recall from \eqref{e:decomp-rho} that we have decomposed $\varrho$ as
\begin{equ}
\varrho = A + S + R\;,
\end{equ} where the stochastic term $S$ and the remainder term $R$ are defined
in \eqref{e:stoch-terms} and \eqref{e:rest-terms} respectively, and $A$ is
defined by $A_{t} = Y_{t}+\Phi[X,Y]_{t}$. See also \eqref{e:def-X} and
\eqref{e:def-Y} for definitions of $X$ and $Y$. The strategy of the proof is to
obtain a lower bound for $A+S$ and an upper bound for $R$. Moreover, the proof is slightly different in the case of passive
scalar advection and in that of linearised SNS, because the upper and lower
bounds differ (despite the overall strategy being unchanged). We will provide a
detailed proof in the case of passive scalar advection and only point out the
main differences in the case of linearised SNS after the proof for passive
scalar advection. Note that for advection it suffices to project on a ball of
Fourier modes of radius $ 1 $, while for technical reasons in linearised SNS we
require a ball of radius $ \sqrt{2} $. In the statement of the proposition we
chose to project on the largest of the two balls (and the worst of the two lower bounds) to
avoid two separate statements.

\emph{The lower bound.} We will obtain the lower bound by the Paley--Zygmund inequality, thus it suffices to consider a lower bound for the first chaos component of $A+S$ from which the other chaos components are uncorrelated:
\begin{equ}
 \left (\EE \| \mL_{1} [A_{t} + S_{t}] \|^{2}\right)^{\frac{1}{2}} \geqslant  \left( \EE \| \mL_{1} [\Pi^{(1)}(A_{t} + S_{t})] \|^{2}\right)^{\frac{1}{2}} \geqslant \left( \EE \| \mL_{1} [\Pi^{(1)}A_{t} ] \|^{2} \right)^{\frac{1}{2}} - \left( \EE \| \mL_{1} [\Pi^{(1)}S_{t} ] \|^{2} \right)^{\frac{1}{2}}  \;,
\end{equ} where $\Pi^{(1)}$ denotes the projection on the first chaos and the second inequality follows from the triangle inequality. We first consider the lower bound on $\EE \| \mL_{1} [\Pi^{(1)}A_{t} ] \|^{2}$. Fix $\beta \geqslant 1$ and recall that $X = X^{0} + \widetilde{X}$ where $X^{0}_{t} = P_{t} u_{0}$. By Lemma~\ref{lem:LBI1-new}, for all integers $M\geqslant M^{(\beta)}(\varrho_{0})$ we have 
\begin{equ}
\EE \| \mL_{1} [\Pi^{(1)}A_{\ts(M)} ] \|^{2} = \EE \| \mL_{1}\Phi[\widetilde{X}, Y]_{\ts(M)}\|^{2} \gtrsim \kappa^{-1} M^{-6} \| \varrho_{0} \|^{2}_{H^{-\frac{\a}{2}}}\;.
\end{equ} On the other hand, for any fixed $q>1$, from Proposition~\ref{prop:chaos-together} we have that for any $1<\alpha<\gamma < \a/2$ and $M\geqslant \kappa^{-q}$ 
\begin{equ}
\EE \| \mL_{1} [\Pi^{(1)}S_{\ts(M)} ] \|^{2} \lesssim  \kappa^{-5}M^{-8}(\log(M))^{4}( \|u_{0}\| +1 )^{4}_{\mC^{\gamma}} \| \varrho_{0}\|^{2}_{H^{-\alpha}}\;.
\end{equ} Collecting these estimates, there exist $C_{1}, C_{2}>0$ such that 
\begin{equs}
\left (\EE \| \mL_{1} [A_{\ts(M)} + S_{\ts(M)}] \|^{2}\right)^{\frac{1}{2}} &\geqslant C_{1} \kappa^{-\frac{1}{2}}M^{-3} \| \varrho_{0} \|_{H^{-\frac{\a}{2}}} - C_{2} \kappa^{-\frac{5}{2}}M^{-4}(\log(M))^{2}( \|u_{0}\| +1 )^{2}_{\mC^{\gamma}} \| \varrho_{0}\|_{H^{-\alpha}}\\
&\geqslant \frac{1}{2} C_{1} \kappa^{-\frac{1}{2}}M^{-3} \| \varrho_{0} \|_{H^{-\frac{\a}{2}}}\;,
\end{equs} where the last inequality follows assuming that in addition $ M \geqslant
M^{(\beta)}(\varrho_{0}) \vee 
\kappa^{- \q} \vee (\| u_{0} \|_{\mC^{\gamma}}+1)^{\r} $ for some $ \q > 2 $, and
by choosing $ \alpha, \gamma $ sufficiently close to $ \a/2 $ and $ \r $
sufficiently large. Indeed we note that 
$ \kappa^{- \frac{1}{2}} M^{ - 3 } \gg \kappa^{- \frac{5}{2} } M^{- 4} $ as
long as $ M \gg \kappa^{- 2} $. In addition, we have used that since $
M \geqslant M^{(\beta)}(\varrho_{0}) $, we can estimate
\begin{equs}
\| \varrho_{0} \|_{H^{ - \alpha}} \lesssim \| \mL_{M} \varrho_{0} \|_{H^{-
\alpha}} \lesssim M^{\frac{\a}{2} - \alpha} \| \mL_{M} \varrho_{0} \|_{H^{-
\frac{\a}{2}}} \lesssim M^{\frac{\a}{2} - \alpha} \| \varrho_{0} \|_{H^{-
\frac{\a}{2}}} \;.
\end{equs} Finally, we can pass from a lower bound on the second moment to a lower bound
on the probability of the desired norm being sufficiently large. To this aim,
let us define $ Z = \| \mL_{1} [ A_{\ts(M)} + S_{\ts(M)}] \|^{2} $ and $ \mu = \EE[Z] $.
Then by the Paley--Zygmund inequality, for any $ \vt \in (0,1) $ we can
estimate
\begin{equ}
\PP (Z \geqslant \vt \mu) \geqslant (1- \vt )
\frac{\mu^{2}}{\EE[Z^{2}]} \;.
\end{equ}
Now, since $ \sqrt{Z} $ lives in a finite inhomogeneous Wiener--It\^o chaos, we deduce by Gaussian
hypercontractivity that $ \EE[ Z^{2}] \leqslant c^{-1} \mu^{2} $ for some $
c \in (0, 1) $ that does not depend on the law of $ Z $. Therefore, we obtain the estimate
\begin{equ}
\PP (Z \geqslant \vt \mu) \geqslant c (1- \vt ) \;.
\end{equ}
Setting for example $ \vt = 1/2 $, this is the kind of lower bound that we are
looking for. Indeed if we would have $ R = 0 $, this would imply the statement
of the proposition with $ \alpha = c/2 $, since our previous calculations have
proven that
\begin{equ}
\sqrt{\mu}  \gtrsim \kappa^{- \frac{1}{2}} M^{ -3} \|
\varrho_{0} \|_{H^{- \frac{\a}{2}}} \;, \qquad \forall M \geqslant 
M^{(\beta)}(\varrho_{0}) \vee \kappa^{- \q} \vee C(\| u_{0}
\|_{\mC^{\gamma}}+1)^{\r} \;,
\end{equ}
for some suitable $ C > 0 $.

\emph{The upper bound.} The proof of the proposition is completed
if we can show that for any $ D, p > 0 $, there exists a $ C(D, p)> 0 $ such
that
\begin{equ}
\PP \left( \| \mL_{1} R_{\ts (M)} \| \geqslant D \kappa^{- \frac{1}{2}} M^{ -3} \|
\varrho_{0} \|_{H^{- \frac{\a}{2}}} \right) \leqslant C
M^{- p} (\| u_{0} \|_{\mC^{\gamma}}^{\r}+1)^{p} \;,
\end{equ}
for some $ \r > 1 $. Now we apply Lemma~\ref{lem:rest-complete} to obtain that for any $
 \ve \in (0, 1) $ and $ \gamma \in \left( \frac{\a - \ve}{2} ,
\frac{\a}{2} \right) $, there exists a $ \widetilde{C} (\ve) >0 $ such that 
\begin{equ}
 \| \mL_{1} R_{\ts (M)} \| \leqslant \widetilde{C} (\ve)  \kappa^{-\frac{5}{2}} M^{- 4  + \ve} \|
\varrho_{0} \|_{H^{- \frac{\a}{2}}} \exp \left( C \ts(M)  \kappa^{-1} 
\| u \|_{\mC^{\gamma}_{\ts (M)}}^{2} \right) \left( \| u \|_{\mC^{\gamma}_{\ts (M)}} + \| X
\|_{\mC^{\gamma}_{\ts (M)}} \right)^{4} \;,
\end{equ} 
where we have used the spaces $ \mC^{\gamma}_{t} $ introduced in
\eqref{e:cbt}.
Here we have absorbed all logarithmic terms in the
power $ M^{\ve} $.  Therefore, we bound the previous probability through the
following:
\begin{equs}
 \PP & \left(   \widetilde{C} (\ve)  \kappa^{-\frac{5}{2}} M^{- 4 + \ve} \exp \left( C
\ts (M) \kappa^{-1}  \| u \|_{\mC^{\gamma}_{\ts (M)}}^{2}  \right) \left( \| u
\|_{\mC^{\gamma}_{\ts(M)}} + \| X
\|_{\mC^{\gamma}_{\ts(M)}} \right)^{4} \geqslant D \kappa^{- \frac{1}{2}} M^{ -3} \right) \\
& = \PP \left(  \kappa^{- 2} M^{- 1 + \ve} \exp \left( C \ts
(M)  \kappa^{-1}\| u \|_{\mC^{\gamma}_{\ts(M)}}^{2}  \right) \left( \| u \|_{\mC^{\gamma}_{\ts
(M)}} + \| X \|_{\mC^{\gamma}_{\ts (M)}} \right)^{4} \geqslant  \overline{C}  \right) \;,
\end{equs}
with $ \overline{C} = \widetilde{C} (\ve)^{-1} D $. Now fix any $\q>2$. Then by the assumption of the proposition we can find $\zeta=\zeta(\q)>0$ and $\ve=\ve(\q)>0$ small so that $\kappa^{-2}M^{-1+\ve}\leqslant M^{-4\zeta}$  and $ \ts (M) \kappa^{-1}  \leqslant M^{-2\zeta} $ for all $M\geqslant \kappa^{-\q}$. Therefore, we
conclude that the previous probability is bounded from above by 
\begin{equs}
\PP  \bigg( \exp \left( C M^{-2 \zeta}  \| u \|_{\mC^{\gamma}_{\ts(M)}}^2  \right)
& \left( M^{ - \zeta} \| u \|_{\mC^{\gamma}_{\ts
(M)}} + M^{- \zeta}\| X \|_{\mC^{\gamma}_{\ts (M)}} \right)^{4} \geqslant
\overline{C} \bigg) \\
& \leqslant \PP \left( M^{- \zeta} \| u \|_{\mC^{\gamma}_{t}} \geqslant E
\right) + \PP \left( M^{- \zeta} \| X \|_{\mC^{\gamma}_{t}} \geqslant E \right)
\;,
\end{equs}
for a suitable $ E > 0 $ that depends on the values of $ C $ and $ \overline{C}
$. Now by applying Markov's inequality, together with
Lemma~\ref{lem:mmt-unif}, we have proven that
\begin{equs}
\PP \left( \left\| \mL_{1} \varrho_{\ts (M)} \right\| \geqslant C_{1} \kappa^{-
\frac{1}{2}} M^{ -3} \| \varrho_{0} \|_{H^{- \frac{\a}{2}}} \right) & \geqslant \alpha -
C_{2} M^{- p} (\| u_{0} \|_{\mC^{\gamma}}+1)^{\r p} \;.
\end{equs} 
Since $ M \geqslant M^{(\beta)} (\varrho_{0}) $ we have $ \|
\varrho_{0} \|_{H^{- \frac{\a}{2}} } \gtrsim M^{- \frac{\a}{2}}  $, so that the proof is
complete.

\emph{Adaptation to linearised SNS.} The case $ L [u] = u \cdot \nabla + \Delta
u \cdot \nabla^{-1} $ follows through similar calculations, the only changes
occurring in the lower and upper bounds. In particular, by
Lemma~\ref{lem:LBI1-sns} and Proposition~\ref{prop:chaos-lns}, for any $ \alpha -1 < \gamma < \a/2  $ and $\q>2$ there exist constants $ C_{1}, C_{2} > 0 $ such that 
\begin{equs}
\left (\EE \| \mL_{\sqrt{2}} [A_{\ts(M)} + S_{\ts(M)}]
\|^{2}\right)^{\frac{1}{2}}&\geqslant C_{1} \kappa^{-\frac{1}{2}}M^{-3} \|
\varrho_{0} \|_{H^{-(\frac{\a}{2}+1)}} \\
& \qquad \qquad - C_{2} \kappa^{-\frac{5}{2}}M^{-4}(\log(M))^{2}( \|u_{0}\| +1 )^{2}_{\mC^{\gamma}} \| \varrho_{0}\|_{H^{-\alpha}}\\
&\geqslant \frac{1}{2} C_{1} \kappa^{-\frac{1}{2}}M^{-3} \| \varrho_{0} \|_{H^{-(\frac{\a}{2}+1)}}\;,
\end{equs}
up to choosing suitable $ \alpha, \gamma $, and for $  M \geqslant 
M^{(\beta)}(\varrho_{0}) \vee \kappa^{- \q} \vee C(\| u_{0}
\|_{\mC^{\gamma}}+1)^{\r}$ with some sufficiently
large $ C > 0 $.
As for the upper bound, we must prove that there exists an $ \r $ such that
for any $ D, p > 0 $ there exists a $ C(D, p) \geqslant 1 $ such that
\begin{equ}
\PP \left( \| \mL_{\sqrt{2} } R_{\ts (M)} \| \geqslant D \kappa^{- \frac{1}{2}} M^{ -3} \|
\varrho_{0} \|_{H^{- (\frac{\a}{2} +1)}} \right) \leqslant C
M^{- p} (\| u_{0} \|_{\mC^{\gamma}}^{\r}+1)^{p} \;.
\end{equ}
Here we use Lemma~\ref{lem:rest-1-sns} to obtain for any $ \alpha, \gamma > 2 $ such that $
\alpha -1 < \gamma < \a/2$, and some $ C > 0 $:
\begin{equ}
\| \mL_{\sqrt{2}} R_{\ts (M)} \| \lesssim \kappa^{- \frac{5}{2}  
 } M^{-4 +\ep} \exp \left( C \ts(M) \kappa^{-1}  \|
u \|_{\mC^{\gamma}_{\ts(M)}}^{2} \right) \left( \| u
\|_{\mC^{\gamma}_{\ts(M)}}^{4}+ \| X
\|_{\mC^{\gamma}_{\ts(M)}}^{4}\right) \| \varrho_{0} \|_{H^{- \alpha }} \;.
\end{equ}
This is of lower order with respect to the lower bound we obtained before,
provided $ M \geqslant
M^{(\beta)}(\varrho_{0}) \vee \kappa^{- \q} \vee  \widetilde{C}(\| u_{0}
\|_{\mC^{\gamma}}+1)^{\r}$ for some $
\widetilde{C} > 0 $ and for any $ \q > 2 $. From this point onward, the proof
follows verbatim the case of pure advection, and we avoid repeating
it.
\end{proof}

\section{Gaussian estimates for linearised SNS} \label{sec:gauss-sns}
The estimates for energy transfer for linearised SNS follow by and large the
same approach as those for the transport equation. However, at low frequencies,
there is a subtle cancellation between the transport and the stretching term.
This can be seen by a crude power counting, since both $ u \cdot \nabla \varrho $
and $ \Delta u \cdot \nabla^{-1} \varrho $ have a ``total'' of one derivative,
combining the Laplacian with the inverse gradient. Because of this
cancellation, some care has to be taken, and we will detail the changes in both
the lower bounds an upper bounds. Whenever the calculations are similar to
those in Section~\ref{sec:gaussian-estimates}, we will try to avoid repeating
them.

\subsection{Lower bound}
We start by obtaining a lower bound on the first, Gaussian, term in the
decomposition of $ \varrho $. This result stands in analogy to
Lemma~\ref{lem:LBI1-new}. The main difference is the presence of some
cancellations, which are due to the particular structure of the equation for
linearised  Navier--Stokes. These lead to a worse lower bound than the one
obtained in Lemma~\ref{lem:LBI1-new}, and therefore require separate attention.
In particular, also the upper bounds must be treated suitably to match the
lower bound: this task is carried out in the upcoming sections.

\begin{lemma}\label{lem:LBI1-sns} 
Consider the map $ \Phi $ from \eqref{e:def-phi} in the case $ L [u] = u \cdot
\nabla + \Delta u \cdot \nabla^{-1}  $, and fix any $ \beta \geqslant 1 $. Then the following estimates hold uniformly over
$ \kappa \in (0, 1] $, $\varrho_{0} \in L^{2}_{*} $ and $M \in
\NN, M \geqslant M^{(\beta)}(\varrho_{0}) $:
\begin{equ}
\EE \left\| \mL_{\sqrt{2}} \left[ \, \Phi [ \widetilde{X}, Y]_{\ts (M)} \right]
\right\|^{2} \gtrsim \kappa^{- 1} M^{-6} \|
\varrho_{0} \|_{H^{-(\frac{\a}{2}+1)}_{M}}^{2} \;.
\end{equ}
\end{lemma}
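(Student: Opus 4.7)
The argument closely follows that of Lemma~\ref{lem:LBI1-new}, modified to account for the additional stretching term $\Delta u\cdot\nabla^{-1}$ in the operator $L[u]$. The plan is to write $\mF\Phi[\widetilde X, Y]_t(\ell)$ as a Gaussian stochastic integral via Stroock's formula, with integrand $f_t^\ell(s,k) = \mF\Phi[\mathcal{D}_{s,k}\widetilde X, Y]_t(\ell)$. Since $\mathcal{D}_{s,k}\widetilde X$ is supported at the single Fourier mode $k$, applying $L[u]$ to $Y_r = P_r^\kappa\varrho_0$ at mode $\ell$ decomposes into an advection contribution proportional to $-\ell\cdot k^\perp/|k|^{\a/2+1}$, exactly as in Lemma~\ref{lem:LBI1-new}, and a stretching contribution from $\widehat{\Delta u}(k)\cdot\widehat{\nabla^{-1}\varrho}(\ell-k)$, which can be computed explicitly using $\widehat{\Delta u}(k) = -|k|^2\hat u(k)$ together with the div-free identity $\hat u(k)\cdot k = 0$.

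The key observation, already flagged in the opening of this section, is that these two contributions exhibit a subtle cancellation at their common leading order in $|k|$, reflecting the fact that $\Delta u\cdot\nabla^{-1}$ carries the same total number of derivatives as $u\cdot\nabla$. After the cancellation, the combined spatial coefficient will take a schematic form in which the pure advection piece is multiplied by an additional factor $(2k\cdot\ell - |\ell|^2)/|\ell-k|^2$ of order $|k|^{-1}$ for $|\ell|\leq\sqrt 2$ and $|k|$ large. Squaring and performing a directional average over $|\ell|\leq\sqrt 2$ then produces a mode-by-mode contribution of order $|k|^{-(\a+2)}$, in place of the $|k|^{-\a}$ appearing in the passive scalar case.

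The rest of the proof then mirrors that of Lemma~\ref{lem:LBI1-new}: I would apply It\^o's isometry and the same time-integral lower bound (unchanged, since the heat kernels are the same) to obtain a factor of order $\kappa^{-1}M^{-6}$ uniformly over $|k|\leq M+1$ at $t = \ts(M)$, and then perform the change of summation variable $n = \ell-k$, with $|n|\sim|k|$ for $|k|$ large compared to $|\ell|$. This produces
\[
\EE\|\mL_{\sqrt 2}\Phi[\widetilde X, Y]_{\ts(M)}\|^2 \gtrsim \kappa^{-1}M^{-6}\sum_{0<|n|\leq M}|n|^{-\a-2}|\hat\varrho_0(n)|^2 = \kappa^{-1}M^{-6}\|\varrho_0\|_{H^{-(\frac{\a}{2}+1)}_M}^2\,,
\]
as claimed. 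The hypothesis $M\geq M^{(\beta)}(\varrho_0)$ can be used, if needed, to relate this truncated norm back to the full norm of $\varrho_0$, as in the passive scalar case.

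The hard part will be the quantitative control of the cancellation together with the directional averaging over $|\ell|\leq\sqrt 2$. Both the advection and the stretching contribute terms of order $|k|$ to the combined coefficient individually, so a clean expansion is required to identify the surviving leading term once they are added. Moreover, for specific pairs $(\ell, k)$ (for instance $k$ aligned with an axis, or $k$ satisfying $k\cdot\ell = |\ell|^2/2$), individual $\ell$'s can give a vanishing contribution; this must be compensated by the other $\ell$'s in the sum, and the resulting directional lower bound such as $\sum_{|\ell|\leq\sqrt 2}|\ell\cdot k^\perp|^2|2k\cdot\ell - |\ell|^2|^2 \gtrsim |k|^4$ has to hold uniformly in $k\in\ZZ^2_*$. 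The enlargement of the projection radius from $1$ to $\sqrt 2$, which brings in the four diagonal modes $\pm e_1\pm e_2$ in addition to $\pm e_1, \pm e_2$, provides precisely the extra directional flexibility required, and is the structural reason for working throughout the SNS arguments with $\mL_{\sqrt 2}$ rather than $\mL_1$.
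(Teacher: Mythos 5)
Your proposal reproduces the paper's argument step for step: the Stroock-formula representation of $\mF\Phi[\widetilde X, Y]_t(\ell)$ as a first-chaos integral, the identification of the combined advection-plus-stretching Fourier coefficient $\bigl(1-|k|^2/|\ell-k|^2\bigr)\langle k^\perp,\ell\rangle$ whose numerator $|\ell|^2-2\langle k,\ell\rangle$ encodes the cancellation and contributes the extra $|k|^{-1}$ you predict, the unchanged time-integral lower bound of order $\kappa^{-1}M^{-6}$, the change of summation variable, and the directional non-degeneracy estimate $\sum_{|\ell|\leq\sqrt 2}|\langle k^\perp,\ell\rangle|^2\bigl||\ell|^2-2\langle k,\ell\rangle\bigr|^2\gtrsim|k|^4$, which is precisely the paper's Lemma~\ref{lem:geometric} and the reason for enlarging the projection radius to $\sqrt 2$. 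This is the same proof, correctly sketched.
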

The choice of projecting on frequencies $ k $ with $ | k | \leqslant
\sqrt{2} $ is due to a geometric non-degeneracy condition, see
Lemma~\ref{lem:geometric}. We must guarantee that the set of frequencies on
which we project is such that we can find $ k_{1}, k_{2} $ that are neither
perpendicular nor parallel. For example $ k_{1} = (1, 0), k_{2} = (1, 1) $
satisfy the condition (note here that $ | k_{2} | = \sqrt{2} $).

\begin{proof}
Consider $ \ell \in \{ \ell \in \ZZ^{2}  \; \colon \; | \ell | \leqslant \sqrt{2}
 \} $ and let us write $
\mI_{t} (\ell) = \mF \bigl[ \, \Phi [ \widetilde{X}, Y]_{t} \bigr] (\ell) $, and
observe that $ \mI_{t} (\ell) $ is Gaussian and given by
\begin{equ}
\mI_{t} (\ell) = \int_{0}^{t} e^{- \kappa | \ell |^{2} (t -s)}
\sum_{k \in \ZZ^{2}_{*} } \mF [ P^{\kappa}_{s} \varrho_{0} ](\ell -
k) \; \ell \cdot   \mF \widetilde{X}_{s}(k) \left( 1 - \frac{| k |^{2}}{|
\ell - k |^{2}} \right) \ud s \;.
\end{equ}
To simplify the upcoming calculations, let us define
\begin{equ}
\mf{c} (\ell, k) = \left(1- \frac{| k |^{2}}{| \ell - k |^{2}}\right) \langle k^{\perp}, \ell \rangle\;, \qquad \forall k, \ell \in
\ZZ^{d} \;,
\end{equ}
with the convention that $ \mf{c} (k, k) = 0 $.
Recall that $ \mF \widetilde{X}$  is given by
\begin{equ}
\mF \widetilde{X}_{s}(k)= \int_{0}^{s} e^{-\nu | k |^{2} (s-r)} \frac{k^{\perp}}{| k |^{1 + \frac{\a}{2}
}}  \ud \zeta^{k}_{r}  \ud s \;,
\end{equ} 
and note that we have $ k \cdot \mF \widetilde{X}_{s}(k)=0$ due to
incompressibility of $ \widetilde{X} $. So the desired term can be rewritten as
\begin{equ}
\mI_{t} (\ell) = - \iota \int_{0}^{t} e^{- \kappa | \ell |^{2}  (t -s)}
\sum_{k \in \ZZ^{2}_{*}} \mF [ P^{\kappa}_{s} \varrho_{0} ] (\ell -
k) \, \ell \cdot \mF \widetilde{X}_{s}(k) \, \mf{c} (\ell, k) \;.
\end{equ}
Now we represent $ \mI (\ell) $ as a stochastic integral of the form
\begin{equ}
\mI_{t} (\ell) = \int_{E_{t}} f_{t}^{\ell} (s, k) \ud \zeta(s,k) \;,
\end{equ}
where the integrand $ f^{\ell}_{t} $ is given by
\begin{equ}
f_{t}^{\ell}(s, k) = \mf{c} (\ell, k) \frac{ \hat{\varrho}_{0}(\ell - k) }{| k |^{\frac{\a}{2}+1}}\int_{s}^{t} e^{- \kappa | \ell |^{2} (t-r)}
e^{- \kappa |\ell - k|^{2} r} e^{- | k |^{2} (r-s)}\ud r \;.
\end{equ} 
Now by \eqref{e:ito} we find that
\begin{equ}
\mathrm{Var} \left( \mI_{t} (\ell) \right)  = \sum_{k} |\mf{c} (\ell, k)|^{2} \frac{|  \hat{\varrho}_{0}(\ell - k) |^{2}}{| k
|^{\a+2}} \int_{0}^{t}  \left\vert \int_{s}^{t}
e^{- \kappa | \ell |^{2} (t-r)}
e^{- \kappa |\ell- k |^{2} r} e^{- | k |^{2} (r -s)} \ud r 
\right\vert^{2} \ud s \;.
\end{equ}
Following the same calculations as in the proof of Lemma~\ref{lem:LBI1-new}. If we consider $ | k | \leqslant M +
\sqrt{2} $ and $ t = \ts (M) $ we can lower bound the time integral
\begin{equ}
   \int_{0}^{t}  \left\vert \int_{s}^{t}
   e^{- \kappa | \ell |^{2} (t-r)}
   e^{- \kappa |\ell- k |^{2} r} e^{- | k |^{2} (r -s)} \ud r 
   \right\vert^{2} \ud s \gtrsim \kappa^{-1} M^{-6} \;.
\end{equ}
Additionally, by defining 
\begin{equ}
\widetilde{c} (\ell, k) =  (| \ell |^{2}- 2 \langle k, \ell \rangle ) \langle
k^{\perp}, \ell \rangle  \;,
\end{equ}
we can compute
\begin{equ}
| \mf{c} (\ell, k) |^{2}= \left( \frac{|\ell - k|^{2}- | k |^{2}}{|
\ell - k|^{2}} \right)^{2}\langle
k^{\perp}, \ell \rangle^2 =  \frac{( | \ell |^{2}- 2 \langle k, \ell \rangle)^{2}}{|\ell - k |^{4}}\gtrsim \frac{| \widetilde{c} (\ell, k) |^{2}}{|\ell - k |^{4}} \;.
\end{equ} 
Hence, shifting indices in the sum, we have obtained the lower bound
\begin{equ}
\mathrm{Var} \left( \mI_{t_{\star}^{\kappa} (M)} (\ell) \right)  \gtrsim \kappa^{-1} M^{-6} \sum_{ |k| \leqslant M   } \frac{|  \hat{\varrho}_{0}(k) |^{2}}{|k
|^{\a+6}}  | \widetilde{c}(\ell, \ell- k) |^{2}\;.
\end{equ}
At this point, the issue is that the second degree (in $ k $) polynomial  $
\widetilde{c}(\ell, \ell-k) $ can vanish at some points $ k $, for fixed $ \ell
$: for instance for $ k $ proportional to $ \ell $. However,
it cannot vanish for all $ \ell $'s at once, namely we have
\begin{equ}
\sum_{| \ell | \leqslant \sqrt{2}} |\widetilde{\mf{c}} (\ell, \ell-k)|^{2} \gtrsim |
k |^{4} \;.
\end{equ}
This follows from Lemma~\ref{lem:geometric} below, noting that $
\widetilde{\mf{c}}(\ell, \ell - k) = \widetilde{c}(\ell, k) $. Using this
bound we obtain that
\begin{equ}
\EE \bigl\| \mL_{\sqrt{2}} \bigl[ \, \Phi [ \widetilde{X}, Y]_{\ts (M)} \bigr]
\bigr\|^{2} \gtrsim \kappa^{- 1} M^{-6}  \| \mL_{M}
\varrho_{0} \|^{2}_{H^{- (\a/2 +1)}} \gtrsim \kappa^{- 1} M^{-6}  \|
\varrho_{0} \|^{2}_{H^{- (\a/2 +1)}} \;,
\end{equ}
which is the desired lower bound. Here, in the last lower bound we have used
that $ M \geqslant M^{(\beta)}(\varrho_{0}) $. The proof is complete.
\end{proof}
We conclude the subsection with a geometric non-degeneracy lemma, which is
used to derive the lower bound above.

\begin{lemma}\label{lem:geometric}
For any $ \ell \in \RR^{2} $ define $ Q_{\ell} (k) = || \ell |^{2} - 2 \langle k, \ell
\rangle |^{2} | \langle k^{\perp}, \ell \rangle |^{2}$.
Then there exists a $ c > 0 $ such that for all $ k \in \ZZ^{2} $
\begin{equ}
\sum_{| \ell | \leqslant \sqrt{2}} Q_{\ell}(k) \geqslant c | k |^{4} \;.
\end{equ}
\end{lemma}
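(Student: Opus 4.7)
The plan is to view $Q_\ell(k)$ as the square of a polynomial in $k$, namely $Q_\ell(k) = P_\ell(k)^2$ with
\begin{equ}
P_\ell(k) = (|\ell|^2 - 2\langle k, \ell\rangle)\langle k^\perp, \ell\rangle\;,
\end{equ}
and then to split $P_\ell$ into its quadratic and linear parts in $k$:
\begin{equ}
P_\ell(k) = R_\ell(k) + S_\ell(k)\;,\qquad R_\ell(k) = -2\langle k,\ell\rangle\langle k^\perp,\ell\rangle\;,\qquad S_\ell(k) = |\ell|^2\langle k^\perp,\ell\rangle\;.
\end{equ}
Expanding $P_\ell^2 = R_\ell^2 + 2R_\ell S_\ell + S_\ell^2$ and summing over the index set $\Lambda = \{\ell\in\ZZ^2\,:\,|\ell|\leqslant \sqrt{2}\}$, the main point will be that the set $\Lambda$ is symmetric under $\ell\mapsto -\ell$.

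The cross term $2R_\ell S_\ell = -4\langle k,\ell\rangle\langle k^\perp,\ell\rangle^2|\ell|^2$ is odd in $\ell$, so pairing $\ell$ with $-\ell$ in $\Lambda$ annihilates its contribution to the sum. Dropping the non-negative $S_\ell^2$ terms, we are reduced to lower bounding
\begin{equ}
\sum_{\ell\in\Lambda}R_\ell(k)^2 = 4\sum_{\ell\in\Lambda}\langle k,\ell\rangle^2\langle k^\perp,\ell\rangle^2\;.
\end{equ}
The set $\Lambda\setminus\{0\}$ consists of the eight vectors $\pm(1,0),\pm(0,1),\pm(1,1),\pm(1,-1)$, so the sum above can be computed explicitly in Cartesian coordinates $k=(k_1,k_2)$: the four axis-aligned vectors each contribute $k_1^2 k_2^2$, while the four diagonal vectors each contribute $(k_1^2-k_2^2)^2$. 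After multiplication by $4$ this gives
\begin{equ}
\sum_{\ell\in\Lambda}R_\ell(k)^2 = 16\bigl(k_1^2 k_2^2 + (k_1^2-k_2^2)^2\bigr) = 16\bigl(k_1^4 - k_1^2 k_2^2 + k_2^4\bigr)\;.
\end{equ}

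Finally, the elementary AM-GM inequality $k_1^2 k_2^2 \leqslant \tfrac14 (k_1^2+k_2^2)^2$ yields
\begin{equ}
k_1^4 - k_1^2 k_2^2 + k_2^4 = |k|^4 - 3k_1^2 k_2^2 \geqslant |k|^4 - \tfrac{3}{4}|k|^4 = \tfrac{1}{4}|k|^4\;,
\end{equ}
so that $\sum_{\ell\in\Lambda}Q_\ell(k)\geqslant 4|k|^4$, which proves the claim with $c=4$ (in fact with any $c\leqslant 4$). There is no real obstacle: the only subtlety is recognising that the potentially dangerous cross term between the linear and quadratic parts of $P_\ell$ is killed by the central symmetry of $\Lambda$, which is exactly why the explicit choice of radius $\sqrt{2}$ (giving access to both axis-aligned and diagonal lattice directions) suffices to ensure non-degeneracy.
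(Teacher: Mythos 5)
Your proof is correct, and it takes a genuinely different (and arguably cleaner) route than the paper's. You decompose $P_\ell(k) = R_\ell(k) + S_\ell(k)$ into its $k$-quadratic and $k$-linear pieces, observe that the cross term $2R_\ell S_\ell = -4|\ell|^2\langle k,\ell\rangle\langle k^\perp,\ell\rangle^2$ is odd in $\ell$ and hence vanishes when summed over the centrally symmetric set $\Lambda = \{\ell\in\ZZ^2:|\ell|\le\sqrt 2\}$, drop the non-negative $S_\ell^2$, and compute $\sum_\ell R_\ell^2$ explicitly over the eight nonzero vectors to get $16(k_1^4 - k_1^2k_2^2 + k_2^4) \ge 4|k|^4$. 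I checked the arithmetic (the axis pairs each give $k_1^2k_2^2$, the diagonal pairs each give $(k_1^2-k_2^2)^2$, and the AM--GM step is fine), and the result holds with explicit constant $c=4$.

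The paper argues differently: it first observes that the leading quartic-in-$k$ factor $\langle k,\ell\rangle^2\langle k^\perp,\ell\rangle^2$ cannot vanish simultaneously at $\ell=(1,0)$ and $\ell=(1,1)$ (because the zero sets $\{\pm\ell/|\ell|,\pm\ell^\perp/|\ell|\}$ on the unit circle are disjoint for those two choices), from which it extracts a lower bound valid for $|k|$ large, and then handles the finitely many small nonzero $k$ by a separate contradiction argument using $\ell$ and $-\ell$ to show the sum cannot vanish. That argument is non-constructive (no explicit $c$) and has a two-case structure. Your parity-cancellation plus explicit enumeration is a single uniform computation, produces a concrete constant, and makes visible exactly why the radius $\sqrt 2$ is the right choice: it admits both axis-aligned and diagonal lattice directions, so that the resulting sum $k_1^4 - k_1^2k_2^2 + k_2^4$ is a positive-definite quartic. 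Both approaches hinge on the same geometric fact (the pair $(1,0)$, $(1,1)$ is non-degenerate), but your route packages it more transparently.
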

\begin{proof}
We start by observing that there exists a $ c > 0 $ such that for every $ k \in
\ZZ^{2} $ there exists a $ \ell = \ell_{k} \in \ZZ^{2}$ (depending on $ k $) with $ |
\ell_{k} | \leqslant \sqrt{2} $  such that
\begin{equ}
| \langle \ell, k \rangle | | \langle \ell, k^{\perp} \rangle | > c | k
|^{2} \;.
\end{equ}
This follows because the only zeros on the sphere of the map $ | \langle \ell, k
\rangle | | \langle \ell, k^{\perp} \rangle | $ are  $ k \in \{ \pm \ell / |
\ell |, \pm \ell^{\perp} / | \ell |\}$, and these sets are disjoint for $ \ell =
(1, 0) $ and $ \ell = (1, 1) $. In particular, it follows that there exists a $
C, c > 0 $ such that for $ | k | > C $ the desired lower bound $
\sum_{| \ell | \leqslant \sqrt{2}} Q_{\ell}(k) \geqslant c | k |^{4} $ holds. Now
to conclude it suffices to show that for $ k \neq 0 $ the sum can not vanish.
Indeed, for fixed $ k $, choose $ \ell = \ell_{k} $ as above such that $ | \langle \ell, k
\rangle | | \langle \ell, k^{\perp} \rangle | > c | k |^{2} $. Then the quantity
$ Q_{\ell}(k) $ can only vanish if $ | \ell |^{2} -2 \langle k, \ell \rangle = 0 $,
which amounts to $ | k | = | k - \ell | $. But then it must holds that $
Q_{\ell^{\prime}} (k) > 0 $, for $ \ell^{\prime} = - \ell $, since otherwise we would
have $ | k - \ell | = | k + \ell | $, which can only be the case if $ \langle k, \ell
\rangle = 0 $, contradicting the fact that $ \ell $ is chosen so that it is not
orthogonal to $ k $.
\end{proof}

\subsection{Upper bounds}

The strategy for the upper bounds on the rest terms is the same as in
Section~\ref{sec:upper-bounds}. However, we must take into account the
cancellations that are due to the linearised Navier--Stokes nonlinearity.
In particular, we still start from the representation
\begin{equs}
\varrho = Y + \Phi [X, Y ] + \Phi[\Psi[u,u],Y] + \Phi[u,\Phi[u,\varrho]] \;,
\end{equs}
and our objective is to show that the last two terms are of lower order through
suitable upper bounds. To identify the issues and set up the analysis, let us
start with the first term.
Here we would like to obtain an upper bound on
\begin{equ}
    \| \mL_{\sqrt{2}
} \left[ \Phi [u, \Phi[u, \varrho]]\right] \| \;.
\end{equ}
Now, we can rewrite $ \Phi[\varphi, \psi] $ as
\begin{equ}[e:Phi-Fourier]
\mF \left[ \Phi [\varphi, \psi]_{t} \right] (\ell)  = \int_{0}^{t} e^{- \kappa
| \ell|^{2} (t -s)} \sum_{k \in \ZZ^{2}_{*}  } \hat{\psi}_{s}  (\ell -
k) \, \mf{c} (\ell, k) \,  \ell\cdot  \hat{ \phi}_{s}(k)  \ud s \;,
\end{equ}
where $ \mf{c}(\ell, k) = 1- | k |^{2} / | \ell - k |^{2} $ and we have used that
$ \varphi $ is divergence free, so that $ (\ell - k) \cdot \hat{\varphi}
(k) = \ell \cdot \hat{\varphi} (k) $. Note that for $ |
\ell | \lesssim 1 $ we have $ |\mf{c} (\ell, k)| \lesssim | k |^{-1} $.
Therefore we can bound via \eqref{e:bd-paraproduct}:
\begin{equ}
\| \mL_{\sqrt{ 2}} \Phi [u, \Phi[u, \varrho ]]_{t} \|  \lesssim 
\int_{0}^{t} \| (- \Delta)^{- \frac{1}{2}} \Phi[u, \varrho]_{s}
\|_{H^{- \alpha}} \| u_{s} \|_{\mC^{\beta}} \ud s \lesssim  t M^{- \alpha -1}
\| u\|_{\mC^{\beta}_{t}} \| \Phi [u, \varrho] \|_{H^{-(\alpha+1)}_{t}} \;,
\end{equ}
for any $ 0 < \alpha < \beta < \a/2$. Now the main difference with respect to
the advection case is that we must control $ \Phi[u, \varrho] $ in terms of the
$ H^{- (\alpha +1)} $ norm of $ \varrho_{0} $, for any $ \alpha < \a/2$. This
is one degree less regularity than in the estimates in the previous section. 

In the present case of linearised SNS, we can instead improve this, by means of the cancellations that
appear in the linearisation, and which are captured in
Lemma~\ref{lem:lin-sns-product}. Indeed, we can rewrite $ \Phi $ as:
\begin{equ}
\Phi[u, \varrho]_{s} = \int_{0}^{s} P^{\kappa}_{s-r} \div (R [u_{r}] \varrho_{r}) \ud r \;,
\end{equ}
with
\begin{equ}[e:R-nF]
R [u] \varrho = u \varrho + (\Delta u) (- \Delta)^{-1} \varrho \;.
\end{equ}
Then, Lemma~\ref{lem:lin-sns-product} guarantees that $ \| R[u] \varrho
\|_{H^{- (\alpha +1)}} \lesssim \| u \|_{\mC^{\beta}} \| u
\|_{H^{- (\alpha+1)} } $ for any $ 2< \alpha < \beta$. Note that by contrast, the
product $ u \varrho $ by itself can be estimated at best via
\eqref{e:bd-paraproduct} through $ \| u \varrho \|_{H^{- \alpha}} \lesssim \|
u \|_{\mC^{\beta}} \| \varrho \|_{H^{- \alpha}}  $.
As a consequence of Lemma~\ref{lem:lin-sns-product}, via Schauder estimates, we can bound
\begin{equs}[e:bd-phi-sns]
\| \Phi [u, \varrho]_{t} \|_{H^{- (\alpha +1)}}  & \lesssim \sqrt{t/ \kappa}  \|
R[u] \varrho \|_{H^{- (\alpha +1)}_{t}} \lesssim
\sqrt{t/ \kappa}\| u \|_{\mC^{\beta}_{t}} \| \varrho
\|_{H^{- (\alpha+1)}_{t}} \;, \\
\| \mL_{\sqrt{2}} \Phi [u, \varrho]_{t} \|_{H^{- (\alpha +1)}}  & \lesssim \|
u \|_{\mC^{\beta}_{t}} \| \varrho \|_{H^{- (\alpha+1)}_{t}} \;,
\end{equs}
which stands in analogy to Lemma~\ref{lem:bds-lb} in the case of pure
advection.
In particular, we observe that this estimate together with a Gronwall-type
argument guarantees that for any $ 2 < \alpha < \beta < \a/2 $
\begin{equ}[e:a-prior-sns]
\| \varrho \|_{H^{- (\alpha +1)}_{t}} \lesssim
\| \varrho_{0} \|_{H^{- (\alpha +1)}_{t}} \exp \left( C t \kappa^{-1}  \| u
\|_{\mC^{\beta}_{t}}^{2} \right) \;.
\end{equ}
Hence, we have overall obtained that
\begin{equ}
\| \mL_{\sqrt{2}} \left[ \Phi[u, \Phi[u,\varrho]] \right]_{t} \|
\lesssim_{u, \kappa} t^{\frac{3}{2}} \| \varrho_{0}
\|_{H^{- (\alpha +1)}} \;,
\end{equ}
for any $ \alpha < \a/2 $. This is exactly analogous to \eqref{e:need-t-ref},
and in particular it is just barely not matching the lower bound of
Lemma~\ref{lem:LBI1-sns}, when $ t = \ts (M) $. Therefore, as in the previous
section, we must proceed with a finer decomposition.

\subsection{Higher order Gaussian and remainder estimates}
The next results are the analogues of the ones established in
Section~\ref{sec:hog} in the context of linearised SNS. The main difference is a modification of
Lemma~\ref{lem:Gaussian-n}, which allows to capture the cancellations that
appear in linearised SNS, in the spirit of Lemma~\ref{lem:lin-sns-product}.

\begin{lemma}\label{lem:Gaussian-lns}
   Consider the map $ \Phi $ as in \eqref{e:def-phi} with $ L[u] =u \cdot
\nabla + \Delta u \cdot \nabla^{-1}  $ as in \eqref{e:form-LNS}. For any $ t > 0 $ and $ 1< \alpha \leqslant  \a/2 $, we estimate uniformly over $\kappa\in(0,1]$ and $\psi$:
   \begin{equ}
      \E\|\mL_{\sqrt{2}}\Phi[\widetilde{X},\psi]_{t}\|^2\lesssim
t^3 \|\psi\|_{H^{- (\frac{\a}{2}+1) }_{t}}^2\;, \qquad
\E\|\mL_{\sqrt{2}}\Phi[\widetilde{X},\psi]_{t}\|^2_{H^{- (\alpha+1)}} \lesssim
t^2 \kappa^{-1}  \|\psi\|_{H^{- (\alpha+1) }_{t}}^2 \;.
   \end{equ}
\end{lemma}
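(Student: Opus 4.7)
The plan is to mimic the proof of Lemma~\ref{lem:Gaussian-n}, exploiting the structural identity $L[u]\varrho = \div(R[u]\varrho)$ with $R$ as in~\eqref{e:R-nF}, which packages the cancellation between the transport and stretching terms. By Stroock's formula together with It\^o's isometry,
\begin{equ}
\E\|\mL_{\sqrt{2}}\Phi[\widetilde{X},\psi]_t\|^2 = \sum_{k\in\Z^2_*}\int_0^t\|\mL_{\sqrt{2}}\Phi[\mathcal{D}_{s,k}\widetilde{X},\psi]_t\|^2\ds\;,
\end{equ}
and writing $\mathcal{D}_{s,k}\widetilde{X}_r = e^{-|k|^2(r-s)}|k|^{-\a/2}\sigma_k\one_{\{s\leqslant r\}}$ yields
\begin{equ}
\Phi[\mathcal{D}_{s,k}\widetilde{X},\psi]_t = -|k|^{-\a/2}\int_s^t e^{-|k|^2(r-s)}P^\kappa_{t-r}\div\bigl(R[\sigma_k]\psi_r\bigr)\dr\;.
\end{equ}

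A direct Fourier computation, using $\Delta \sigma_k = -|k|^2 \sigma_k$ and $((-\Delta)^{-1}\psi)^\wedge(j) = |j|^{-2}\hat\psi(j)$, gives the explicit identity
\begin{equ}
\mF\bigl(R[\sigma_k]\psi_r\bigr)(\ell) = \frac{\iota k^\perp}{|k|}\left(1 - \frac{|k|^2}{|\ell-k|^2}\right)\hat\psi_r(\ell-k)\;,
\end{equ}
in which the scalar prefactor equals $(|\ell|^2 - 2\langle\ell,k\rangle)/|\ell-k|^2$ and realises the desired cancellation. For $|\ell|\leqslant\sqrt{2}$ this prefactor is bounded by $C/|k|$ once $|k|\geqslant 2\sqrt{2}$, and by an absolute constant for the finitely many smaller values of $|k|$, so that
\begin{equ}
\|\mL_{\sqrt{2}}R[\sigma_k]\psi_r\|^2 \lesssim \frac{1}{1\vee|k|^2}\sum_{|\ell|\leqslant\sqrt{2}}|\hat\psi_r(\ell-k)|^2\;.
\end{equ}
The extra $|k|^{-2}$ factor, absent in the advection case, is exactly what justifies the weaker assumption $\psi\in H^{-(\a/2+1)}$ rather than $\psi\in H^{-\a/2}$.

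For the first inequality I follow the chain of estimates in Lemma~\ref{lem:Gaussian-n} verbatim: using $e^{-|k|^2(r-s)}\leqslant 1$, $\|\mL_{\sqrt{2}}\div g\|\lesssim\|\mL_{\sqrt{2}}g\|$, Cauchy--Schwarz in $r$, and interchanging sums and integrals, one arrives at
\begin{equ}
\E\|\mL_{\sqrt{2}}\Phi[\widetilde{X},\psi]_t\|^2 \lesssim t^3\sum_{k\in\Z^2_*}|k|^{-\a-2}\sum_{|\ell|\leqslant\sqrt{2}}\sup_{0\leqslant r\leqslant t}|\hat\psi_r(\ell-k)|^2\;.
\end{equ}
Shifting $j = \ell-k$ and splitting into $|j|$ large (where $|\ell-j|\sim|j|$) and $|j|$ bounded (where $|\ell-j|\geqslant 1$ since $k\in\Z^2_*$), a direct variant of Lemma~\ref{lem:equivalencen} with parameter $\a/2+1>1$ controls both contributions by $\|\psi\|^2_{H^{-(\a/2+1)}_t}$. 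For the second inequality I combine the Schauder bound $\|P^\kappa_{t-r}\div g\|_{H^{-(\alpha+1)}}\lesssim\kappa^{-1/2}(t-r)^{-1/2}\|g\|_{H^{-(\alpha+1)}}$ with the analogous Fourier computation applied to $\|R[\sigma_k]\psi_r\|^2_{H^{-(\alpha+1)}}$, so that the $(t-r)^{-1/2}$ singularity integrates to produce the prefactor $t^2\kappa^{-1}$ while the summation in $k$ is treated as in the first bound.

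The main technical obstacle is in the second estimate, where the Bessel weight $|\ell|^{-2(\alpha+1)}$ is singular for small $|\ell|$. The cancellation factor $1 - |k|^2/|\ell-k|^2$ vanishes linearly in $|\ell|$ in that regime, which should balance the singularity, but a careful splitting between low and high values of $|\ell|$ together with the index shift $j = \ell-k$ will be required to combine the powers of $|k|$, $|\ell|$ and $t$ consistently and recover the claimed $H^{-(\alpha+1)}$ norm on $\psi$ rather than the stronger $H^{-\alpha}$.
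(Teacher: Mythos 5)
Your Stroock/It\^o setup and the Fourier identity for $\mF(R[\sigma_k]\psi_r)(\ell)$ are both correct and match the paper's approach. For the \emph{first} estimate your route is in fact a small simplification of the paper's: instead of passing from $\|\mL_{\sqrt2}R[\sigma_k]\psi_r\|$ to $\|R[\sigma_k]\psi_r\|_{H^{-(\a/2+1)}}$ and then invoking a general norm equivalence (Lemma~\ref{lem:lns-norm}), you use the constraint $|\ell|\le\sqrt2$ to read off the explicit $|k|^{-1}$ gain in the prefactor $1-|k|^2/|\ell-k|^2$, after which a direct index shift suffices. This is clean and works.

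The gap is in the \emph{second} estimate. Once the Schauder factor $\kappa^{-1/2}(t-r)^{-1/2}$ is peeled off you are left needing
\begin{equ}
\sum_{k\in\Z^2_*}\frac{1}{|k|^{\a}}\,\|R[\sigma_k]\psi_r\|^2_{H^{-(\alpha+1)}}\;\lesssim\;\|\psi_r\|^2_{H^{-(\alpha+1)}}\;,
\end{equ}
and this is precisely where the low-frequency shortcut is no longer available: the $H^{-(\alpha+1)}$ norm runs over all $\ell\in\Z^2_*$, not just $|\ell|\le\sqrt2$, and in the off-resonant regimes $|k|\gg|\ell-k|$ or $|\ell-k|\gg|k|$ the prefactor $1-|k|^2/|\ell-k|^2$ is only $O(1)$, not small. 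You correctly sense that a case split is required, but you do not carry it out, and the obstacle you name is slightly miscast: the Bessel weight $|\ell|^{-2(\alpha+1)}$ is not singular on $\Z^2_*$ (one has $|\ell|\ge1$ throughout), so nothing needs to be ``balanced'' near $\ell=0$. The genuine content is the three-way split according to $|k|\gtrsim|\ell-k|$, $|\ell-k|\gtrsim|k|$, and $|k|\sim|\ell-k|$, where only the resonant case uses the cancellation (via the bound $\bigl||k|^{-2}-|\ell-k|^{-2}\bigr|\lesssim|\ell||k|^{-3}$) while the other two are handled by matching $|k|$ with $|\ell|$ or $|\ell-k|$ respectively and using $\a/2>1$ for summability. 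The paper isolates this computation as Lemma~\ref{lem:lns-norm}; you would need to reproduce it (or an equivalent) to close the argument.
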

\begin{proof} 
We proceed as in the proof of Lemma~\ref{lem:Gaussian-n}, to find that
\[
   \E\|\mL_{\sqrt{2}}\Phi[\widetilde{X},\psi]_{t}\|^2 = \sum_{|\ell|\leq 1}\sum_{k\in\Z^2_*}\int_0^t|\mF \Phi[\mathcal{D}_{s,k}\widetilde{X},\psi]_t(\ell)|^2\ds = \sum_{k\in \Z^2_*}\int_0^t\|\mL_{\sqrt{2}} \Phi[\mathcal{D}_{s,k}\widetilde{X},\psi]_t\|^2\ds\;.
\]
Now, in the present setting
\begin{equ}
\Phi[\mathcal{D}_{s,k}\widetilde{X},\psi]_t = \int_{s}^{t} | k |^{-
\frac{\a}{2}} e^{- | k |^{2} (r -s)}\div (P_{t -r}^{\kappa} (R[\sigma_{k}] \psi_{r}) )\ud r
\;,
\end{equ}
Next observe that $ R [u] $, as defined in \eqref{e:R-nF}, satisfies:
\begin{equ}
R [ \sigma_{k}] \psi = \sigma_{k} (1 - | k |^{2} (- \Delta)^{-1} )\psi \;.
\end{equ}
Now we bound
\begin{equ}
\| \mL_{\sqrt{2}} \Phi[\mathcal{D}_{s,k}\widetilde{X},\psi]_t\| \leqslant | k
|^{- \frac{\a}{2}} \int_{s}^{t} \| \mL_{\sqrt{2}}
R[\sigma_{k}] \psi_{r} \|  \ud r\;,
\end{equ}
so that via Jensen's inequality
\begin{equs}
\E\|\mL_{\sqrt{2}}\Phi[\widetilde{X},\psi]_{t}\|^2 & \leqslant t \int_{0}^{t}
\int_{s}^{t} \sum_{k \in \ZZ^{2}_{*}} \frac{1}{| k |^{\a}}  \| \mL_{\sqrt{2}}
R[\sigma_{k}] \psi_{r} \|^{2} \ud r \ud s \\
& \lesssim t \int_{0}^{t}
\int_{s}^{t} \sum_{k \in \ZZ^{2}_{*}} \frac{1}{| k |^{\a}}  \| 
R[\sigma_{k}] \psi_{r} \|^{2}_{H^{- (\a/2 +1)}} \ud r \ud s \;.
\end{equs}
Now we use Lemma~\ref{lem:lns-norm} to conclude that
\begin{equ}
\E\|\mL_{\sqrt{2}}\Phi[\widetilde{X},\psi]_{t}\|^2  \lesssim t^{3} \| \psi
\|_{H^{- (\a/2 +1)}_{t}}^{2} \;,
\end{equ}
which is the desired estimate.

The second estimates follows in exactly the same way, the only difference being
that we estimate
\begin{equ}
\| \Phi[ \mD_{s, k} \widetilde{X}, \psi]_{t} \|_{H^{- (\alpha+1)}} \lesssim
\kappa^{- \frac{1}{2}} \int_{s}^{t} | k |^{- \frac{\a}{2}} (t -r)^{-
\frac{1}{2}}  \| R[\sigma_{k}] \psi_{r}
\|_{H^{- (\alpha+1)}}   \ud r\;.
\end{equ}
From here we then bound following the same steps as before:
\begin{equs}
 \E\|\Phi[\widetilde{X},\psi]_{t}\|^2_{H^{- (\alpha+1)}}  & \lesssim
\kappa^{- 1}  \int_{0}^{t} \sum_{k \in \ZZ^{2}_{*}}
\frac{1}{| k |^{\a}}  \left( \int_{s}^{t}  (t -s)^{- \frac{1}{2}} \|
 R [\sigma_{k}] \psi_{r} \|_{H^{- (\alpha+1)}} \ud r \right)^{2} \ud s \\
& \lesssim \kappa^{-1} t^{\frac{1}{ 2}} \int_{0}^{t}   \int_{s}^{t}  (t -s)^{- \frac{1}{2}} \sum_{k \ in \ZZ^{2}_{*}}
\frac{1}{| k |^{\a}}\|
 R[ \sigma_{k}] \psi_{r} \|_{H^{- (\alpha+1)}}^{2} \ud r \ud s \\
& \lesssim
\kappa^{-1} t^{2} \| \psi \|_{H^{- (\alpha+1)}_{t}}^{2} \;,
\end{equs}
where we have once more applied Lemma~\ref{lem:lns-norm}.
This completes the proof.
\end{proof}
As a consequence of this result, together with Lemma~\ref{lem:lin-sns-product},
we obtain an analogue of Proposition~\ref{prop:chaos-together}. Note that we are using the same decomposition as in \eqref{e:decomp-rho}: $\varrho = Y + \Phi[X,Y] + S + R$.

\begin{proposition}\label{prop:chaos-lns}
Fix any $ \q >1 $. Consider the map $ \Phi $ as in \eqref{e:def-phi} with $ L [u] = u \cdot \nabla
+ \Delta u \cdot \nabla^{-1} $.  For any $ 2 <\alpha < \beta < \a/2 $, we can estimate uniformly
over $ \kappa \in (0, 1] $ and $ M \geqslant \kappa^{-\q} $:
\begin{equ}
\EE \bigl\| \mL_{\sqrt{2}}  \bigl[  \Pi^{(1)}S_{\ts (M)} \bigr] \bigr\|^{2}\lesssim
(\ts (M))^{4} \kappa^{-1} (\| u_{0}
\|+1)^{4}_{\mC^{\beta}} \| \varrho_{0} \|_{H^{- (\alpha+1)}}^{2} \;.
\end{equ}
\end{proposition}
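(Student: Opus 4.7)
The plan is to follow the same four-term decomposition of $\Pi^{(1)}S$ used in Section~\ref{sec:hog}, namely
\begin{equs}
\Pi^{(1)}S &= \Phi[\widetilde X, \Phi[X^0,Y]] + \Phi[X^0,\Phi[\widetilde X,Y]]\\
&\quad + 2\Phi[\Psi[X^0,\widetilde X],Y] + \Pi^{(1)}\Phi[X,\Phi[X,\Phi[X,Y]]],
\end{equs}
and to bound each term through the SNS-adapted analogues of the ingredients used in Lemmas~\ref{lem:first-chaosn}, \ref{lem:Frst-Chaos-2n} and \ref{lem:third-psa}. The two key replacements will be the stochastic bounds in Lemma~\ref{lem:Gaussian-lns} (which capture the $R[u]$-cancellation in the linearised SNS drift and cost one extra negative derivative relative to Lemma~\ref{lem:Gaussian-n}) and the analytic bounds \eqref{e:bd-phi-sns}; together they explain why the target bound has $\|\varrho_0\|_{H^{-(\alpha+1)}}^2$ rather than $\|\varrho_0\|_{H^{-\alpha}}^2$ as in Proposition~\ref{prop:chaos-together}. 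Throughout we will repeatedly use the elementary fact that $\|\mL_{\sqrt 2}\varphi\|\simeq\|\mL_{\sqrt 2}\varphi\|_{H^s}$ for any $s$ and the embedding $\|\cdot\|_{H^{-(\a/2+1)}}\lesssim\|\cdot\|_{H^{-(\alpha+1)}}$ valid since $\alpha<\a/2$.

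For the first term, I will apply the first estimate of Lemma~\ref{lem:Gaussian-lns} with $\psi=\Phi[X^0,Y]$ to get a bound by $t^3\|\Phi[X^0,Y]\|_{H^{-(\a/2+1)}_t}^2$, and then invoke the first line of \eqref{e:bd-phi-sns} with $u=X^0$ (which is a deterministic divergence-free vector field bounded in $\mC^\beta_t$ by $\|u_0\|_{\mC^\beta}$ via Schauder) to produce the extra factor $t\kappa^{-1}\|u_0\|_{\mC^\beta}^2\|\varrho_0\|_{H^{-(\alpha+1)}}^2$. For the second term, I will first use the low-frequency SNS paraproduct estimate discussed just before Lemma~\ref{lem:Gaussian-lns} (which yields $\|\mL_{\sqrt 2}\Phi[X^0,\cdot]_t\|\lesssim t\,\|X^0\|_{\mC^\beta_t}\|\cdot\|_{H^{-(\alpha+1)}_t}$) and then apply the second estimate of Lemma~\ref{lem:Gaussian-lns} to control $\EE\|\Phi[\widetilde X,Y]_s\|_{H^{-(\alpha+1)}}^2\lesssim s^2\kappa^{-1}\|\varrho_0\|_{H^{-(\alpha+1)}}^2$. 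Both contributions give the desired $t^4\kappa^{-1}\|u_0\|_{\mC^\beta}^2\|\varrho_0\|_{H^{-(\alpha+1)}}^2$ at $t=\ts(M)$.

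For the third term $\Phi[\Psi[X^0,\widetilde X],Y]$, I will proceed as in Lemma~\ref{lem:Frst-Chaos-2n}: write the Fourier transform as a stochastic integral in the Malliavin derivative of $\widetilde X$, apply It\^o's isometry, and estimate the resulting deterministic kernel. The only substantive change with respect to the advection proof is that the product with $Y_r$ must be controlled via the SNS operator $R[\cdot]$ from \eqref{e:R-nF}, so that Lemma~\ref{lem:lin-sns-product} and the norm identity of Lemma~\ref{lem:lns-norm} replace the naive paraproduct bound used in the advection case, and the norm equivalence Lemma~\ref{lem:equivalencen} then closes the sum over $k$ with $\|\varrho_0\|_{H^{-(\alpha+1)}}^2$ on the right-hand side. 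The triple-iteration term $\Pi^{(1)}\Phi[X,\Phi[X,\Phi[X,Y]]]$ is expanded as in Lemma~\ref{lem:third-psa} into the six first-chaos contributions (three with only one $\widetilde X$ and three coming from pairwise contractions), and each is bounded by iterating \eqref{e:bd-phi-sns} and Lemma~\ref{lem:Gaussian-lns}; the resulting bound is $t^5\kappa^{-2}$, which under the assumption $M\geqslant\kappa^{-\q}$ with $\q>1$ is dominated by $t^4\kappa^{-1}$ (this is exactly the reason for the hypothesis on $\q$).

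I expect the third term to be the main obstacle, because one has to keep track of the $R$-operator structure while it sits inside both a quadratic Gaussian form $\Psi$ and an outer integral $\Phi$; naive application of paraproduct inequalities loses one derivative and gives $\|\varrho_0\|_{H^{-\alpha}}^2$ instead of $\|\varrho_0\|_{H^{-(\alpha+1)}}^2$, so one must explicitly verify that the cancellation of $R[\sigma_k]$ survives the Leray projection and the symmetric tensor product appearing in $\Psi[X^0,\widetilde X]$. Once this is done, the remaining estimates are routine adaptations of the advection arguments.
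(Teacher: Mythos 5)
Your decomposition, choice of ingredients (Lemma~\ref{lem:Gaussian-lns} in place of Lemma~\ref{lem:Gaussian-n}, \eqref{e:bd-phi-sns} in place of Lemma~\ref{lem:bds-lb}) and the $\q>1$ argument for dominating $t^5\kappa^{-2}$ by $t^4\kappa^{-1}$ all coincide with what the paper does, and you correctly identify $\Phi[\Psi[X^0,\widetilde X],Y]$ as the only term requiring genuine new work. The one imprecision is in exactly how you propose to close that term. You suggest that Lemma~\ref{lem:lin-sns-product} and the norm identity of Lemma~\ref{lem:lns-norm} do the job, but Lemma~\ref{lem:lns-norm} is stated specifically for $R[\sigma_k]\psi$ with the explicit form $R[\sigma_k]\psi = \sigma_k(1-|k|^2(-\Delta)^{-1})\psi$; after the Malliavin derivative passes through $\Psi$ the relevant object is instead $R[U_{r,\tau,k}]Y_r$ with $U_{r,\tau,k}=\Div\,P_{r-\tau}\mathbf{P}(X^0_\tau\otimes_s\sigma_k)$, which is \emph{not} of the form $R[\sigma_k]$, so that identity does not apply directly. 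The paper instead carries out the Fourier computation by hand: one shows
\begin{equ}
\Bigl|1-\frac{|j+k|^2}{|\ell-j-k|^2}\Bigr|\;=\;\frac{||\ell|^2-2\langle\ell,j+k\rangle|}{|\ell-j-k|^2}\;\lesssim\;|\ell-j-k|^{-1}\;,
\end{equ}
uses the Schauder bound $|\widehat U_{r,\tau,k}(j+k)|\lesssim (r-\tau)^{-1/2}|\widehat X^0_\tau(j)|$ (which is where the Leray projection and symmetrised product are dealt with), arrives at $\|\mL_{\sqrt 2}[R[U_{r,\tau,k}]Y_r]\|^2\lesssim (r-\tau)^{-1}\|X^0_\tau\|_{H^\alpha}^2\|e_k(-\Delta)^{-1/2}Y_r\|_{H^{-\alpha}}^2$, and only then invokes Lemma~\ref{lem:equivalencen} to sum over $k$ against $|k|^{-\a}$. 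So the cancellation mechanism is the one you point to, but it must be re-derived at the level of Fourier coefficients rather than imported wholesale from Lemma~\ref{lem:lns-norm}; since you already flag that one must ``explicitly verify that the cancellation of $R[\sigma_k]$ survives the Leray projection and the symmetric tensor product'', you have correctly located exactly where that extra work has to happen.
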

\begin{proof}
The proof of this result follows verbatim the proof of
Proposition~\ref{prop:chaos-together}, by using Lemma~\ref{lem:Gaussian-lns}
instead of Lemma~\ref{lem:Gaussian-n} and \eqref{e:bd-phi-sns} instead of Lemma~\ref{lem:bds-lb}.
The only difference lies in the treatment of the term 
$ \Phi \bigl[ \Psi [X^{0}, \widetilde{X}], Y
\bigr] $. Here as in the proof of Lemma~\ref{lem:Frst-Chaos-2n}, we find 
\begin{equ}
\EE \bigl\| \mL_{\sqrt{2}}  \bigl[  \Phi \bigl[ \Psi [X^{0}, \widetilde{X}], Y
\bigr]_{t} \bigr] \bigr\|^{2} = \sum_{k\in
\Z^2_*}\int_0^t\|\mL_{\sqrt{2}} \Phi \bigl[ \Psi [X^{0}, \mD_{s, k} \widetilde{X} ], Y
\bigr]_{t}\|^2\ds \;.
\end{equ}
Then, following the same type of argument as in the previous upper bounds and using
Cauchy--Schwarz, we obtain the following bound
\[
\E\|\mL_{\sqrt{2}}\Phi[\Psi[X^0,\widetilde{X}],Y]_t\|^2 \leq \sum_{k \in
\ZZ^{2}_{*}} \int_0^t\left(\int_s^t\int_s^{r}g(k, s,r,\tau)\dee \tau\dee r\right)^2\dee s\,,
\]
where $g$ is given by
\[
g(k, s,r,\tau) = |k|^{- \frac{\a}{2} }\|\mL_{\sqrt{2}}[R[\Div
P_{r-\tau}\mathbf{P}(X^0_\tau\otimes_s\sigma_k)]Y_{r}]\|\,,
\]
with $ \mathbf{P} $ the Leray projection.
Now let us write $U_{r,\tau,k} = \Div P_{r-\tau}\mathbf{P}(X^0_\tau\otimes_s\sigma_k)$. Then we have
\begin{equs}
   \|\mL_{\sqrt{2}}[R[U_{r,\tau,k}]Y_{r}]\|^2 &= \sum_{|\ell|\leq \sqrt{2}}\biggl(\sum_{j+m = \ell}\left(1-\frac{|j|^2}{|m|^2}\right)\widehat{U}_{r,\tau,k}(j)\widehat{Y}_r(m)\biggr)^2\\
   &= \sum_{|\ell|\leq \sqrt{2}}\biggl(\sum_{j+m = \ell}\biggl(1-\frac{|j+k|^2}{|m-k|^2}\biggr)\widehat{U}_{r,\tau,k}(j+k)\widehat{Y}_r(m-k)\biggr)^2\\
   &\leqc \sum_{|\ell|\leq \sqrt{2}}\biggl(\sum_{j}|\ell- j- k|^{-1}||\widehat{U}_{r,\tau,k}(j+k)||\widehat{Y}_r(\ell - j-k)|\biggr)^2\\
   &\leqc (r-\tau)^{-1}\sum_{|\ell|\leq \sqrt{2}}\biggl(\sum_{j}|\ell- j- k|^{-1}|\widehat{X}^0_\tau(j)||\widehat{Y}_r(\ell - j-k)|\biggr)^2\\
   &\leqc (r-\tau)^{-\frac{1}{2}}
\|X^0_\tau\|^2_{H^{\alpha}}\|e_k(-\Delta)^{-\frac{1}{2}}Y_{r}\|_{H^{-\alpha}}^2
\;,
\end{equs}
where we used that
\begin{equs}
   \left(1-\frac{|j+k|^2}{|\ell-j-k|^2}\right) &= \frac{|\ell|^2 - 2\langle\ell,j+k\rangle}{|\ell-j-k|^2}\\
   &\leq \frac{|\ell|^2}{|\ell-j-k|^2} + \frac{|\ell||j+k|}{|\ell - j-k|^2} \leqc |\ell-j-k|^{-1},
\end{equs}
since $|\ell - j-k|\approx |j+k|$. To get from the third to the fourth line we also used that
\begin{equs}
|\widehat{U}_{r,\tau,k}(j+k)| &= |\mF[\Div P_{r-\tau}\mathbf{P}(X^0_\tau\otimes_s\sigma_k)](j+k)|\\
&\leqc (r- \tau)^{-\frac{1}{2}}|\mF(X^0_\tau\otimes_s \sigma_k)|(j+k) \leqc (r-\tau)^{-\frac{1}{2}}|\widehat{X}^0_\tau|(j).
\end{equs}
It follows by an application of Lemma \ref{lem:equivalencen} 
\begin{equs}
\sum_{k \in \ZZ^{2}_{*}} |k|^{-\a} \|\mL_{\sqrt{2}}[R[U_{r,\tau,k}]Y_r]\|^2 &\lesssim (r-\tau)^{-1}\|u_0\|_{H^{\alpha}}^2\sum_{k}|k|^{-\a}\|e_k(-\Delta)^{-\frac{1}{2}}Y_r\|_{H^{-\alpha}}^2\\
&\leqc (r-\tau)^{-1}\|u_0\|_{H^{\alpha}}^2 \|(-\Delta)^{-\frac{1}{2}}Y_r\|_{H^{-\alpha}}^2\\
&\leqc (r-\tau)^{-1}\|u_0\|_{H^{\alpha}}^2 \|Y_r\|_{H^{-(\alpha+1)}} \;.
\end{equs}
Putting everything together and applying Jensen with respect to the measure $(r-\tau)^{-1/2}\dee \tau\dee r$, we obtain
\begin{equs}
\|\mL_{\sqrt{2}}\Phi[\Psi[X^0,\widetilde{X}],Y]_t\|^2 &\lesssim t^{\frac{3}{2}} \int_0^t\int_{s}^{t}\int_s^{r}(r-\tau)^{1/2}\biggl(\sum_{k \in \ZZ^{2}_{*}} |k|^{-\a} \|\mL_{\sqrt{2}}[R[U_{r,\tau,k}]Y_r]\|^2\biggr)\dee \tau\dee r\\
&\lesssim t^{\frac{3}{2}}\biggl( \int_0^t\int_{s}^{t}\int_s^{r}(r-\tau)^{-1/2}\dee \tau\dee r\biggr)  \|u_0\|_{H^{\alpha}}^2\|\varrho_0\|_{H^{-(\alpha+1)}}^2\\
&\lesssim t^4 \|u_0\|_{H^{\alpha}}^2\|\varrho_0\|_{H^{-(\alpha+1)}}^2 \;.
\end{equs}
This completes the proof of the lemma.
\end{proof}
With the previous result, we have completed our stochastic estimates, and we can move on to bound the
remainder terms through pathwise estimates. We find the following bound.

\begin{lemma}\label{lem:rest-1-sns}
Consider the map $ \Phi $ as in \eqref{e:def-phi} with $ L[u] = u \cdot \nabla
+ \Delta u \cdot \nabla^{-1}  $, and let $ R $ be defined as in
\eqref{e:rest-terms}.
For any $ 2< \alpha < \beta< \a/2 $, there exists a $ C > 0 $ such
that we can estimate uniformly
over $ \kappa \in (0, 1] $:
\begin{equ}
\left\| \mL_{\sqrt{2} }  \left[ R_{\ts (M)}
\right] \right\| \lesssim  (\ts (M))^{2}  \kappa^{- \frac{1}{2} }\| \varrho_{0}
\|_{H^{- (\alpha +1)}} \exp \bigl( C t \kappa^{-1}  \|
u \|_{\mC^{\beta}_{t}}^{2} \bigr)  ( \| X \|_{\mC^{\beta}_{t}}+ \| u
\|_{\mC^{\beta}_{t}} )^{4} \;.
\end{equ} 
\end{lemma}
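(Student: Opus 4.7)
The plan is to follow the same template as in the proof of Lemma~\ref{lem:rest-complete}, namely to bound each of the seven summands in \eqref{e:rest-terms} individually using pathwise estimates. The crucial new ingredient relative to the pure advection setting is the improved Schauder-type estimate \eqref{e:bd-phi-sns}, which exploits the cancellation in the linearised SNS operator $R[u]\varrho = u\varrho + (\Delta u)(-\Delta)^{-1}\varrho$ captured by Lemma~\ref{lem:lin-sns-product}. This allows one to control $\Phi[u,\varrho]$ in $H^{-(\alpha+1)}$ in terms of $\|\varrho\|_{H^{-(\alpha+1)}_t}$, rather than losing an additional full derivative from the stretching term. The a priori bound \eqref{e:a-prior-sns}, which follows from \eqref{e:bd-phi-sns} via a Gronwall-type argument, plays exactly the role that \eqref{e:a-priori-scalar} did in the advection case.

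Concretely, for each term I would apply the following recipe: every occurrence of $\Psi[\cdot,\cdot]$ is handled by Lemma~\ref{lem:bd-psi}, producing a factor $\sqrt{t}$ times a product of $\mC^\beta_t$ norms of the inputs; every inner, unprojected $\Phi[u,\cdot]$ produces a factor $\sqrt{t/\kappa}\,\|u\|_{\mC^\beta_t}$ via the first bound in \eqref{e:bd-phi-sns}; and the outermost $\mL_{\sqrt 2}\Phi$ produces a factor $t\,\|u\|_{\mC^\beta_t}$ via the low-frequency bound in \eqref{e:bd-phi-sns}. The innermost $H^{-(\alpha+1)}$ norm of $\varrho$ appearing at the bottom of these chains is then absorbed by \eqref{e:a-prior-sns}, which yields the exponential factor $\exp(Ct\kappa^{-1}\|u\|^2_{\mC^\beta_t})$ in the final bound.

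Carrying out this routine, the three \textquotedblleft single $\Psi$\textquotedblright\ terms $\Phi[\Psi[\Psi[u,u],u+X],Y]$, $\Phi[\Psi[u,u],\Phi[u,Y]]$ and $\Phi[u,\Phi[\Psi[u,u],Y]]$ yield bounds of order $t^2\kappa^{-1/2}$ times a cubic expression in $\|u\|_{\mC^\beta_t}$ and $\|X\|_{\mC^\beta_t}$, while the four terms of $\overline{R}$ contain one more layer of $\Phi$ and hence produce bounds of order at worst $t^{5/2}\kappa^{-3/2}$ times a quartic expression in the same norms. Under the assumption $M\geq \kappa^{-\q}$ with $\q>1$, evaluation at $t=\ts(M)$ gives $\ts(M)^{5/2}\kappa^{-3/2}\lesssim \ts(M)^2\kappa^{-1}$, so the worst of these bounds matches the claimed right-hand side after multiplying by the common exponential factor.

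The main obstacle, and the only genuinely new difficulty compared with Lemma~\ref{lem:rest-complete}, lies in systematically tracking the use of the improved norm $H^{-(\alpha+1)}$ throughout the iteration: a naive power count with $H^{-\alpha}$ norms (as in the advection case) would fail to close, because the stretching term $\Delta u\cdot \nabla^{-1}\varrho$ individually loses one full derivative and only the combination appearing in $R[u]$ is well-behaved. Once the correct function-space framework is chosen uniformly across all seven terms, the estimates combine exactly as in the advection case and the proof concludes.
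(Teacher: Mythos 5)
Your proposal is correct and takes exactly the route the paper does: the paper's proof of this lemma is the single sentence that the argument is identical to Lemma~\ref{lem:rest-complete}, with the advection bounds of Lemma~\ref{lem:bds-lb} replaced throughout by the linearised-SNS bounds \eqref{e:bd-phi-sns} (built on the $R[u]$ cancellation of Lemma~\ref{lem:lin-sns-product}), and the a priori bound \eqref{e:a-prior-sns} in place of \eqref{e:a-priori-scalar}. Your term-by-term power count, including the observation that the worst $\overline{R}$ contribution $t^{5/2}\kappa^{-3/2}$ is absorbed into $\ts(M)^2\kappa^{-1/2}$ under $M\geq\kappa^{-\q}$ with $\q>1$, matches the intended argument.
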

The proof of this result is now identical to that of
Lemma~\ref{lem:rest-complete}, so we skip it. The only difference lies in the
application of \eqref{e:bd-phi-sns} instead of Lemma~\ref{lem:bds-lb}.
This concludes our analysis of energy transfer for linearised SNS. Before we
proceed, let us prove a norm equivalence that is fundamental to obtain
Lemma~\ref{lem:Gaussian-lns} above.

\begin{lemma}\label{lem:lns-norm}
The following equivalence of norms holds for any $ \alpha > 1 $ and $ R
$ defined in \eqref{e:R-nF}:
\begin{equ}
\sum_{k} \frac{1}{| k |^{2 \alpha} } \| R[\sigma_{k}] \psi \|_{H^{-
(\alpha +1)}}^{2} \simeq \| \psi \|_{H^{- (\alpha +1)}}^{2} \;.
\end{equ}

\end{lemma}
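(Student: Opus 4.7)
The plan is to exploit the fact that $\sigma_k$ is an eigenfunction of the Laplacian, $-\Delta \sigma_k = |k|^2 \sigma_k$, to give an explicit Fourier representation of $R[\sigma_k]\psi$ and reduce the equivalence to a single scalar lattice sum. Substituting $\Delta\sigma_k = -|k|^2\sigma_k$ into the definition of $R$ yields $R[\sigma_k]\psi = \sigma_k\bigl(1 - |k|^2(-\Delta)^{-1}\bigr)\psi$, so the Fourier coefficient at mode $m\in\ZZ^2_*$ reads
\[
\mathcal{F}[R[\sigma_k]\psi](m) \;=\; \frac{\iota k^\perp}{|k|}\,\hat\psi(m-k)\left(1 - \frac{|k|^2}{|m-k|^2}\right).
\]
Squaring, weighting by $|m|^{-2(\alpha+1)}$, summing over $m$ and then over $k$ with the weight $|k|^{-2\alpha}$, and swapping the order of summation after the change of variables $l = m-k$, the left-hand side of the claimed equivalence rewrites as
\[
\sum_{l\in\ZZ^2_*}\frac{|\hat\psi(l)|^2}{|l|^4}\,A(l), \qquad A(l) := \sum_{k\in\ZZ^2_*}\frac{(|l|^2 - |k|^2)^2}{|k|^{2\alpha}\,|l+k|^{2(\alpha+1)}}.
\]
Thus the lemma reduces to proving $A(l) \simeq |l|^{2-2\alpha}$ uniformly in $l\in\ZZ^2_*$, since this produces exactly the weight $|l|^{-2(\alpha+1)}$ needed to recover $\|\psi\|_{H^{-(\alpha+1)}}^2$.

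For the lower bound on $A(l)$ with $|l|$ large, I would restrict the sum to the range $1\leqslant |k|\leqslant |l|/2$, on which $||l|^2-|k|^2|\geqslant \tfrac{3}{4}|l|^2$ and $|l+k|\simeq |l|$, producing $A(l)\gtrsim |l|^{4}\cdot |l|^{-2(\alpha+1)} = |l|^{2-2\alpha}$ after noting that there is at least one nonzero $k$ in the range; the case $|l|=1$ is trivial by picking any test frequency $k$ with $|k|\neq |l|$. For the upper bound, the key algebraic observation is the identity $|l|^2 - |k|^2 = \langle l-k,\,l+k\rangle$, which provides the cancellation $(|l|^2-|k|^2)^2 \leqslant |l-k|^2|l+k|^2$ and allows one to absorb one factor of $|l+k|^2$ into the denominator. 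The resulting sum is then bounded via $|l-k|^2 \lesssim |l|^2 + |k|^2$ by two convolution-type sums of the form $\sum_k |k|^{-a}|l+k|^{-b}$, each handled by the standard splitting into $|k|\leqslant |l|/2$, $|k|\simeq |l|$, and $|k|\geqslant 2|l|$; since $\alpha>1$ ensures $\sum_k |k|^{-2\alpha}<\infty$ on $\ZZ^2_*$, one readily obtains $A(l)\lesssim |l|^{2-2\alpha}$.

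The main subtlety lies in the near-resonance region $|k|\simeq |l|$, where neither of the denominator weights $|k|^{-2\alpha}$ nor $|l+k|^{-2(\alpha+1)}$ decays; all the required saving has to come from the numerator $(|l|^2-|k|^2)^2$, and the factorisation $\langle l-k, l+k\rangle$ is what turns this into a genuine pointwise gain of the missing power of $|l+k|$. I do not anticipate further analytic difficulty: the hypothesis $\alpha>1$ is sharp in the sense that it is precisely what ensures summability of $\sum_{k\in\ZZ^2_*}|k|^{-2\alpha}$ in dimension two, and this single arithmetic fact drives both directions of the equivalence.
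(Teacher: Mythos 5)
Your proposal is correct and takes essentially the same route as the paper: rewrite the quantity as an explicit lattice sum via the Fourier representation $R[\sigma_k]\psi = \sigma_k(1-|k|^2(-\Delta)^{-1})\psi$, change variables to isolate $|\hat\psi(l)|^2$, and then control the remaining convolution-type sum by splitting into regions according to the relative sizes of the lattice indices. Your key algebraic observation $|l|^2-|k|^2 = \langle l-k, l+k\rangle$ is exactly the paper's cancellation \eqref{e:bd-reso}, merely packaged as a global Cauchy--Schwarz step rather than a resonant-region estimate; reducing everything to the scalar quantity $A(l)\simeq |l|^{2-2\alpha}$ is a clean way to organize the same computation. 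One minor remark: you prove both directions explicitly, whereas the paper's proof (after stating that one direction is trivial) only carries out the three-region case analysis; your version is thus slightly more complete, and correctly identifies that the trivial direction is the lower bound (dominated by the region $|k|\lesssim |l|$, which avoids the cancellation) while the upper bound is the one that genuinely needs the factorisation in the near-resonant region $|k|\simeq |l|$.
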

\begin{proof}
Since the upper bound is trivial, we focus on proving the lower bound. Let us
rewrite the left-hand side as
\begin{equs}
\sum_{k} \frac{1}{| k |^{2 \alpha}}  \| R[\sigma_{k}] \psi \|_{H^{-
\alpha-1}}^{2} & = \sum_{k, l} \frac{1}{| k |^{2 \alpha }} \frac{1}{| l
|^{2 \alpha +2}} |(1 - | k |^{2} / | l-k |^{2}) \hat{\psi} (l-k) |^{2} \\
& = \sum_{k, l} \frac{1}{| k |^{2 \alpha}} \frac{| k |^{4}}{| l
|^{2 \alpha+2}} |(1/ | k |^{2} - 1 / | l-k |^{2})\hat{\psi} (l-k) |^{2}
\;.
\end{equs}
Now we analyse the sum over the three sets $ | k | \gg | l | $, $ | l
| \gg | k | $ and $ | k | \sim | l | $, since it exhibits different behaviours.

When $ | k | \geqslant 2| k-l | $ we bound the sum by
\begin{equs}
\sum_{| k | \geqslant 2 | k - l |} \frac{1}{| k |^{2 \alpha }} \frac{| k |^{4}}{| l
|^{2 \alpha +2} | l- k |^{4}} | \hat{\psi} (l-k) |^{2} \;.
\end{equs}
Then since $ | k | \geqslant 2 | k - l | $ we have $ | l | \sim | k | $, so the
sum is additionally bounded by
\begin{equs}
\sum_{| k | \geqslant 2 | k - l |} \frac{1}{| k |^{2 \alpha }} \frac{1}{| l
|^{2 \alpha -2} | l- k |^{4}} | \hat{\psi} (l-k) |^{2} & \lesssim 
\sum_{| k | \geqslant 2 | k - l |} \frac{1}{| k |^{2 \alpha }} \frac{1}{| l-k
|^{2 \alpha +2} } | \hat{\psi} (l-k) |^{2} \lesssim \| \psi \|_{H^{-
(\alpha +1)}}^{2} \;,
\end{equs}
where we have used that $ \alpha > 1 $ and we are in dimension $ d=2 $.

Now we pass to the set $ 2 | k | \leqslant | k-l |$. Here we estimate the sum by
\begin{equs}
\sum_{ | k | \leqslant 2 | k-l |} \frac{1}{| k |^{2 \alpha }} \frac{1}{| l
|^{2 \alpha +2}} | \hat{\psi} (l-k) |^{2} \lesssim \sum_{ | k | \leqslant 2 | k-l |} \frac{1}{| k
|^{2 \alpha }} \frac{1}{| l-k |^{2 \alpha +2}} | \hat{\psi} (l-k) |^{2}
\lesssim \| \psi \|_{H^{- (\alpha+1)}}^{2}\;,
\end{equs}
which is again the desired estimate, and where we have used that $ | l- k
| \sim | l | $.

Then we have the final set $ | k | \sim | k-l | $, which is shorthand for $
2 | k - l | > | k |> (1/2) | k -l| $. Here we have
\begin{equs}
\sum_{| k | \sim | k-l | } \frac{1}{| k |^{2 \alpha}} \frac{| k |^{4}}{| l
|^{2 \alpha +2}} |(1/ | k |^{2} - 1 / | l-k |^{2}) \hat{\psi} (l-k) |^{2} & \lesssim 
\sum_{| k | \sim | k-l | } \frac{1}{| k |^{2 \alpha }} \frac{| k |^{4}}{| l
|^{2 \alpha +2}} | \l |^{2} | k |^{-6}|\hat{\psi} (l-k) |^{2} \\
& \simeq \sum_{| k | \sim | k-l | } \frac{1}{| k-l |^{2 \alpha +2}} \frac{1}{| l
|^{2 \alpha }} | \hat{\psi} (l-k) |^{2} \\
& \lesssim \| \psi \|^{2}_{H^{- (\alpha +1)}}\;,
\end{equs}
where we have used the same estimate as in the proof of
Lemma~\ref{lem:lin-sns-product}, in \eqref{e:bd-reso}. This completes the proof
of the estimate.
\end{proof}

\section{High-frequency regularity}\label{sec:regularity}

In this section we collect several results concerning the high-frequency regularity of
$ \pi_{t} $. The rule of thumb is that  $ \pi_{t} $ follows the rules of parabolic
regularity theory (via Schauder estimates) above the spectral median. As a first
step, we provide control of high- versus low-frequency energy. 
\begin{lemma}\label{lem:hl} 
In the setting of Theorem~\ref{thm:main} there exists a $ C > 0 $ such that
uniformly over $ M \in \NN $ and $ \kappa \in (0, 1] $, the following
differential inequalities hold,
for all $ t \leqslant \inf \{ s \geqslant 0  \; \colon \; \| \mL_{M}
\varrho_{s} \| = 0 \} $:
\begin{itemize}
\item If $ \varrho $ is the solution to \eqref{e:main} with  $L[u]  = u\cdot
\nabla  $, then
\begin{equ}
\frac{\ud}{\ud t} \frac{\| \mH_{M} \varrho_{t} \|}{\| \mL_{M} \varrho_{t} \|}
\leqslant C M \| u_{t} \|_{\infty} \left(  1 +  \frac{\| \mH_{M} \varrho
\|^{2}}{\| \mL_{M} \varrho \|^{2}} \right) \;.
\end{equ}
\item If $ \varrho $ is the solution to \eqref{e:main} with $L[u]  = u\cdot
\nabla  + \Delta u\cdot \nabla^{-1}$, then
\begin{equ}
\frac{\ud}{\ud t} \frac{\| \mH_{M} \varrho_{t} \|}{\| \mL_{M} \varrho_{t} \|}
\leqslant C M ( \| u_t \|_{\infty} + \|\Delta  u_{t} \|_{\infty} ) \left(  1 +  \frac{\| \mH_{M} \varrho
\|^{2}}{\| \mL_{M} \varrho \|^{2}} \right) \;.  
\end{equ}
\end{itemize}

\end{lemma}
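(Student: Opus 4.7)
Write $h_t = \|\mH_M \varrho_t\|$, $l_t = \|\mL_M \varrho_t\|$, and $f_t = h_t/l_t$; all are smooth in $t$ by parabolic smoothing of $\varrho$, and $f$ is well-defined on the time interval in question since $l_t > 0$ there by assumption. The plan is to derive equations for $\partial_t h^2$ and $\partial_t l^2$, bound the transport and stretching contributions separately, and combine via the quotient rule while exploiting a cancellation of the dissipative pieces.

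Projecting \eqref{e:main} onto high and low frequencies and using orthogonality of $\mH_M$ and $\mL_M$,
\begin{equs}
\partial_t h_t^2 &= -2\kappa \|\nabla \mH_M\varrho\|^2 - 2\langle \mH_M \varrho, L[u]\varrho\rangle\;,\\
\partial_t l_t^2 &= -2\kappa \|\nabla \mL_M\varrho\|^2 - 2\langle \mL_M \varrho, L[u]\varrho\rangle\;.
\end{equs}
For the transport piece $u\cdot\nabla\varrho$, incompressibility of $u$ gives $\langle \mH_M\varrho, u\cdot\nabla\mH_M\varrho\rangle = 0 = \langle \mL_M\varrho, u\cdot\nabla\mL_M\varrho\rangle$; the surviving cross-frequency terms are bounded, via Cauchy--Schwarz, $\|\nabla\mL_M\varrho\| \leq M l$, and, for the low-frequency inner product, one additional integration by parts using $\div u=0$, yielding
\[
|\langle \mH_M\varrho, u\cdot\nabla\varrho\rangle| + |\langle \mL_M\varrho, u\cdot\nabla\varrho\rangle| \leq 2 M\|u\|_\infty h l\;.
\]
For the stretching piece $\Delta u\cdot\nabla^{-1}\varrho$ (needed only in the linearised SNS case), I would split $\nabla^{-1}\varrho = \nabla^{-1}\mH_M\varrho + \nabla^{-1}\mL_M\varrho$ and use the Fourier-multiplier bounds $\|\nabla^{-1}\mH_M\varrho\| \leq M^{-1} h$ and $\|\nabla^{-1}\mL_M\varrho\| \leq l$, giving
\[
|\langle \mH_M\varrho, \Delta u\cdot\nabla^{-1}\varrho\rangle| \leq \|\Delta u\|_\infty (M^{-1} h^2 + h l)\;,
\]
and the analogous bound for the low-frequency inner product with $l$ replacing the outer $h$.

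The key cancellation then emerges via the quotient rule $\partial_t(h^2/l^2) = (\partial_t h^2)/l^2 - h^2(\partial_t l^2)/l^4$: the two dissipative contributions are $-2\kappa\|\nabla \mH_M\varrho\|^2/l^2$ and $+2\kappa h^2 \|\nabla \mL_M\varrho\|^2/l^4$, which using $\|\nabla\mH_M\varrho\|^2 \geq M^2 h^2$ and $\|\nabla\mL_M\varrho\|^2 \leq M^2 l^2$ are bounded above by $-2\kappa M^2 h^2/l^2$ and $+2\kappa M^2 h^2/l^2$ respectively, so their sum is non-positive and can be discarded. Writing $f = h/l$ and inserting the advection and stretching estimates, I obtain
\[
\partial_t(f^2) \leq 2M\|u\|_\infty f(1+f^2) + C\|\Delta u\|_\infty \bigl(f + f^2 + M^{-1}f^3\bigr)\;.
\]
The elementary inequality $f + f^2 + f^3 \leq 3 f(1+f^2)$ for $f \geq 0$, together with $M \geq 1$, turns this into $\partial_t(f^2) \leq CM(\|u\|_\infty + \|\Delta u\|_\infty) f(1+f^2)$, and dividing by $2f$ gives the stated differential inequality on $\partial_t f$; the pure advection case follows by dropping the $\|\Delta u\|_\infty$ terms throughout. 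The only real technical point is the stretching term, which, unlike the transport piece, does not enjoy an integration-by-parts cancellation and therefore requires the finer Fourier splitting described above; no deeper analytic obstacle arises.
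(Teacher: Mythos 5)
Your proof is correct and takes essentially the same route as the paper: apply the quotient rule to $\|\mH_M\varrho\|/\|\mL_M\varrho\|$, discard the dissipative contribution, and control the transport and stretching terms via incompressibility, Cauchy--Schwarz, and the elementary Fourier-multiplier bounds $\|\nabla\mL_M\varrho\|\leqslant M\|\mL_M\varrho\|$, $\|\nabla^{-1}\mH_M\varrho\|\leqslant M^{-1}\|\mH_M\varrho\|$. Working with $f^2=h^2/l^2$ rather than $f=h/l$ is purely cosmetic. One small thing you do that the paper glosses over: the paper's displayed step simply drops both Laplacian terms, but the term $+\kappa h\|\nabla\mL_M\varrho\|^2/l^3$ is \emph{positive} and can only be discarded by pairing it with $-\kappa\|\nabla\mH_M\varrho\|^2/(hl)$ via the spectral bounds $\|\nabla\mH_M\varrho\|^2\geqslant M^2h^2$, $\|\nabla\mL_M\varrho\|^2\leqslant M^2l^2$ — exactly the cancellation you spell out. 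Making this explicit is a genuine (if minor) improvement in rigour.
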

\begin{proof}
Let us start with the passive scalar advection case $L[u]  = u\cdot \nabla $.
Here, by the chain rule we estimate
\begin{equs}
\frac{\ud}{\ud t} \frac{\| \mH_{M} \varrho \|}{ \| \mL_{M} \varrho \|} &  =
\frac{\langle \mH_{M} \varrho, \kappa \Delta \varrho - u\cdot \nabla \varrho
\rangle}{\| \mH_{M} \varrho_{t} \| \| \mL_{
M} \varrho \|} -  \frac{\| \mH_{M} \varrho \|}{\|
\mL_{M} \varrho \|^{3}} \langle \mL_{M} \varrho, \kappa \Delta
\varrho - u\cdot \nabla \varrho \rangle  \\
& \leqslant  - \frac{\langle \mH_{M}
\varrho, u\cdot \nabla \varrho\rangle}{\| \mH_{M} \varrho \| \| \mL_{M} \varrho
\|} +  \frac{\| \mH_{M} \varrho \|}{\|
\mL_{M} \varrho \|^{3}} \langle \mL_{M} \varrho,
u\cdot \nabla \varrho\rangle \\
&= - \frac{\langle \mH_{M}
\varrho, u\cdot \nabla (\mL_{M}\varrho)\rangle}{\| \mH_{M} \varrho \| \| \mL_{M} \varrho
\|} +  \frac{\| \mH_{M} \varrho \|}{\|
\mL_{M} \varrho \|^{3}} \langle \mL_{M} \varrho,
u\cdot \nabla (\mH_{M}\varrho)\rangle  \;. 
\end{equs} 
Here we have used that $u$ is divergence free, so that $\langle \mH_{M} \varrho,  u\cdot \nabla (\mH_{M}\varrho) \rangle =0$. Therefore, we obtain that 
\begin{equ}
\frac{\ud}{\ud t} \frac{\| \mH_{M} \varrho \|}{ \| \mL_{M} \varrho \|}  \lesssim M \| u_{t}\|_{\infty} \left(  1 +  \frac{\| \mH_{M} \varrho\|^{2}}{\| \mL_{M} \varrho \|^{2}} \right)\;,
\end{equ} 
where we again used that since $u$ is divergence free, so that
\begin{equ}
\langle \mL_{M} \varrho,
u\cdot \nabla (\mH_{M}\varrho)\rangle  = \langle \nabla (\mL_{M}\varrho), (\mH_{M}\varrho) u\rangle \;.
\end{equ}
Now consider the case $L[u]\varrho  = u\cdot \nabla \varrho + \Delta u\cdot
\nabla^{-1}\varrho $. As above, we have 
\begin{equs}
\frac{\ud}{\ud t} \frac{\| \mH_{M} \varrho \|}{ \| \mL_{M} \varrho \|} & \leqslant  - \frac{\langle \mH_{M}
\varrho, u\cdot \nabla \varrho+\Delta u\cdot \nabla^{-1}\varrho\rangle}{\| \mH_{M} \varrho \| \| \mL_{M} \varrho\|} +  \frac{\| \mH_{M} \varrho \|}{\|\mL_{M} \varrho \|^{3}} \langle \mL_{M} \varrho,u\cdot \nabla \varrho+\Delta u\cdot \nabla^{-1}\varrho\rangle \;.
\end{equs} 
Now via the triangle inequality we obtain
\begin{equs}
\left|\langle \mH_{M}\varrho, \Delta u\cdot \nabla^{-1}\varrho\rangle\right|&\leqslant \left|\langle \mH_{M}\varrho, \Delta u\cdot \nabla^{-1}(\mH_{M}\varrho)\rangle\right|+\left|\langle \mH_{M}\varrho, \Delta u\cdot \nabla^{-1}(\mL_{M}\varrho)\rangle\right|\\
&\lesssim \| \mH_{M}\varrho \| \| \Delta u \|_{\infty}   \| \mH_{M} \varrho\| + \| \mH_{M}\varrho \|\| \Delta u \|_{\infty}   \| \mL_{M} \varrho\|\;,
\end{equs} and similarly 
\begin{equ}
\left|\langle \mL_{M}\varrho, \Delta u\cdot \nabla^{-1}\varrho\rangle\right|\leqslant \| \mL_{M} \varrho\| \|\Delta u\|_{\infty} (\| \mH_{M} \varrho\| + \| \mL_{M} \varrho\|)\;.
\end{equ} The desired result follows by combining these bounds with the
estimate for the previous case $L[u]\varrho  = u\cdot \nabla \varrho $.
\end{proof}
We can use the previous result to obtain a bound on negative
moments for the stopping time $\sigma(M)$ defined in \eqref{e:sigma}. In other words, the following lemma says that the high-frequency regularity of the projective process $\pi_{t}$ does not increase too quickly.
\begin{lemma}\label{lem:ngemmts}
Consider any $ M \geqslant  M (\varrho_{0}) $ and let $ \sigma= \sigma (M) $ be
defined as in \eqref{e:sigma}. Then for any $ \gamma > 2 $ there exists an $\r>1$ such that the
following holds: for any $p> 1$, uniformly over $ M \geqslant M(\varrho_{0}) $
there exists a $C(p)$ such that uniformly over $ \kappa \in (0,1] $
\begin{equ}
\EE [ (M \sigma)^{- p}] \leqslant C(p) ( \| u_{0} \|_{\mC^{\gamma}}+1)^{\r p } \;.
\end{equ}
\end{lemma}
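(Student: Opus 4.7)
The plan is to combine the differential inequality from Lemma~\ref{lem:hl} with the moment bounds on the velocity field from Lemma~\ref{lem:mmt-unif}. First I would introduce the ratio $R_t = \|\mH_M \varrho_t\|/\|\mL_M \varrho_t\|$ and observe that at $t=0$, since $M \geqslant M(\varrho_0) = M^{(1)}(\varrho_0)$, we have $R_0 \leqslant 1$. The stopping time $\sigma$ is essentially the first time $R_t$ exceeds $2$: indeed $M^{(2)}(\varrho_t) > M$ entails in particular $\| \mH_M \varrho_t \| > 2 \| \mL_M \varrho_t \|$, i.e.\ $R_t > 2$, so $\sigma \geqslant \sigma'$ for $\sigma' = \inf\{t \geqslant 0 : R_t \geqslant 2\}$.

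Applying Lemma~\ref{lem:hl} (using the stronger linearised SNS bound, which dominates the pure advection one) together with the embedding $\mC^\gamma \hookrightarrow C^2$ for any $\gamma > 2$ to absorb $\|\Delta u\|_\infty$ into $\|u_t\|_{\mC^\gamma}$, we obtain on the event $\{t < \sigma'\}$ the pointwise inequality
\[
\frac{\ud}{\ud t}\arctan(R_t) = \frac{R_t'}{1+R_t^2} \leqslant C M\|u_t\|_{\mC^\gamma}\,.
\]
Integrating from $0$ to $\sigma'$ and using $\arctan(R_{\sigma'}) - \arctan(R_0) \geqslant \arctan(2) - \pi/4 =: c_0 > 0$, we arrive at the pathwise bound
\[
(M\sigma)^{-1} \leqslant (M\sigma')^{-1} \lesssim \sup_{s \leqslant \sigma}\|u_s\|_{\mC^\gamma}\,.
\]

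To pass to expectations, I would split according to whether $\sigma \leqslant 1$ or $\sigma > 1$. On $\{\sigma > 1\}$ one has the trivial bound $(M\sigma)^{-p} \leqslant M^{-p} \leqslant 1$, since $M \in \NN_*$. On $\{\sigma \leqslant 1\}$ the pathwise bound yields $(M\sigma)^{-p}\one_{\sigma \leqslant 1} \lesssim_p \sup_{s \leqslant 1}\|u_s\|_{\mC^\gamma}^p$, so that overall
\[
\EE[(M\sigma)^{-p}] \lesssim_p 1 + \EE\Bigl[\sup_{s\leqslant 1}\|u_s\|_{\mC^\gamma}^p\Bigr] \lesssim_p (\|u_0\|_{\mC^\gamma}+1)^{\r p}\,,
\]
where the last inequality is an application of Lemma~\ref{lem:mmt-unif} for some $\r > 1$ independent of $p$. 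This yields the statement of the lemma.

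The main subtlety I expect is the possibility that $\|\mL_M\varrho_t\|$ vanishes at some time $\tilde\tau < \sigma$, making $R_t$ blow up and invalidating a direct application of Lemma~\ref{lem:hl} on the full interval $[0,\sigma]$. This is precisely why the ODE argument is carried out with the auxiliary hitting time $\sigma'$ rather than $\sigma$: by parabolic backwards uniqueness \cite{Poon} $\varrho_t \neq 0$ for all $t$, so at any such $\tilde\tau$ one must have $\|\mH_M\varrho_{\tilde\tau}\| > 0$, forcing $R_t \to \infty$ continuously as $t \to \tilde\tau^-$ and hence crossing the value $2$ beforehand. Thus $\sigma' \leqslant \tilde\tau$ and Lemma~\ref{lem:hl} is applied on the open interval $[0,\sigma')$ where $R_t$ remains finite.
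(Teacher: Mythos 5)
Your proof is correct and follows essentially the same approach as the paper: integrate the ratio inequality from Lemma~\ref{lem:hl} over $[0,\sigma)$ where the ratio is bounded, deduce $(M\sigma)^{-1}\lesssim \sup_{s\leqslant\sigma}\|u_s\|_{\mC^\gamma}$, and then apply the moment bound of Lemma~\ref{lem:mmt-unif}. The paper bounds $1+R_t^2\leqslant 5$ directly rather than using the $\arctan$ substitution, and passes to moments via the tail estimate $\PP(M\sigma\leqslant t)\lesssim(Ct)^q(\|u_0\|_{\mC^\gamma}+1)^{\r q}$ rather than the split on $\{\sigma\leqslant 1\}$; these are cosmetic variations, and your treatment of the potential vanishing of $\|\mL_M\varrho_t\|$ is careful (though already covered by $R_t\leqslant 2$ on $[0,\sigma)$ together with backwards uniqueness).
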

\begin{proof} 
We find by Lemma~\ref{lem:hl} that for some $ C > 0 $
\begin{equ}
\frac{\| \mH_{M} \varrho_{\sigma}  \|}{\| \mL_{M} \varrho_{\sigma} \|}
\leqslant 1 +  \sigma C M \left( \sup_{0 \leqslant s \leqslant \sigma} \|
u_{s} \|_{\infty} +\sup_{0\leqslant s \leqslant \sigma} \|\Delta  u_{s}\|_{\infty}  \right) \;,
\end{equ}  in both cases of non-linearity $L[u]\varrho$. Provided that $\a >
4$, by Lemma~\ref{lem:mmt-unif} and Markov's inequality there exists an $\r>1$
such that for any $ \gamma > 2 $
\begin{equ}
\PP ( M \sigma \leqslant t) \leqslant \PP \left( \sup_{0 \leqslant s \leqslant
t} \| u_{s} \|_{\mC^{\gamma}} \geqslant (C t)^{-1}  \right) \lesssim (Ct)^{p} (
\| u_{0} \|_{\mC^{\gamma}}+1)^{\r p }\;, 
\end{equ}
so that the result follows.
\end{proof}
Finally, building on the previous lemmata, we are able to study the regularity for the projective
process above the spectral median. Recall the definitions of $ \ts, \tsd $ and $ \sigma $ in \eqref{e:t-kappa}, \eqref{e:tsd} and \eqref{e:sigma}. We set
\begin{equ}[e:sigma-bar]
\overline{\sigma} (M)= \sigma(M) \wedge t_{\star}^{\kappa, \delta} (M)
 =  \inf \{ t \geqslant 0 \; \colon \; M^{(2)}(\varrho_{t}) > M \} \wedge \lambda\kappa^{-1} M^{-2+\delta }\log (M)\;,
\end{equ} for arbitrary $\kappa, \delta \in (0, 1] $ and $ \lambda $ as in
\eqref{e:t-kappa}.
In this setting we obtain the following
regularity estimate.
\begin{proposition}\label{prop:regu} 
Fix any $ \q > 2 $.  For any $ p \geqslant 1 $ there exists a $ C (p) > 0 $ such that the following holds.
Fix any $ \varrho_{0} \in L^{2}_{*} $. Then there exists $ \r > 1 , \gamma \in
( 2, \a/2)$ such that for $ M \geqslant M (\varrho_{0})\vee \kappa^{-\q} $, and uniformly over $
\kappa \in (0, 1] $
\begin{equ}
\EE \left[ \| \pi_{ \overline{\sigma} (M) } \|_{H^{1}}^{2p} \right]
\lesssim_{p, \delta, \q} M^{p}(\|
u_{0}\|_{\mC^{2}}+1)^{\r p}\;,
\end{equ}
where $ \overline{\sigma} $, depending on $ \delta \in (0, 1) $, is defined in \eqref{e:sigma-bar}. 
\end{proposition}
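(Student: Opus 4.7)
My plan is to combine parabolic regularity estimates with the spectral median constraint and moment bounds on the velocity, splitting according to whether the stopping time $\overline{\sigma}(M)$ is realised by $t_\star^{\kappa,\delta}(M)$ or by $\sigma(M)$. The starting point is the basic decomposition
\[
\|\pi_{\overline{\sigma}}\|_{H^1}^2 \leqslant M^2\|\mL_M \pi_{\overline{\sigma}}\|^2 + \|\mH_M \pi_{\overline{\sigma}}\|_{H^1}^2,
\]
together with the Duhamel representation $\varrho_t = P_t^\kappa \varrho_0 - \int_0^t P_{t-s}^\kappa L[u_s]\varrho_s \, ds$ and Schauder estimates in the spirit of Lemma~\ref{lem:bds-lb}. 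From these ingredients I would derive a pathwise bound for $\|\varrho_t\|_{H^1}/\|\varrho_t\|$ in terms of $(\kappa t)^{-1/2}$ and exponentials of $\|u\|_{\mC^\gamma_t}$, via a Gronwall-type argument using the a priori $H^{-\alpha}$ estimate \eqref{e:a-priori-scalar} (respectively \eqref{e:a-prior-sns}).

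On the event $\{\overline{\sigma}=t_\star^{\kappa,\delta}\}$ the spectral condition $M^{(2)}(\varrho_t)\leqslant M$ holds throughout, and the length of the time window is precisely chosen so that the Laplacian dissipation damps modes above $M$ by a factor $\exp(-2\kappa M^2 t_\star^{\kappa,\delta})=M^{-2\lambda M^\delta}$, which is super-polynomially small; the negligibility of $\|\mH_M \pi_{\overline{\sigma}}\|_{H^1}^2$ in this regime is exactly the design behind the definition \eqref{e:t-kappa} of $t_\star^{\kappa,\delta}$, paralleling the way $\lambda$ was used in the proof of Lemma~\ref{lem:t-1}. On the event $\{\overline{\sigma}=\sigma<t_\star^{\kappa,\delta}\}$ the time may be arbitrarily short, so the pathwise bound carries a $(\kappa\sigma)^{-1}$ factor that must be controlled in expectation. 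Here the key tool is Lemma~\ref{lem:ngemmts}, which supplies the negative moment estimate $\EE[(M\sigma)^{-p}]\lesssim(\|u_0\|_{\mC^\gamma}+1)^{\r p}$: combined via Cauchy--Schwarz with the moment bounds on $\sup_{s\leqslant \overline{\sigma}}\|u_s\|_{\mC^\gamma}$ from Lemma~\ref{lem:mmt-unif}, this yields the desired moment of $\|\pi_{\overline{\sigma}}\|_{H^1}^{2p}$. The hypothesis $M\geqslant \kappa^{-\q}$ then allows one to absorb any residual $\kappa^{-1}$ into $M^{1/\q}$, contributing only to the constant depending on $\q$.

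The delicate point, and hence the main obstacle, is the second event: the parabolic smoothing time can be very short, and the naive pathwise bound blows up. It is the quantitative interplay between Lemma~\ref{lem:ngemmts} (which, via the differential inequality of Lemma~\ref{lem:hl}, shows that $\sigma\gtrsim 1/M$ in any $L^p$ sense up to velocity moments) and the $(\kappa\sigma)^{-1}$ dependence of parabolic smoothing that gives the correct polynomial scaling in $M$. The long-time regime of the first event is, by comparison, handled by standard dissipative estimates made quantitative by the choice of $\lambda$ in \eqref{e:t-kappa}, so the bulk of the technical work goes into the short-time regime.
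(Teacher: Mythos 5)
Your overall plan shares the right skeleton with the paper's proof: decompose into low and high frequency contributions, use parabolic smoothing for the high-frequency part, use negative moments of $\sigma(M)$ via Lemma~\ref{lem:ngemmts}, and absorb $\kappa$-factors via $M\geqslant\kappa^{-\q}$. But the proposal contains a conceptual error and glosses over the two technical devices that actually make the smoothing argument close.

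The claim that, on $\{\overline{\sigma}=t_\star^{\kappa,\delta}\}$, the factor $\exp(-2\kappa M^2 t_\star^{\kappa,\delta})=M^{-2\lambda M^\delta}$ makes $\|\mH_M\pi_{\overline{\sigma}}\|_{H^1}^2$ negligible is wrong. That super-exponential factor damps only the free evolution $P^\kappa_t\varrho_0$ (equivalently the contribution of the initial condition to the Duhamel formula); the forcing $\mH_M L[u]\varrho$ continuously injects energy into high frequencies, and the normalized high-frequency content of $\pi$ at time $\tsd$ is of order one in $L^2$ by the very definition of $\sigma$. You are conflating the mechanism of Lemma~\ref{lem:t-1} (where the spectral median is forced to stay \emph{above} $M-1$, so the total energy itself decays super-exponentially) with the present situation (where $M^{(2)}(\varrho_t)\leqslant M$, so the median is \emph{low} and the total energy is essentially conserved). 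The parameter $\lambda$ in \eqref{e:t-kappa} is tuned for Lemma~\ref{lem:t-1}, not for this proposition, and plays no role here. The actual bound on the forcing contribution at time $\tsd$ is of order $\kappa^{-3/2-\ve}\lesssim M^{(3/2+\ve)/\q}$, which is $\ll M$ for $\q>2$, but this is a \emph{polynomial} gain coming from parabolic smoothing, not a super-polynomial one coming from dissipation.

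Even leaving that aside, your ``Duhamel plus Schauder plus Gronwall'' step skips two genuine obstructions. First, what has to be controlled is not $\|\mH_M\varrho_t\|_{H^1}$ but the ratio $\|\mH_M\varrho_t\|_{H^1}/\|\mL_M\varrho_t\|$, and differentiating the denominator produces a potential term $\zeta_t=\|\nabla\mL_M\varrho_t\|^2/\|\mL_M\varrho_t\|^2\leqslant M^2$ multiplying the unknown. The paper absorbs $\kappa\zeta_t$ into the generator, obtaining the time-inhomogeneous semigroup $Q^\kappa_{s,t}$ of \eqref{e:Z}, and uses that this still enjoys heat-kernel Schauder estimates on frequencies above $2M$; if you instead treat $\kappa\zeta_t\mZ_t$ as a source and run Gronwall with the plain heat semigroup, you do not recover the required bound without further work. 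Second, $\div(R[u]\varrho)/\|\mL_M\varrho\|$ sits only in $H^{-1}$ with norm controlled by $\|u\|_{\mC^\gamma}$, so the smoothing $H^{-1}\to H^1$ has a non-integrable singularity $(\kappa(t-s))^{-1}$. The paper handles this by first bounding $\sup_s s^{\beta/2}\|\mZ_s\|_{H^\beta}$ for $\beta<1$ (where the singularity is integrable) and then bootstrapping to $\beta=1$; this bootstrap is where the $\kappa^{-3/2-\ve}$ appears. Your proposal says nothing about either the absorbed potential or the bootstrap, and these are precisely the steps that would fail or degrade the exponent if attempted naively. The short-time regime handled via Lemma~\ref{lem:ngemmts}, and the absorption $\kappa^{-1}\leqslant M^{1/\q}$, are correctly identified and match the paper.
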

\begin{proof}
We only prove the case $L[u] = u\cdot \nabla + \Delta u \cdot \nabla^{-1}$ (linearised stochastic Navier--Stokes equation) since the proof of the other case (passive scalar advection) is simpler. 

Up to rescaling the initial condition, let us assume without loss of generality
that $ \| \varrho_{0} \| = 1 $.
Let us define the process $ \mZ $ as follows, for all $ t \leqslant
\overline{\sigma}$:
\begin{equ}
\mZ_{t} = \frac{\mH_{2M} \varrho_{t}}{\| \mL_{ M} \varrho_{t} \|} \;.
\end{equ} Observe that we have $\|\mZ_{t} \| \leqslant 1/2 $ since  $ \| \varrho_{t} \| \leqslant 2 \| \mL_{M}\varrho_{t} \| $ for $ t \leqslant \overline{\sigma}$ . Now, it suffices to control the regularity of $ \mZ $, since we can estimate
for $ t \leqslant \overline{\sigma} $:
\begin{equ}
\| \pi_{t} \|_{H^{1}} \lesssim M + \| \mH_{2M} \varrho_{t} / \|
\mL_{M} \varrho_{t} \| \|_{H^{1}}  \simeq  M + \| \mZ_{t}
\|_{H^{1}} \;,
\end{equ}
where we again used that $ \| \varrho_{t} \| \leqslant 2 \| \mL_{M}
\varrho_{t} \| $ for $ t \leqslant \overline{\sigma} $. Therefore, our
objective is to obtain a regularity estimate on $ \mZ $. For this purpose, let
us view $ \mZ $ as the solution to the following parabolic equation:
\begin{equs}
\partial_{t} \mZ & = \kappa \Delta \mZ - \frac{\mH_{2 M} \div
(R[u]\varrho ) }{\| \mL_{ M} \varrho \|} -
\frac{\mH_{2 M} \varrho}{\| \mL_{  M} \varrho
\|^{3}} \langle \mL_{ M} \varrho, \kappa \Delta \varrho - \div
(R[u]\varrho ) \rangle \\
& = \kappa (  \Delta - \zeta )\mZ - \frac{\mH_{2 M} \div
(R[u]\varrho) }{\| \mL_{ M} \varrho \|} +
\frac{\mH_{2 M} \varrho}{\| \mL_{  M} \varrho
\|^{3}} \langle \mL_{ M} \varrho, \div
(R[u]\varrho)  \rangle \;, \label{e:Z}
\end{equs}
where $R[u]\varrho  = \varrho u + \Delta u 
(-\Delta)^{-1}\varrho$ as in \eqref{e:R-nF} and 
\begin{equ}
\zeta_{t} =  \frac{\| \nabla \mL_{ M} \varrho_{t} \|^{2}}{ \|
\mL_{M} \varrho_{t}\|^{2}} \leqslant  M^{2} \;.
\end{equ}
Now we view \eqref{e:Z} as an equation driven by the time-dependent linear operator $ \kappa
(\Delta - \zeta_{t}) $. Therefore, we define the time-inhomogeneous operator
and associated semigroup
\begin{equ}
L_{t} = \kappa (\Delta - \zeta_{t}) \;, \qquad Q_{s, t}^{\kappa} = e^{\kappa \int_{s}^{t} L_{r} \ud r}
\;,
\end{equ}
so that we can represent
\begin{equs}
\mZ_{t} & = Q^{\kappa}_{0, t} \mZ_{0} + \int_{0}^{t} Q^{\kappa}_{s, t} [f_{s}] \ud s \;, \\
f & = - \frac{\mH_{2M} \div
(R[u]\varrho) }{\| \mL_{M} \varrho \|} +
\frac{\mH_{2M} \varrho}{\| \mL_{ M} \varrho
\|^{3}} \langle \mL_{ M} \varrho, \div
(R[u]\varrho)  \rangle \;. \label{e:force}
\end{equs}
In order to establish appropriate bounds on $ \mZ $, we will use the parabolic
smoothing effect of $ Q^{\kappa}_{s, t} $. Indeed, since $ \zeta_{t} \leqslant
M^{2} $, we can bound
\begin{equ}[e:schauder]
\| Q_{s,t}^{\kappa} \mH_{2M} \varphi \|_{H^{\alpha + \beta}} \lesssim (\kappa (t-s) )^{-
\frac{\beta}{2}} \| \mH_{2M} \varphi \|_{H^{\alpha}} \;,
\end{equ}
so that at frequencies higher than $ 2M $, the operator $ Q $ has the same
smoothing effect of the heat semigroup $ P^{\kappa} $.
Therefore, we obtain the following bound for the evolution of the initial
condition:
\begin{equ}
\| Q^{\kappa}_{0, \overline{\sigma}} \mZ_{0} \|_{H^{1}}  \lesssim
\kappa^{- \frac{1}{2}} \overline{\sigma}^{- \frac{1}{2}}
 \| \mZ_{0} \|_{L^{2}} \lesssim \kappa^{-
\frac{1}{2}} \overline{\sigma}^{- \frac{1}{2}}\;.
\end{equ}
Similarly, we can bound uniformly in time for any $ \beta > 0 $
\begin{equ}[e:regu-ic]
\sup_{0 \leqslant s \leqslant \overline{\sigma}} s^{\frac{\beta}{2} }\| Q^{\kappa}_{0, s}
\mZ_{0} \|_{H^{\beta}}  \lesssim \kappa^{- \frac{\beta}{2}} \;.
\end{equ}
The latter bound will be useful in the remainder of the discussion. In any
case, our first bound guarantees that we may apply Lemma~\ref{lem:ngemmts} to obtain
\begin{equs}
\EE  \left[ \| Q^{\kappa}_{0, \overline{\sigma}} \mZ_{0}
\|^{p}_{H^{1}} \right]
\lesssim \kappa^{- \frac{p}{2}} \EE \,  \overline{\sigma}^{- \frac{p}{2}}
 &\lesssim \kappa^{- \frac{p}{2}} M^{\frac{p}{2}} (\|
u_{0}\|_{\mC^{2}}+1)^{\frac{\r p}{2}} + \kappa^{- \frac{p}{2}} \tsd
(M)^{- \frac{p}{2}} \\
& \lesssim_{\delta} \kappa^{- \frac{p}{2}} M^{\frac{p}{2}} (\|
u_{0}\|_{\mC^{2}}+1)^{\frac{\r p}{2}} + M^{p}\;. \label{e:mmt-ic}
\end{equs}
After having controlled the initial condition, we must control the force on the
right hand-side of \eqref{e:Z}. We start with the first term in
\eqref{e:force}. Here we bound for any $ \beta \in [0,1) $:
\begin{equ}
\bigg\| \int_{0}^{t} Q^{\kappa}_{s, t} \left[ \frac{\mH_{2M} \div
(R[u_{s}]\varrho_{s})  }{\| \mL_{ M} \varrho_{s} \|}  \right] \ud s
\bigg\|_{H^{\beta}} \leqslant \int_{0}^{t} \| Q^{\kappa}_{s, t} \varphi_{s}
\|_{H^{1+ \beta}}   \ud s \;.
\end{equ}
Here we have defined $ \varphi_{s} = \mH_{2M} 
(R[u_{s}]\varrho_{s})  / \| \mL_{M} \varrho_{s} \|$. Now using the bound  
\begin{equ}
\| R[u]\varrho \|\lesssim \| u\|_{\infty} \| \varrho \| + \| \Delta
u\|_{\infty} \| (-\Delta)^{-1} \varrho \| \lesssim \|u\|_{\mC^{\gamma }} \|\varrho\|\;,
\end{equ}  
for all $ \gamma \in (2, \a/2) $, we obtain that for all $ t \leqslant
\overline{\sigma} $, following the notation of \eqref{e:cbt}:
\begin{equ}
\bigg\| \int_{0}^{t} Q^{\kappa}_{s, t} \left[ \frac{\mH_{2M} \div
(R[u_{s}]\varrho_{s}) }{\| \mL_{ M} \varrho_{s} \|}  \right]  \ud s
\bigg\|_{H^{\beta}}  \lesssim 
 \| u \|_{\mC^{\gamma}_{t}}  \cdot \int_{0}^{t}  (\kappa s)^{- \frac{1 + \beta}{2}} \ud s  \lesssim_{\beta} \kappa^{- \frac{1 + \beta}{2}} t^{\frac{1 - \beta}{2}
}\| u \|_{\mC^{\gamma}_{t}}  \;.\label{e:bd-frc-1}
\end{equ}
As for the second term in \eqref{e:force}, we start with the estimate
\begin{equ}
\frac{|\langle \mL_{ M} \varrho_{s},\div
(R[u_{s}]\varrho_{s}) \rangle | }{\| \mL_{ M} \varrho_{s} \|^{2}} = \frac{|\langle \nabla(\mL_{ M} \varrho_{s}),R[u_{s}]\varrho_{s} \rangle | }{\| \mL_{ M} \varrho_{s} \|^{2}} \lesssim
M \| u_{s} \|_{\mC^{ \gamma}} \;,
\end{equ}
for any $ \gamma \in (2, \a/2) $,
where once more we used that by assumption $ \| \mL_{ M } \varrho_{s} \|
\gtrsim \| \varrho_{s} \| $ for all $ s \leqslant \overline{\sigma} $.
Therefore via \eqref{e:schauder} we obtain the upper bound
\begin{equs}
\left\| \int_{0}^{t} Q^{\kappa}_{s, t} \left[ \frac{\mH_{2M}
\varrho_{s}}{\| \mL_{M} \varrho_{s}
\|^{3}} \langle \mL_{ M} \varrho_{s},\div(R[u_{s}]\varrho_{s})  \rangle \right]
\ud s \right\|_{H^{1}} & \lesssim \| u \|_{\mC^{\gamma}_{t}}
 \kappa^{- \frac{1}{2}} M    \int_{0}^{t}   s^{- \frac{1}{2}} \ud s \\
& \lesssim \| u \|_{\mC^{\gamma}_{t}} M \kappa^{- \frac{1}{2}} t^{\frac{1}{2} }\\
&\lesssim_{\delta, \ve} \| u \|_{\mC^{\gamma}_{t}} \kappa^{-1}M^{\delta/2+\ve}\;,\label{e:bd-frc-2}
\end{equs}
for all $ t \in [0, \overline{\sigma}]  $, and where we have used that $ t
\leqslant \tsd(M) $ and have absorbed the logarithmic terms in a small power $
M^{\ve} $, for arbitrary $ \ve \in (0, 1) $. Now combining \eqref{e:regu-ic} with \eqref{e:bd-frc-1} and
\eqref{e:bd-frc-2} and using that $(\tsd(M))^{\beta/2}M^{\delta/2 +\ve} \leqc
M^{-\beta/2 +\delta/2 +\ve}\leqc 1$ if $\beta \in (\delta, 1) $ and $ \ve
$ is sufficiently small, we find that there exists a constant $C(\beta)>0$ such that 
\begin{equ}[e:bd-intermediate]
\sup_{0 \leqslant s \leqslant \overline{\sigma}} s^{ \frac{\beta}{ 2}} \| \mZ_{s}
\|_{H^{\beta}} \leqslant C(\beta) \kappa^{-
1}  (1 + \| u \|_{\mC^{ \gamma}_{t}})\;.
\end{equ} 
Since our objective is to obtain an estimate on the $ H^{1}$ norm of
$ \mZ_{ \overline{\sigma}} $, we must still improve our bound to allow for $
\beta = 1 $. Here we bootstrap our argument and go back to \eqref{e:bd-frc-1}
and bound. In view of \eqref{e:bd-intermediate} we obtain:
\begin{equs}
\bigg\| \int_{0}^{t} Q^{\kappa}_{s, t}  \left[ \frac{\mH_{2M} \div
(R[u]\varrho) }{\| \mL_{ M} \varrho \|}  \right]  \ud s \bigg\|_{H^{1}} & \lesssim \int_{0}^{t} \| Q^{\kappa}_{s, t} \varphi_{s}
\|_{H^{2}}   \ud s \\
& \lesssim  \| u \|_{\mC^{\gamma}_{t}} 
\cdot \int_{0}^{t}   ( \kappa (t-s) )^{- \frac{2 - \beta}{2}
}\| \mZ_{s} \|_{H^{\beta}} \ud s\\
& \lesssim \left( 1 +   \| u
\|_{\mC^{\gamma}_{t}} \right)^{2} \kappa^{-1} 
\int_{0}^{t}  (\kappa (t-s) )^{- \frac{2 - \beta}{2}
}  s^{- \frac{\beta}{2}} \ud s \\
& \lesssim_{\beta}  \left( 1 +   \| u
\|_{\mC^{\gamma}_{t}} \right)^{2} \kappa^{-1} \kappa^{- \frac{2- \beta}{2}}
\lesssim_{\beta, \ve} \left( 1 +   \| u \|_{\mC^{\gamma}_{t}} \right)^{2} \kappa^{-
\frac{3}{2} - \ve}\;. \qquad
\label{e:bd-frc-3}
\end{equs}
for any $ \ve > 0 $, provided $ \beta $ is sufficiently close to one.
Now we are ready to conclude the proof. Combining \eqref{e:bd-frc-3} with
\eqref{e:bd-frc-2} and \eqref{e:regu-ic} we obtain that
\begin{equs}
\| \mZ_{ \overline{\sigma}} \|_{H^{1}} \lesssim \kappa^{-
\frac{1}{2}} \overline{\sigma}^{- \frac{1}{ 2}} + \left( 1 +   \| u \|_{\mC^{\gamma}_{t}} \right)^{2} \kappa^{-
\frac{3}{2} - \ve} \;.
\end{equs}
Hence, upon taking expectations and through \eqref{e:mmt-ic} and
Lemma~\ref{lem:mmt-unif}, we obtain
\begin{equs}
\EE \left[ \| \pi_{ \overline{\sigma}} \|^{p}_{H^{1}} \right] & \lesssim
 \left( \kappa^{- \frac{1}{2}} M^{\frac{1}{2}} + \kappa^{- \frac{3}{2}
- \ve} \right)^{p} (\|
u_{0}\|_{\mC^{2}}+1)^{\frac{\r p}{2}} + M^{p} \\
& \lesssim   \kappa^{- \frac{p}{2}} M^{\frac{p}{2}}(\|
u_{0}\|_{\mC^{2}}+1)^{\frac{\r p}{2}} + M^{p}\\
& \lesssim   M^{p}(\| u_{0}\|_{\mC^{2}}+1)^{\frac{\r p}{2}} \;,
\end{equs}
where the last two steps follow under the assumption $ M \geqslant
\kappa^{-\q} $ for some $ \q > 2 $, by choosing $ \ve $ sufficiently small.
This concludes the proof.
\end{proof}
This concludes our first set of results concerning high-frequency regularity.
\subsection{Energy estimates for higher regularity}
We conclude this section with a deterministic upper bound on the regularity of
the projective process, which follows from an energy estimate. This estimate
can also be found in \cite[Section 2.3.2]{miles2018diffusion} in the case of
pure advection.

\begin{lemma}\label{lem:ub-h1-Lns}
Let $\varrho$ be a solution to \eqref{e:main} with $L[u] $ satisfying
\eqref{e:form-PSA} or \eqref{e:form-LNS}. Then for any $\gamma\in (2,3)$ and $0\leqslant s \leqslant t$, there exists a $C(\gamma)>1$ such that: for the projective process $\pi=\varrho/\|\varrho\|$ we have 
\[
\|\pi_{t} \|_{H^{1}}\leqslant C(\gamma) \exp\left(C(\gamma)\int_{s}^{t} \|u_{r}\|_{H^{\gamma}}\ud r \right)\left(\|\pi_{s}\|_{H^{1}}+ (t-s)
\sup_{s\leqslant r \leqslant t}\|u_{r}\|_{H^{\gamma +1}}\right)\;.
\]
Therefore, for any $\r \geqslant 1 $ there exists a $C(\r,t)>0$ such that 
\[
\E \left[ \| \pi_t\|_{H^{1}}^{\r} \right] \leqslant C(\r,t) \EE \left[\| \pi_{s}\|_{H^{1}}^{2\r} +1\right]^{\frac{1}{2}} V_{\r}^{\frac{1}{2}}(u_{0})\;,
\] 
where $V_{\r}$ is defined in \eqref{e:lyap-sns}. 
\end{lemma}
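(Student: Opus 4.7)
The plan is to establish the pathwise bound via an energy estimate on $\|\nabla\pi_t\|^2$, and then obtain the moment bound by combining Cauchy--Schwarz with exponential moment control on the velocity field. Starting from the renormalised equation $\partial_t\pi = \kappa\Delta\pi - L[u]\pi + G_t\pi$ (with scalar $G_t = \kappa\|\nabla\pi\|^2 + \langle\pi, L[u]\pi\rangle$ enforcing $\|\pi\| = 1$), a direct computation of $\frac{d}{dt}\|\nabla\pi\|^2 = -2\langle\Delta\pi,\partial_t\pi\rangle$ yields
\begin{equ}
\frac{d}{dt}\|\nabla\pi\|^2 = -2\kappa\bigl(\|\Delta\pi\|^2 - \|\nabla\pi\|^4\bigr) + 2\langle\Delta\pi, L[u]\pi\rangle + 2\|\nabla\pi\|^2\langle\pi, L[u]\pi\rangle\;.
\end{equ}
The first bracket is non-negative by Cauchy--Schwarz ($\|\nabla\pi\|^4 = \langle\pi,-\Delta\pi\rangle^2 \leq \|\Delta\pi\|^2$ using $\|\pi\|=1$), so the dissipative contribution $-2\kappa(\cdots)\leq 0$ can be discarded in an upper bound.

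For pure advection $L[u]\pi = u\cdot\nabla\pi$, incompressibility gives $\langle\pi, u\cdot\nabla\pi\rangle = 0$, while the identity $\langle\Delta\pi, u\cdot\nabla\pi\rangle = -\sum_{i,j}\int\partial_i\pi\,\partial_i u_j\,\partial_j\pi$ yields $|\langle\Delta\pi, u\cdot\nabla\pi\rangle|\lesssim \|u\|_{H^\gamma}\|\nabla\pi\|^2$ via the Sobolev embedding $H^\gamma\hookrightarrow \mC^1$ for $\gamma > 2$ in two dimensions. Gronwall then closes the advection case with no inhomogeneous term. For linearised SNS, the extra stretching term $B[u]\pi = \Delta u\cdot\nabla^{-1}\pi$ is controlled by integrations by parts exploiting $\div\Delta u = 0$: writing $\nabla^{-1}\pi = \nabla\Phi$ with $\Phi = (-\Delta)^{-1}\pi$ and noting $\|\Phi\|_{L^\infty}\leq C$ (since $\Phi\in H^2\hookrightarrow L^\infty$ in two dimensions), one obtains $|\langle\pi, B[u]\pi\rangle| = |\int\nabla\pi\cdot \Delta u\,\Phi|\lesssim \|u\|_{H^\gamma}\|\nabla\pi\|$, while an analogous integration by parts on $\langle\Delta\pi, B[u]\pi\rangle = -\langle\nabla\pi, \nabla(\Delta u\cdot\nabla^{-1}\pi)\rangle$ combined with H\"older, Sobolev and the $L^2$-boundedness of the Riesz transforms $\partial_i\nabla^{-1}$ gives $|\langle\Delta\pi, B[u]\pi\rangle|\lesssim \|u\|_{H^{\gamma+1}}\|\nabla\pi\|$. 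The residual cubic contribution $\|u\|_{H^\gamma}\|\nabla\pi\|^3$ arising from $\|\nabla\pi\|^2\langle\pi, B[u]\pi\rangle$ is absorbed into the retained dissipation $-2\kappa(\|\Delta\pi\|^2 - \|\nabla\pi\|^4)$ using the inequality $\|\nabla\pi\|^4\leq \|\Delta\pi\|\|\nabla\pi\|^2$. Altogether $\frac{d}{dt}\|\nabla\pi\|^2\leq C\|u\|_{H^\gamma}\|\nabla\pi\|^2 + C\|u\|_{H^{\gamma+1}}\|\nabla\pi\|$, so dividing by $2\|\nabla\pi\|$ where positive and invoking Gronwall's inequality delivers the pathwise bound (with the $+1$ in $\|\pi\|_{H^1} = \sqrt{1+\|\nabla\pi\|^2}$ absorbed into the constant $C(\gamma)$).

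The moment estimate then follows from Cauchy--Schwarz: one separates the exponential factor from the linear term $\|\pi_s\|_{H^1} + (t-s)\sup_r\|u_r\|_{H^{\gamma+1}}$, invokes the exponential moment control in Lemma~\ref{lem:lyap-sns} for the former (producing the factor $V_\r^{1/2}(u_0)$), and uses Lemma~\ref{lem:mmt-unif} to bound moments of $\sup_{s\leq r\leq t}\|u_r\|_{H^{\gamma+1}}$ uniformly in time. The principal technical obstacle is the SNS step: the cubic expression $\|u\|_{H^\gamma}\|\nabla\pi\|^3$ that naively emerges from $\|\nabla\pi\|^2\langle\pi, B[u]\pi\rangle$ would drive Riccati-type blow-up, and the delicate integration-by-parts identities relying on $\div u = 0$, together with the absorption of the cubic contribution into the parabolic dissipation, are what permit the clean separation between $\|u\|_{H^\gamma}$ (in the exponent) and $\|u\|_{H^{\gamma+1}}$ (as an additive inhomogeneous constant).
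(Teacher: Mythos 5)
Your strategy is essentially the paper's: an $H^1$ energy estimate on the projective process, followed by Cauchy--Schwarz and the moment bounds from Lemmas~\ref{lem:lyap-sns} and~\ref{lem:mmt-unif}. (The paper works with $\|\nabla\varrho\|/\|\varrho\|$ directly rather than with the renormalised $\pi$-equation, but these are the same computation.) The advection case and the moment step are fine. There is, however, a genuine gap in the linearised SNS case.

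The problem is the claimed absorption of the cubic term $\|u\|_{H^\gamma}\|\nabla\pi\|^3$ into the retained dissipation $-2\kappa(\|\Delta\pi\|^2-\|\nabla\pi\|^4)$. That quantity can be \emph{zero}: Cauchy--Schwarz $\|\nabla\pi\|^4=\langle\pi,-\Delta\pi\rangle^2\leq\|\Delta\pi\|^2$ is an equality precisely when $\pi$ is a Laplacian eigenfunction, and for such $\pi$ the dissipative bracket vanishes while $\|\nabla\pi\|^3$ does not. No form of Young's inequality fixes this: any split of the cubic leaves behind either a $\kappa\|\nabla\pi\|^4$ term or an un-absorbable $\kappa\|\Delta\pi\|^2$, and either way you cannot reach the linear-in-$\|\nabla\pi\|$ differential inequality
\begin{equ}
\frac{\ud}{\ud t}\|\nabla\pi\|^2\leq C\|u\|_{H^\gamma}\|\nabla\pi\|^2 + C\|u\|_{H^{\gamma+1}}\|\nabla\pi\|
\end{equ}
that Gronwall requires; a genuine $\|\nabla\pi\|^3$ (or $\|\nabla\pi\|^4$) term on the right is Riccati-type and the pathwise bound would not follow.

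The cubic is in fact an artifact of your integration by parts. Writing $\nabla^{-1}\pi=\nabla\Phi$ and pushing derivatives onto $\pi$ costs you a factor $\|\nabla\pi\|$ in exchange for the easy bound $\|\Phi\|_{L^\infty}\lesssim 1$. Instead estimate $\langle\pi,\Delta u\cdot\nabla^{-1}\pi\rangle$ directly by H\"older and Sobolev, as the paper does: with $\frac{1}{p}+\frac{1}{p'}=\frac12$,
\begin{equ}
|\langle\pi,\Delta u\cdot\nabla^{-1}\pi\rangle|\leq\|\pi\|\,\|\nabla^{-1}\pi\|_{L^{p'}}\,\|\Delta u\|_{L^{p}}\lesssim\|\pi\|^2\,\|u\|_{H^\gamma}\;,
\end{equ}
using $\nabla^{-1}\pi\in H^1\hookrightarrow L^{p'}$ and $H^{\gamma-2}\hookrightarrow L^{p}$ for $p$ chosen according to $\gamma\in(2,3)$. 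With $\|\pi\|=1$ this gives $|\langle\pi,B[u]\pi\rangle|\lesssim\|u\|_{H^\gamma}$ with \emph{no} $\|\nabla\pi\|$ factor, so the offending contribution $2\|\nabla\pi\|^2\langle\pi,B[u]\pi\rangle$ is only quadratic in $\|\nabla\pi\|$, the dissipation can be discarded as non-positive, and the rest of your argument then goes through.
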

\begin{proof}
Since the result in the case of advection $ L[u] = u \cdot \nabla $ is simpler
than in the case of linearised SNS, we only consider the latter.
   We start by estimating $\varrho$ in $L^{2}$. By the divergence-free property of $u$, we have
   \[
   \frac{1}{2}\frac{\ud}{\ud t}\|\varrho_{t}\| + \kappa\|\nabla \varrho_{t}\|^{2}
= -\langle \varrho_{t},\Delta u_{t} \cdot \nabla^{-1}\varrho_{t}\rangle \;.
   \]
For any $\gamma\in(2,3)$, we choose $p>2$ so that $\gamma = 2+ \frac{p-2}{p}$ and let $p^{\prime} = 2p/(p-2)$. Then using Sobolev embedding, we can bound 
  \begin{equs}
      |\langle \varrho_{t},\Delta u_{t} \cdot \nabla^{-1}\varrho_{t}\rangle| &\lesssim \|\varrho_{t}\|\|\nabla^{-1}\varrho_{t}\|_{L^{p^{\prime}}}\|\Delta u_{t}\|_{L^{p}}\\
      &\lesssim \|\varrho_{t}\|^{2}\|u_{t}\|_{H^{\gamma}} \;,
  \end{equs} which gives us 
  \begin{equ}\label{e:L2est-rho-Lns}
     \frac{1}{2}\frac{\ud}{\ud t}\|\varrho_{t}\| \lesssim -\kappa\|\nabla \varrho_{t}\|^{2} + \|\varrho_{t}\|^{2}\|u_{t}\|_{H^{\gamma}}\;.
  \end{equ}
   Next we estimate $\varrho$ in $H^1$. Using the divergence-free property of $u$,
and integrating by parts leads to the estimate
  \begin{equs}
     \frac{1}{2}\frac{\ud}{\ud t}\|\nabla\varrho_{t}\|^{2} + \kappa\|\Delta\varrho_{t}\|^{2} &= -\langle \nabla \varrho_{t}, \nabla u_{t}\cdot \nabla \varrho_{t}\rangle - \langle \nabla \varrho_{t}, \nabla(\Delta u_{t} \cdot \nabla^{-1}\varrho_{t})\rangle\\
   &\lesssim \|\nabla \varrho_{t}\|^{2} \|\nabla u_{t}\|_{\infty} + |\langle \nabla
\varrho_{t}, \nabla(\Delta u_{t} \cdot \nabla^{-1}\varrho_{t})\rangle| \;.
  \end{equs}
   Note that we can bound the second term in a similar way to the $L^2$ setting above using Sobolev embedding
\begin{equs}
    |\langle \nabla \varrho_{t}, \nabla(\Delta u_{t} \cdot \nabla^{-1}\varrho)\rangle| &\lesssim \|\nabla \varrho_{t}\|(\|\varrho_{t}\|_{L^{p}}\|\Delta u_{t}\|_{L^{p^\prime}} + \|\nabla^{-1}\varrho_{t}\|_{L^{p}}\|\nabla\Delta u_{t}\|_{L^{p^\prime}})\\
      &\lesssim \|\nabla \varrho_{t}\|^{2}\|u_{t}\|_{H^{\gamma}} + \|\nabla
\varrho_{t}\|\|\varrho_{t}\|\|u_{t}\|_{H^{\gamma+1}} \;.
\end{equs}
Putting this together with the $L^2$ estimate in \eqref{e:L2est-rho-Lns}, we have
\begin{equs}
   \frac{1}{2}\frac{\ud}{\ud t }\frac{\|\nabla \varrho_{t}\|^2}{\|\varrho_{t}\|^{2}} + \kappa\left(\frac{\|\Delta \varrho_{t}\|^{2}}{\|\varrho_{t}\|^{2}} - \frac{\|\nabla \varrho_{t}\|^{4}}{\|\varrho_{t}\|^{4}}\right) &\lesssim \|u_{t}\|_{H^\gamma}\frac{\|\nabla \varrho_{t}\|^{2}}{\|\varrho_{t}\|^{2}} + \|u_{t}\|_{H^{\gamma+1}}\frac{\|\nabla \varrho_{t}\|}{\|\varrho_{t}\|}\;,
\end{equs} where we used that $\| \nabla u_{t} \|_{\infty} \lesssim \| u_{t} \|_{H^{\gamma}}$. Using the interpolation estimate $\|\nabla \varrho_{t}\| \leqslant \|\Delta \varrho_{t}\|^{1/2}\|\varrho_{t}\|^{1/2}$, and changing to the derivative of $\|\nabla \varrho_{t}\|/\|\varrho_{t}\|$, we have
\[
\frac{\ud}{\ud t}\frac{\|\nabla \varrho_{t}\|}{\|\varrho_{t}\|} \lesssim \|u_{t}\|_{H^\gamma}\frac{\|\nabla \varrho_{t}\|}{\|\varrho_{t}\|} + \|u_{t}\|_{H^{\gamma+1}}.
\]
Integrating the above estimate leads to the first of the desired estimates. Moreover,
Lemma~\ref{lem:lyap-sns} and Lemma~\ref{lem:mmt-unif} imply the second estimate. 
\end{proof}

\section{Analytic estimates}\label{sec:analytic_est}

In this section we collect some analytic estimates that are useful throughout
the paper. We start by introducing Besov spaces. Namely, for a smooth dyadic
partition of the unity $ \{ \chi_{j}
\}_{j \geqslant -1} $ (see \cite[Chapter 2]{BookChemin} , with the convention
that $ \chi_{-1} = \chi,
\chi_{j} = \varphi_{j} $ for $ j \geqslant 0$ in the notation of
\cite{BookChemin}) we define the Paley blocks
\begin{equ}[e:Paley]
\Delta_{j} \varphi = \mF^{-1} ( \chi_{j} \mF \varphi) \;,
\end{equ}
and the Besov space $\mB^{\beta}_{p,q}$ for $\beta\in\RR$ and $p,q\in[1,\infty]$,
through the norm
\begin{equ}[e:besov]
\| \varphi \|_{\mB^{\beta}_{p,q}} = \Bigl( \sum_{j\geqslant -1} 2^{j q \beta } \| \Delta_{j} \varphi \|_{L^{p}}^{q}\Bigr)^{\frac1q}\;.
\end{equ} In particular, we define the H\"older space $ \mC^{\beta} = \mB^{\beta }_{\infty,\infty}$ as 
\begin{equ}[e:holder]
\| \varphi \|_{\mC^{\beta}} = \sup_{j \geqslant -1} 2^{j \beta}\| \Delta_{j} \varphi
\|_{\infty} \;.
\end{equ}
In a similar way, one can also recover the  Bessel norm that we have
been using so far: for $H^\alpha = \mB^{\alpha}_{2,2}$ we define the norm by
\begin{equ}[e:bessel-equiv]
\| \varphi \|_{H^{\alpha}} \simeq  \Bigl( \sum_{j \geqslant -1} 2^{2 \alpha j}  \| \Delta_{j} \varphi
\|_{L^{2}}^{2}\Bigr)^{\frac{1}{2}} \;.
\end{equ}
Paley blocks are particularly useful to study products of distributions. In
particular, for any pair of distributions $ \varphi, \psi $ we can formally
(because the resonant product $ \reso $ is in general not defined) decompose
their product into
\begin{equ}[e:para-dec]
\varphi \cdot \psi = \varphi \para \psi + \varphi \reso \psi + \varphi \rpara
\psi \;,
\end{equ}
with 
\begin{equ}
\varphi \para \psi = \sum_{-1 \leqslant j \leqslant i-1} \Delta_{j} \varphi
\Delta_{i} \psi \;, \qquad \varphi \reso \psi = \sum_{| i - j | \leqslant 1}
\Delta_{i} \varphi \Delta_{j} \psi \;, \qquad \varphi \rpara \psi = \psi \para
\varphi \;.
\end{equ}
In particular, paraproducts are useful because one can estimate them as
follows.
\begin{lemma}[Theorems 2.82 and 2.85 \cite{BookChemin}]\label{lem:para-est}
  Fix $\alpha, \beta \in \RR$. Then uniformly over $\varphi, \psi \in
  \mS^{\prime}$ 
  \begin{align*}
    \| \varphi \para \psi \|_{H^{\alpha}} & \lesssim \| \varphi
    \|_{L^{2}} \| \psi \|_{\mC^{\alpha}} \;, \\
\| \varphi \rpara \psi \|_{H^{\alpha}} & \lesssim \| \varphi
    \|_{H^{\alpha}} \| \psi \|_{L^{\infty}} \;, & &  \\ 
    \| \varphi \para \psi \|_{H^{\alpha + \beta}} &\lesssim \| \varphi
    \|_{H^{\beta}} \| \psi \|_{\mC^{\alpha}} \;, \ \ & &  \text{if} \ \ \beta  <
    0 \;, \\
    \| \varphi \reso \psi \|_{H^{\alpha + \beta}} & \lesssim \| \varphi
    \|_{H^{\beta}} \| \psi \|_{\mC^{\alpha}}\;, \ \ & & \text{if} \ \ \alpha
    {+} \beta > 0 \;.
  \end{align*}
\end{lemma}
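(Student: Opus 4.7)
The plan is to reduce each of the four estimates to a Littlewood--Paley block-by-block computation, exploiting the spectral support properties of the paraproduct and resonant operators. For any distribution $f$, write $S_j f = \sum_{i \leqslant j - 1} \Delta_i f$ for the low-frequency cutoff, so that $\varphi \para \psi = \sum_{j \geqslant 1} S_{j-1}\varphi \cdot \Delta_j \psi$, and note that each summand $S_{j-1}\varphi \cdot \Delta_j \psi$ has Fourier support in an annulus of size $\sim 2^j$. Consequently $\Delta_k(\varphi \para \psi)$ vanishes unless $k \sim j$, so that $\|\Delta_k(\varphi \para \psi)\|_{L^2} \lesssim \sum_{|j-k|\leqslant N} \|S_{j-1}\varphi\|_{L^2}\|\Delta_j \psi\|_{L^\infty}$ for some fixed $N$.

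For the first estimate, I would use $\|S_{j-1}\varphi\|_{L^2} \lesssim \|\varphi\|_{L^2}$ together with $\|\Delta_j \psi\|_{L^\infty} \lesssim 2^{-\alpha j}\|\psi\|_{\mC^\alpha}$, giving $\|\Delta_k(\varphi \para \psi)\|_{L^2} \lesssim 2^{-\alpha k}\|\varphi\|_{L^2}\|\psi\|_{\mC^\alpha}$, which upon squaring, multiplying by $2^{2\alpha k}$ and summing over $k$ yields the $H^\alpha$ bound via \eqref{e:bessel-equiv}. The second estimate is essentially identical after swapping the roles of the two factors. For the third estimate, I would instead use $\|S_{j-1}\varphi\|_{L^2} \lesssim \sum_{i \leqslant j-1} 2^{-\beta i}\|\Delta_i \varphi\|_{L^2} 2^{\beta i}$; when $\beta < 0$ the geometric series $\sum_{i \leqslant j-1} 2^{-\beta i}$ is controlled by $2^{-\beta j}$, which then combines with the $\mC^\alpha$-factor to give the desired gain of $\beta$ regularity in the output norm.

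The main obstacle is the resonant estimate, since $\varphi \reso \psi = \sum_{|i-j|\leqslant 1} \Delta_i \varphi \, \Delta_j \psi$ has Fourier support in a \emph{ball} of size $\sim 2^j$ rather than an annulus, so there is no automatic frequency localization of $\Delta_k(\varphi \reso \psi)$. The strategy is to write $\|\Delta_k(\varphi \reso \psi)\|_{L^2} \lesssim \sum_{j \geqslant k - N} \|\Delta_j \varphi\|_{L^2}\|\Delta_j \psi\|_{L^\infty} \lesssim \sum_{j\geqslant k-N} 2^{-\beta j}\|\varphi\|_{H^\beta_{\mathrm{diag}}} \cdot 2^{-\alpha j}\|\psi\|_{\mC^\alpha}$, where the sum starts at $k$ because only high-frequency pairings contribute to block $k$. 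This sum converges precisely when $\alpha + \beta > 0$, producing a factor $2^{-(\alpha + \beta) k}$ which gives the claimed $H^{\alpha+\beta}$ regularity after the usual $\ell^2$ Bessel summation; the hypothesis $\alpha + \beta > 0$ is thus sharp for this argument. Since the statement is quoted from \cite{BookChemin}, I would at this point simply refer to Theorems~2.82 and~2.85 there for the remaining verifications.
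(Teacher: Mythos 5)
The paper does not prove this lemma; it cites Theorems~2.82 and 2.85 of Bahouri--Chemin--Danchin, so your attempt to re-derive it from scratch is by construction a different route. The block-by-block Littlewood--Paley framework and the spectral support observations you invoke (annuli for $\para$, balls for $\reso$) are the right starting point, and your argument for the second estimate is sound: $\|S_{k-1}\psi\|_{L^\infty}\lesssim\|\psi\|_{L^\infty}$ uniformly in $k$, while the factor $\Delta_k\varphi$ directly carries the $\ell^2$ weight needed for the Bessel summation.

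For the first, third and fourth estimates there is a genuine gap. In each case you produce a block bound of the form $\|\Delta_k(\cdot)\|_{L^2}\lesssim 2^{-sk}C$ that is uniform in $k$ (with $s=\alpha$ or $\alpha+\beta$), and then invoke ``the usual $\ell^2$ Bessel summation.'' That step diverges: $\sum_k 2^{2sk}(2^{-sk}C)^2=\sum_k C^2=\infty$, so what you have shown is only the weaker $B^{s}_{2,\infty}$ bound, not the claimed $H^s=B^s_{2,2}$ one. To upgrade, you must not discard the $\ell^2$ structure on $\varphi$ by passing to the supremum of its blocks. Setting $a_i = 2^{\beta i}\|\Delta_i\varphi\|_{L^2}$, so that $\|a\|_{\ell^2}\simeq\|\varphi\|_{H^\beta}$, one finds
$2^{(\alpha+\beta)k}\|\Delta_k(\cdot)\|_{L^2}\lesssim\|\psi\|_{\mC^\alpha}\sum_m w_m\, a_{k-m}$,
with $w_m=2^{\beta m}\1_{\{m\geqslant -N\}}$ in the paraproduct case and $w_m=2^{(\alpha+\beta)m}\1_{\{m\leqslant N\}}$ in the resonant case; the hypotheses $\beta<0$, respectively $\alpha+\beta>0$, are exactly what guarantee $w\in\ell^1(\ZZ)$, and Young's convolution inequality $\|w*a\|_{\ell^2}\leqslant\|w\|_{\ell^1}\|a\|_{\ell^2}$ then closes the argument. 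This mechanism is not ``essentially identical after swapping'' to your second estimate, where no convolution is needed because the $\ell^2$ weight sits on the non-collapsed factor. Finally, for the first estimate (the endpoint $\beta=0$) the relevant kernel is $w_m=\1_{\{m\geqslant -N\}}$, which is \emph{not} summable; the divergence your method would exhibit is a real warning, not a bookkeeping artifact, and this endpoint cannot be closed by the same Young argument \dash one needs either an $\varepsilon$-loss in the output regularity, $\varphi\in L^\infty$ rather than $L^2$, or $\ell^2$ rather than $\ell^\infty$ summability in the blocks of $\psi$.
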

Note that as a consequence of Lemma~\ref{lem:para-est} one can bound products
of distributions for $ 0 < \alpha < \beta $ by
\begin{equ}[e:bd-paraproduct]
 \| \varphi \psi
\|_{H^{- \alpha}} \lesssim \| \varphi \|_{H^{- \alpha}} \| \psi
\|_{\mC^{\beta}} \;.
\end{equ}
With this we conclude our first set of analytic estimates. We will now use
paraproducts to obtain an improved bound on the linearisation of SNS.

\subsection{A product bound for linearised SNS}
The aim of this subsection is to establish an estimate on the operator $ R
$ defined in \eqref{e:R-nF} in the context of linearised SNS, where $ L[u] $ is of
the form \eqref{e:form-LNS}.
Note that we can write $L[u]\varrho$ in Fourier space as
\begin{equs}
   L[u]\varrho & = \sum_{\ell \in \ZZ^2_*}\sum_{j+k = \ell} c_{k,j}
\hat{w}(k)\hat{\varrho}(j) e_\ell \\
& = \sum_{\ell \in \ZZ^2_*}\sum_{j+k = \ell} \ell \cdot \mf{d}_{k,j}
\hat{w}(k)\hat{\varrho}(j) e_\ell = \div \left( R[u] \varrho \right) 
\end{equs}
where $w = \mathrm{curl}\, u$ is the vorticity and in the case of linearised SNS
$ \mf{d}, c $ are respectively the Fourier multipliers
\begin{equ}[e:defdkj]
\mf{d}_{k, j} =  k^\perp (|k|^{-2} - |j|^{-2}) \;, \qquad c_{k,j} =\langle
k^\perp, j\rangle (|k|^{-2} - |j|^{-2}) \;.
\end{equ}
Therefore we can rewrite $ R $ in terms of Fourier coefficients by
\begin{equ}[e:def-R]
R[u] \varrho =  \sum_{\ell \in \ZZ^2_*}\sum_{j+k = \ell} \mf{d}_{k,j}
\hat{w}(k)\hat{\varrho}(j) e_\ell \;.
\end{equ}
Now \eqref{e:defdkj} shows that there is a cancellation when $ | k | \sim |
j | $. This cancellation leads to the following estimate.

\begin{lemma}\label{lem:lin-sns-product}
Fix any $ \alpha, \beta > 2 $ and $ \alpha < \beta +1 $, and let $ R $ be as in
\eqref{e:def-R}. Then we can estimate uniformly over all $ \varphi, \psi $:
\begin{equ}
\| R[\varphi] \psi \|_{H^{- \alpha}} \lesssim \| \varphi
\|_{\mC^{\beta}} \| \psi \|_{H^{- \alpha}} \;.
\end{equ}
\end{lemma}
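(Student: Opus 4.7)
The plan is to exploit a hidden divergence identity for $R$ that exhibits the cancellation at the heart of the lemma. Setting $g := (-\Delta)^{-1}\psi$ so that $\psi = -\Delta g$, a direct component-wise application of the Leibniz rule gives
\begin{equ}\label{e:div-form}
R[\varphi]\psi \;=\; g\Delta\varphi - \varphi\Delta g \;=\; \nabla \cdot (g \nabla \varphi - \varphi \nabla g)\;.
\end{equ}
This identity is the main point of the proof. A direct paraproduct estimate for the two summands $\varphi\psi$ and $\Delta\varphi \cdot (-\Delta)^{-1}\psi$ in $H^{-\alpha}$ separately would require $\beta > \alpha$ in order to give meaning, via Lemma~\ref{lem:para-est}~(4), to the resonant products $\varphi \reso \psi$ and $\Delta\varphi \reso (-\Delta)^{-1}\psi$. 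This is strictly stronger than the hypothesis $\alpha < \beta + 1$ under which we are trying to prove the lemma, and the divergence form captures precisely the cancellation between these two resonant contributions, trading one lost derivative through the outer divergence for the two derivatives gained in passing from $\psi$ to $g$.

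With \eqref{e:div-form} in hand, the Fourier-side bound $\|\nabla \cdot F\|_{H^{-\alpha}} \leqslant \|F\|_{H^{-\alpha+1}}$ reduces the problem to establishing
\begin{equ}
\|\varphi \nabla g\|_{H^{-\alpha+1}} + \|g \nabla \varphi\|_{H^{-\alpha+1}} \lesssim \|\varphi\|_{\mC^\beta}\,\|\psi\|_{H^{-\alpha}}\;.
\end{equ}
Both products will be estimated by paraproduct decomposition and Lemma~\ref{lem:para-est}, using that $\varphi \in \mC^\beta$, $\nabla\varphi \in \mC^{\beta-1}$, $g \in H^{-\alpha+2}$ and $\nabla g \in H^{-\alpha+1}$ with $\|g\|_{H^{-\alpha+2}} = \|\psi\|_{H^{-\alpha}}$. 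For $\varphi\nabla g$, the piece $\nabla g \rpara \varphi$ is handled by bound (2) (using $\beta > 1$ so that $\varphi \in L^\infty$), the high-low piece $\nabla g \para \varphi$ by bound (3) (requiring $-\alpha + 1 < 0$, hence $\alpha > 1$), and the resonant piece $\varphi \reso \nabla g$ by bound (4), requiring $\beta + (-\alpha + 1) > 0$, i.e.\ $\beta > \alpha - 1$. The product $g\nabla\varphi$ is treated analogously: the high-low piece $g \para \nabla\varphi$ by bound (3), now requiring $-\alpha + 2 < 0$ and hence $\alpha > 2$; the low-high piece by bound (2); and the resonant piece $g \reso \nabla\varphi$ by bound (4), again giving $\beta > \alpha - 1$. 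The output Sobolev exponents either already equal $-\alpha + 1$ or exceed it, and the relevant embeddings are automatic since $\beta \geqslant 0$.

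The only genuinely non-routine step is the derivation of \eqref{e:div-form}; the rest is careful bookkeeping with paraproducts. The hypotheses $\alpha, \beta > 2$ and $\alpha < \beta + 1$ are exactly calibrated so that each paraproduct bound closes: $\alpha > 2$ is forced by the high-low piece $g \para \nabla \varphi$ via bound (3), while $\beta > \alpha - 1$ is exactly the constraint to make both resonant pieces land in a space embedded in $H^{-\alpha+1}$.
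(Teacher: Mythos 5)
Your proof is correct and exploits the same cancellation as the paper, but via a genuinely different mechanism. The paper works directly on the Fourier-side definition \eqref{e:def-R}: it splits $R[\varphi]\psi$ into paraproducts of $\zeta = \mathrm{curl}\,\varphi$ against $\psi$, treats the $\para$ and $\rpara$ pieces by simply splitting the multiplier $\mf{d}_{k,j}$ into its two summands (where no cancellation is needed), and then handles the resonant piece by the explicit multiplier estimate $\bigl||k|^{-2}-|j|^{-2}\bigr|\lesssim |\ell|\,|k|^{-3}$ on the set $|k|\sim|j|$, followed by a bare-hands Cauchy--Schwarz argument that closes the sum over $\ell$. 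Your route instead packages the cancellation once and for all into the Green-type identity $g\Delta\varphi - \varphi\Delta g = \nabla\cdot(g\nabla\varphi - \varphi\nabla g)$, after which everything is a routine application of the standard paraproduct bounds of Lemma~\ref{lem:para-est} in $H^{-\alpha+1}$; the two derivatives gained by passing from $\psi$ to $g=(-\Delta)^{-1}\psi$ minus the one lost to the outer divergence net exactly the one derivative needed for the improved estimate of Remark~\ref{rem:why-good}. One minor point: your \eqref{e:div-form} is not itself a Leibniz-rule computation — the first equality is just the physical-space form \eqref{e:R-nF} of $R$ combined with $\psi = -\Delta g$, while the second equality is the Leibniz rule (Green's second identity), applied componentwise in the vector index of $\varphi$. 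It is also worth noting that the constraint $\alpha > 2$ arises differently in the two proofs: you get it from the high-low paraproduct $g\para\nabla\varphi$ via Lemma~\ref{lem:para-est}(3), while the paper needs it for the convergence of $\sum_\ell|\ell|^{-2\alpha+2}$ in dimension two. Your approach is cleaner and more coordinate-free; the paper's has the merit of working entirely at the level of the Fourier multiplier $\mf{d}_{k,j}$, which is the form in which $R$ naturally arises in the Gaussian computations of Section~\ref{sec:gauss-sns}.
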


\begin{remark}\label{rem:why-good}
Note that in the case of pure transport we would have $ R[u] \varrho = u
\varrho $, so that the natural assumption for the previous lemma would be $
\alpha < \beta $. Instead, in linearised SNS we have a gain of regularity because of a cancellation
in the Fourier coefficients when similar frequencies of the velocity and the linearisation
interact (in the resonant product). The key point is that we can allow for $
\alpha < \beta +1 $.
\end{remark}

\begin{proof}
To prove the result we split $ R [ \varphi] \psi  $ into paraproducts. Namely,
we define
\begin{equ}
R [ \varphi] \para \psi = \sum_{\ell \in \ZZ^2_*}\sum_{j+k = \ell} \mf{d}_{k,j}
\hat{\zeta}(k)\hat{\psi}(j) \zeta^{\para}(k, j) e^{\iota \ell \cdot x} \;,
\end{equ}
were $ \zeta = \mathrm{curl} ( \varphi)$ and $ \zeta^{\para} (k, j) = \sum_{m \leqslant n-1} \chi_{m} (k)
\chi_{n} (j) $, where $ \{\chi_{m} \}$ is the dyadic partition of the unity
introduced in \eqref{e:Paley}. Similarly, we define $ R[\varphi] \rpara \psi $
and $ R[\varphi] \reso \psi $. Now we bound each paraproduct and the resonant
product separately.

For the first paraproduct $ R [\varphi] \para \psi $ we find from the
definition of $ \mf{d} $ that
\begin{equs}
R [\varphi] \para \psi = ( \nabla^{
\perp} (- \Delta)^{-1} \zeta) \para \psi - (\nabla^{\perp} \zeta) \para
(- \Delta)^{-1} \psi \;.
\end{equs}
From here we obtain for any $ \beta > 2 $, via Lemma~\ref{lem:para-est}
\begin{equ}
\| R [\varphi] \para \psi \|_{H^{- \alpha}} \lesssim \| \nabla^{ \perp}
(- \Delta)^{-1} \zeta \|_{\mC^{\beta}} \| \psi \|_{H^{- \alpha}} + \|
\nabla^{\perp} \zeta\|_{\mC^{\beta-2}} \| (- \Delta)^{-1} \psi \|_{H^{- \alpha}} \lesssim
\| \varphi \|_{\mC^{\beta}} \| \psi \|_{H^{- \alpha}} \;,
\end{equ}
where we used that $ \| \zeta \|_{\mC^{\beta -1}} \lesssim \| \varphi
\|_{\mC^{\beta}} $.
Similarly for the second paraproduct we can estimate, for any $ \beta >0 $:
\begin{equs}
\| R[\varphi] \rpara \psi  \|_{H^{-\alpha}} & = \|( \nabla^{
\perp} (- \Delta)^{-1} \zeta) \rpara \psi - (\nabla^{\perp} \zeta) \rpara
(- \Delta)^{-1} \psi  \|_{H^{- \alpha}} \\
& \lesssim \| \nabla^{
\perp} (- \Delta)^{-1} \zeta \|_{\mC^{\beta}} \| \psi  \|_{H^{- \alpha}} + \|
\nabla^{\perp} \zeta \|_{\mC^{\beta -2}} \| (- \Delta)^{-1} \psi
\|_{H^{- \alpha +2}} \\
& \lesssim \| \varphi \|_{\mC^{\beta}} \| \psi \|_{H^{- \alpha}} \;.
\end{equs}
This leaves us with the resonant product. Let us write
\begin{equs}
\| R[ \varphi ] \reso \psi \|^{2}_{H^{- \alpha}} & = \sum_{\ell} | \ell |^{-
2\alpha} \biggl\vert \sum_{k+j = \ell}
 \mf{d}_{j, k} \hat{ \zeta } (k)  \hat{ \psi} (j) \zeta^{\reso} (k, j)  \biggr\vert^{2} \\
& \lesssim \sum_{\ell} | \ell |^{- 2 \alpha + 2} \biggl\vert \sum_{k+j = \ell} | k |^{-2}
| \hat{\zeta} (k) | | \hat{\psi} (j) | | \zeta^{\reso} (k, j) | \biggr\vert^{2}
\;,
\end{equs}
where $ \zeta^{\reso} (k, j) = \sum_{| m-n | \leqslant 1} \chi_{m}(k)
\chi_{n}(j) $, and we have estimated
\begin{equs}[e:bd-reso]
\big\vert | k |^{-2} - | k- \ell |^{-2} \big\vert & \simeq |
k |^{- 4} \left\vert | k - \ell |^{2} - | k |^{2} \right\vert = | k |^{-4}
\left\vert | \ell |^{2} -2 \langle \ell, k \rangle \right\vert \\
& \lesssim \frac{| \ell |^{2}}{| k |^{4}} + \frac{| \ell |}{| k |^{3}} \lesssim |
\ell | | k |^{-3} \;.
\end{equs}
Here in the last step we have used that necessarily $ | \ell | \lesssim | k
| $ on the set $ | k | \sim | j | $. Now we estimate the inner sum as follows,
for fixed $ \ell \in \ZZ^{2} $, since $ | k | \sim | j | $ on the resonant set
\begin{equs}
 \sum_{k+j = \ell} | k |^{-2}
| \hat{\zeta} (k) | | \hat{\psi} (j) | | \zeta^{\reso} (k, j) | & \lesssim  \sum_{k} | k
|^{\beta-1}
| \hat{\zeta} (k) | | \ell-k |^{- \beta -1} | \hat{\psi} (\ell-k) | |
\zeta^{\reso} (k, j) | \\
& \lesssim \Bigl( \sum_{k} | k
|^{2(\beta-1)} | \hat{\zeta} (k) |^{2} \Bigr)^{\frac{1}{2}} \Bigl(
\sum_{k} |k|^{-2 (\beta +1)} | \hat{\psi} (k) |^{2}\Bigr)^{\frac{1}{2}} \\
& \lesssim \| \varphi \|_{H^{\beta}} \| \psi \|_{H^{-(\beta+1)}} \;.
\end{equs}
Therefore, if $ \alpha $ satisfies $ \alpha < \beta +1 $ and $ \alpha - 1 > d/2
= 1$ (in dimension $ d=2 $, so that the sum over $ \ell $ is finite), we obtain
the desired estimate $ \| R[\varphi] \psi \|_{H^{- \alpha}} \lesssim \| \varphi
\|_{\mC^{\beta}} \| \psi \|_{H^{- \alpha}} $.

\end{proof}

\subsection{Estimates on the velocity field}\label{sec:velocity-estimates}
Next we collect some results on the velocity field $ u $ solving
\eqref{e:sns}.
We first introduce a super-Lyapunov functional
$V_{\r}(u)$ associated to the
solution $u$ to the stochastic Navier--Stokes equation \eqref{e:sns} in
dimension two, for a parameter $ \r \geqslant 0 $ that captures the rate of
polynomial growth and a parameter $ \beta \in (\a/2-2, \a/2-1) $:
\begin{equ}\label{e:lyap-sns}
V_{\r, \beta}(u) = (1+ \| u\|^{2}_{H^{\beta}})^{\r}\exp\left( c_{\star} \|
u\|^{2}_{H^{1}} \right)\;, \qquad V_{\r} = V_{\r, \b} \;,
\end{equ}
where we recall that for simplicity we have fixed a particular choice of
regularity $ \b = (\a-3)/2$:
this choice is rather arbitrary and can replaced by any
$\a/2-2 <  \b < \a/2 -1$. In addition, the Lyapunov functional depends on a constant $
c_{\star} $ which is not important (and therefore we avoid writing explicitly
the dependence on it), but necessary in the bound of the exponential below. The
proof of the next result can be found in \cite[Lemma
3.7]{BBPS-AOP-22}.

\begin{lemma}\label{lem:lyap-sns}
There exists a $ c_{\star} > 0 $, as used in \eqref{e:lyap-sns}, such that
for all $ \beta \in (\a/2 -2 , \a/2-1 ), C_{0} >0, \zeta \in (0, 3)$ and
$T\geqslant 0 $
\begin{equ}
    \EE\left[\exp \biggl(C_{0}\int_{0}^{T} \|u_s \|_{H^{\zeta}}\ud s
\biggr) \right] \lesssim V_{r, \beta}(u_{0})\;,
 \end{equ} 
where the proportionality constant depends on all the parameters of the
problem. Moreover, for any $ \r > 1 $ there exists a constant $ \gamma>0$ such
that uniformly over all $t > 0 $,
 \begin{equ}
  \EE\left[ V_{\r, \beta}(u_{t})\right] \lesssim e^{- \gamma t}
V_{\r, \beta}(u_{0}) + 1 \;.
 \end{equ}
\end{lemma}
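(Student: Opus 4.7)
I would work in vorticity form. Set $\omega = \mathrm{curl}\,u$, noting that $\|u\|_{H^1}^2 \simeq \|\omega\|^2$ on the mean-zero sector, and that $\omega$ satisfies $\partial_t\omega + u\cdot\nabla\omega - \Delta\omega = \mathrm{curl}\,\xi$. Since $\langle u\cdot\nabla\omega,\omega\rangle = 0$ by incompressibility of $u$, It\^o's formula yields
\begin{equ}
\ud\|\omega_t\|^2 + 2\|\nabla\omega_t\|^2\,\ud t = Q\,\ud t + \ud M_t\;,
\end{equ}
with $Q = \sigma^2\sum_{k\in\ZZ^2_*}|k|^{2-\a}<\infty$ (since $\a>4$), and with a martingale $M$ satisfying $\ud\langle M\rangle_t \lesssim \|\omega_t\|^2\,\ud t$. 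Applying It\^o's formula to $\Phi(\omega) = \exp(c_\star\|\omega\|^2)$ and using Poincar\'e's inequality $\|\omega\|^2\leq\|\nabla\omega\|^2$, I would choose $c_\star>0$ small enough that
\begin{equ}
\mathcal{L}\Phi \leq -c\,\|\nabla\omega\|^2\,\Phi + C\,\Phi
\end{equ}
for suitable $c,C>0$. This is the basic super-Lyapunov estimate that drives everything.

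For the first inequality, I would apply It\^o to the functional $\Phi(\omega_t)\exp(c'\int_0^t\|\nabla\omega_s\|^2\,\ud s)$ with $c'<c$; taking expectation absorbs the dissipation into the exponential weight and yields
\begin{equ}
\EE\left[\Phi(\omega_T)\exp\Bigl(c'\int_0^T\|\nabla\omega_s\|^2\,\ud s\Bigr)\right] \lesssim e^{CT}\Phi(\omega_0)\;.
\end{equ}
Since $\|u\|_{H^2}^2 \simeq \|\omega\|_{H^1}^2$, this is control of exponential moments of $\int_0^T\|u_s\|_{H^2}^2\,\ud s$. For $\zeta\in(0,2]$, Cauchy--Schwarz $\int_0^T\|u_s\|_{H^\zeta}\,\ud s \leq \sqrt{T}\bigl(\int_0^T\|u_s\|_{H^\zeta}^2\,\ud s\bigr)^{1/2}$ combined with the elementary bound $e^{ab}\leq e^{a^2/(4\eta)+\eta b^2}$ yields the claim. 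For $\zeta\in(2,3)$ one uses interpolation $\|u\|_{H^\zeta}\leq \|u\|_{H^2}^{1-\theta}\|u\|_{H^\beta}^\theta$ with $\theta = (\zeta-2)/(\beta-2)\in(0,1)$ (valid since $\beta > 3$ from $\a>10$) and H\"older's inequality to trade exponential moments of $\int\|u\|_{H^2}^2$ against polynomial moments of $\sup_s\|u_s\|_{H^\beta}$; the latter follow from standard $H^\beta$ energy estimates and produce the polynomial prefactor $(1+\|u_0\|_{H^\beta}^2)^\r$.

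For the second inequality, I would apply It\^o directly to $V_{\r,\beta}(u) = (1+\|u\|_{H^\beta}^2)^\r\Phi(\omega)$. The super-dissipative contribution $-c\|\nabla\omega\|^2 V_{\r,\beta}$ from $\Phi$ dominates the analogous contribution produced by the standard $H^\beta$ balance for $(1+\|u\|_{H^\beta}^2)^\r$ (whose production terms are bounded by $\|u\|_{H^\beta}^{2\r-1}\|u\|_{H^{\beta+1}}$ and can be absorbed using Sobolev embeddings, Young's inequality, and the exponential factor), ultimately yielding a drift inequality $\mathcal{L}V_{\r,\beta}\leq -\gamma V_{\r,\beta} + K$, from which the conclusion follows by Gr\"onwall. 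The main obstacle is the delicate choice of $c_\star$: small enough that the It\^o correction $\tfrac{c_\star^2}{2}\Phi\,\ud\langle\|\omega\|^2\rangle_t$ is dominated by the dissipation $-2c_\star\Phi\|\nabla\omega\|^2$, yet producing an exponential weight strong enough to tame the polynomial growth coming from the higher-order $H^\beta$ energy estimate.
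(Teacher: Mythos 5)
The paper does not prove this lemma: it refers the reader to \cite[Lemma~3.7]{BBPS-AOP-22}, so your attempt is doing something genuinely different by supplying a self-contained argument. Your outline follows the standard blueprint for 2D SNS Lyapunov functions (It\^o applied to $\exp(c_\star\|\omega\|^2)$, Poincar\'e, absorption of the quadratic variation into the dissipation, interpolation to reach higher regularity), and the structural skeleton is correct. The vorticity-noise computation, the choice of small $c_\star$, and the control of $\EE\exp\bigl(c'\int_0^T\|\nabla\omega\|^2\bigr)$ are all fine and essentially what BBPS do.

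There are, however, two places where the argument as sketched has a real gap. First, for $\zeta\in(2,3)$: after interpolating $\|u\|_{H^\zeta}\leq\|u\|_{H^2}^{1-\theta}\|u\|_{H^\beta}^{\theta}$ and applying Young to split off $\eta\|u\|_{H^2}^2$, the remaining term is $\|u\|_{H^\beta}^{2\theta/(1+\theta)}$, and exponentiating it requires a \emph{stretched-exponential} moment $\EE\exp\bigl(c(\sup_{s\leqslant T}\|u_s\|_{H^\beta})^{2\theta/(1+\theta)}\bigr)$, not a polynomial moment of $\sup\|u_s\|_{H^\beta}$. For $\zeta$ close to $3$ and $\beta$ close to $3$ the power $2\theta/(1+\theta)$ approaches $1$, and such a moment does \emph{not} a priori have only polynomial dependence on $\|u_0\|_{H^\beta}$. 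The actual route needs a more careful use of the fact that the $H^\beta$ energy estimate itself is Gr\"onwall-type in $\exp(c\int_0^T\|\nabla u\|_\infty)$ and that this $\int\|\nabla u\|_\infty$ is already controlled by the $H^1$-level exponential Lyapunov weight; this iteration is what produces the polynomial prefactor, and it is missing from your sketch. Second, for the drift inequality on $V_{\r,\beta}$: the nonlinear production in the $H^\beta$ energy balance is \emph{not} of the form $\|u\|_{H^\beta}^{2\r-1}\|u\|_{H^{\beta+1}}$ (that is the noise contribution); the advection term produces $\|\nabla u\|_\infty\|\omega\|_{H^{\beta-1}}^2$ plus commutator terms, and taming it requires the interpolation $\|\nabla u\|_\infty\lesssim\|u\|_{H^1}^{1-\sigma}\|u\|_{H^{\beta+2}}^{\sigma}$ combined with Young to push weight onto the $H^{\beta+1}$ dissipation and the $\exp(c_\star\|u\|_{H^1}^2)$ weight. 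Without making these interpolations explicit the claimed dominance of $-c\|\nabla\omega\|^2 V_{\r,\beta}$ over the $H^\beta$ production is not established.
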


In the next result we consider the $\mC^{\beta}$ regularity for $\beta$ arbitrarily close to $\a/2$ for the velocity field. Note that in the case  $u_{0}\in H^{\beta}$, we need an intermediate time to regularise. For the next result recall that we write $ X $ for the Gaussian process $ X_{t} = P_{t} u_{0} +
\int_{0}^{t} P_{t-s} \mathbf{P} \xi \ud s $.

\begin{lemma}\label{lem:mmt-unif}
Fix any $t_{0} > 0$. There exists an $ \r >1 $ such that the following holds: for any $ p > 0, 2<  \beta < \a/2 $ and $t>t_{0} $, there exists a $C(t_{0})>0$ such that for any $u_{0}\in H^{\b}$ with $\b= (\a-3)/2$ 
\begin{equ}
 \EE \left[ \sup_{t_{0} \leqslant s \leqslant t}  \left( \| X_{s} \|_{\mC^{\beta}
}^{p} + \| u_{s} \|_{\mC^{\beta}
}^{p} \right) \right] \leqslant C(t_{0}) (\| u_{0} \|_{H^{\b}} + 1)^{p \r } \;.      
\end{equ} Moreover, there exists a $C>0$ such that for any $u_{0}\in \mC^{\beta} $ 
\begin{equ}
 \EE \left[ \sup_{0 \leqslant s \leqslant t}  \left( \| X_{s} \|_{\mC^{\beta}
}^{p} + \| u_{s} \|_{\mC^{\beta}
}^{p} \right) \right] \leqslant C (\| u_{0} \|_{\mC^{\beta}} + 1)^{p \r } \;.
\end{equ}
\end{lemma}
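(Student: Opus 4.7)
The proof uses the Da Prato--Debussche decomposition $u = X + \psi$, with $X$ the Ornstein--Uhlenbeck process from \eqref{e:def-X} and $\psi = u - X$ solving $\partial_t\psi - \Delta\psi = -\mathbf{P}\div((X+\psi)^{\otimes 2})$ with $\psi_0 = 0$. The plan is to bound $X$ in $\mC^\beta$ using Gaussian techniques and then transfer this control to $\psi$ via Schauder estimates, leveraging the $H^\b$-moments of $u$ supplied by Lemma~\ref{lem:lyap-sns}.

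For $X$, split $X_t = P_t u_0 + \widetilde X_t$ with $\widetilde X_t = \int_0^t P_{t-s}\mathbf{P}\xi\,ds$ centered and Gaussian. A direct Fourier computation from \eqref{e:assu-noise} yields
\begin{equ}
\E|\mF\widetilde X_t(k)|^2 = \frac{1 - e^{-2|k|^2 t}}{2|k|^{\a+2}} \lesssim |k|^{-\a-2}\;,
\end{equ}
uniformly in $t$. Gaussian hypercontractivity applied pointwise to each Paley block then gives $\E|\Delta_j\widetilde X_t(x)|^p \lesssim_p 2^{-jp\a/2}$, so that for every $\beta' < \a/2$,
\begin{equ}
\E\|\widetilde X_t\|_{B^{\beta'}_{p,p}}^p = \sum_j 2^{j\beta' p}\,\E\|\Delta_j\widetilde X_t\|_{L^p}^p \lesssim_p \sum_j 2^{jp(\beta' - \a/2)} < \infty\;.
\end{equ}
Invoking the Besov embedding $B^{\beta'}_{p,p} \hookrightarrow \mC^{\beta' - 2/p}$ in dimension two and taking $p$ large produces moment bounds on $\|\widetilde X_t\|_{\mC^\beta}$ for every $\beta < \a/2$. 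The supremum over $t$ is then recovered from Kolmogorov's continuity criterion applied to $\widetilde X$ viewed as a process with values in a slightly less regular Besov space, using the splitting $\widetilde X_t - \widetilde X_s = (P_{t-s} - \mathrm{Id})\widetilde X_s + \int_s^t P_{t-r}\mathbf{P}\xi\,dr$ together with standard H\"older estimates for the heat semigroup.

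The deterministic contribution $P_t u_0$ is handled differently in the two cases. When $u_0 \in \mC^\beta$, the $\mC^\beta$-contractivity of the heat semigroup immediately gives $\|P_t u_0\|_{\mC^\beta} \leq \|u_0\|_{\mC^\beta}$, yielding the second claim without any restriction on $t$. When $u_0 \in H^\b$ with $\b = (\a-3)/2 < \a/2 - 1$, Sobolev embedding alone is insufficient to reach $\mC^\beta$ for $\beta$ close to $\a/2$, so we instead use parabolic smoothing $\|P_t u_0\|_{\mC^\beta} \lesssim t^{-\gamma}\|u_0\|_{H^\b}$ for some $\gamma = \gamma(\beta,\b) > 0$; this is uniformly bounded on $[t_0,t]$ but singular at the origin, which explains the need for a positive initial time in the first claim.

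For the remainder $\psi$, Lemma~\ref{lem:lyap-sns} combined with the above control on $X$ in $H^\b$ gives $\E\|\psi_t\|_{H^\b}^p \lesssim V_\r(u_0)$ for suitable $\r$. Since $X \in \mC^\beta$ already has bounded Gaussian moments, Lemma~\ref{lem:para-est} controls the product $(X+\psi)^{\otimes 2}$ in a sufficiently regular Besov space, and a Schauder plus Gronwall argument applied to the mild formulation of the $\psi$-equation promotes the $H^\b$-bound to a $\mC^\beta$-bound with the required polynomial dependence on the initial data. The main obstacle is this Gronwall step: the bilinear estimate introduces exponential factors in $\|u\|_{H^\gamma}$, and these are precisely the factors that the super-exponential weight $\exp(c_\star\|u\|_{H^1}^2)$ in the Lyapunov functional $V_\r$ is designed to absorb.
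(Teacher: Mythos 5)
Your treatment of $X$ is sound — computing the Fourier variances, applying hypercontractivity blockwise, Besov embedding, and Kolmogorov continuity amounts to proving from scratch the Gaussian moment bound that the paper dispatches by invoking Fernique's theorem, and your handling of the deterministic part $P_tu_0$ (contractivity in the $\mC^\beta$ case, parabolic smoothing plus a positive initial time in the $H^\b$ case) matches the paper. The gap is in your treatment of the remainder.

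You propose to control $\psi = u - X$ by first deriving an $H^\b$ moment bound from Lemma~\ref{lem:lyap-sns} and then bootstrapping via the mild formulation. But Lemma~\ref{lem:lyap-sns} yields bounds of the form $\E\|u_t\|_{H^\b}^{2\r}\lesssim V_{\r}(u_0)$, and the Lyapunov functional carries the super-exponential weight $\exp(c_\star\|u_0\|_{H^1}^2)$. No amount of Schauder or Gronwall downstream can turn $V_\r(u_0)$ back into a quantity polynomial in $\|u_0\|_{H^\b}$: the exponential is baked into your very first estimate. The lemma, however, asserts a \emph{polynomial} bound $(\|u_0\|_{H^\b}+1)^{p\r}$, so the route through $V_\r$ proves something strictly weaker than claimed. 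Your final sentence even flags that the bilinear Gronwall step generates exponential factors which $V_\r$ is "designed to absorb" — that is precisely the problem, since those factors then infect the dependence on $u_0$.

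The paper avoids this by decomposing $u = \widetilde X + v$ with the initial datum carried by $v$ (not by an Ornstein--Uhlenbeck process) and running an \emph{enstrophy} estimate on $\omega = \mathrm{curl}\,v$. The crucial structural point is that
\begin{equ}
\langle\omega,\, K*(\omega+\chi)\cdot\nabla(\omega+\chi)\rangle
   = \langle\omega,\, K*(\omega+\chi)\cdot\nabla\chi\rangle\;,
\end{equ}
by incompressibility, so the resulting differential inequality for $\|\omega\|^2$ has Gronwall coefficient $\|\chi\|_{\mC^{\beta-1}}$ rather than anything involving $\omega$ itself. The exponential Gronwall factor then involves only $\widetilde X$, which is a fixed Gaussian with moments independent of $u_0$, while $u_0$ enters only linearly through the initial data of the enstrophy inequality. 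This is what produces the polynomial dependence, and it is exactly the mechanism your proposal is missing. You should replace the appeal to Lemma~\ref{lem:lyap-sns} by this enstrophy argument (or some other device that decouples the exponential from $u_0$) before the bootstrap.
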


The proof of these estimates follow along standard lines, through an enstrophy
estimate and a bootstrap argument. See also \cite[Section 3]{MR4682783} for a
similar approach, in a more difficult setting. Note also that the restriction
$ \beta > 2 $ is artificial, and can be removed by following a slightly
modified proof.
\begin{proof}
From its definition, the process $X$ satisfies that for any $t\geqslant
t_{0}$, and for any $ \beta < \a/2 $, via Schauder estimates:
\begin{equ}
\| X_{t} \|_{\mC^{\beta}} \leqslant \| P_{t} u_{0} \|_{\mC^{\beta}} + \left\|
\int_{0}^{t} P_{t-s} \mathbf{P} \xi \ud s \right\|_{\mC^{\beta}} \lesssim t^{-
\frac{5}{4}}\| u_{0} \|_{\mC^{\beta-5/2}} +  \left\|
\int_{0}^{t} P_{t-s} \mathbf{P} \xi \ud s \right\|_{\mC^{\beta}}\;.
\end{equ}
By Besov embedding and since $ \b=(\a-3)/2 < \beta - 3/2$, we find $\| u_{0}
\|_{\mC^{\beta-5/2}} \lesssim \| u_{0} \|_{H^{\b}}$
and moreover the stochastic convolution convolution term lies in $ 
\mC^{\beta} $ for any $ \beta < \a/2 $ with all moments finite (by Fernique's
theorem for example). Therefore, we can estimate
\begin{equ}
\EE \left[ \sup_{t_{0} \leqslant s \leqslant t} \| X_{s}
\|_{\mC^{\beta}}^{p} \right] \leqslant C(t_{0})( \| u_{0} \|_{H^{\b}} + 1 )^{p} \;.
\end{equ} Now we move on to the estimate for $ u $. We start by writing $ u = \widetilde{X}  + v $, where $ v $ solves
\begin{equ}
\partial_{t} v + \mathbf{P} \div ( v + \widetilde{X})^{\otimes_{s} 2} = \Delta v \;, \qquad
v (0, \cdot) = u_{0} (\cdot) \;,
\end{equ}
and $ \widetilde{X} $ is the stochastic convolution $ \widetilde{X}_{t} =
\int_{0}^{t} P_{t-s} \mathbf{P} \xi  \ud s $. We establish an a-priori bound on $ v $ through an enstrophy estimate.
Define $ \omega = \mathrm{curl} ( v) $ and $ \chi = \mathrm{curl} (
\widetilde{X}) $. Then $ \omega $ solves
\begin{equ}
\partial_{t} \omega + K * (\omega + \chi) \cdot \nabla
(\omega + \chi)= \Delta \omega  \;, \qquad \omega (0, \cdot) = \mathrm{curl}(u_{0}) (\cdot) \;,
\end{equ}
where $ K $ is the Biot--Savart kernel. Then we find
\begin{equ}
\frac{1}{2} \partial_{t} \| \omega \|^{2} = - \| \nabla \omega \|^{2} - \langle
\omega, K * (\omega + \chi) \cdot \nabla (\omega + \chi)\rangle \;.
\end{equ}
Then, for the last term on the right hand-side we find by antisymmetry that
\begin{equ}
\langle \omega, K * (\omega + \chi) \cdot \nabla (\omega + \chi)\rangle =
\langle \omega, K * (\omega + \chi ) \cdot \nabla \chi \rangle \;.
\end{equ}
Now $ | \langle \omega, (K * \omega) \cdot \nabla \chi \rangle | \lesssim \|
\omega \|^{2} \| \nabla \chi \|_{ \infty} $, and $ | \langle \omega, (K * \chi)
\cdot \nabla \chi  \rangle | \lesssim \| \omega \| \| \chi
\|_{\infty} \| \nabla \chi \| $. Therefore for any $ \beta > 2 $, applying
Young's inequality $ \| \omega \| \| \chi
\|_{\infty} \| \nabla \chi \| \leqslant (1/2) \| \omega \|^{2} + C \| \chi
\|_{\mC^{\beta-1}}^{2}$, and absorbing the first term in the dissipation, we
obtain
\begin{equ}
\frac{1}{2} \partial_{t} \| \omega \|^{2} \lesssim \| \omega \|^{2}  \| \chi
\|_{\mC^{ \beta -1 }} + \| \chi \|_{\mC^{\beta -1}}^{2} \;.
\end{equ}
Integrating out, we have found that
\begin{equ}[e:bd-v]
\sup_{0 \leqslant s \leqslant t} \| v_{s} \|_{H^{1}}^{2} \lesssim e^{C \int_{0}^{t}
\| \widetilde{X}_{s} \|_{\mC^{\beta}} \ud s } \left( \| u_{0}
\|_{H^{1}}^{2}  +
\int_{0}^{t} \| \widetilde{X}_{s} \|_{\mC^{\beta}}^{2} \ud s \right) \;.
\end{equ}
In particular, choosing $ \beta \in (2, \a/2) $ and since $ \widetilde{X} $ is Gaussian and takes values in $
\mC^{\beta} $ for any $ \beta < \a/2 $, we deduce by Fernique's theorem that since $\b>1$
\begin{equ}
\EE \left[ \sup_{0 \leqslant s \leqslant t} \| v_{s} \|_{H^{1}}^{p} \right]
\lesssim (\| u_{0} \|_{H^{\b}} + 1)^{p} \;,
\end{equ}
for any $ p \geqslant 0 $. Now, for any $ \delta > 0
$ we find by Besov embedding that  $ \| v
\|_{\mC^{- \delta}} \lesssim \| v \|_{H^{1}} $, so that $ \| \div
(v^{\otimes_{s}2}) \|_{H^{- \delta}} \lesssim \| v
\|_{H^{1}}^{2} $. Therefore, if we set ${\bf M}_{t} = \sup_{0 \leqslant s \leqslant t} \| v_{s} \|_{H^{1}}$, we find via
Schauder estimates and through the Besov embeddings $H^{\b}\subseteq H^{2-\delta} \subseteq
\mC^{1 - \delta} $ that
\begin{equs}
\| v_{t} \|_{\mC^{1 - \delta}} & \leqslant \| u_{0} \|_{\mC^{1- \delta}} + \left\|
\int_{0}^{t}  P_{t-s} \div ( (v_{s} + \widetilde{X}_{s})^{\otimes_{s} 2})
 \ud s \right\|_{H^{2- \delta}}\\
& \lesssim_{\delta} \| u_{0} \|_{H^{\b}} + ({\bf M}_{t}+1)^{2} (\| \widetilde{X}
\|_{\mC^{\beta}_{t}} +1)^{2} \;,
\end{equs}
for any $ \beta > 1 $ and $ \delta > 0 $. Therefore, through \eqref{e:bd-v} we have proven that
\begin{equ}
\EE \left[ \sup_{0 \leqslant s \leqslant t} \| v_{s} \|_{\mC^{1 -
\delta}}^{p} \right] \lesssim (\| u_{0} \|_{H^{\b}} + 1)^{2 p} \;.
\end{equ} Now by considering the initial data $v_{t_{0}}$, one can iterate these estimates using Schauder estimates until one finds an $ \r > 2 $ such that for any $2<\beta <\a/2$  and $ p \geqslant 0 $
\begin{equ}
\EE \left[ \sup_{t_{0} \leqslant s \leqslant t} \| v_{s} \|_{\mC^{\beta}
}^{p} \right] \leqslant C(t_{0}) (\| u_{0} \|_{H^{\b}} +1)^{\r p} \;.
\end{equ}
This completes the proof of the first statement of the lemma. The second statement follows similarly in a simpler way. Indeed, one can directly bound for $\|X\|_{\mC^{\beta}}$
\begin{equ}
\| X_{t} \|_{\mC^{\beta}} \leqslant \| u_{0} \|_{\mC^{\beta}} + \left\|
\int_{0}^{t} P_{t-s} \mathbf{P} \xi \ud s \right\|_{\mC^{\beta}}\;.
\end{equ} Now for any $\beta\in(2,\a/2)$ there exists a $C>0$ such that 
\begin{equ}
\EE \left[ \sup_{0 \leqslant s \leqslant t} \| v_{s} \|_{\mC^{1 -
\delta}}^{p} \right] \leqslant C(\| u_{0} \|_{\mC^{\beta}} + 1)^{2 p} \;.
\end{equ} 
Then iterating the previous estimates leads to the desired bound.
\end{proof}

\appendix

\section{Fundamentals of Lyapunov exponents}\label{appendix:Lyapunov}
In this section we outline some basic properties of Lyapunov exponents. 
First, we check an integrability condition on the scalar
$ \varrho $, which is sufficient to guarantee the existence of Lyapunov
exponents. Here we denote with $ \mL (X) $ the set of bounded linear operators
on a Banach space $ X $.
\begin{lemma}\label{lem:integrability}
In the setting of Theorem~\ref{thm:main}, let $ \varrho $ be the solution to
\eqref{e:main} with $ L [u] $ given either by \eqref{e:form-PSA} or
\eqref{e:form-LNS}. Let us write $ S^{t}_{\omega, u_{0}} \varrho_{0} =
\varrho_{t} (\omega) $ for the linear solution map associated to $ \varrho $.
Then
\begin{equ}
\EE  \left[ \log{\| S^{1} \|_{\mL(L^{2}_{*}) }} \right] < \infty  \;.
\end{equ}
\end{lemma}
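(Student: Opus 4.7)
The plan is to bound the operator norm $\|S^1_{\omega,u_0}\|_{\mathcal{L}(L^2_*)}$ pointwise by an exponential of an integrated norm of the velocity field, and then to show that this random upper bound has finite expectation under $\mathbb{P}\times\mu$. Since $\log$ is concave, it suffices to control the positive part.

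For the advection case $L[u]\varrho = u\cdot\nabla \varrho$, the divergence-free property of $u$ gives the pointwise identity
\begin{equ}
\tfrac{1}{2}\partial_t\|\varrho_t\|^2 = -\kappa\|\nabla \varrho_t\|^2 \leqslant 0 \;,
\end{equ}
so $\|S^t\|_{\mathcal{L}(L^2_*)} \leqslant 1$ deterministically, and $\log\|S^1\|\leqslant 0$ trivially gives the bound.

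For the linearised SNS case $L[u]\varrho = u\cdot\nabla\varrho + \Delta u\cdot \nabla^{-1}\varrho$, the transport term still drops out by incompressibility, but the stretching term contributes. I would reuse the energy estimate already derived in \eqref{e:L2est-rho-Lns} within the proof of Lemma~\ref{lem:ub-h1-Lns}: for any $\gamma\in(2,3)$,
\begin{equ}
\tfrac{1}{2}\partial_t\|\varrho_t\|^2 \leqslant -\kappa\|\nabla\varrho_t\|^2 + C\|\varrho_t\|^2\|u_t\|_{H^\gamma} \;,
\end{equ}
where the stretching term is handled through Sobolev embedding applied to $\Delta u \in L^p$ and $\nabla^{-1}\varrho\in L^{p'}$. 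Dropping the dissipative term and applying Gronwall gives
\begin{equ}
\|S^1_{\omega,u_0}\|_{\mathcal{L}(L^2_*)} \leqslant \exp\left(C\int_0^1 \|u_s\|_{H^\gamma}\,\dee s\right) \;.
\end{equ}

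Taking the logarithm and then the expectation with respect to $\mathbb{P}\times\mu$, Fubini and the fact that $\mu$ is invariant for \eqref{e:sns} reduce the problem to showing $\int_{H^\b}\|u\|_{H^\gamma}\,\dee\mu(u) < \infty$ for some $\gamma\in(2,\alpha/2)$. This is a standard moment estimate for the invariant measure of the 2D stochastic Navier--Stokes equations with non-degenerate forcing: it follows directly from Lemma~\ref{lem:lyap-sns}, since the super-Lyapunov functional $V_{\r,\b}$ dominates every polynomial moment of $\|u\|_{H^\gamma}$ (via Besov embedding, $\|u\|_{H^\gamma}$ is controlled by $\|u\|_{H^\b}$ up to the exponential factor in $V_{\r,\b}$), and stationarity then gives $\int V_{\r,\b}\,\dee\mu < \infty$ from the Lyapunov drift inequality. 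No step here is really hard; the only place one needs to be a bit careful is in checking that the exponent $\gamma$ can be chosen strictly below $\alpha/2$ while still making the stretching bound in \eqref{e:L2est-rho-Lns} go through, which is fine since $\alpha > 10$ gives plenty of room.
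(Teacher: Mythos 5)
Your proof is correct and follows the same core argument as the paper: the advection case is handled by observing that $S^t$ is a contraction on $L^2$ by incompressibility, and the linearised SNS case by an $L^2$ energy estimate in which the transport term drops out and the stretching term $\langle\varrho,\Delta u\cdot\nabla^{-1}\varrho\rangle$ is controlled by $\|\varrho\|^2$ times a subcritical norm of $u$, followed by Gronwall. The only departure is in the final integration step: the paper bounds $\sup_{0\le s\le1}\|u_s\|_{\mC^\beta}$ in moments via Lemma~\ref{lem:mmt-unif} and then integrates the resulting polynomial in $\|u_0\|$ against $\mu$, whereas you pass through Fubini and the invariance of $\mu$ for \eqref{e:sns} to reduce directly to $\int\|u\|_{H^\gamma}\ud\mu<\infty$. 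Both routes ultimately rest on the same polynomial moment bounds for the invariant measure (from the Lyapunov functional of Lemma~\ref{lem:lyap-sns}), so the difference is one of bookkeeping rather than substance; your version is arguably slightly cleaner in that it avoids the pathwise supremum bound. One small remark: the phrase \emph{``since $\log$ is concave, it suffices to control the positive part''} is a non-sequitur --- the relevant point is simply that the MET requires $\EE[\log^+\|S^1\|]<\infty$, which is what the exponential bound delivers.
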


\begin{proof}
Let us treat only the case of linearised Navier--Stokes, since the pure
advection case is simpler: indeed in that case $ S $ is a contraction, i.e.\ $
\| S^{t}_{\omega,u_0} \|_{\mL (L^{2}_{0})} \leqslant 1 $. Instead for linearised
Navier--Stokes we find
\begin{equ}
\frac{1}{2} \partial_{t} \| \varrho_{t} \|^{2} \leqslant \langle
\varrho_{t},  \Delta u_{t} \cdot \nabla^{-1} \varrho_{t} \rangle \lesssim \|
\varrho_{t} \|^{2} \| u_{t} \|_{\mC^{\beta}} \;,
\end{equ}
for any $ \beta > 2 $. Therefore, for some $ C > 0 $, we find that $ \| \varrho_{t} \| \lesssim \|
\varrho_{0} \| \exp \left( C \int_{0}^{t} \| u_{s} \|_{\mC^{\beta}} \ud s \right) $. The
result follows then from Lemma~\ref{lem:mmt-unif}.
\end{proof}
Next we prove a result that guarantees that the top Lyapunov exponent, as
defined in \eqref{e:lambda-top}, is attainable.

\begin{lemma}\label{lem:exp-decay}
In the same setting as in Theorem~\ref{thm:main}, for every $\ep>0$, there exists a random constant $D_\ep(\omega,u_0)$ satisfying $D_\ep \in (0,1]$, $\P\times \mu$ almost surely, and a closed random subspace $F(\omega,u_0)\subset L^2_{*}$ with $\mathrm{codim}\,F(\omega,u_0) = m \in (0,\infty)$, such that for all $ \varrho_{0} \in L^{2}_{*}$ we have for all $t\geq 0$
\begin{equ}\label{e:exp-decay}
   \|\varrho_t\| \geq  D_\ep(\omega,u_0)\|\varrho_0\| \sin\angle(\varrho_0,F(\omega,u_0))e^{(\lambda^{\kappa}_1-\ep) t}\;,
\end{equ}
where
\[
\sin\angle(\varrho_0,F) = \inf_{\varphi\in
F(\omega,u_0)}\frac{\|\varrho_0-\varphi\|}{\|\varrho_0\|}\;.
\]
\end{lemma}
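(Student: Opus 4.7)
The plan is to invoke the multiplicative ergodic theorem (MET) of Lian--Lu \cite{MR2674952} for the linear cocycle $S^t_{\omega,u_0}$ on $L^2_*$ generated by \eqref{e:main}. The log-integrability hypothesis $\EE[\log^+\|S^1\|_{\mL(L^2_*)}]<\infty$ holds by Lemma~\ref{lem:integrability}, while compactness of $S^t$ for $t>0$ follows from the smoothing of $P^\kappa_t$ together with the fact that $L[u]$ is a first-order operator. The MET thus provides a measurable, cocycle-invariant splitting $L^2_*=E_1(\omega,u_0)\oplus F(\omega,u_0)$ with $\dim E_1 = m$ equal to the multiplicity of $\lambda^\kappa_1$ and $F$ a closed subspace of codimension $m$ carrying top Lyapunov exponent $\lambda_2<\lambda^\kappa_1$ (with the convention $\lambda_2=-\infty$ if $\lambda^\kappa_1$ is the only finite exponent).

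The two quantitative inputs from the MET that I would then use are tempered lower bounds: for any $\ep>0$ there exist $\PP\times\mu$-a.s.\ positive random constants $C_\ep(\omega,u_0)$ and $D'_\ep(\omega,u_0)$ such that, for all $t\ge 0$,
\begin{equs}
\|S^t v\| &\ge C_\ep(\omega,u_0) e^{(\lambda^\kappa_1-\ep/2)t}\|v\|\quad\text{for all } v\in E_1(\omega,u_0)\;,\\
\sin\angle\bigl(S^tE_1(\omega,u_0),S^tF(\omega,u_0)\bigr) &\ge D'_\ep(\omega,u_0) e^{-\ep t/2}\;.
\end{equs}
The first is the standard lower bound on the cocycle restricted to a single Oseledets block, and the second is the classical tempered lower bound on the angle between complementary Oseledets subspaces; both are standard consequences of the MET.

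Given these, the argument is short. Decompose $\varrho_0=e_0+f_0$ with $e_0\in E_1$ and $f_0\in F$. Because $F$ is closed, the orthogonal projection of $\varrho_0$ onto $F$ minimises the distance, so $\|\varrho_0\|\sin\angle(\varrho_0,F)=\mathrm{dist}(\varrho_0,F)\le\|e_0\|$. Invariance of the splitting under $S^t$ gives $\varrho_t=S^te_0+S^tf_0\in S^tE_1\oplus S^tF$, whence
\begin{equ}
\|\varrho_t\|\ge\mathrm{dist}(S^te_0,S^tF)\ge\sin\angle(S^tE_1,S^tF)\cdot\|S^te_0\|\;.
\end{equ}
Combining the three inequalities yields $\|\varrho_t\|\ge C_\ep D'_\ep\,\|\varrho_0\|\sin\angle(\varrho_0,F)\,e^{(\lambda^\kappa_1-\ep)t}$, and setting $D_\ep:=\min(1,C_\ep D'_\ep)$ closes the argument.

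The main technical obstacle is verifying the applicability of the Lian--Lu MET, namely that $S^1$ is quasi-compact $\PP\times\mu$-a.s. For the advection case this follows because $S^1$ differs from $P^\kappa_1$ by a lower-order perturbation that preserves compactness; for the linearised SNS case the argument is analogous, using the regularity estimates developed in Section~\ref{sec:regularity}. Extracting the tempered angle bound in the Banach-space setting also requires some care but is now well documented in the literature surrounding \cite{MR2674952,MR4642633}.
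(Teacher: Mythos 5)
Your proof is correct and takes essentially the same route as the paper's, namely invoking the multiplicative ergodic theorem for the compact parabolic cocycle, checking log-integrability via Lemma~\ref{lem:integrability}, and exploiting the Oseledets structure. The difference is purely expository: you make the fast/slow splitting $L^2_* = E_1 \oplus F$ and the two tempered bounds (uniform growth on the finite-dimensional $E_1$ and the tempered angle between $S^tE_1$ and $S^tF$) explicit as inputs valid for all $t\ge 0$, then combine them in three lines; the paper instead quotes the pointwise MET limit for $\varrho_0\notin F$, extracts a tail bound for $t\ge T_\ep$, and covers $[0,T_\ep]$ separately by defining $D_\ep$ as an explicit infimum over unit initial data and bounded times whose positivity rests on parabolic backwards uniqueness. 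Both arguments use the same underlying facts; yours front-loads the tempered estimates while the paper's defers them into the construction of $D_\ep$, and the positivity of that infimum is in fact justified by exactly the tempered angle bound you write down, so nothing is lost or gained mathematically.
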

\begin{proof}
By the Multiplicative Ergodic Theorem (see e.g. \cite{MR4642633} for specific applications to the problems at hand), there exists a closed random subspace $F(\omega,u_0)\subset L^2_{*}$ with $\mathrm{codim}\,(F(\omega,u_0)) = m \in (0,\infty)$ such that for all $ \varrho_{0} \in L^{2}_{*}\backslash F(\omega, u_0)$ we have
\begin{equ}
\lambda_1^\kappa = \lim_{t\to \infty}\frac{1}{t}\log\|\varrho_t\|\;, \qquad
\P\times \mu\text{-almost surely}\;.
\end{equ}
This implies that for every $\ep>0$ there exists an almost surely finite random variable $T_\ep(\omega,u_0)>0$ such that for all $t\geq T_\ep(\omega,u_0)$ we have
\begin{equ}\label{e:large-t-decay}
\|\varrho_t\| \geq \|\pi^\perp_{F}\varrho_0\|e^{(\lambda_1^\kappa-\ep)t}\;,
\end{equ}
where $\pi^\perp_F$ is the orthogonal projection onto the orthogonal complement $F^\perp(\omega,u_0)$. Note that by the Hilbert projection theorem
\begin{equ}
\|\pi^\perp_{F}\varrho_0\| = \|\varrho_0\|\sin\angle(\varrho_0,F(\omega,u_0))
\;.
\end{equ}
We now define 
\begin{equ}
D_\ep(\omega,u_0) := 1\wedge \min_{t\in
[0,T_\ep(\omega,u_0)]}\inf_{\varrho_0\in L^2_*\backslash F(\omega,u_0)\atop
\|\varrho_0\|=1}\frac{\|\varrho_t\|e^{-(\lambda_1^\kappa-\ep)t}}{\sin\angle(\varrho_0,F(\omega,u_0))}\;.
\end{equ}
It is easy to see by backwards uniqueness for $\varrho_t$ that $D_\ep(\omega,u_0)>0$ almost surely and that \eqref{e:exp-decay} holds for all $t\geq 0$ by definition of $D_\ep(\omega,u_0)$ and \eqref{e:large-t-decay}.
\end{proof}

\endappendix

\bibliographystyle{Martin}

\bibliography{refs}

\end{document}